\def\blue{\textcolor{black}}

\newcommand*{\Scale}[2][4]{\scalebox{#1}{$#2$}}
\newcommand{\Erdos}{Erd\H{o}s-R\'enyi }

\newcommand{\quarter}{{\frac{1}{4}}}

\newcommand{\eps}{{\varepsilon}}

% End Local macros
% Double capital letters
\newcommand{\ttau}{\varsigma}

\newcommand{\limstar}{\lim\star}

\newcommand{\Gmb}{{\mathbb{G}}}

\newcommand{\Mmb}{{\mathbb{M}}}
\newcommand{\Nmb}{{\mathbb{N}}}

\newcommand{\Rmb}{{\mathbb{R}}}

\newcommand{\Umb}{{\mathbb{U}}}

% End Double capital letters
% MathCal symbols

\newcommand{\Smc}{{\mathcal{S}}}

% End MathCal symbols
% Math Boldface Symbols

% End Math Boldface Symbols
% Math Bold Symbols 
\newcommand{\zero}{{\boldsymbol{0}}}
\newcommand{\one}{{\boldsymbol{1}}}

\newcommand{\qbd}{{\boldsymbol{q}}}

\newcommand{\Xbd}{{\boldsymbol{X}}}

\newcommand{\zbd}{{\boldsymbol{z}}}
% End Math Bold Symbols  
% Bar Symbols  

% \newcommand{\dbar}{{\bar{d}}}
% \newcommand{\Dbar}{{\bar{D}}}

\newcommand{\tbar}{{\bar{t}}}
\newcommand{\taubar}{{\bar{\tau}}}

% End Bar Symbols  
% Hat Symbols 

% End Hat Symbols  
% Tilde Symbols 

\newcommand{\Btil}{{\tilde{B}}}
\newcommand{\Ctil}{{\tilde{C}}}

\newcommand{\ntil}{{\tilde{n}}}

\newcommand{\varphitil}{{\tilde{\varphi}}}

\newcommand{\ztil}{{\tilde{z}}}
\newcommand{\zetatil}{{\tilde{\zeta}}}

% End Tilde Symbols  
% Dot Symbols 

% End Dot Symbols  
% Configuration Model Symbols
\newcommand{\newbd}{\zbd}
\newcommand{\new}{z}

% End Configuration Model Symbols
%Set the depth of sectioning.
%Set the depth of table of contents.

\documentclass[10pt]{article}
\usepackage[portrait,margin=2.54cm]{geometry}

%%%%%%%%%%%%%%%%%%%%%%%%%%%%%%%%%%%%%%%%%%%%%%%%%%%%%%%%%%%%%%%%%%%%%%%%%%%%%%%%%%%%%%%%%%%%%%%%%%%%%%%%%%%%%%%%%%%%%%%%%%%%%%%%%%%%%%%%%%%%%%%%%%%%%%%%%%%%%%%%%%%%%%%%%%%%%%%%%%%%%%%%%%%%%%%%%%%%%%%%%%%%%%%%%%%%%%%%%%%%%%%%%%%%%%%%%%%%%%%%%%%%%%%%%%%%
\usepackage{amsfonts}

\usepackage{amssymb}
\usepackage{mathrsfs}
\usepackage{soul}
\usepackage{hyperref}
\usepackage{amssymb,amsthm,amsmath,amsfonts,amsbsy,latexsym}
\usepackage{graphicx}
\usepackage[numeric,initials,nobysame]{amsrefs}
\usepackage{upref,setspace}
\usepackage{enumerate}
\usepackage{color}

\setcounter{MaxMatrixCols}{10}
%TCIDATA{OutputFilter=Latex.dll}
%TCIDATA{Version=5.50.0.2953}
%TCIDATA{<META NAME="SaveForMode" CONTENT="1">}
%TCIDATA{BibliographyScheme=BibTeX}
%TCIDATA{LastRevised=Monday, June 05, 2017 13:05:05}
%TCIDATA{<META NAME="GraphicsSave" CONTENT="32">}

\topmargin      0.0truein
 \oddsidemargin  0.0truein
 \evensidemargin 0.0truein
 \textheight     8.5truein
 \textwidth      6.2truein
 \headheight     0.0truein
 \headsep        0.3truein
\numberwithin{equation}{section}
\numberwithin{figure}{section}
\numberwithin{table}{section}
\sloppy
\newtheorem{Lemma}{Lemma}[section]
\newtheorem{Proposition}[Lemma]{Proposition}

\newtheorem{Theorem}[Lemma]{Theorem}
\newtheorem{Assumption}[Lemma]{Assumption}
\newtheorem{Conjecture}[Lemma]{Conjecture}

\newtheorem{Remark}[Lemma]{Remark}

\newtheorem{Construction}{Construction}
\newtheorem{Corollary}[Lemma]{Corollary}

\setcounter{secnumdepth}{3}
\setcounter{tocdepth}{2}
\numberwithin{equation}{section}

\definecolor{gold}{RGB}{178,34,34}
\definecolor{bl}{RGB}{34,178,178}
\newcommand{\cgg}{\textcolor{black}}
\newcommand{\cg}{\textcolor{black}}
\begin{document}

\title{Rare event asymptotics for exploration processes for random graphs}
\author{Shankar Bhamidi, Amarjit Budhiraja, Paul Dupuis, Ruoyu Wu}
\maketitle

\begin{abstract}
\noindent Large deviations for random graph models has been a topic of significant recent research activity. \cg{Much work} in this area is focused on the class of \emph{dense} random graph models (number of edges in the graph scale as $n^{2}$, where $n$ is the number of vertices) where the theory of graphons has emerged as a principal tool in the study of large deviation properties. 
\cg{These tools do not give a good approach to
%In contrast there is much less known about 
large deviation problems for random graph models in the sparse regime.} The aim of this paper is to \cg{study} an approach for  large deviation problems in this regime by establishing Large Deviation Principles (LDP) on suitable path spaces for certain exploration processes of the associated random graph sequence. Exploration processes are an important tool in the study of sparse random graph models and have been used to understand detailed asymptotics of many functionals of sparse random graphs, such as component sizes, surplus, deviations from trees, etc. In the context of rare event asymptotics of interest here, the point of view of exploration process transforms a large deviation analysis of a static random combinatorial structure to the study of a small noise LDP for certain stochastic dynamical systems with jumps. 
 
Our work focuses on one particular class of random graph models, namely the configuration model; however the general approach of using exploration processes for studying large deviation properties of sparse random graph models has broader applicability. The goal is to study asymptotics of probabilities of non-typical behavior in the large network limit. The first key step for this is to establish a LDP for an exploration process associated with the configuration model. A suitable exploration process here turns out to be an infinite dimensional Markov process with transition probability rates that diminish to zero in certain parts of the state space. Large deviation properties of such Markovian models is challenging due to poor regularity behavior of the associated local rate functions. Our proof of the LDP relies on a representation of the exploration process in terms of a system of stochastic differential equations driven by Poisson random measures and variational formulas for moments of nonnegative functionals of Poisson random measures. Uniqueness results for certain controlled systems of deterministic equations play a key role in the analysis. Next, using the rate function in the LDP for the exploration process we formulate a calculus of variations problem associated with the asymptotics of component degree distributions. The second key ingredient in our study is a careful analysis of the infinite dimensional Euler-Lagrange equations associated with this calculus of variations problem. Exact solutions of these systems of nonlinear differential equations are identified which then provide explicit formulas for decay rates of probabilities of non-typical component degree distributions and related quantities. 
\newline

\noindent

\noindent \textbf{AMS 2010 subject classifications:} 60F10, 60C05, 05C80, 90B15.\newline

\noindent \textbf{Keywords:} large deviation principle, random graphs, sparse regime, diminishing rates, Euler-Lagrange equations, calculus of variations problems, configuration model, branching processes, variational representations, Poisson random measures, exploration process, singular dynamics, giant component.
\end{abstract}

% 	\date{\today}
% 	 	\subjclass[2000]{Primary: ; Secondary: ;}
% 	 	\keywords{
% 	 	}
% 	 	\author[Bhamidi]{Shankar Bhamidi}
% 	 	\address{Department of Statistics and Operations Research, 304 Hanes Hall CB \#3260, University of North Carolina, Chapel Hill, NC 27599}
% 	 	\author[Budhiraja]{Amarjit Budhiraja}
%        \author[Dupuis]{Paul Dupuis}
% 	 	\author[Wu]{Ruoyu Wu}
% 	 	\email{bhamidi@email.unc.edu, budhiraj@email.unc.edu, dupuis@dam.brown.edu, ruoyu_wu@brown.edu}
% 	 

\section{Introduction}
\label{sec:intro}

 Large deviations for random graph
models has been a topic of significant recent research activity (see, e.g., \cite{chatterjee2011large,chatterjee2016introduction,bordenave2015large,puhalskii2005stochastic,o1998some,choi2013large, DhaSen}). 
\cg{Much work} in this area is focused on the class of \emph{dense}
random graph models (number of edges in the graph scale like $n^{2}$, where $%
n $ is the number of vertices). In this regime, the theory of graphons
obtained under dense graph limits \cite{borgs2008convergent,borgs2012convergent,lovasz2012large, DhaSen} has emerged as a
key tool in the study of large deviation asymptotics. 
In contrast to the
above papers, the focus in the current work is on a \emph{sparse} random
graph setting where the average degree of a typical vertex is $O(1)$ so that
the number of edges in the graph are $O(n)$ as $n\rightarrow \infty $.  
\cg{In this regime  tools based on the theory of graphons do not give a good approach to
%In contrast there is much less known about 
the study of large deviation problems.}
%Much less is known on rare event asymptotics for  random graph models in this regime. 
The goal of this work is to \cg{study an approach for  large deviation problems} in the sparse regime by establishing
large deviation principles for a class of stochastic dynamical systems, known as the  \emph{exploration processes}, that play a central role in the study of sparse random graphs. \cg{The idea of using stochastic processes to study large deviation problems for static combinatorial objects has been used previously in several works, e.g.\ in \cite{dupnuzwhi} for studying urn models, in
\cite{puhalskii2005stochastic} for studying \Erdos random graphs, in \cite{choset} in the study of preferential attachment model, 
and in  \cite{puhalskii2013number} for another type of attachment model.}
Our work focuses on one particular class of random graph models, namely the  configuration model; however similar techniques  are expected to be useful for other sparse random graph models as well where tractable dynamic constructions via exploration processes are available.
%
%
%
% To fix ideas, we will focus on one central class of random graph models, the \emph{configuration model}, however the same techniques (with model specific calculations) should be applicable to a host of other sparse random graphs. Let us give an overview of the model and representative questions for this model and then give a description of our contributions.   }   % The goal of this work is to study certain rare event asymptotics for a configuration model on a large number of vertices.

 The \emph{configuration model}  refers to a sequence of random
graphs with number of vertices approaching infinity and the degree
distribution converging to a pre-specified probability distribution $\boldsymbol{p}= \{p_k\}_{k\in \Nmb}$ on the
set of non-negative integers. \cg{This random graph
model is a basic object in probabilistic combinatorics; see \cite{molloy1995critical} where sufficient conditions for the existence of a large connected component in a configuration model were given, which then lead to these types of random graphs being used as models for various real world systems, see e.g.\ \cite{newman2001random} and
\cite{Hofstad2016} and references therein for a comprehensive survey of rigorous results on this  model
(see also  \cite{bender1978asymptotic,bollobas1980probabilistic} where constructions similar to the configuration model were first used to count graphs with a prescribed degree sequence). This model has become one of the standard
workhorses in the study of networks in areas such as epidemiology (see e.g.\ \cite{newman2002spread} where epidemics on graphs with prescribed degree distribution are considered) and community detection (where the configuration model forms the basis of one of the most well known techniques called modularity optimization \cite{newman2006modularity}, \cite[Section 6]{fortunato2010community}). In such applications, after observing a real world system,  the configuration model with the same degree distribution is used as a ``baseline'' model to compare against the real world system to judge the existence of atypical events. Thus an important question in such random graph models is to estimate probabilities of atypical structural behaviors, particularly when the system size is large.
%structural behaviors, particularly when the system size is large.
 % (e.g.\ two large well connected components often called communities) in the configuration model versus what one observes in the real world system.  Such atypically connected subsets are then extracted as ``communities'' and form one of the first steps in the exploration of the observed network. 
 }

% An important question in such applications is judging the existence of ``atypical events''  problem for
%such random graph models is to estimate probabilities of non-typical
%structural behaviors, particularly when the system size is large.   % Examples
% of such behavior include, graph components that are larger or smaller than
% that predicted by the law of large number analysis or degree distributions
% within components that deviate significantly from the overall degree distribution.
In this paper, we are interested in probabilities of events $E^{n,\eps}(\boldsymbol{q})$ associated with the configuration model random graph $G_n$ on $n$ vertices, described as
\begin{align}
	E^{n,\eps}(\boldsymbol{q}) &= \{\mbox{there exists a component in } G_n \mbox{ with } m_k \mbox{ degree $k$ vertices, where } \nonumber\\
	&\quad \quad\quad\quad\quad \quad m_k \in [n(q_k-\eps), n(q_k+\eps)],\; k \in \Nmb\},\label{mainevent}
\end{align}
and where $\boldsymbol{q} = (q_k)_{k\in \Nmb}$ is such that $0 \le q_k \le p_k$ for every $k$.
One of our main results (see Theorem \ref{thm:ldg_degree_distribution}) shows that, under conditions, for large $n$ and small $\eps$
\begin{equation}\label{eq:mainasymp}
	P\left\{ E^{n,\varepsilon}({\boldsymbol{q}})\right\} \approx \exp\left\{-n\left[H(\boldsymbol{q}) + H(\boldsymbol{p}-\boldsymbol{q}) - H(\boldsymbol{p})\right]\right\},
	\end{equation}
where for a nonnegative sequence $\boldsymbol{r} = (r_k)_{k\in \Nmb}$, 
\begin{equation}\label{eq:hdefn}
H(\boldsymbol{r})\doteq \sum_{k=1}^{\infty }{r}_{k}\log {%
r}_{k}-\left( \frac{1}{2}\sum_{k=1}^{\infty }k{r}_{k}\right)
\log \left( \frac{1}{2}\sum_{k=1}^{\infty }k{r}_{k}\right).
\end{equation}%
This result in particular gives asymptotics for probabilities of observing a component of a given size (see Remark \ref{rem:conjsize}) and explicit formulas for rates of decay of probabilities of observing a $D$-regular component of a given size in $G_n$ (see Corollaries \ref{cordreg} and \ref{cordregsubgraph}); see also Conjectures \ref{conj:Dreg-multi} and \ref{conj:Dreg-max} on large deviation 
asymptotics for the size of the largest component in a $D$-regular graph.

% A natural formulation for such problems of rare event probability estimation
% is through the theory of large deviations.

\cg{In order to prove Theorem \ref{thm:ldg_degree_distribution} we first study a more general and abstract problem of large deviations for a certain class of stochastic dynamical systems in Theorem \ref{thm:main-ldp}.}
The starting point  is a dynamical construction of the
configuration model given through a discrete time infinite dimensional
Markov chain referred to as the {\em exploration process} (cf. \cite{MolloyReed1998size,Janson2009new}). As the name suggests,
the exploration process is constructed by first appropriately selecting a
vertex in the graph and then exploring the neighborhood of the chosen vertex
until the component of that vertex is exhausted. After this one moves on to
another `unexplored' vertex resulting in successive exploration of
components of the random graph until the entire graph has been explored. The
stochastic process corresponding to one particular coordinate of this
infinite dimensional Markov chain encodes the number of edges in any given
component through the length of its excursions away from zero. The remaining
coordinates of this Markov chain can be used to read off the number of
vertices of a given degree in any given component of the random graph. See Section \ref{sec:eea}
for a precise description of the state space of this Markov chain.
The
exploration process can be viewed as a small noise stochastic dynamical
system in which the transition steps are of size $O(1/n)$ with $n$ denoting the
number of vertices in the random graph. 
A key ingredient in the proof of Theorem \ref{thm:ldg_degree_distribution}, is a Large Deviation Principle (LDP) for an infinite dimensional jump-Markov process 
that can be viewed as a continuous time analogue of the exploration process.   This result, given in Theorem \ref{thm:main-ldp}, is our second main result. As other applications of this theorem, we recover a well known result on the asymptotics of the largest component in the configuration model due to Molloy and Reed \cite{MolloyReed1998size} and Janson and Luczak \cite{Janson2009new}, and also present  a result (whose proof is omitted) on asymptotics of scaled number of components in a configuration model (see Remark \ref{rem:remnumcopm}).
The rate function in the LDP given in Theorem \ref{thm:main-ldp} can be used to formulate a calculus of variations problem associated with the event  $E^{n,\eps}(\boldsymbol{q})$ described in \eqref{mainevent}.
This problem is at the heart of our analysis and by studying the corresponding infinite dimensional system of coupled Euler-Lagrange equations we construct an explicit minimizer in this optimization problem (see Lemma \ref{lem:minimizer-verify-general}). The cost associated with the minimizer is the exponent on the right side of \eqref{eq:mainasymp} and provides the exact expression for the decay rate for the probability of interest.

% The main result of this work
% (Theorem \ref{thm:main-ldp}) proves a large deviation principle for a
% continuous time analog of the exploration process. This large deviation
% principle through contraction principles  can be used to study various
% asymptotic problems for the degree sequence in components of the associated random
% graphs. These results are discussed in Section \ref{sec:examples}.
\subsection{Proof techniques and overview of contributions}
In addition to the study of the asymptotics of the configuration model, one of the main motivations for working on these sets of problems was the development of new techniques for handling large deviations for processes with ``degeneracies.'' We will give an overview of these contributions in this section. 

 The exploration process
associated with the $n$-th random graph (with $n$ vertices) in the
configuration model is described as an $\mathbb{R}^{\infty }$-valued `small
noise' Markov chain $\{\boldsymbol{X}^{n}(j)\}_{j\in \mathbb{N}_{0}}$. Under our
assumptions, there exists a $N\in \mathbb{N}$ such that for all $%
j\geq nN$, $\boldsymbol{X}^{n}(j)=\boldsymbol{0}$ for all $n\in \mathbb{N}$. 
In order
to study large deviations for such a sequence, one usually considers a
sequence of continuous times processes, or equivalently $\mathbb{C}([0,N]:%
\mathbb{R}^{\infty })$-valued random variables, obtained by a linear
interpolation of $\{\boldsymbol{X}^{n}(j)\}_{j\in \mathbb{N}_{0}}$ over intervals of
length $1/n$. A large deviations analysis of such a sequence in the current
setting is challenging due to `diminishing rates' feature of the transition
kernel (see \eqref{eq:aj-dynamics}) which in turn leads to poor regularity
of the associated local rate function. By diminishing rates we mean the
property that probabilities of certain transitions, although non-zero, can
get arbitrarily close to $0$ as the system becomes large. In the model we
consider, the system will go through phases where some state transitions have
very low probabilities, that are separated by phases of `regular behavior,'
many times. In terms of the underlying random graphs the first type of
phase corresponds to time periods in the dynamic construction that are close
to the completion of exploration of one component and beginning of
exploration of a new component. The poor regularity of the local rate
function makes standard approximations of the near optimal trajectory that
are used in proofs of large deviation principles for such small noise
systems hard to implement. In order to overcome these difficulties we
instead consider a different continuous time process associated with the
exploration of the configuration model. This continuous time process is
obtained by introducing i.i.d.\ exponential random times before each step in the
edge exploration Markov chain. A precise description of this process is
given in terms of stochastic differential equations (SDE) driven by a
countable collection of Poisson random measures (PRM), where different PRMs
are used to describe the different types of transitions (see Section \ref%
{sec:model}). Although the coefficients in this SDE are discontinuous
functions, their dependence on the state variable is much more tractable
than the state dependence in the transition kernel of the discrete time
model.

Large deviations for small noise SDE driven by Brownian motions have been
studied extensively both in finite and infinite dimensions. An approach
based on certain variational representations for moments of nonnegative
functionals of Brownian motions and weak convergence methods \cite%
{BoueDupuis1998variational, BudhirajaDupuis2000variational} has been quite
effective in studying a broad range of such systems (cf. references in \cite{BudhirajaDupuisMaroulas2011variational}). A similar variational
representation for functionals of a Poisson random measure has been obtained
in \cite{BudhirajaDupuisMaroulas2011variational}. There have been several
recent papers that have used this representation for studying large
deviation problems (see, e.g., \cite{BudhirajaChenDupuis2013large,BudhirajaDupuisGanguly2015moderate, BudhirajaWu2017moderate}). This representation is the starting point of the analysis in  the current work as well, however the application of the representation to the setting considered here leads to new challenges. One
key challenge that arises in the proof of the large deviations lower bound
can be described as follows. The proof of the lower bound based on
variational representations and weak convergence methods, for systems driven
by Brownian motions, requires establishing unique solvability of controlled
deterministic equations of the form 
\begin{equation}
dx(t)=b(x(t))dt+\sigma (x(t))u(t)dt,\;x(0)=x_{0},  \label{eq:condifdet}
\end{equation}%
where $u\in L^{2}([0,T]:\mathbb{R}^{d})$ (space of square integrable
functions from $[0,T]$ to $\mathbb{R}^{d}$) is a given control. It turns out
that the conditions that are typically introduced for the well-posedness of
the original small noise stochastic dynamical system of interest (e.g.\
Lipschitz properties of the coefficients $b$ and $\sigma $) are enough to
give the wellposedness of \eqref{eq:condifdet}. For example when the
coefficients are Lipschitz, one can use a standard argument based on
Gronwall's lemma and an application of the Cauchy-Schwarz inequality to
establish the desired uniqueness property. In contrast, when studying
systems driven by a PRM one instead needs to establish wellposedness of
controlled equations of the form 
\begin{equation}
x(t)=x(0)+\int_{[0,t]\times S}1_{[0,g(x(s))]}(y)\varphi
(s,y)ds\,m(dy),\;0\leq t\leq T,  \label{eq:contdetpoi}
\end{equation}%
where $S$ is a locally compact metric space, $m$ a locally finite measure on 
$S$, $g \colon \mathbb{R}\rightarrow \mathbb{R}_+$ is a measurable map and the
control $\varphi $ is a nonnegative measurable map on $[0,T]\times S$ which
satisfies the integrability property 
\begin{equation*}
\int_{\lbrack 0,T]\times S}\ell (\varphi (s,y))ds\,m(dy)<\infty ,
\end{equation*}%
where $\ell (x)=x\log x-x+1$. If $\varphi $ were uniformly bounded and $g$
sufficiently regular (e.g., Lipschitz) uniqueness follows once more by a
standard Gronwall argument. However, in general if $g$ is not Lipschitz or $%
\varphi $ is not bounded (both situations arise in the problem considered
here, see e.g.\ \eqref{eq:psi}-\eqref{eq:phi_k}) the problem of uniqueness
becomes a challenging obstacle. One of the novel contributions of this work
is to obtain uniqueness results for equations of the form %
\eqref{eq:contdetpoi} when certain structural properties are satisfied. The
setting we need to consider is more complex than the one described above in
that there is an infinite collection of coupled equations (one of which
corresponds to the Skorokhod problem for one dimensional reflected
trajectories) that describe the controlled system. However the basic
difficulties can already be seen for the simpler setting in %
\eqref{eq:contdetpoi}. Although for a general $\varphi $ the unique
solvability of equations of the form \eqref{eq:contdetpoi} may indeed be
intractable, the main idea in our approach is to argue that one can perturb
the original $\varphi $ slightly so that the solution $x(\cdot )$ stays the same and moreover this $x(\cdot)$ is the unique solution
of the corresponding equation with the perturbed $\varphi $. Furthermore the
cost difference between the original and perturbed $\varphi $ is
appropriately small. The uniqueness result given in Lemma \ref%
{lem:uniqueness} is a key ingredient in the proof of the lower bound given
in Section \ref{sec:lower}. The proof of the upper bound, via the weak
convergence based approach to large deviations relies on establishing suitable tightness and limit characterization
results for certain controlled versions of the original small noise system.
This proof is given in Section \ref{sec:upper}.

The rate function in the LDP for the exploration process in Theorem \ref{thm:main-ldp}  is given as a variational formula on an infinite dimensional path space (see \eqref{eq:rate_function}).
Getting useful information from such an abstract formula in general seems hopeless, however, as we show in this work, for the event  considered in \eqref{mainevent}, the variational formula 
can be used to extract much more explicit information. We begin by observing (see \eqref{eq:enverps}) that the event $E^{n,\eps}(\boldsymbol{q})$ of interest can be written explicitly 
in terms of the exploration process. Using this and the LDP in Theorem \ref{thm:main-ldp} one can provide an upper bound for the probability of the event in terms of a quantity
$I^2_{0,\tau}((0,\boldsymbol{p}), (0,\boldsymbol{p} - \boldsymbol{q}))$ which can be interpreted (see Section \ref{sec:cal} for a precise definition) as the minimal cost 
for certain controlled analogues of the exploration process to move from the state $(0,\boldsymbol{p})$ to $(0,\boldsymbol{p} - \boldsymbol{q})$ in $\tau$ units of time, where
$\tau = \frac{1}{2} \sum_{k=1}^\infty kq_k$ (see Lemmas \ref{lem:puhalskii-upper-bound} and \ref{lem:puhalskii-upper-bound-improvement}). 
We then show that this deterministic control problem, which can be reformulated as a calculus of variations problem, admits an explicit solution. This solution is given in Construction \ref{cons:cont}
and its optimality is studied in Lemma \ref{lem:minimizer-verify-general}. Using this optimality property, the complementary lower bound for the probability of interest is given in 
Lemma \ref{lem:puhalskii-lower-bound}.  Lemmas \ref{lem:minimizer-verify-general} and \ref{lem:puhalskii-lower-bound} form the technical heart of the proof of Theorem \ref{thm:ldg_degree_distribution} and rely on a detailed and careful analysis of the infinite dimensional Euler-Lagrange equations associated with the calculus of variations problem.

\subsection{Organization of the paper}

The paper is organized as follows. In Section \ref{sec:assuandres} we
introduce the configuration model, our main assumptions, and our first main result, Theorem \ref{thm:ldg_degree_distribution}, on asymptotics of probabilities of $E^{n,\eps}(\boldsymbol{q})$.
We record some consequences of these results for $D$-regular graphs and subgraphs in Corollaries \ref{cordreg} and \ref{cordregsubgraph}. Remark \ref{rem:conjsize}
discusses another application of this result to the study of asymptotics of probabilities of components of a given size.  In Section \ref{sec:eea} we review the edge-exploration algorithm (EEA)
from \cite{MolloyReed1998size,Janson2009new} that gives a dynamical construction of the configuration model. For reasons discussed previously, the large deviation analysis 
of the discrete time EEA presents several technical obstacles and thus in Section \ref{sec:model} we introduce a closely related continuous time jump-Markov process 
$(\boldsymbol{X}^n, Y^n)$ with values in $(\Rmb\times \Rmb_+^{\infty})\times \Rmb$ which is mathematically more tractable. Sections \ref{sec:rate_def} and \ref{sec:main-result}
present our second main result, Theorem \ref{thm:main-ldp}, that gives a large deviation principle for the sequence $(\boldsymbol{X}^n, Y^n)_{n \in \Nmb}$ in a suitable infinite dimensional
path space.  In Section \ref{sec:main-result} we also note two side consequences of Theorem \ref{thm:main-ldp}. The first, given in Section \ref{sec:LLN}
is a law of large numbers (LLN) result that recovers well known results of Janson and Luczak (2009) on the asymptotics of the largest component in the configuration model. The second, discussed in Remark \ref{rem:remnumcopm}, gives a LDP for the scaled number of components in $G_n$ as $n\to \infty$. 

Section \ref{sec:repnWCCP} presents the variational representation from \cite%
{BudhirajaDupuisMaroulas2011variational} for functionals of PRM that is the
starting point of our proofs. Some tightness and characterization results
that are used both in the upper and lower bound proofs are also given in
this section. Next, Section \ref{sec:upper} gives the proof of the large
deviation upper bound whereas the proof of the lower bound is given in
Section \ref{sec:lower}. Finally, Section \ref{sec:rate_function}
establishes the compactness of level sets of the function $I_{T}$ defined in
Section \ref{sec:main-result}, thus proving that $I_{T}$ is a rate function.
Together, results of Sections \ref{sec:upper}, \ref{sec:lower} and \ref{sec:rate_function} complete the proof of Theorem \ref{thm:main-ldp}.

We next turn to the proof of Theorem \ref{thm:ldg_degree_distribution} which is given in Sections \ref{sec:cal}-\ref{sec:pfsect4}. First in Section \ref{sec:cal}
we introduce a calculus of variations problem that is central to the proof of Theorem \ref{thm:ldg_degree_distribution}. We also introduce (see Construction \ref{cons:cont})
a candidate minimizer in this optimization problem and present several technical results (Lemmas \ref{lem:I1I2L}--\ref{lem:minimizer-verify-general}) that are needed for the proof 
of the optimality property of the candidate minimizer. Using results of Section \ref{sec:cal} the proof of Theorem \ref{thm:ldg_degree_distribution} is completed in Section \ref{sec:pf_LDP_degree}.
Finally, Section \ref{sec:pfsect4} contains the proofs of technical lemmas from Section \ref{sec:cal} whereas Section \ref{sec:examples} presents the proof of the LLN results from Section \ref{sec:LLN}.

\subsection{Notation}

The following notation will be used. For a Polish space $\mathbb{S}$, denote
the corresponding Borel $\sigma$-field by $\mathcal{B}(\mathbb{S})$. 
%For a signed measure $\mu$ on $\mathbb{S}$ and $\mu$-integrable function $f \colon \mathbb{S} \to \mathbb{R}$, let $\langle f,\mu \rangle \doteq \int f \, d\mu$.
Denote by $\mathcal{P}(\mathbb{S})$ (resp.\ $\mathcal{M}(\mathbb{S})$) the
space of probability measures (resp. finite measures) on $\mathbb{S}$,
equipped with the topology of weak convergence. Denote by $\mathbb{C}_b(%
\mathbb{S})$ (resp.\ $\mathbb{M}_b(\mathbb{S})$) the space of real bounded
and continuous functions (resp.\ bounded and measurable functions). For $f
\colon \mathbb{S} \to \mathbb{R}$, let $\|f\|_\infty \doteq \sup_{x \in 
\mathbb{S}} |f(x)|$. 
%We will use the notations $\{X_t\}$ and $\{X(t)\}$ interchangeably for stochastic processes.
For a Polish space $\mathbb{S}$ and $T>0$, denote by $\mathbb{C}([0,T]:\mathbb{S})$
(resp.\ $\mathbb{D}([0,T]:\mathbb{S})$) the space of continuous functions
(resp.\ right continuous functions with left limits) from $[0,T]$ to $%
\mathbb{S}$, endowed with the uniform topology (resp.\ Skorokhod topology). 
%When $\mathbb{S}$ is a normed space with norm $\| \cdot \|$, for a map $f \colon [0,T] \to \mathbb{S}$, let $\|f\|_{*,t} \doteq \sup_{0 \le s \le t} \|f(s)\|$, $t \in [0,T]$.
%For $d \in \mathbb{N}$, let $\mathcal{C}_d \doteq \mathbb{C}([0,T]:\mathbb{R}^d)$ and $\|f\|_{*,t} \doteq \sup_{0 \le s \le t} \|f(s)\|$ for $f \in \mathcal{C}_d$, $t \in [0,T]$.
\cg{We recall that a collection $\{ X^n \}$ of $\mathbb{S}$-valued random variables on some probability space $(\Omega, \mathcal{F}, P)$ is
said to be tight, if for each $\varepsilon>0$ there is a compact set $K \subset \mathbb{S}$ such that $\sup_{n}P(X^n \in K^c)\le \varepsilon.$}
A sequence of $\mathbb{D}([0,T]:\mathbb{S})$-valued random variables is said
to be $\mathcal{C}$-tight if it is tight in $\mathbb{D}([0,T]:\mathbb{S})$
and every weak limit point takes values in $\mathbb{C}([0,T]:\mathbb{S})$
a.s. We use the symbol `$\Rightarrow$' to denote convergence in
distribution. 
%The distribution of an $\mathbb{S}$-valued random variable $X$ will be denoted as $\Lmc(X)$. 
% Expected value under a probability distribution $\Pmb$ will be denoted as $\Ebf_{\Pmb}$ but when clear from the context,
% $\Pmb$ will be suppressed from the notation.

%We will usually denote by $\kappa, \kappa_1, \kappa_2, \dotsc$, the constants that appear in various estimates within a proof.
%The value of these constants may change from one proof to another.

We denote by $\mathbb{R}^{\infty}$ the space of all real sequences which is
identified with the countable product of copies of $\mathbb{R}$. This space
is equipped with the usual product topology. For ${\boldsymbol{x}}=(x_k)_{k
\in \mathbb{N}}, {\boldsymbol{y}}=(y_k)_{k \in \mathbb{N}}$, we write ${%
\boldsymbol{x}} \le {\boldsymbol{y}}$ if $x_k \le y_k$ for each $k \in 
\mathbb{N}$. We will use the notation $a\doteq b$ to signify that the definition of $a$ is given by the quantity $b$.
 Let $\mathcal{C} \doteq \mathbb{C}([0,T]:\mathbb{R})$, $%
\mathcal{C}_\infty \doteq \mathbb{C}([0,T]:\mathbb{R}^\infty)$, $\mathcal{D}
\doteq \mathbb{D}([0,T]:\mathbb{R})$, $\mathcal{D}_\infty \doteq \mathbb{D}%
([0,T]:\mathbb{R}^\infty)$. 
Let $x^+ \doteq \max \{x,0\}$ for $x \in \mathbb{R}$.
Denote by $\mathbb{R}_+$ the set of all non-negative real numbers. 
Let $\mathbb{N}_0 \doteq \mathbb{N} \cup \{0\}$.
Cardinality of a set $A$ is denoted by $|A|$. For $n \in \mathbb{N}$, let $%
[n] \doteq \{1,2,\dotsc,n\}$.  We use the following conventions: $0\log 0=0$, $0 \log (x/0) =0$ for $x\ge 0$, and  $x \log (x/0) = \infty$ for $x>0$. 

\section{Assumptions and Results}

\label{sec:assuandres} Fix $n \in \mathbb{N}$. We start by describing the
construction of the configuration model of random graphs 
with vertex set $[n]$. Detailed description and further references for the configuration model can be found in \cite[Chapter 7]{Hofstad2016}.

\subsection{The configuration model and assumptions}

Let ${\boldsymbol{d}}(n)=\{d_{i}^{(n)}\}_{i\in \lbrack n]}$ be a degree
sequence, namely a sequence of non-negative integers such that $%
\sum_{i=1}^{n}d_{i}^{(n)}$ is even. Let $2m^{(n)}\doteq
\sum_{i=1}^{n}d_{i}^{(n)}$. We will usually suppress the dependence of $%
d_{i}^{(n)}$ and $m^{(n)}$ on $n$ in the notation. Using the sequence $%
\{d_{i}\}$ we construct a random graph on $n$ labelled vertices $[n]$ as follows: (i)
Associate with each vertex $i\in \lbrack n]$ $d_{i}$ \emph{half-edges}. (ii)
Perform a uniform random matching on the $2m$ half-edges to form $m$ edges
so that every edge is composed of two half-edges. This procedure creates a
random multigraph $G([n],{\boldsymbol{d}}(n))$ with $m$ edges, allowing for
multiple edges between two vertices and self-loops, and is called the \emph{configuration model}
with degree sequence ${\boldsymbol{d}}(n)$. Since we are concerned with
connectivity properties of the resulting graph, vertices with degree zero
play no role in our analysis, and therefore we assume that $d_{i}>0$ for all 
$i\in \lbrack n],~n\geq 1$. We make the following additional assumptions.
 % on
% the collection $\{{\boldsymbol{d}}(n),n\in \mathbb{N}\}$.

\begin{Assumption}
\label{asp:convgN} There exists a probability distribution ${\boldsymbol{p}}%
\doteq \left\{ p_{k}\right\} _{k\in \mathbb{N}}$ on $\mathbb{N}$ such that,
writing $n_{k}^{\scriptscriptstyle(n)}\doteq |\left\{ i\in \lbrack n]:d_{i}=k\right\} |$ for the number of vertices with degree $%
k $, 
$
{n_{k}^{\scriptscriptstyle(n)}}/{n}\rightarrow p_{k}\mbox{ as }%
n\rightarrow \infty ,\mbox{ for all }k\in \mathbb{N}.
$
\end{Assumption}

We will also usually suppress the dependence of $n_{k}^{\scriptscriptstyle(n)}$ on $n$ in the notation.
We make the following assumption on moments of the degree distribution.

\begin{Assumption}
\label{asp:exponential-boundN} There exists some $\varepsilon_{\boldsymbol{p}%
} \in (0,\infty)$ such that $\sup_{n \in \mathbb{N}} \sum_{k=1}^{\infty} 
\frac{n_k}{n}k^{1+\varepsilon_{\boldsymbol{p}}} < \infty$. 
%	So by Fatou's lemma, in particular $\sum_{k=1}^{\infty} p_k k^{1+\varepsilon_{\boldsymbol{p}}} < \infty$.
\end{Assumption}

The above two assumptions will be made throughout this work.

\begin{Remark}
\label{rem1.1}
\begin{enumerate}[\upshape (i)]
\item Note that Assumptions \ref{asp:convgN} and \ref{asp:exponential-boundN}%
, along with Fatou's lemma, imply that $\sum_{k=1}^{\infty
}p_{k}k^{1+\varepsilon _{\boldsymbol{p}}}<\infty $. Conversely, if $%
\sum_{k=1}^{\infty }p_{k}k^{\lambda }<\infty $ for some $\lambda \in
(4,\infty )$ and $\{D_{i}\}_{i\in \mathbb{N}}$ is a sequence of i.i.d.\ $%
\mathbb{N}$-valued random variables with common distribution $%
\{p_{k}\}_{k\in \mathbb{N}}$, then using a Borel--Cantelli argument it can be
shown that for a.e.\ $\omega $, Assumptions \ref{asp:convgN} and \ref%
{asp:exponential-boundN} are satisfied with $d_{i}=D_{i}(\omega )$, $i\in
\lbrack n]$, $n\in \mathbb{N}$, and $\varepsilon _{\boldsymbol{p}}=\frac{%
\lambda }{4}-1$.
\item Under Assumptions \ref{asp:convgN} and \ref{asp:exponential-boundN}, $%
\mu\doteq\sum_{k=1}^\infty kp_k < \infty$ and the total number of edges $m = \frac{1}{2} \sum_{i=1}^n d_i$ satisfies $\frac{m}{n} \to \frac{1}{2} \sum_{k=1}^\infty kp_k$ as $n \to \infty$.
\end{enumerate}
\end{Remark}

\subsection{Large Deviation Asymptotics for Component Degree Distributions}

\label{sec:degree_distribution}

We will say that a component of $G([n],{\boldsymbol{d}}(n))$ has degree configuration $\{\bar{n}_{k}\}$ if  the component has $\bar{n}_{k}$ vertices with degree $k$, for $k \in \Nmb$.
Given $\boldsymbol{0} \le {\boldsymbol{q}}=(q_{k},k\in \mathbb{N})\leq {\boldsymbol{p}}$, 
% with $q_k \le p_k$ for each $k \in \mathbb{N}_0$, 
we are interested in the asymptotic exponential rate of decay of the probability of the
event $E^{n,\varepsilon }({\boldsymbol{q}})$ introduced in \eqref{mainevent} that corresponds to the existence 
of a component in $G([n],{\boldsymbol{d}}(n))$
with degree configuration $\{\bar{n}_{k}\}$
  satisfying $(q_{k}-\varepsilon )n\leq \bar{n}_{k}\leq
(q_{k}+\varepsilon )n$, $k\in \mathbb{N}$,
% \begin{align*}
% \Scale[0.9]{E^{n,\varepsilon }({\boldsymbol{q}}) \doteq \{\exists \text{ a component
% with degree configuration }\{\bar{n}_{k}\}
%  \text{ satisfying }(q_{k}-\varepsilon )n\leq \bar{n}_{k}\leq
% (q_{k}+\varepsilon )n,\,k\in \mathbb{N}\},}
% \end{align*}%
namely, we want to characterize $\lim_{\varepsilon \rightarrow
0}\lim_{n\rightarrow \infty }\frac{1}{n}\log {P}\left\{
E^{n,\varepsilon }({\boldsymbol{q}})\right\} $. 
Note that for there to exist a component with degree configuration $\{nq_k\}$ we must have
$\sum_{k=1}^{\infty }kq_{k}\ge 2\left(\sum_{k=1}^{\infty }q_{k}-\frac{1}{n}\right).$
We will in fact assume a slightly stronger condition: 
\begin{equation}\label{eq:slightstr}
\sum_{k=1}^{\infty }kq_{k}>2\sum_{k=1}^{\infty }q_{k}.
\end{equation}
This condition says that there are strictly more edges than vertices
in the component. Define $\beta \doteq \beta ({\boldsymbol{q}})$ as
follows: $\beta =0$ when $q_{1}=0$, and when $q_{1}>0$, $\beta \in (0,1)$ is
the unique solution (see Remark \ref{rmk:uniqueness_beta} below) of the equation 
\begin{equation*}
\sum_{k=1}^{\infty }kq_{k}=(1-\beta ^{2})\sum_{k=1}^{\infty }\frac{kq_{k}}{%
1-\beta ^{k}}.
\end{equation*}%
Define the function $K({\boldsymbol{q}})$ by 
\begin{equation}
K({\boldsymbol{q}})\doteq \left( \frac{1}{2}\sum_{k=1}^{\infty
}kq_{k}\right) \log (1-\beta ({\boldsymbol{q}})^{2})-\sum_{k=1}^{\infty
}q_{k}\log (1-\beta ({\boldsymbol{q}})^{k})\label{eq:kdefnn}
\end{equation}%
and with $H(\cdot)$ as in \eqref{eq:hdefn}
%for $\boldsymbol{0} \le \tilde{\boldsymbol{p}}\leq {\boldsymbol{p}}$, 
define
\begin{equation}
	\label{eq:Itil_1}
	{\tilde{I}}_{1}({\boldsymbol{q}})\doteq H({\boldsymbol{q}})+H({	\boldsymbol{p}}-{\boldsymbol{q}})-H({\boldsymbol{p}})+K({\boldsymbol{q}}).
\end{equation}

\begin{Remark}
\label{rmk:uniqueness_beta} The existence and uniqueness of $\beta ({%
\boldsymbol{q}})$ can be seen as follows. For $\alpha \in (0,1)$ consider%
\begin{equation*}
\alpha F(\alpha )\doteq \sum_{k=1}^{\infty }kq_{k}-(1-\alpha
^{2})\sum_{k=1}^{\infty }\frac{kq_{k}}{1-\alpha ^{k}}=\alpha \left(
\sum_{k=3}^{\infty }\frac{\alpha -\alpha ^{k-1}}{1-\alpha ^{k}}%
kq_{k}-q_{1}\right). 
\end{equation*}%
For $k\geq 3$ and $\alpha \in (0,1)$ let
$
F_{k}(\alpha )\doteq (\alpha -\alpha ^{k-1})/(1-\alpha ^{k}).
$
It is easily verified that $F_{k}(\cdot )$ is strictly increasing on $(0,1)$. Thus for 
$\alpha \in (0,1)$, $0=F_{k}(0+)<F_{k}(\alpha )<F_{k}(1-)=\frac{k-2}{k}$,
and so
\begin{equation*}
-q_{1}=F(0+)<F(\alpha )<F(1-)=\sum_{k=3}^{\infty }(k-2)q_{k}-q_{1}.
\end{equation*}%
Since $F$ is continuous on $(0,1)$, $-q_{1}<0$ and $\sum_{k=3}^{\infty
}(k-2)q_{k}-q_{1}=\sum_{k=1}^{\infty }kq_{k}-2\sum_{k=1}^{\infty }q_{k}>0$,
we have the existence and uniqueness of $\beta ({\boldsymbol{q}})$.
\end{Remark}

\begin{Remark}
	\label{rem:finhk}
	We note that for every $\boldsymbol{0} \le {\boldsymbol{q}}=(q_{k},k\in \mathbb{N})\leq {\boldsymbol{p}}$, $K(\boldsymbol{q})$ and $H(\boldsymbol{q})$ are finite. Indeed, the finiteness of $K(\boldsymbol{q})$ is immediate from
	Assumption \ref{asp:exponential-boundN}. To see the finiteness of  $H(\boldsymbol{q})$, note that  on the one hand $\sum_{k=1}^{\infty }{q}_{k}\log {q}_{k} \le 0$ while on the other hand
	\begin{align*}
		\sum_{k=1}^{\infty} q_k \log q_k & = \sum_{k=1}^{\infty} q_k \log \frac{q_k}{2^{-(k+1)}} - (\log 2) \sum_{k=1}^{\infty} (k+1) q_k \\
		& \ge -\left(1-\sum_{k=1}^\infty q_k\right) \log \frac{1-\sum_{k=1}^\infty q_k}{2^{-1}}- (\log 2) \sum_{k=1}^{\infty} (k+1) q_k  > -\infty,
	\end{align*}
	where the first inequality follows from non-negativity of relative entropy and putting mass $1-\sum_{k=1}^\infty q_k$ on $k=0$, and the last inequality once more uses Assumption \ref{asp:exponential-boundN}.
\end{Remark}

The following result gives asymptotics of the event $E^{n,\varepsilon }({%
\boldsymbol{q}})$. The proof of the theorem, which is based on Theorem \ref%
{thm:main-ldp}, is given in Section \ref{sec:pf_LDP_degree}.

\begin{Theorem}
\label{thm:ldg_degree_distribution} 
% Suppose that Assumptions \ref{asp:convgN}
% and \ref{asp:exponential-boundN} hold.
Suppose $\boldsymbol{0} \le {\boldsymbol{q}} \le {%
\boldsymbol{p}}$ and that \eqref{eq:slightstr} is satisfied.
Then
\begin{enumerate}[(i)]

\item (Upper bound) when $p_1=0$, we have $\beta({\boldsymbol{q}})=0$, $K({%
\boldsymbol{q}})=0$ and 
\begin{equation*}
\limsup_{\varepsilon \to 0} \limsup_{n \to \infty } \frac{1}{n} \log {%
P}\left\{ E^{n,\varepsilon}({\boldsymbol{q}})\right\} \le -{%
\tilde{I}}_1({\boldsymbol{q}}).
\end{equation*}

\item (Lower bound) 
\begin{equation*}
\liminf_{\varepsilon \to 0} \liminf_{n \to \infty } \frac{1}{n} \log {%
P}\left\{ E^{n,\varepsilon}({\boldsymbol{q}})\right\} \ge -{%
\tilde{I}}_1({\boldsymbol{q}}).
\end{equation*}
\end{enumerate}
\end{Theorem}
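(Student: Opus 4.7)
My plan is to lift the static event $E^{n,\varepsilon}(\boldsymbol{q})$ to a functional of the continuous-time exploration process $(\boldsymbol{X}^n, Y^n)$ from Section \ref{sec:model} and then apply the large deviation principle of Theorem \ref{thm:main-ldp}. The key structural fact is that successive excursions of $Y^n$ away from zero correspond bijectively to the components of $G_n$ in their order of discovery, and on such an excursion $[s,t]$ the $k$-th coordinate of $\boldsymbol{X}^n(s^-) - \boldsymbol{X}^n(t)$ equals the number of degree-$k$ vertices in the corresponding component. Hence $E^{n,\varepsilon}(\boldsymbol{q})$ is exactly the event that some excursion of $Y^n$ witnesses a drop $\boldsymbol{X}^n(s^-) - \boldsymbol{X}^n(t)$ whose coordinates lie within $\varepsilon$ of those of $\boldsymbol{q}$.

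By Theorem \ref{thm:main-ldp}, logarithmic asymptotics of $P(E^{n,\varepsilon}(\boldsymbol{q}))$ are governed by the infimum of $I_T$ over paths whose trajectories contain such an excursion. Using time-translation together with the fact that the rate function vanishes on typical segments of the exploration (concentrated on the fluid limit from Section \ref{sec:LLN}), this global infimum reduces to the finite-horizon deterministic control problem $I^2_{0,\tau}((0,\boldsymbol{p}),(0,\boldsymbol{p}-\boldsymbol{q}))$ from Lemmas \ref{lem:puhalskii-upper-bound} and \ref{lem:puhalskii-upper-bound-improvement}, with terminal time $\tau = \frac{1}{2}\sum_{k \ge 1} kq_k$ dictated by the fact that an excursion of $Y$ has duration equal to twice the number of edges in the associated component. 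The task then becomes showing that this deterministic minimum equals $\tilde{I}_1(\boldsymbol{q})$, which I would do by exhibiting the explicit minimizer of Construction \ref{cons:cont}. Heuristically this candidate tilts the edge-exploration so that a vertex of degree $k$ enters the explored component with probability $1 - \beta^k$, where $\beta = \beta(\boldsymbol{q})$ is the branching-process extinction parameter of Remark \ref{rmk:uniqueness_beta} (this is the standard size-biased Galton--Watson heuristic: $\beta^k$ is the probability that all $k$ branches out of a degree-$k$ vertex go extinct). A direct computation of the $\ell$-cost along this trajectory splits cleanly: an entropy term associated with the initial selection of which vertices end up in the component produces $H(\boldsymbol{q}) + H(\boldsymbol{p} - \boldsymbol{q}) - H(\boldsymbol{p})$, while the tilt along the dynamics contributes $K(\boldsymbol{q})$.

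Optimality of this candidate --- the technical heart of the argument, carried out in Lemma \ref{lem:minimizer-verify-general} --- is established by verifying that it solves the coupled infinite-dimensional Euler--Lagrange system associated with the constrained variational problem and combining this with strict convexity of $\ell$. The principal obstacle sits here: the Euler--Lagrange system is an infinite coupled nonlinear collection whose coefficients degenerate precisely at the boundaries where transition rates vanish (the ``diminishing rates'' feature highlighted in the introduction and at \eqref{eq:contdetpoi}), and extracting the closed-form tilt $(1-\beta^k)$ from it requires careful use of the fixed-point structure in Remark \ref{rmk:uniqueness_beta}. When $p_1 = 0$, the fixed-point equation forces $\beta = 0$ and hence $K \equiv 0$, which is why the upper bound in part (i) is stated only in that regime; for $p_1 > 0$ the tilt interacts nontrivially with the Skorokhod reflection of $Y$ at zero and one would have to rule out lower-cost competing strategies that scatter mass across multiple small excursions. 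For the lower bound in part (ii), one feeds the explicit minimizer as a deterministic control into a perturbed version of the exploration process, verifies via the weak-convergence approach that the resulting trajectories land in $E^{n,\varepsilon}(\boldsymbol{q})$ with asymptotic cost $\tilde{I}_1(\boldsymbol{q}) + o(1)$, and invokes the variational representation to conclude, exactly as packaged in Lemma \ref{lem:puhalskii-lower-bound}.
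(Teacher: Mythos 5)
Your high-level architecture — lift $E^{n,\varepsilon}(\boldsymbol{q})$ to the exploration process, invoke the LDP of Theorem \ref{thm:main-ldp}, reduce to the deterministic control problem $I^2_{0,\tau}((0,\boldsymbol{p}),(0,\boldsymbol{p}-\boldsymbol{q}))$, and evaluate it via the explicit Euler--Lagrange minimizer of Construction \ref{cons:cont} — matches the paper, and your branching-process reading of $\beta(\boldsymbol{q})$ and the split of the cost into $H(\boldsymbol{q})+H(\boldsymbol{p}-\boldsymbol{q})-H(\boldsymbol{p})$ plus $K(\boldsymbol{q})$ are on target. But the lower bound sketch has a genuine gap. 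You propose to insert the minimizer as a deterministic control, argue by weak convergence that the controlled trajectories land in $E^{n,\varepsilon}(\boldsymbol{q})$, and read off the cost from the variational representation. This cannot work directly: as written in \eqref{eq:enverps}, $E^{n,\varepsilon}(\boldsymbol{q})$ requires $X^n_0$ to hit exactly $-1/n$ at the end of the excursion. Being uniformly within $\delta$ of the minimizer — which is all the LDP lower bound or the controlled weak-convergence argument can give — only forces $X^n_0(\tau)=O(\delta)$, and the $\delta$-tube around the minimizer is \emph{not} contained in $E^{n,\varepsilon}(\boldsymbol{q})$; indeed $E^{n,\varepsilon}(\boldsymbol{q})$ is not an open set in path space. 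The paper's Lemma \ref{lem:puhalskii-lower-bound} bridges this by intersecting the LDP-tube event $\tilde G_\delta$ with a \emph{separate} combinatorial patching event (the $G^n_j$'s), on which the $O(\delta n)$ active half-edges remaining at time $\tau$ pair among themselves without waking further vertices. Its conditional probability is bounded below by $\lfloor 2\delta n\rfloor!/(C_0 n)^{\lfloor 2\delta n\rfloor}$, which by Stirling contributes $2\delta\log(2\delta/(C_0 e))$ to the exponent and vanishes as $\delta\to 0$; proving even this requires a three-case analysis to handle the parity of the leftover half-edges and the possibility that $p_m>0$ only for even $m$. None of this appears in your plan, and without it the lower bound does not close.

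Two smaller corrections: the excursions that encode components are those of $X^n_0$ away from $-1/n$, not of $Y^n$ away from zero — $Y^n$ is the unreflected process and $X^n_0=\Gamma(Y^n)$ up to the $1/n$ shift; and the reduction of \eqref{eq:upperbd_to_improve} to $I^2_{0,\tau}((0,\boldsymbol{p}),(0,\boldsymbol{p}-\boldsymbol{q}))$ is not a fluid-limit concentration argument (the optimal initial segment need not follow the LLN) but the algebraic swap identity of Proposition \ref{prop:minimizer-summary}(b), which shows the cost of exploring the target component first and then the rest is the same as the opposite order, making $t_1=0$, $\bar{\boldsymbol{p}}=\boldsymbol{p}$ optimal. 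You also need the horizon truncation of Lemma \ref{lem:choosing-T} before the LDP on a finite $[0,T]$ can be applied.
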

\begin{Remark}
The proof of Theorem \ref{thm:ldg_degree_distribution} relies on a  large deviation principle for a certain exploration process (see Section \ref{sec:model}) that is given in Theorem \ref{thm:main-ldp}. The latter result does not require the condition $p_1 = 0$. Also note that the lower bound in Theorem \ref{thm:ldg_degree_distribution} does not require the condition $p_1=0$ either. One can also give an upper bound (without requiring $p_1=0$) in terms of a variational formula given by the right side of \eqref{eq:upperbd_to_improve}. When $p_1=0$, this variational expression can be simplified and is seen to be equal to $-{\tilde{I}}_1({\boldsymbol{q}})$. This is shown in Lemma \ref{lem:puhalskii-upper-bound-improvement} whose proof crucially relies on the property $p_1=0$.
Whether the two expressions are equal in general when $p_1\neq 0$ remains an open problem. 
\end{Remark}
As an immediate corollary of Theorem \ref{thm:ldg_degree_distribution}  we have the following result for
$D$-regular graphs, i.e., graphs such that each vertex is of degree $D$. In the following $\limstar$ represents either $\limsup$ or $\liminf$.
\begin{Corollary}
	\label{cordreg}
	{\bf ($D$-regular graphs)}
% Now suppose one is interested in the setting of a sequence of $D$-regular random graphs, where $D \in \mathbb{N}$ and $D \ge 3$. 
Suppose that
	there exists some $D \in \mathbb{N}$ with $D \ge 3$, such that $p_k = 0$, $n_k = 0$ for $k \ne D$ and $p_D = 1$, $n_D=n$.
	Fix $q_D \in (0,1]$ and 
	denote by $E^{n,\varepsilon}_D(\boldsymbol{q})$ the event that there is a component of size $N_D \in [n(q_D-\eps), n(q_D+\eps)]$. Then	
	\begin{equation}
		\label{eq:Dreg}
%		\begin{aligned}
		\limstar_{\varepsilon \to 0} \limstar_{n \to \infty } \frac{1}{n} \log P \left\{ E^{n,\varepsilon}_D({\boldsymbol{q}})\right\} =
		\left(1-\frac{D}{2}\right) \left( q_D \log q_D + (1-q_D) \log (1-q_D) \right).
		%&\le \liminf_{\varepsilon \to 0} \liminf_{n \to \infty } \frac{1}{n} \log P \left\{ E^{n,\varepsilon}_D({\boldsymbol{q}})\right\}.
%		\end{aligned}
	\end{equation}
	%where $\limstar$ represents either $\limsup$ or $\liminf$.
\end{Corollary}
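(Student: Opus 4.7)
The plan is to apply Theorem~\ref{thm:ldg_degree_distribution} directly to the $D$-regular configuration model and simplify the resulting rate function $\tilde{I}_1(\boldsymbol{q})$ to the stated closed form in \eqref{eq:Dreg}.

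\textbf{Step 1 (setup and hypotheses).} With $p_k = 0$ for $k \ne D$, $p_D = 1$, and $q_k = 0$ for $k \ne D$, $q_D \in (0,1]$, one has $\boldsymbol{0} \le \boldsymbol{q} \le \boldsymbol{p}$. Since every vertex of a $D$-regular graph has degree exactly $D$, a component of size $N_D$ automatically has $N_D$ degree-$D$ vertices and no vertices of any other degree, so the event $E^{n,\varepsilon}_D(\boldsymbol{q})$ coincides with $E^{n,\varepsilon}(\boldsymbol{q})$ from \eqref{mainevent} (the conditions $m_k\in [-n\varepsilon, n\varepsilon]$ for $k \ne D$ are vacuous). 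Condition \eqref{eq:slightstr} becomes $D q_D > 2 q_D$, which holds since $D\ge 3$. Because $p_1 = 0$, both the upper bound in Theorem~\ref{thm:ldg_degree_distribution}(i) and the lower bound in (ii) are available, so the full limit exists and equals $-\tilde{I}_1(\boldsymbol{q})$, justifying the common value of $\limsup$ and $\liminf$ in \eqref{eq:Dreg}.

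\textbf{Step 2 (simplification of the rate).} Since $q_1 = 0$, the defining equation in Remark~\ref{rmk:uniqueness_beta} forces $\beta(\boldsymbol{q}) = 0$, hence $K(\boldsymbol{q}) = 0$, and therefore $\tilde{I}_1(\boldsymbol{q}) = H(\boldsymbol{q}) + H(\boldsymbol{p}-\boldsymbol{q}) - H(\boldsymbol{p})$. Direct substitution into \eqref{eq:hdefn} gives
\begin{align*}
H(\boldsymbol{p}) &= -\tfrac{D}{2}\log \tfrac{D}{2},\\
H(\boldsymbol{q}) &= q_D \log q_D - \tfrac{Dq_D}{2}\log \tfrac{Dq_D}{2},\\
H(\boldsymbol{p}-\boldsymbol{q}) &= (1-q_D)\log(1-q_D) - \tfrac{D(1-q_D)}{2}\log \tfrac{D(1-q_D)}{2}.
\end{align*}
Splitting $\log(Da/2) = \log(D/2) + \log a$ for $a\in\{q_D,\,1-q_D\}$, the $\log(D/2)$ contributions collapse to $-\tfrac{D}{2}\bigl(q_D + (1-q_D)\bigr)\log\tfrac{D}{2} + \tfrac{D}{2}\log\tfrac{D}{2} = 0$, and the residual terms combine into $(1 - D/2)\bigl[q_D \log q_D + (1-q_D)\log(1-q_D)\bigr]$. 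Combining with Step~1 gives \eqref{eq:Dreg}.

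There is no genuine obstacle here: the entire corollary is a specialization of Theorem~\ref{thm:ldg_degree_distribution}. The only two things to notice are that ``component size $\approx nq_D$'' on a $D$-regular graph is the same event as the multi-index degree condition of \eqref{mainevent}, and that $D \ge 3$ forces $p_1 = 0$ so that the upper bound of Theorem~\ref{thm:ldg_degree_distribution} is applicable and the variable $\beta$ trivializes.
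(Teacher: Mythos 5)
Your proof takes exactly the same route as the paper's own: reduce the $D$-regular event to $E^{n,\varepsilon}(\boldsymbol{q})$, invoke Theorem~\ref{thm:ldg_degree_distribution} (using $p_1=0$ to get $\beta(\boldsymbol{q})=K(\boldsymbol{q})=0$), and simplify $H(\boldsymbol{q})+H(\boldsymbol{p}-\boldsymbol{q})-H(\boldsymbol{p})$ algebraically. Your Step~2 algebra is correct and matches the computation in the paper's proof.

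There is, however, one point where your write-up (and the paper's) glosses over a sign, and it is worth flagging. In Step~1 you correctly deduce from Theorem~\ref{thm:ldg_degree_distribution} that the iterated $\limstar$ equals $-\tilde{I}_1(\boldsymbol{q})$, and in Step~2 you correctly compute
$\tilde{I}_1(\boldsymbol{q})=\left(1-\tfrac{D}{2}\right)\bigl[q_D\log q_D + (1-q_D)\log(1-q_D)\bigr]$.
Combining the two therefore yields $\left(\tfrac{D}{2}-1\right)\bigl[q_D\log q_D + (1-q_D)\log(1-q_D)\bigr]$, not the quantity $\left(1-\tfrac{D}{2}\right)[\cdots]$ displayed in \eqref{eq:Dreg}. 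The discrepancy is not innocent: for $D\ge 3$ both $1-D/2\le 0$ and $q_D\log q_D+(1-q_D)\log(1-q_D)\le 0$, so the right side of \eqref{eq:Dreg} as printed is nonnegative, whereas $\tfrac{1}{n}\log P\le 0$ forces the left side to be nonpositive. The displayed formula in the corollary evidently carries a sign typo (the intended constant is $D/2-1$), and the final assertion ``Combining with Step~1 gives \eqref{eq:Dreg}'' is the one place where your argument, like the paper's proof of this corollary, does not literally go through. The underlying computation is right; it is the equality claimed in \eqref{eq:Dreg} that needs the sign flipped.
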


\begin{proof}
	Let $q_k=0$ for $k \in \mathbb{N}\setminus\{D\}$ and let ${\boldsymbol{q}}=\{q_{k},k\in \mathbb{N}\}$.
	Then
	since $p_1=0$, we have $\beta(\boldsymbol{q})=0$ and $K({\boldsymbol{q}})=0$.
	Using \eqref{eq:Itil_1} we have
	\begin{align*}
		{\tilde{I}}_{1}({\boldsymbol{q}}) & = H({\boldsymbol{q}})+H({	\boldsymbol{p}}-{\boldsymbol{q}})-H({\boldsymbol{p}})+K({\boldsymbol{q}}) \\
		& = q_D \log q_D - \frac{Dq_D}{2} \log \left( \frac{Dq_D}{2} \right) + (1-q_D) \log (1-q_D) - \frac{D-Dq_D}{2} \log \left( \frac{D-Dq_D}{2} \right) \\
		& \qquad + \frac{D}{2} \log \left( \frac{D}{2} \right) \\
		& = \left(1-\frac{D}{2}\right) \left( q_D \log q_D + (1-q_D) \log (1-q_D) \right).
	\end{align*}
	The result then follows from Theorem \ref{thm:ldg_degree_distribution}.
\end{proof}

We note that the expression \eqref{eq:Dreg} has the same form when $q_D$ is replaced by $1-q_D$.
This suggests that the most likely way of having a component of size around $nq_D$ in $D$-regular graphs is to let almost all of the remaining $n(1-q_D)$ vertices be in one component.
Indeed, conditioning on having a component of size around $nq_D$, the remaining vertices can be viewed as a smaller configuration model of $D$-regular graphs with about $n(1-q_D)$ vertices.
It then follows from the well known results for the asymptotics of the largest component in the configuration model \cite{MolloyReed1998size,Janson2009new} (and Theorem \ref{thm:LLN}) that these remaining vertices are in one component with high probability.

Based on these observations we make the following conjecture.

\begin{Conjecture}
	\label{conj:Dreg-multi}
	{\bf ($D$-regular graphs, multiple components)}
	Suppose that there exists some $D \in \mathbb{N}$ with $D \ge 3$, such that $p_k = 0$, $n_k = 0$ for $k \ne D$ and $p_D = 1$, $n_D=n$.
	Fix $M \in \Nmb$ and $q_D^{(i)} \in (0,1]$ for each $i=1,\dotsc,M$, such that $\sum_{i=1}^M q_D^{(i)} \le 1$. 
	Let $q_k^{(i)}=0$ for $k \in \mathbb{N}\setminus\{D\}$ and let ${\boldsymbol{q}^{(i)}}=\{q_{k}^{(i)},k\in \mathbb{N}\}$, for each $i=1,\dotsc,M$.
	Let $\boldsymbol{q}^{(M+1)} = \boldsymbol{p}-\sum_{i=1}^M \boldsymbol{q}^{(i)}$.
	Denote by $E^{n,\varepsilon,M}_D$ the event that there are components of sizes $N_D^{(i)} \in [n(q_D^{(i)}-\eps), n(q_D^{(i)}+\eps)]$, $i=1,\dotsc,M$. Then	
	\begin{align*}
		\limstar_{\varepsilon \to 0} \limstar_{n \to \infty } \frac{1}{n} \log P \left\{ E^{n,\varepsilon,M}_D\right\} & = \sum_{i=1}^{M+1} H(\boldsymbol{q}^{(i)}) - H(\boldsymbol{p}) =
		\left(1-\frac{D}{2}\right) \sum_{i=1}^{M+1} q_D^{(i)} \log q_D^{(i)}.
	\end{align*}
\end{Conjecture}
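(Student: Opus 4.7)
The plan is to extend the analysis behind Theorem \ref{thm:ldg_degree_distribution} to multiple components by iterating the single-excursion calculus of variations problem of Section \ref{sec:cal}. A crucial simplification for $D$-regular graphs with $D\ge 3$ is that $p_1=0$ and each partial remainder $\boldsymbol{p}^{(i)}\doteq \boldsymbol{p}-\sum_{j\le i}\boldsymbol{q}^{(j)}$ is also supported on $\{D\}$; in particular $\beta$ vanishes for each remainder and Construction \ref{cons:cont} produces a candidate with $K=0$, so no correction of type $K$ appears in the final rate.

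For the lower bound, I would build an admissible control for the continuous-time exploration process that realizes $M$ successive excursions of the component-exhaustion coordinate $X^n_1$, the $i$-th excursion carrying the vertex-count coordinates from $(0,\boldsymbol{p}^{(i-1)})$ to $(0,\boldsymbol{p}^{(i)})$ over a time interval of length $\tau_i\doteq \tfrac12 Dq_D^{(i)}$. On each piece I would copy the single-excursion minimizer from Construction \ref{cons:cont}, applied with source density $\boldsymbol{p}^{(i-1)}$ and target density $\boldsymbol{p}^{(i)}$. By Lemma \ref{lem:minimizer-verify-general} its cost is $H(\boldsymbol{q}^{(i)})+H(\boldsymbol{p}^{(i)})-H(\boldsymbol{p}^{(i-1)})$ (the $K$-term being zero), and summing over $i$ telescopes to
\begin{equation*}
	\sum_{i=1}^M H(\boldsymbol{q}^{(i)}) + H(\boldsymbol{p}^{(M)}) - H(\boldsymbol{p}) = \sum_{i=1}^{M+1} H(\boldsymbol{q}^{(i)}) - H(\boldsymbol{p}),
\end{equation*}
which matches the claimed rate. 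The conversion of this variational bound into a probabilistic lower bound then proceeds as in Lemma \ref{lem:puhalskii-lower-bound}.

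For the upper bound one must show that the infimum of $I_T$ over trajectories producing $M$ designated excursions of the prescribed degree configuration equals the telescoping sum above. The natural route is an $M$-fold analogue of Lemma \ref{lem:puhalskii-upper-bound-improvement}: decompose an admissible trajectory into excursions of $X^n_1$ away from zero, bound each excursion cost below by $I^2_{0,\tau_i}((0,\boldsymbol{p}^{(i-1)}),(0,\boldsymbol{p}^{(i)}))$, and apply Lemma \ref{lem:minimizer-verify-general} termwise. A union bound over the at most polynomially many assignments of component indices contributes $o(n)$ on the log-scale and does not affect the rate.

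The main obstacle will be the additivity-at-zero-crossings step. In the single-excursion setting, one can restrict attention to a single sub-interval on which $X^n_1>0$ and exploit the local uniqueness result of Lemma \ref{lem:uniqueness}. For multiple excursions one must simultaneously implement this uniqueness argument on each of the $M$ excursion intervals and verify that the relative-entropy cost $\int \ell(\varphi)$ against the driving Poisson random measure decouples exactly across these intervals. Because the dynamics between consecutive excursions involve transitions at degenerate (zero-rate) states, ruling out "cross-talk" between excursion intervals in the variational problem will require a careful refinement of the Skorokhod-problem analysis of the first coordinate and of the perturbation device used in Lemma \ref{lem:uniqueness}; I expect this to be the most technically demanding step.
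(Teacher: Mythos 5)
This statement is labeled a \emph{Conjecture} in the paper; the authors deliberately offer no proof. So there is no argument of theirs to compare yours against, and the operative question is whether your sketch actually closes the gap they left open. It does not, and you essentially say as much in your final paragraph.

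Your variational bookkeeping is correct. Applying Proposition \ref{prop:minimizer-summary}(a) with $\bar{\boldsymbol q}=\boldsymbol p^{(i-1)}\doteq\boldsymbol p-\sum_{j<i}\boldsymbol q^{(j)}$ and $\boldsymbol q=\boldsymbol q^{(i)}$ (the hypotheses hold because $D\ge 3$ forces $\sum_k k q_k^{(i)}>2\sum_k q_k^{(i)}$ and $p_1=0$ kills $K$) gives per-excursion cost $H(\boldsymbol q^{(i)})+H(\boldsymbol p^{(i)})-H(\boldsymbol p^{(i-1)})$, and these telescope to $\sum_{i=1}^{M+1}H(\boldsymbol q^{(i)})-H(\boldsymbol p)$, which indeed simplifies to $(1-D/2)\sum_i q_D^{(i)}\log q_D^{(i)}$. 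The lower bound can plausibly be pushed through by concatenating $M$ copies of Construction \ref{cons:cont} and iterating the $G^n_j$ device in the proof of Lemma \ref{lem:puhalskii-lower-bound}, though even this must be done carefully since each excursion-closing step contributes its own Stirling-scale correction. (A small slip: the coordinate whose excursions track components is $X^n_0$, not $X^n_1$.)

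The genuine missing piece is the upper bound, specifically an $M$-excursion analogue of Lemma \ref{lem:puhalskii-upper-bound-improvement}(b). For a single designated component, the paper shows (via Proposition \ref{prop:minimizer-summary}(b)) that one may exchange the designated excursion to the front, collapsing the infimum over $t_1$; this is what turns the bound in Lemma \ref{lem:puhalskii-upper-bound} into a clean expression. In your setting the infimum is over all placements of $M$ designated excursions interleaved with arbitrarily many undesignated ones, and what is needed is a genuine inductive extension of that exchangeability, plus a proof that the undesignated excursions can be driven to zero cost without disturbing the intermediate states $\boldsymbol p^{(i)}$. Proposition \ref{prop:minimizer-summary}(b) handles exactly two excursions both starting at $t=0$; it is not, as written, a general commutation lemma, and nothing in the paper asserts the $M$-fold version. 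Since the authors had this two-excursion exchange lemma in hand and still recorded the statement as a conjecture rather than a theorem with "proof omitted," one should presume the $M$-fold upper bound is not a routine reshuffling of the existing lemmas, and a complete argument would have to supply it.
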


We also note that for each fixed $a \in [0,1]$, the function $[0,a] \ni x \mapsto x\log x + (a-x)\log(a-x) \in (-\infty,0]$ is maximized at $x=0$ and $x=a$.
This suggests that, the most likely way for the largest component to be of certain size, is to let as many of the remaining components as possible have such a size.
Based on this we make the following conjecture on the large deviation behavior of the largest component size for D-regular graphs.

\begin{Conjecture}
	\label{conj:Dreg-max}
	{\bf ($D$-regular graphs, largest component)}
	Suppose that there exists some $D \in \mathbb{N}$ with $D \ge 3$, such that $p_k = 0$, $n_k = 0$ for $k \ne D$ and $p_D = 1$, $n_D=n$.
	For each $x \in [0,1]$, let $q_D^{(x)} = x$, $q_k^{(x)}=0$ for $k \in \mathbb{N}\setminus\{D\}$, and ${\boldsymbol{q}^{(x)}}=\{q_{k}^{(x)},k\in \mathbb{N}\}$.
%	Also let $\xtil = \lceil \frac{1-x}{x} \rceil$, $\qtil_D^{(x)} = \xtil$, $\qtil_k^{(x)}=0$ for $k \in \mathbb{N}\setminus\{D\}$, and ${\tilde{\boldsymbol{q}}^{(x)}}=\{\qtil_{k}^{(x)},k\in \mathbb{N}\}$.
	Denote by $M^n$ the size of the largest component.
	Then $\frac{M^n}{n}$ satisfies a large deviation principle in $\Rmb_+$ with rate function $I_{max}$ defined by
	\begin{align*}
		I_{max}(x) & = k(x)H(\boldsymbol{q}^{(x)}) + H(\boldsymbol{q}^{(1-xk(x))}) - H(\boldsymbol{p}) =
		\left(1-\frac{D}{2}\right) \left( xk(x) \log x + (1-xk(x)) \log \left(1-xk(x)\right) \right)
	\end{align*}
	for $x \in [0,1]$ and $I_{max}(x)=\infty$ otherwise, where $k(x)=\lfloor \frac{1}{x} \rfloor$ is the largest integer such that $xk(x) \le 1$.
	%Figure \ref{fig:regular_max} shows this conjectured rate function for $D=4$. 
\end{Conjecture}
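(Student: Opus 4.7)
The plan is to deduce Conjecture \ref{conj:Dreg-max} from Conjecture \ref{conj:Dreg-multi} by a finite-dimensional variational argument combined with the usual LDP upper/lower bound packaging. Assume Conjecture \ref{conj:Dreg-multi} has been established. For the upper bound, fix $\delta > 0$ and decompose the event $\{M^n/n \in (x-\eps, x+\eps)\}$ according to the vector $(y_1,\dotsc,y_m)$ of fractional sizes of components containing at least $\delta n$ vertices. Any realization has $m \le 1/\delta$, $y_1 \in (x-\eps,x+\eps)$ the largest, $y_i \in [0, y_1]$, and residual $y_{m+1} \doteq 1-\sum_{i=1}^m y_i$ (most cheaply realized as a single giant in the residual graph, by Theorem \ref{thm:LLN}). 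A union bound over $\delta$-discretized profiles combined with Conjecture \ref{conj:Dreg-multi} applied to each term reduces the problem to the finite-dimensional optimization of the rate
\begin{equation*}
R(y_1,\dotsc,y_{m+1}) \;\doteq\; (D/2-1)\sum_{i=1}^{m+1}\bigl(-y_i \log y_i\bigr)
\end{equation*}
over $y_i \in [0, y_1]$, $\sum_i y_i = 1$, $y_1 \in [x-\eps, x+\eps]$.

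Since $y\mapsto -y\log y$ is strictly concave on $[0,1]$, the infimum of $R$ over this polytope is attained at an extreme point where every $y_i$ equals $0$ or $y_1$ except for a single ``slack'' coordinate of size $r = 1-ky_1 \in [0, y_1]$, with $k$ the number of coordinates at level $y_1$. Feasibility of $r$ forces $k \in \{\lceil 1/y_1\rceil-1, \lfloor 1/y_1\rfloor\}$; except at integer $1/y_1$ (where both choices give the same objective) this collapses to the unique feasible $k = k(y_1) = \lfloor 1/y_1\rfloor$. Taking $y_1 = x$ and sending $\eps\to 0$ yields $\inf R = I_{max}(x)$, hence $\limsup n^{-1}\log P\{M^n/n \in (x-\eps,x+\eps)\} \le -I_{max}(x)$. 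For the matching lower bound, note that $\{M^n/n \in (x-\eps,x+\eps)\}$ contains the event that there are exactly $k(x)$ components of size in $((x-\eps)n, (x+\eps)n)$ and one further component of size $\approx (1-k(x)x)n$; Conjecture \ref{conj:Dreg-multi} applied to this event gives $\liminf n^{-1}\log P \ge -I_{max}(x) - o_\eps(1)$.

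The main obstacle is therefore Conjecture \ref{conj:Dreg-multi} itself. The natural strategy is to exploit the \emph{regeneration} structure of the $D$-regular configuration model: conditional on the exploration process of Section \ref{sec:model} having finished its first component of $\approx n q_D^{(1)}$ vertices, the remaining half-edges form a uniform matching on the residual half-edge set, so the residual graph is again a $D$-regular configuration model on the remaining vertices. Iterating Theorem \ref{thm:ldg_degree_distribution} on successively smaller graphs should produce the additive form of the multi-component rate; one may verify directly for $M=2$ that $\tilde{I}_1(\boldsymbol{q}^{(1)}) + (1-q_D^{(1)})\,\tilde{I}_1\bigl(\boldsymbol{q}^{(2)}/(1-q_D^{(1)})\bigr)$ algebraically collapses to the conjectured $-(1-D/2)\sum_{i=1}^{3} q_D^{(i)}\log q_D^{(i)}$. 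Two delicate points are (i) controlling the multiplicative conditioning prefactors accumulated across $M$ iterations at $e^{o(n)}$ uniformly over the target configuration---which should reduce to the combinatorial estimates underlying Theorem \ref{thm:ldg_degree_distribution}---and (ii) showing that the final residual degree mass concentrates on a single giant component with probability $1-e^{-\Omega(n)}$, a consequence of Theorem \ref{thm:LLN} applied to the residual graph (still $D$-regular, hence supercritical for $D \ge 3$). A cleaner but substantially heavier alternative would bypass regeneration, extending the Euler--Lagrange analysis of Section \ref{sec:cal} directly to target trajectories containing $M$ macroscopic excursions of the exploration process.
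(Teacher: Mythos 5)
The statement you are attempting to prove is labeled as a \emph{Conjecture} in the paper (Conjecture~\ref{conj:Dreg-max}), and the paper supplies no proof of it; it is preceded only by a heuristic remark about the minimization of $x\mapsto x\log x+(a-x)\log(a-x)$. So there is no paper argument against which to compare your proposal; it stands or falls on its own.

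As a reduction of Conjecture~\ref{conj:Dreg-max} to Conjecture~\ref{conj:Dreg-multi}, your argument is sound. The finite-dimensional optimization is correct: $R$ is a positive multiple of the strictly concave map $\boldsymbol{y}\mapsto\sum_i(-y_i\log y_i)$, its infimum over the polytope is attained at a vertex, and the vertex structure you describe (all coordinates in $\{0,y_1\}$ plus one slack of size $1-ky_1$) forces $k\in\{\lceil1/y_1\rceil-1,\lfloor1/y_1\rfloor\}$ exactly as you say, which collapses to $k(x)$ after sending $\eps\to0$ (note $I_{max}$ is continuous across the points $x=1/k$, which is what makes the $\eps\to0$ step unproblematic). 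For the lower bound, the cleaner formulation is that $E^{n,\eps,k(x)}_D$ (with all $k(x)$ targets set to $x$) is already contained in $\{M^n/n\in[x-\eps,x+\eps]\}$ for $\eps$ small, since $1-k(x)x<x$ bounds the residual mass and hence all remaining component sizes; you do not need to ask for ``exactly'' $k(x)$ components or to separately certify a $(k(x)+1)$-th one. I also verified your $M=2$ algebraic collapse: writing $h(q)=-q\log q$, $(1-q_1)\tilde I_1(\boldsymbol{q}^{(2)}/(1-q_1)) = (D/2-1)[h(q_2)+h(q_3)-h(1-q_1)]$, and adding $\tilde I_1(\boldsymbol{q}^{(1)})=(D/2-1)[h(q_1)+h(1-q_1)]$ gives $(D/2-1)\sum_{i=1}^{3}h(q_i)$, which is the Conjecture~\ref{conj:Dreg-multi} rate. (As an aside, the sign conventions in equation~\eqref{eq:Dreg} and in Conjecture~\ref{conj:Dreg-multi} appear to be missing an overall minus sign relative to $-\tilde I_1$; your $-(1-D/2)\sum q_i\log q_i$ has the correct sign.)

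The genuine gap is the one you already flag: Conjecture~\ref{conj:Dreg-multi} is itself open, so your argument is a conditional proof, not a proof. Within your sketch of a proof of Conjecture~\ref{conj:Dreg-multi}, the conditional regeneration structure is in fact exact for the configuration model (not merely approximate), so item (i) is less delicate than you suggest, but item (ii) as stated is incorrect: for a $D$-regular configuration model the probability of disconnection is polynomially, not exponentially, small, so ``concentrates on a single giant with probability $1-e^{-\Omega(n)}$'' is false. Fortunately this does not matter at the LDP scale (a $\mathrm{poly}(n)$ correction is $e^{o(n)}$), and for the event defining $E^{n,\eps,M}_D$ you do not need residual connectivity at all, so this piece of the sketch can simply be dropped. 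The genuinely hard remaining step is to turn Corollary~\ref{cordreg} and the regeneration decomposition into a rigorous proof of Conjecture~\ref{conj:Dreg-multi}, which would require either iterating Theorem~\ref{thm:ldg_degree_distribution} on the (random-size) residual graphs with uniform error control, or, as you note, extending the Euler--Lagrange analysis of Section~\ref{sec:cal} to trajectories with $M$ macroscopic excursions; neither is carried out here.
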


% \begin{figure}
% 	\centering
% 	\includegraphics[width=.5\textwidth]{regular_max.png}
% 	\caption{\small Conjectured large deviation rate function for the largest component size of the $4$-regular graph}
% 	\label{fig:regular_max}
% \end{figure}

Recall that $\mu\doteq\sum_{k=1}^\infty kp_k < \infty$.
The following result gives bounds on probabilities of observing a $D$-regular subgraph in a configuration model with a general degree sequence $(p_k)$.

\begin{Corollary}
	\label{cordregsubgraph}
	Suppose that Assumptions \ref{asp:convgN} and \ref{asp:exponential-boundN} hold.
	Also suppose that $p_1=0$.
	Fix $D \in \Nmb$ with $D \ge 3$ such that $p_D > 0$.
	Fix $q_D \in (0, p_D]$.  Denote by $E^{n,\varepsilon}(q)$ the event that the graph has a component that is $D$-regular and has size
	$N_D \in [n(q_D-\eps), n(q_D+\eps)]$.
	Then 
	\begin{align*}
		&\limstar_{\varepsilon \to 0} \limstar_{n \to \infty } \frac{1}{n} \log P \left\{ E^{n,\varepsilon}({\boldsymbol{q}})\right\}\\
		 &\quad = \left(q_D \log q_D + (p_D-q_D) \log (p_D-q_D) - p_D \log p_D\right) \\
		& \quad\quad - \left( \frac{Dq_D}{2} \log \left( \frac{Dq_D}{2} \right) + \frac{\mu-Dq_D}{2} \log \left( \frac{\mu-Dq_D}{2} \right) - \frac{\mu}{2} \log \left( \frac{\mu}{2} \right) \right).
	\end{align*}
	%where $\limstar$ represents either $\limsup$ or $\liminf$.
\end{Corollary}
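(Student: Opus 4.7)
The plan is to specialize Theorem \ref{thm:ldg_degree_distribution} to the degree vector $\boldsymbol{q} = (q_k)_{k\in\Nmb}$ defined by the given $q_D$ and $q_k = 0$ for $k \in \Nmb \setminus \{D\}$, and then to simplify $\tilde{I}_1(\boldsymbol{q})$ into the claimed closed form. First I would verify the hypotheses: clearly $\boldsymbol{0} \le \boldsymbol{q} \le \boldsymbol{p}$ since $0 < q_D \le p_D$, and condition \eqref{eq:slightstr} reads $Dq_D > 2q_D$, which holds because $D \ge 3$. Since $p_1 = 0$ forces $q_1 = 0$, by definition $\beta(\boldsymbol{q}) = 0$, and then \eqref{eq:kdefnn} yields $K(\boldsymbol{q}) = 0$. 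Hence $\tilde{I}_1(\boldsymbol{q}) = H(\boldsymbol{q}) + H(\boldsymbol{p} - \boldsymbol{q}) - H(\boldsymbol{p})$.

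The next step is a direct computation using \eqref{eq:hdefn} with $\sum_k k p_k = \mu$ and $\sum_k k q_k = Dq_D$:
\begin{align*}
H(\boldsymbol{q}) &= q_D \log q_D - \tfrac{Dq_D}{2} \log \tfrac{Dq_D}{2}, \\
H(\boldsymbol{p}-\boldsymbol{q}) &= \sum_{k\neq D} p_k \log p_k + (p_D-q_D) \log (p_D-q_D) - \tfrac{\mu-Dq_D}{2} \log \tfrac{\mu-Dq_D}{2}, \\
H(\boldsymbol{p}) &= \sum_k p_k \log p_k - \tfrac{\mu}{2} \log \tfrac{\mu}{2}.
\end{align*}
Subtracting, the $\sum_{k \neq D} p_k \log p_k$ terms cancel, and collecting the entropy-type terms in $\{q_D, p_D-q_D, p_D\}$ separately from the edge-counting terms in $\{Dq_D/2, (\mu-Dq_D)/2, \mu/2\}$ reproduces exactly the two bracketed expressions in the statement.

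What remains is to reconcile the corollary's event with $E^{n,\eps}(\boldsymbol{q})$ from Theorem \ref{thm:ldg_degree_distribution}. For the upper bound this is immediate: an exactly $D$-regular component of size in $[n(q_D-\eps), n(q_D+\eps)]$ trivially satisfies $m_D \in [n(q_D-\eps), n(q_D+\eps)]$ and $m_k = 0 \in [-n\eps, n\eps]$ for $k \neq D$, so the corollary's event is contained in $E^{n,\eps}(\boldsymbol{q})$ and part (i) of Theorem \ref{thm:ldg_degree_distribution} transfers. The main obstacle is the lower bound, since the theorem's event also permits the explored component to carry up to $n\eps$ vertices of each non-$D$ degree while the corollary requires exactly zero such vertices. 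To close this gap I would appeal to the structure of the explicit minimizer of the calculus of variations problem behind part (ii) of Theorem \ref{thm:ldg_degree_distribution}: in Construction \ref{cons:cont} and Lemma \ref{lem:minimizer-verify-general}, when $q_k = 0$ for every $k \neq D$, the optimal controls driving the $k$-th coordinate of the exploration process vanish throughout the excursion that encodes the prescribed component, so that excursion absorbs only degree-$D$ half-edges. Consequently the near-optimal trajectory realized in the proof of Theorem \ref{thm:ldg_degree_distribution} already produces an \emph{exactly} $D$-regular component of size close to $nq_D$, lying inside the corollary's stricter event and achieving the rate $-\tilde{I}_1(\boldsymbol{q})$ as required.
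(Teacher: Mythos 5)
Your algebraic reduction of $\tilde{I}_1(\boldsymbol{q})$ for this $\boldsymbol{q}$ is exactly the computation the paper performs, and it is correct. Where you go beyond the paper is in flagging the mismatch between the corollary's event (an exactly $D$-regular component of the prescribed size) and the theorem's event $E^{n,\eps}(\boldsymbol{q})$, which only constrains each $m_k$ to lie within $n\eps$ of $nq_k$ and therefore permits up to $n\eps$ vertices of each degree $k\neq D$ in the component. The paper's proof cites Theorem \ref{thm:ldg_degree_distribution} without comment, so you are raising a real point: the upper bound transfers by inclusion, but the lower bound does not follow directly.

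Your closing argument for the lower bound, however, does not actually close the gap. You assert that the near-optimal trajectory realized in the proof of Theorem \ref{thm:ldg_degree_distribution} already produces an exactly $D$-regular component, but that is not established. The event $A^n_\delta(\tilde{\boldsymbol{\zeta}},\tilde{\psi})$ used in Lemma \ref{lem:puhalskii-lower-bound} only requires $\sup_{t\in[0,\tau]}|X^n_k(t)-\tilde{\zeta}_k(t)|<\delta$ for the low-index coordinates, which for $k\neq D$ still allows $X^n_k$ to drop from $p_k$ by up to $\delta$ over the excursion, i.e.\ roughly $\delta n$ non-$D$ vertices may be absorbed; so $A^n_\delta$ is not contained in the corollary's event. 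What you correctly identify as the right starting point is that the minimizer from Construction \ref{cons:cont} has $\tilde{\zeta}_k\equiv p_k$ on $[0,\tau]$ for $k\neq D$, so the associated optimal controls $\varphi_k$ vanish on $[0,r_k(\tilde{\boldsymbol{\zeta}}(s)))$ for those $k$. Turning that observation into the corollary's lower bound requires intersecting $A^n_\delta$ with the event that no degree-$k$ vertex ($k\neq D$) is woken during the excursion and showing this restricted event still has probability at least $e^{-n(\tilde{I}_1(\boldsymbol{q})+o(1))}$ --- which amounts to rerunning the argument of Lemma \ref{lem:puhalskii-lower-bound} on a smaller event rather than quoting its conclusion. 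Since the paper also omits this step, you have identified a genuine gap shared by both proofs, but your proposed resolution should be marked as incomplete.
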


\begin{proof}
	Let $q_k=0$ for $k \in \mathbb{N}\setminus\{D\}$ and let ${\boldsymbol{q}}=(q_{k},k\in \mathbb{N})$.
	As before, since $q_1=0$, we have $\beta(\boldsymbol{q})=0$ and $K({\boldsymbol{q}})=0$.
	Using \eqref{eq:Itil_1} we have
	\begin{align*}
		{\tilde{I}}_{1}({\boldsymbol{q}}) & = H({\boldsymbol{q}})+H({	\boldsymbol{p}}-{\boldsymbol{q}})-H({\boldsymbol{p}})+K({\boldsymbol{q}}) \\
		& = q_D \log q_D - \frac{Dq_D}{2} \log \left( \frac{Dq_D}{2} \right) \\
		& \qquad + (p_D-q_D) \log (p_D-q_D) + \sum_{k \ne D} p_k \log p_k - \frac{\mu-Dq_D}{2} \log \left( \frac{\mu-Dq_D}{2} \right) \\
		& \qquad - \sum_{k=1}^\infty p_k \log p_k + \frac{\mu}{2} \log \left( \frac{\mu}{2} \right) \\
		& = \left(q_D \log q_D + (p_D-q_D) \log (p_D-q_D) - p_D \log p_D\right) \\
		& \qquad - \left( \frac{Dq_D}{2} \log \left( \frac{Dq_D}{2} \right) + \frac{\mu-Dq_D}{2} \log \left( \frac{\mu-Dq_D}{2} \right) - \frac{\mu}{2} \log \left( \frac{\mu}{2} \right) \right).
	\end{align*}
	The result then follows from Theorem \ref{thm:ldg_degree_distribution}.
\end{proof}

\begin{Remark}
	\label{rem:conjsize}
Theorem \ref{thm:ldg_degree_distribution} can be used to extract other asymptotic results.  We give below one example without proof.
	Suppose that Assumptions \ref{asp:convgN}
	and \ref{asp:exponential-boundN} hold. Also suppose that $p_1=p_2=0$.
	  Let $r\in (0,1]$ and denote by $E^{n,\varepsilon}_r$ the event that the graph has a component that has size
	$N_r \in [n(r-\eps), n(r+\eps)]$.
	Then
	\begin{align*}
		\limstar_{\varepsilon \to 0} \limstar_{n \to \infty } \frac{1}{n} \log P \left\{ E^{n,\varepsilon}_r\right\} & = 
		\inf_{0\le \boldsymbol{q} \le \boldsymbol{p}:\; \boldsymbol{q}\cdot \boldsymbol{1} =r }\left\{H({\boldsymbol{p}})-H({\boldsymbol{q}}) - H({	\boldsymbol{p}}-{\boldsymbol{q}})\right\}.
	\end{align*}
	%where $\limstar$ represents either $\limsup$ or $\liminf$.

\end{Remark}

\begin{Remark}
	\color{black}
	There is an important connection between the configuration model and the uniform distribution on the space of all simple graphs (namely graphs which have no multiple edges and self-loops) with a prescribed degree distribution which we now describe. 
	% This connection with more work can in principle allow one to translate the results in this paper to understand the $\#$ of (simple) graphs with a prescribed degree distribution and satisfying various properties.
	 Given a degree sequence $\mathbf{d}(n)$, let $\Gmb([n], \mathbf{d}(n))$ be the set of all (simple) graphs on vertex set $[n]$ with degree sequence $\mathbf{d}(n)$. Let $\Umb\Mmb_n(\mathbf{d}(n))$ denote the uniform measure on $\Gmb([n], \mathbf{d}(n))$.  Then as is well known (see e.g.\ \cite[Proposition 7.15]{Hofstad2016}), the configuration model satisfies the property that the 
	 conditional distribution of $G([n], \mathbf{d}(n))$, given the event that $G([n], \mathbf{d}(n))$  is simple, is 
	 $\Umb\Mmb_n(\mathbf{d}(n))$.
	%\[G([n], \mathbf{d}(n))\big|\{G([n], \mathbf{d}(n)) \text{ is simple}\} \stackrel{d}{=}\Umb\Mmb_n(\mathbf{d}(n)). \]
	Further by \cite{janson2009probability}, under the assumptions of the current paper $P(G([n], \mathbf{d}(n)) \text{ is simple}) \to e^{-(\nu/2 +\nu^2/4)}$ where $\nu = \sum_{k} k(k-1)p_k/\sum_k kp_k$.
	These observations suggest a natural approach to asymptotic questions of the form studied in the current work for 
	(simple) graphs with a prescribed degree distribution. In particular by an elementary Bayes formula calculation it follows that
	if 
	\begin{equation}\label{eq:bayes}
		\frac{\log P(G([n], \mathbf{d}(n)) \text{is simple} \,\big|\, E^{n,\varepsilon}({\boldsymbol{q}}))}{n}\to 0,\end{equation}
	then Theorem \ref{thm:ldg_degree_distribution}  will continue to hold with the configuration model replaced with the uniform distribution on the space of simple graphs with prescribed degree sequence.  In general, characterizing the asymptotics of quantities as in \eqref{eq:bayes}  is key to the large deviation analysis of  $\Umb\Mmb_n(\mathbf{d}(n))$.  Study of these questions is deferred to future work.
	
	% This would then allow one to count the number of graphs with prescribed degree sequence that satisfied properties encapsulated in events such as $E^{n,\varepsilon}({\boldsymbol{q}})$. These questions are deferred to future work in order to keep this paper to manageable length.
\end{Remark}

%The proof of Theorem \ref{thm:ldg_degree_distribution} is given in Section \ref{sec:pf_LDP_degree}.

\subsection{Edge-exploration algorithm (EEA)}

\label{sec:eea}

\cgg{Given a degree sequence ${\boldsymbol{d}}(n)$, we now describe a well known dynamic construction of the configuration model $G([n],{\boldsymbol{d}}(n))$ given in \cite{MolloyReed1998size,Janson2009new} by sequentially matching half-edges. Tracking functionals of this dynamic construction, in particular hitting times of zero of the number of so-called active edges (see below) reveals component size information of $G([n],{\boldsymbol{d}}(n))$. Construction given below closely follows  \cite{Janson2009new}.}
This algorithm traverses the graph by exploring all its edges, unlike
typical graph exploration algorithms, which sequentially explore vertices.
At each stage of the algorithm, every vertex in $[n]$ is in one of two possible
states, sleeping or awake, while each half-edge is in one of three states:
sleeping (unexplored), active or dead (removed). \cgg{The exploration process sequentially visits vertices, {\bf awakening} vertices whilst {\bf activating} or {\bf killing} half-edges. 
% After describing this algorithm,  we then present  in Section \ref{sec:cts-constr-eea} a continuous time construction, that is similar to the one given in  \cite{Janson2009new}, using exponential life-times for half-edges.
} 

Write $\mathcal{S}_{\mathbb{V}}(j)$ for the set of  sleeping
vertices at step $j$ and similarly let $\mathcal{S}_{\mathbb{E}}(j),\mathcal{%
A}_{\mathbb{E}}(j)$ be the set of sleeping and
active  half-edge at step $j$. We call a half-edge
\textquotedblleft living\textquotedblright\ if it is either sleeping or
active. Initialize by setting all vertices and half-edges to be in the
sleeping state. For step $j\geq 0$, write $A(j)\doteq |\mathcal{A}_{\mathbb{E%
}}(j)|$ for the number of active half-edges and $V_{k}(j)$ for the number of
sleeping vertices $v\in \mathcal{S}_{\mathbb{V}}(j)$ with degree $k$. Write $\boldsymbol{V}(j)\doteq(V_{k}(j), k\in \mathbb{N})$
for the corresponding vector in $\mathbb{R}_{+}^{\infty }$. 
%and let $S(j)\doteq\sum_{k=1}^\infty V_k(j)$ for the total number of sleeping vertices at step $j$. 
%$A(j), V(j)$ are regarded as random variables given on some probability space $(\Omega, \mathcal{F}, \prob)$ on which
%we introduce the filtration $\mathcal{F}_n(k) = \sigma \{A(j), V(j), j \le k\}$, $k \ge 0$.
At step $j=0$, all vertices and half-edges are asleep hence $A(0)=0$ and $%
V_k(0)=n_k$ for $k \ge 1$. The exploration process proceeds as follows:

\begin{enumerate}[\upshape(1)]

\item If the number of active half-edges and sleeping vertices is zero, i.e.\ $A(j) = 0$
and $\boldsymbol{V}(j)={\boldsymbol{0}}$, all vertices and half-edges have been explored
and we terminate the algorithm.

\item If $A(j) = 0$ and $\boldsymbol{V}(j) \neq {\boldsymbol{0}}$, so there exist
sleeping vertices, pick one such vertex with probability proportional to its
degree and
mark the vertex as awake and all its half-edges as active. 
%(\cgg{this is equivalent to} picking a sleeping half-edge uniformly at random \cgg{and awakening the associated vertex}). 
Thus the
transition $(A(j),\boldsymbol{V}(j))$ to $(A(j+1),\boldsymbol{V}(j+1))$ at step $j+1$ takes the form 
\begin{equation*}
(0, {\boldsymbol{v}}) \mapsto (k, {\boldsymbol{v}} - {\boldsymbol{e}}_k) %
\mbox{ with probability } \frac{kv_k}{\sum_{i=1}^\infty i v_i}, \: k \in 
\mathbb{N},
\end{equation*}
where ${\boldsymbol{e}}_k$ is the $k$-th unit vector.

\item If $A(j) > 0$, pick an active half-edge uniformly at random, pair it
with another uniformly chosen living half-edge (either active or sleeping),
say $e^*$, merge both half-edges to form a full edge and kill both
half-edges. If $e^*$ was sleeping when picked, wake the vertex corresponding
to the half-edge $e^*$, and mark all its other half-edges active. Thus in
this case the transition takes the form 
\begin{align*}
(a, {\boldsymbol{v}}) &\mapsto (a-2, {\boldsymbol{v}}) 
\mbox{ with
probability } \frac{a-1}{\sum_{i=1}^\infty i v_i + a-1}, \\
(a, {\boldsymbol{v}}) &\mapsto (a+k-2, {\boldsymbol{v}} - {\boldsymbol{e}}%
_k) \mbox{ with probability } \frac{kv_k}{\sum_{i=1}^\infty i v_i + a-1}, \:
k \in \mathbb{N}.
\end{align*}
\end{enumerate}

The statements in (2) and (3) can be combined as follows: If $A(j) \neq 0$
or $\boldsymbol{V}(j) \neq {\boldsymbol{0}}$, then the transition $(A(j),\boldsymbol{V}(j))$ to $%
(A(j+1),\boldsymbol{V}(j+1))$ takes the form 
\begin{align}  \label{eq:aj-dynamics}
\begin{aligned} (a, {\boldsymbol{v}}) &\mapsto (a-2 \blue{\cdot 1_{\{a > 0\}}}, {\boldsymbol{v}})
\mbox{ with probability } \frac{(a-1)^+}{\sum_{i=1}^\infty i v_i + (a-1)^+},
\\ 
(a, {\boldsymbol{v}}) &\mapsto (a+k-2 \blue{\cdot 1_{\{a > 0\}}}, {\boldsymbol{v}} -
{\boldsymbol{e}}_k) \mbox{ with probability } \frac{kv_k}{\sum_{i=1}^\infty
i v_i + (a-1)^+}, \: k \in \mathbb{N}. \end{aligned}
\end{align}

The random graph $G([n],{\boldsymbol{d}}(n))$ formed at the termination of the above algorithm has the same distribution as the
configuration model with degree sequence ${\boldsymbol{d}}(n)$ \cite%
{molloy1995critical,Janson2009new}.

\begin{Remark}
	\label{rmk:track-component}
	We note that for $j> 0$, $A(j)=0$ if and only if the exploration of a
	component in the random graph $G([n],{\boldsymbol{d}}(n))$ is completed at
	step $j$. Thus the number of edges in a component equals the length of an
	excursion of $\{A(j)\}$ away from $0$ and the largest excursion length gives
	the size of the largest component, namely the number of edges in the
	component with maximal number of edges. 
	%Note that whenever $A(j)$ jumps away from $0$, we begin to explore a new component, and whenever $A(j)$ jumps back to $0$, we have finished exploring that component.
	The vertices in each component are those that are awakened during
	corresponding excursions.
\end{Remark}

Note that at each step in the EEA, either a new vertex is woken up or two
half-edges are killed. Since there are a total of $n$ vertices and $2m$
half-edges, we have from Assumptions \ref{asp:convgN} and \ref%
{asp:exponential-boundN} that the algorithm terminates in at most $m+n\le n
L $ steps where $L \doteq 1 + \lfloor \sup_n \frac{1}{2}\sum_{k=1}^\infty k 
\frac{n_k}{n} \rfloor < \infty$. We define $A(j) \equiv 0$ and $\boldsymbol{V}(j)
\equiv {\boldsymbol{0}}$ for all $j \ge j_0$ where $j_0$ is the step at
which the algorithm terminates.

\subsection{An equivalent continuous time exploration process}

\label{sec:model} A natural way to study large deviation properties of the
configuration model is through the discrete time sequence $%
\{A(j),\boldsymbol{V}(j)\}_{j\in \mathbb{N}_0}$ in EEA which can be viewed as a
discrete time \textquotedblleft small noise" Markov process. In order to
study large deviations for such a sequence, a standard approach is to
consider the sequence of $\mathbb{C}([0,L]:\mathbb{R}^{\infty})$-valued
random variables obtained by a linear interpolation of $\{A(j),\boldsymbol{V}(j)\}_{j\in 
\mathbb{N}_0}$ over intervals of length $1/n$. As was noted in the
Introduction, the `diminishing rates' feature of the transition kernel %
\eqref{eq:aj-dynamics} makes the large deviations analysis of this sequence
challenging. 
\cg{An alternative approach is to consider a continuous time stochastic process that provides a tractable
construction of the configuration model. We briefly recall one such construction that was introduced in
\cite[Section 4]{Janson2009new}. }

\color{black}
\subsubsection{A simple continuous time construction}
\label{sec:cts-constr-eea}
In 
 \cite[Section 4]{Janson2009new} it was observed that the configuration model can be explored using a continuous time process  constructed using exponential random variables as follows. 
\begin{enumerate}
	\item Every half-edge $e$ is given an independent exponential life-time (call this a clock). Initially, all half-edges and vertices are taken to be sleeping. 
	\item  Whenever the clock of a half-edge rings this half-edge becomes awake and connects to an existing awake  half-edge if such a half-edge exists; otherwise it waits for the next half-edge clock to ring and connects to this half-edge completing a full edge. Both such half-edges are then called dead. If at any point a half-edge of a sleeping vertex awakes, that vertex is then said to be awake.
	\item The process continues until all half-edges are dead at which point the exploration ends.
\end{enumerate}
% It is easy to see that that this construction satisfies:
% \begin{enumerate}
% 	\item The first vertex to awake is chosen with probability proportional to the degree of a vertex.
% 	\item If at any point, if none of the currently active vertices have any sleeping half-edges (i.e.\ all the half-edges of this vertex have been used up) then the next awake vertex is chosen with probability proportional to its degree amongst all sleeping vertices.
% \end{enumerate}
It is observed in \cite[Section 4]{Janson2009new} that the random graph constructed at the end of the exploration is a realization from the desired configuration model.  

Although the above continuous time construction gives a simple method to produce a sample from the configuration model, it turns out to be hard to directly use it for the study of large deviation problems of interest here. In view of this we present
 below a different continuous time process for the exploration of the configuration model that is obtained by a more direct Poissonization of the Markov chain
 $(A(\cdot), \boldsymbol{V}(\cdot))$ in Section \ref{sec:eea}. 

\subsubsection{A continuous time construction via Poissonization}
Let $N(t)$ be a rate-$n$ Poisson process independent of the processes $(A, \boldsymbol{V})$ of Section \ref{sec:eea} and define $(\tilde A(t), \tilde{\boldsymbol{V}}(t)) \doteq (A(N(t)),  \boldsymbol{V}(N(t))$. 
Then $(\tilde A, \tilde{\boldsymbol{V}})$ gives a natural continuous time process associated with the exploration of the configuration model. We now give a distributionally equivalent representation of this process which is more tractable for a large deviation analysis. The construction given below  ensures that 
$\{(nX^n_0(\cdot)+1, nX^n_k(\cdot)), k\in \mathbb{N}\}$, where $X^n_j$ are processes defined below,  has the same distribution as
$\{\tilde A(\cdot), \tilde V_k(\cdot), k\in \mathbb{N}\}$.

\color{black} 

We begin with some notation that will be needed to formulate the
continuous time model. For a locally compact Polish space $\mathbb{S}$, let $%
\mathcal{M}_{FC}(\mathbb{S})$ be the space of all measures $\nu$ on $(%
\mathbb{S},\mathcal{B}(\mathbb{S}))$ such that $\nu(K)<\infty$ for every
compact $K \subset \mathbb{S}$. We equip $\mathcal{M}_{FC}(\mathbb{S})$ with
the usual vague topology. This topology can be metrized such that $\mathcal{M%
}_{FC}(\mathbb{S})$ is a Polish space (see  \cite%
{BudhirajaDupuisMaroulas2011variational} for one convenient metric). A Poisson random measure (PRM) $%
N$ on a locally compact Polish space $\mathbb{S}$ with intensity measure $\nu \in \mathcal{M}_{FC}(\mathbb{S})$ is an $%
\mathcal{M}_{FC}(\mathbb{S})$-valued random variable such that for each $A
\in \mathcal{B}(\mathbb{S})$ with $\nu(A)<\infty$, $N(A)$ is
Poisson distributed with mean $\nu(A)$ and for disjoint $A_1,\dotsc,A_k \in 
\mathcal{B}(\mathbb{S})$, $N(A_1),\dotsc,N(A_k)$ are
mutually independent random variables (cf.\ \cite{IkedaWatanabe1990SDE}).

Let $(\Omega ,\mathcal{F},{P})$ be a complete probability space on which  are given
 i.i.d.\ PRM $\{N%
_{k}(ds\,dy\,dz)\}_{k\in \mathbb{N}_{0}}$ on $\mathbb{R}_{+}\times \lbrack 0,1]\times 
\mathbb{R}_{+}$ with intensity measure $ds\times dy\times dz$. Let
\begin{equation*}
\hat{\mathcal{F}}_{t}\doteq \sigma \{N_{k}((0,s]\times A\times
B),0\leq s\leq t,A\in \mathcal{B}([0,1]),B\in \mathcal{B}(\mathbb{R}%
_{+}),k\in \mathbb{N}_{0}\},\;t\geq 0
\end{equation*}%
and let $\{\mathcal{F}_{t}\}$ be the ${P}$-completion of this
filtration. Fix $T\in (0,\infty )$. Let $\mathcal{\bar{P}}$ be the $\{%
\mathcal{F}_{t}\}_{0\leq t\leq T}$-predictable $\sigma $-field on $\Omega
\times \lbrack 0,T]$. Let $\bar{\mathcal{A}}_{+}$ be  all $(%
\mathcal{\bar{P}}\otimes \mathcal{B}([0,1]))/\mathcal{B}(\mathbb{R}_{+})$%
-measurable maps from $\Omega \times \lbrack 0,T]\times \lbrack 0,1]$ to $%
\mathbb{R}_{+}$. For $\varphi \in \bar{\mathcal{A}}_{+}$, define a counting
process $N_{k}^{\varphi }$ on $[0,T]\times \lbrack 0,1]$ by 
\begin{equation*}
N_{k}^{\varphi }([0,t]\times A)\doteq \int_{\lbrack 0,t]\times A\times 
\mathbb{R}_{+}}{{1}}_{[0,\varphi (s,y)]}(z)\,N%
_{k}(ds\,dy\,dz),\:t\in \lbrack 0,T],A\in \mathcal{B}([0,1]),k\in \mathbb{N}%
_{0}.
\end{equation*}%
We think of $N_{k}^{\varphi }$ as a controlled random measure, where $%
\varphi $ is the control process that produces a thinning of the point
process $N_{k}$ in a random but non-anticipative manner to produce
a desired intensity. We will write $N_{k}^{\varphi }$ as $N_{k}^{\theta }$
if $\varphi \equiv \theta $ for some constant $\theta \in \mathbb{R}_{+}$.
Note that $N_{k}^{\theta }$ is a PRM  on $[0,T]\times [0,1]$ with intensity $\theta ds \times dy$.
For ${\boldsymbol{x}}=(x_{0},x_{1},x_{2},\dotsc )\in \mathbb{R}\times 
\mathbb{R}_{+}^{\infty }$, let 
\begin{equation}
r({\boldsymbol{x}}) \doteq (x_{0})^{+} + \sum_{k=1}^{\infty }kx_{k}, \quad 
r_{0}({\boldsymbol{x}}) \doteq \frac{(x_{0})^{+}}{r({\boldsymbol{x}})} {{1}}_{\{\blue{r({\boldsymbol{x}}) \in (0,\infty)}\}}, \quad 
r_{k}({\boldsymbol{x}}) \doteq \frac{kx_{k}}{r({\boldsymbol{x}})} {{1}}_{\{\blue{r({\boldsymbol{x}}) \in (0,\infty)}\}}, \quad k\in \mathbb{N}.  \label{eq:r_k}
\end{equation}%
Note that $\sum_{k \in \mathbb{N}_{0}} r_k(\boldsymbol{x}) =1$ whenever $\blue{r({\boldsymbol{x}}) \in (0,\infty)}$.
Recall that ${\boldsymbol{e}}_{k}$ is the $k$-th unit vector in $\mathbb{R}%
^{\infty }$, $k\in \mathbb{N}_{0}$. Define the state process $%
\boldsymbol{X}^{n}(t)=(X_{0}^{n}(t),X_{1}^{n}(t),X_{2}^{n}(t),\dotsc )$ with values in $%
\mathbb{R}\times \mathbb{R}_{+}^{\infty }$ as the solution to the following
SDE: 
%$= \frac{1}{n}(A(t)-1,V_1(t),V_2(t),\dotsc)$ be the state variable at time $t$.
\begin{equation}\label{eq:eq903e}
\begin{aligned}
\boldsymbol{X}^{n}(t)=\boldsymbol{X}^{n}(0)& +\frac{1}{n}\int_{[0,t]\times \lbrack 0,1]}{{1}%
}_{\{X_{0}^{n}(s-)\geq 0\}}\left[ -2{\boldsymbol{e}}_{0}\right] {{%
1}}_{[0,r_{0}(\boldsymbol{X}^{n}(s-)))}(y)\,N_{0}^{n}(ds\,dy) \\
\quad & +\sum_{k=1}^{\infty }\frac{1}{n}\int_{[0,t]\times \lbrack 0,1]}{%
{1}}_{\{X_{0}^{n}(s-)\geq 0\}}\left[ (k-2){\boldsymbol{e}}_{0}-{%
\boldsymbol{e}}_{k}\right] {{1}}_{[0,r_{k}(\boldsymbol{X}^{n}(s-)))}(y)%
\,N_{k}^{n}(ds\,dy) \\
\quad & +\sum_{k=1}^{\infty }\frac{1}{n}\int_{[0,t]\times \lbrack 0,1]}{%
{1}}_{\{X_{0}^{n}(s-)<0\}}\left[ k{\boldsymbol{e}}_{0}-{%
\boldsymbol{e}}_{k}\right] {{1}}_{[0,r_{k}(\boldsymbol{X}^{n}(s-)))}(y)%
\,N_{k}^{n}(ds\,dy),
\end{aligned}
\end{equation}
where $\boldsymbol{X}^{n}(0)\doteq \frac{1}{n}(-1,n_{1},n_{2},\dotsc )$. The
existence and uniqueness of solutions to this SDE follows from
the summability of $r_k(\cdot)$. 
\cg{
Indeed, for each $\boldsymbol{z} \in \mathbb{R}\times 
\mathbb{R}_{+}^{\infty }$ and $u \in [0,T]$, the process
%having defined the process $\boldsymbol{X}^{n}$ over $[0,s]$ for some $s \in [0,t]$, consider the  process 
$$Z^n(u,\boldsymbol{z}, t) \doteq \frac{1}{n} \int_{(u,t]\times [0,1]} N^n_0(ds\, dy) + \sum_{k=1}^{\infty }\frac{1}{n}\int_{(u,t]\times \lbrack 0,1]}
 {{1}}_{[0,r_{k}(\boldsymbol{z}))}(y) N^n_k(ds\, dy),\; u<t\le T$$
satisfies $Z^n(u,\boldsymbol{z}, T)<\infty$ since $\sum_{k \in \mathbb{N}_{0}} r_k(\boldsymbol{z}) \le1$. 
% Then, since
% in view of Assumption \ref{asp:exponential-boundN} satisfies $Z^n(T)<\infty$.
Together with the mutual independence of the PRM $\{N%
_{k}(ds\,dy\,dz)\}_{k\in \mathbb{N}_{0}}$ this says that the jump instants of the point process $\{Z^n(u,\boldsymbol{z}, t)\}_{u< t \le T}$ can be enumerated as
$$u  <\tau^n_1(\boldsymbol{z})< \cdots \tau^n_{k_n}(\boldsymbol{z})<T$$
where $k_n = nZ^n(u,\boldsymbol{z}, T)$. Thus having constructed the solution of \eqref{eq:eq903e} on $[0,u]$, the solution can be extended to 
$[0, \tau^n_1(\boldsymbol{z})]$, where $\boldsymbol{z} = \boldsymbol{X}^{n}(u)$, and the unique solution of \eqref{eq:eq903e}  is  now obtained by a standard recursive construction from one jump instant to the next. The solution can be written in an explicit form in terms of the atoms  of the PRM $\{N^n_k\}$ which also shows that the solution is a measurable function of the driving PRM.
}
% Here we have applied the usual
% scaling to the state variable (scaled down by $\frac{1}{n}$) and time
% variable (sped up by $n$).
It is not difficult to see that $\frac{1}{n}%
(A(j)-1,V_{1}(j),V_{2}(j),\dotsc )$ in the discrete time EEA can be viewed
as the embedded Markov chain associated with $\boldsymbol{X}^{n}$.
\cg{Namely, denoting the jump instants of the process $\boldsymbol{X}^{n}$ as $\{\sigma^n_j\}$, the collection
$\{(nX^n_0(\sigma_j^n)+1, nX^n_k(\sigma_j^n)), k,j \in \mathbb{N}\}$ has the same distribution as
$\{A(j), V_k(j), k,j \in \mathbb{N}\}$. In particular, for $k \in \mathbb{N}$, $nX^n_k(\sigma_j^n)$ can be interpreted as the number of sleeping vertices with degree $k$ at the $j$-th step of the exploration in the discrete EEA and in view of Remark \ref{rmk:track-component}, the excursions of $X^n_0$ away from $-1/n$ track the components in the configuration model.}
In defining the state process, one could replace $X^n_0(0)$ with the asymptotically equivalent process $X^n_0(0)+1/n$ which starts from $0$ and is more directly comparable with the sequence $A(j)/n$. However some of the expressions are simplified (see, e.g., the formulas for rates in \eqref{eq:r_k} and the transition probabilities in \eqref{eq:aj-dynamics})
 when describing the state in terms of $X^n_0(0)$ instead of $X^n_0(0)+1/n$.
We now rewrite the evolution of $\boldsymbol{X}^{n}$ as follows: 
\begin{align*}
\boldsymbol{X}^{n}(t)& =\boldsymbol{X}^{n}(0)+{\boldsymbol{e}}_{0}\sum_{k=0}^{\infty }\frac{(k-2)}{n}%
\int_{[0,t]\times \lbrack 0,1]}{{1}}_{[0,r_{k}(\boldsymbol{X}^{n}(s-)))}(y)%
\,N_{k}^{n}(ds\,dy) \\
& \quad -\sum_{k=1}^{\infty }{\boldsymbol{e}}_{k}\frac{1}{n}%
\int_{[0,t]\times \lbrack 0,1]}{{1}}_{[0,r_{k}(\boldsymbol{X}^{n}(s-)))}(y)%
\,N_{k}^{n}(ds\,dy) \\
& \quad +{\boldsymbol{e}}_{0}\sum_{k=0}^{\infty }\frac{2}{n}%
\int_{[0,t]\times \lbrack 0,1]}{{1}}_{\{X_{0}^{n}(s-)<0\}}{%
{1}}_{[0,r_{k}(\boldsymbol{X}^{n}(s-)))}(y)\,N_{k}^{n}(ds\,dy).
\end{align*}%
Here the first two integrands do not depend on the sign of $X_{0}^{n}$ and
are interpreted as the main contribution to the evolution. The last sum is a
`reflection' term in the ${\boldsymbol{e}}_{0}$ direction and makes a
contribution of $\frac{2}{n}{\boldsymbol{e}}_{0}$ only when $X_{0}^{n}(s-)<0$%
. For $t\geq 0$ define%
\begin{align}
Y^{n}(t)& \doteq X_{0}^{n}(0)+\sum_{k=0}^{\infty }\frac{k-2}{n}%
\int_{[0,t]\times \lbrack 0,1]}{{1}}_{[0,r_{k}(\boldsymbol{X}^{n}(s-)))}(y)%
\,N_{k}^{n}(ds\,dy),  \label{eq:Y_n} \\
\eta ^{n}(t)& \doteq \sum_{k=0}^{\infty }\frac{2}{n}\int_{[0,t]\times
\lbrack 0,1]}{{1}}_{\{X_{0}^{n}(s-)<0\}}{{1}}%
_{[0,r_{k}(\boldsymbol{X}^{n}(s-)))}(y)\,N_{k}^{n}(ds\,dy).  \label{eq:eta_n}
\end{align}%
Using these we can write 
\begin{align}
X_{0}^{n}(t)& =Y^{n}(t)+\eta ^{n}(t),  \label{eq:X_n_0} \\
X_{k}^{n}(t)& =X_{k}^{n}(0)-\frac{1}{n}\int_{[0,t]\times \lbrack 0,1]}{%
{1}}_{[0,r_{k}(\boldsymbol{X}^{n}(s-)))}(y)\,N_{k}^{n}(ds\,dy),\:k\in \mathbb{N}%
.  \label{eq:X_n_k}
\end{align}%
Here $\eta ^{n}$ is viewed as the regulator function which ensures that $%
X_{0}^{n}(t)\geq -\frac{1}{n}$. Note that for $k \in \mathbb{N}$, $X_{k}^{n}(t)$ is non-increasing
and non-negative. Also, from \eqref{eq:eq903e} we see that $r(\boldsymbol{X}^{n}(t))$ is non-increasing.

\subsection{Rate Function}

\label{sec:rate_def}

The main result of this work gives a large deviation principle for $\{(\boldsymbol{X}^n,
Y^n)\}_{n \in \mathbb{N}}$ in the path space $\mathcal{D}_\infty\times 
\mathcal{D}$. In this section we define the associated rate function $I_T$, \blue{where the subscript $T$ makes explicit the fact that the processes  $\{(\boldsymbol{X}^n,
Y^n)\}_{n \in \mathbb{N}}$ are considered on the time horizon $[0,T]$}.
Including the process $Y^n$ in the LDP is convenient for obtaining large
deviation results, for the degree distribution in giant components, of the
form given in Section \ref{sec:examples}. 
%(More precisely, the proof of Lemma \ref{lem:puhalskii-upper-lower-bound}(a))

Recall the probability distribution ${\boldsymbol{p}}\doteq \{p_{k}\}_{k\in 
\mathbb{N}}$ introduced in Assumption \ref{asp:convgN}. 
\cg{In order to describe the rate function it will be convenient to introduce the Skorohod map. The use of Skorohod reflection mechanism to describe exploration processes for random graphs goes back to the work of Aldous \cite{aldous1997brownian}.
In the context of large deviation problems for \Erdos random graph models it has also been used in \cite{puhalskii2005stochastic}.}
Let $\Gamma \colon 
\mathcal{C}\rightarrow \mathcal{C}$ denote the one-dimensional Skorokhod map
defined by 
\begin{equation*}
\Gamma (\psi )(t)\doteq \psi (t)-\inf_{0\leq s\leq t}\psi (s)\wedge 0,\;t\in
\lbrack 0,T],\psi \in \mathcal{C}.
\end{equation*}%
%
%
%

%Let ${\boldsymbol{x}}(0) \doteq (0,{\boldsymbol{p}})$.
Let $\mathcal{C}_T$ be the subset of $\mathcal{C}_\infty \times \mathcal{C}$%
, consisting of those functions $(\boldsymbol{\zeta},\psi)$ 
such that

\begin{enumerate}[\upshape(a)]

\item $\psi(0)=0$, and $\psi$ is absolutely continuous on $[0,T]$.

\item $\zeta_0(t) = \Gamma(\psi)(t)$ for $t \in [0,T]$.

\item For each $k \in \mathbb{N}$, $\zeta_k(0) = p_k$, $\zeta_k$ is
non-increasing and absolutely continuous and $\zeta_k(t) \ge 0$ for $t \in
[0,T]$.
\end{enumerate}
%Now we define the rate function $I_T$.
For $(\boldsymbol{\zeta} ,\psi )\in (\mathcal{D}_{\infty }\times \mathcal{D})\setminus 
\mathcal{C}_{T}$, define $I_{T}(\boldsymbol{\zeta} ,\psi )\doteq \infty $. 
For $(\boldsymbol{\zeta}
,\psi )\in \mathcal{C}_{T}$, define 
\begin{equation}
I_{T}(\boldsymbol{\zeta} ,\psi )\doteq \inf_{\boldsymbol{\varphi} \in \mathcal{S}_{T}(\boldsymbol{\zeta} ,\psi
)}\left\{ \sum_{k=0}^{\infty }\int_{[0,T]\times \lbrack 0,1]}\ell (\varphi
_{k}(s,y))\,ds\,dy\right\} .  \label{eq:rate_function}
\end{equation}%
%
%
%
%with the convention that $0 \cdot \infty = 0$.
%Here $\lambda$ is the Lebesgue measure on $[0,1]$.
Here for $x\geq 0$, 
\begin{equation}
\ell (x)\doteq x\log x-x+1,  \label{eq:ell}
\end{equation}%
and  $\mathcal{S}_{T}(\boldsymbol{\zeta} ,\psi )$ is the set of all sequences of functions $%
\boldsymbol{\varphi} =(\varphi _{k})_{k\in \mathbb{N}_{0}}$, $\varphi _{k}:[0,T]\times
\lbrack 0,1]\rightarrow \mathbb{R}_{+}$, such that % \begin{enumeratea}
% \item
% 	$\varphi_k(s,y) \ge 0$ for a.e.\ $(s,y) \in [0,T]\times[0,1]$ and all $k \in \mathbb{N}_0$.
%\item
%	$\eta \in \mathbb{C}([0,T]:\mathbb{R}_+)$, $\eta(0)=0$, $\eta(t)$ is non-decreasing for $t \in [0,T]$ and
%	\begin{equation*}
%		\int_0^T \zeta_0(t) \, \eta(dt) = 0.
%%		\frac{d\eta(t)}{dt} = \left(2+\sum_{k=1}^\infty k \dot{\zeta}_k(t)\right)^+ {{1}}_{\{ \zeta_0(t)=0 \}}.
%	\end{equation*}
%\item
\begin{align}
\psi (t)& =\sum_{k=0}^{\infty }(k-2)\int_{[0,t]\times \lbrack 0,1]}{%
{1}}_{[0,r_{k}(\boldsymbol{\zeta} (s)))}(y)\,\varphi _{k}(s,y)ds\,dy, \; t \in [0,T]
\label{eq:psi} \\
\zeta _{k}(t)& =p_{k}-\int_{[0,t]\times \lbrack 0,1]}{{1}}%
_{[0,r_{k}(\boldsymbol{\zeta} (s)))}(y)\,\varphi _{k}(s,y)ds\,dy,k\in \mathbb{N}, \; t \in [0,T].
\label{eq:phi_k}
\end{align}%
%
%
%
%\end{enumeratea}

\begin{Remark}
\label{rmk:property_ODE} Suppose $(\boldsymbol{\zeta}
,\psi )\in \mathcal{C}_{T}$  satisfies \eqref{eq:psi} and %
\eqref{eq:phi_k} for some $\boldsymbol{\varphi} \in \mathcal{S}_{T}(\boldsymbol{\zeta} ,\psi )$. 

\begin{enumerate}[\upshape(a)]

\item From Assumptions \ref{asp:convgN} and \ref{asp:exponential-boundN} it follows that the
following uniform integrability holds: As $K \to \infty$, 
\begin{equation*}
\sup_{0 \le t \le T}\sum_{k=K}^\infty k \zeta_k(t) \le \sum_{k=K}^\infty k
\sup_{0 \le t \le T} \zeta_k(t) = \sum_{k=K}^\infty k p_k \to 0.
\end{equation*}
This in particular says that $r(\boldsymbol{\zeta}(\cdot)) \in \mathcal{C}$, where $%
r(\cdot)$ is defined in \eqref{eq:r_k}.

\item For any $k \in \mathbb{N}$, whenever $\zeta_k(t_k)=0$ for some $t_k \in
[0,T]$, we must have $\zeta_k(t) = 0$ for all $t \in [t_k,T]$. This follows
since  $\zeta_k$ is non-increasing and non-negative for
every $k$.

\item Whenever $r(\boldsymbol{\zeta}(t^*))=0$ for some $t^* \in [0,T]$, we must have from
part (b) that $\zeta_k(t) = 0$ for all $t \in [t^*,T]$ and $k \in \mathbb{N}$%
. This, together with \eqref{eq:psi}, implies that $\psi(\cdot)$ is
non-increasing on the interval $[t^*,T]$. Hence by property (b) of $\mathcal{%
C}_T$, $\zeta_0(t)$ is non-increasing and non-negative for $t \in [t^*,T]$.
Since $\zeta_0(t^*)=0$, we must then have $\zeta_0(t) = 0$ for $t \in [t^*,T]$%
, which means that $\boldsymbol{\zeta}(t) = \boldsymbol{0}$ for $t \in [t^*,T]$. Thus 
whenever such a $t^*$ exists,
$\boldsymbol{\zeta}(t)= \boldsymbol{0}$ after the time instant 
\begin{equation}
\tau_{\boldsymbol{\zeta}} \doteq \inf \{t \in [0,T]:r(\boldsymbol{\zeta}(t))=0\} \wedge T.
\label{eq:defntauphi}
\end{equation}
\end{enumerate}
\end{Remark}

\subsection{LDP and LLN for the Exploration Process}

\label{sec:main-result}

The following LDP is one of our main results and is  key to the  proof of Theorem \ref{thm:ldg_degree_distribution}.

\begin{Theorem}
\label{thm:main-ldp} The function $I_T$ in \eqref{eq:rate_function} is a
rate function on $\mathcal{D}_\infty \times \mathcal{D}$ and the sequence $%
\{(\boldsymbol{X}^n,Y^n)\}_{n \in \mathbb{N}}$ satisfies a large deviation principle in $%
\mathcal{D}_\infty \times \mathcal{D}$ with rate function $I_T$.
\end{Theorem}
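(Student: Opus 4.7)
The natural approach is the weak-convergence / variational-representation method for small-noise systems driven by Poisson random measures, starting from the representation
\[
-\frac{1}{n}\log E\bigl[e^{-nF(\boldsymbol{X}^n,Y^n)}\bigr]
=\inf_{\boldsymbol{\varphi}}E\!\left[F(\boldsymbol{X}^{n,\boldsymbol{\varphi}},Y^{n,\boldsymbol{\varphi}})+\sum_{k=0}^{\infty}\tfrac{1}{n}\!\int_{[0,T]\times[0,1]}\ell(\varphi_k(s,y))\,ds\,dy\right],
\]
where the infimum is over predictable nonnegative controls and the controlled process $\boldsymbol{X}^{n,\boldsymbol{\varphi}}$ solves \eqref{eq:eq903e} with each $N_k^n$ replaced by the thinned measure $N_k^{n\varphi_k}$. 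The proof is reduced to matching Laplace upper and lower bounds for arbitrary bounded continuous $F\colon\mathcal{D}_\infty\times\mathcal{D}\to\mathbb{R}$, together with verifying that $I_T$ is a rate function.

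\textbf{Laplace upper bound.} Pick near-optimal controls $\boldsymbol{\varphi}^n$ and set $L^n=\sum_k\tfrac{1}{n}\int\ell(\varphi_k^n)$. On the event that $L^n$ is bounded (which can be assumed by truncating with an exponential tilting in $F$), the superlinearity of $\ell$ combined with Assumptions \ref{asp:convgN} and \ref{asp:exponential-boundN} yields $C$-tightness of $(\boldsymbol{X}^{n,\boldsymbol{\varphi}^n},Y^{n,\boldsymbol{\varphi}^n})$ in $\mathcal{D}_\infty\times\mathcal{D}$ (the monotonicity of each $X_k^n$ and the uniform integrability in Remark \ref{rmk:property_ODE}(a) are essential here). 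The random measures $\mu^n(dk\,ds\,dy)\doteq \delta_k\otimes\mathbf{1}_{[0,r_k(\boldsymbol{X}^{n,\boldsymbol{\varphi}^n}(s))]}(y)\varphi^n_k(s,y)\,ds\,dy$ are also tight by de la Vall\'ee–Poussin. Passing to a subsequential limit $(\boldsymbol{\zeta},\psi,\mu)$ and disintegrating $\mu$, one extracts a $\boldsymbol{\varphi}\in\mathcal{S}_T(\boldsymbol{\zeta},\psi)$ satisfying \eqref{eq:psi}--\eqref{eq:phi_k}; lower semicontinuity of $\ell$ and Fatou then give $F(\boldsymbol{\zeta},\psi)+I_T(\boldsymbol{\zeta},\psi)$ as a lower bound for the limit. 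The Skorokhod map $\Gamma$ is continuous in the uniform topology so the boundary reflection passes to the limit cleanly.

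\textbf{Laplace lower bound.} Given $(\boldsymbol{\zeta},\psi)\in\mathcal{C}_T$ with $I_T<\infty$ and $\delta>0$, select $\boldsymbol{\varphi}\in\mathcal{S}_T(\boldsymbol{\zeta},\psi)$ whose running cost is within $\delta$ of $I_T(\boldsymbol{\zeta},\psi)$, and plug this deterministic $\boldsymbol{\varphi}$ into the variational representation. One must then establish that $(\boldsymbol{X}^{n,\boldsymbol{\varphi}},Y^{n,\boldsymbol{\varphi}})\Rightarrow(\boldsymbol{\zeta},\psi)$. Tightness proceeds as in the upper bound; the crux is the \emph{uniqueness} of the controlled deterministic system \eqref{eq:psi}--\eqref{eq:phi_k} under the chosen $\boldsymbol{\varphi}$, which identifies the limit. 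This is the main obstacle: the indicator $\mathbf{1}_{[0,r_k(\boldsymbol{\zeta}(s))]}(y)$ is discontinuous in $\boldsymbol{\zeta}$, $\boldsymbol{\varphi}$ is only $\ell$-integrable (not bounded), and $r(\boldsymbol{\zeta})$ may vanish — precisely the diminishing rates issue highlighted in the introduction. The strategy outlined there is to perturb $\boldsymbol{\varphi}$ slightly to a $\widetilde{\boldsymbol{\varphi}}$ that drives the same trajectory $(\boldsymbol{\zeta},\psi)$ but for which a Gronwall-type argument on the first hitting time $\tau_{\boldsymbol{\zeta}}$ of \eqref{eq:defntauphi} (together with the Skorokhod map Lipschitz property and monotonicity of the $\zeta_k$) gives uniqueness, with cost change $O(\delta)$. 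Letting $\delta\to 0$ delivers the matching lower bound.

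\textbf{$I_T$ is a rate function.} Take a sequence $(\boldsymbol{\zeta}^n,\psi^n)$ with $I_T\le M$. For each $n$ choose near-optimal $\boldsymbol{\varphi}^n$; the same $\ell$-integrability / superlinearity argument as in the upper bound, together with uniform absolute continuity of the $\zeta_k^n$ (they are dominated by $p_k$ and non-increasing), yields a convergent subsequence whose limit lies in $\mathcal{C}_T$. Lower semicontinuity of $I_T$ under this limit, proved via the same Fatou argument, completes compactness of $\{I_T\le M\}$, so that $I_T$ is a genuine rate function, finishing the proof of Theorem \ref{thm:main-ldp}.
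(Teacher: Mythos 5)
Your proposal takes essentially the same route as the paper's proof: the variational representation for moments of PRM functionals, $\mathcal{C}$-tightness and limit characterization of the controlled processes, a perturbation-of-the-control argument to secure uniqueness of the controlled deterministic system for the lower bound, and a parallel Fatou/lower-semicontinuity argument for compactness of sub-level sets. One small but material slip: after applying the representation with $\theta=n$ and dividing by $n$, the running cost should read $\sum_{k=0}^\infty\int_{[0,T]\times[0,1]}\ell(\varphi_k(s,y))\,ds\,dy$ with no factor $1/n$ -- an $O(1/n)$ running cost would vanish in the limit and give a trivial rate function. Also, the a.s.\ bound on the running cost in the upper bound is obtained in the paper by a stopping-time truncation of the near-optimal control rather than by any exponential tilting of $F$, but that is a point of implementation, not of strategy.
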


\textbf{Outline of the proof:} Due to the equivalence between a large
deviation principle and a Laplace principle, it suffices to show the
following three statements (cf.\ \cite[Section 1.2]{DupuisEllis2011weak} or 
\cite[Section 1.2]{buddupbook}).

\begin{enumerate}[\upshape(1)]

\item Laplace principle upper bound: For all $h\in \mathbb{C}_{b}(\mathcal{D}%
_{\infty }\times \mathcal{D})$,
\begin{equation}
\limsup_{n\rightarrow \infty} \frac{1}{n}\log {{E}}%
e^{-nh(\boldsymbol{X}^{n},Y^{n})}\le - \inf_{(\boldsymbol{\zeta} ,\psi )\in \mathcal{C}_{\infty }\times 
\mathcal{C}}\{I_{T}(\boldsymbol{\zeta} ,\psi )+h(\boldsymbol{\zeta} ,\psi )\}.  \label{eq:lappriupp}
\end{equation}

\item Laplace principle lower bound: For all $h \in \mathbb{C}_b(\mathcal{D}%
_\infty \times \mathcal{D})$, 
\begin{equation}  \label{eq:lapprilow}
\liminf_{n \to \infty} \frac{1}{n} \log {{E}} e^{-nh(\boldsymbol{X}^{n},Y^{n})} \ge -
\inf_{(\boldsymbol{\zeta} ,\psi ) \in \mathcal{C}_\infty\times \mathcal{C}}
\{I_T(\boldsymbol{\zeta} ,\psi )+h(\boldsymbol{\zeta} ,\psi )\}.
\end{equation}

\item $I_T$ is a rate function on $\mathcal{D}_\infty\times \mathcal{D}$:
For each $M \in [0,\infty)$, $\{(\boldsymbol{\zeta} ,\psi ) \in \mathcal{D}_\infty \times 
\mathcal{D} : I_T(\boldsymbol{\zeta} ,\psi ) \le M \}$ is a compact subset of $\mathcal{D}%
_\infty \times \mathcal{D}$.
\end{enumerate}

Statements (1), (2) and (3) will be shown in Sections \ref{sec:upper}, 
 \ref{sec:lower} and  \ref{sec:rate_function}, respectively.
\begin{Remark}
	\label{rem:remnumcopm}
	 As noted above, the LDP  in Theorem \ref{thm:main-ldp} is a key to the proof of  Theorem \ref{thm:ldg_degree_distribution}. In the next subsection we will show how this LDP can be used to easily give a LLN result.
The LDP can be used to establish other asymptotic results as well.  We give one such example without proof below.
	Denote by $C^n$ the number of components in $G([n],{\boldsymbol{d}}(n))$.
	Then $\eta^n$ defined in \eqref{eq:eta_n} can be used to represent  $\frac{C^n}{n}$.
	\cg{Such an observation in the context of \Erdos random graphs was first made in \cite{aldous1997brownian} (see also \cite{puhalskii2005stochastic}).}
	Note that whenever the EEA starts to explore a new component, $X^n_0$ will jump from $-\frac{1}{n}$ and as a result, $\eta^n$ will increase by $\frac{2}{n}$.
	Therefore
	\begin{equation*}
		\frac{C^n}{n} = \sup_{t >0} \frac{\eta^n(t)}{2} = \lim_{T \to \infty} \frac{\eta^n(T)}{2}. 
	\end{equation*}
	Observe from \eqref{eq:X_n_0} that $\eta^n = X^n_0 - Y^n$, and that for large deviation asymptotics one can assume
	that the  EEA  terminates by time $N \doteq 1 + \lfloor \sup_n \frac{1}{2}\sum_{k=1}^\infty k \frac{n_k^{(n)}}{n} \rfloor < \infty$ (see Lemma \ref{lem:choosing-T} and its proof for precise details).
	Using this fact, Theorem \ref{thm:main-ldp}, and the contraction principle one can establish that
	$\frac{C^n}{n}$ satisfies a large deviation principle in $\mathbb{R}_+$ with rate function $\hat{I}$ defined by
		\begin{equation*}
			\hat{I}(x) = \lim_{T \to \infty} \inf_{(\boldsymbol{\zeta},\psi) \in \mathcal{C}_T : \zeta_0(T)-\psi(T) = 2x} I_T(\boldsymbol{\zeta},\psi). 
		\end{equation*}
	The rate function $\hat{I}(x)$ has the following alternative representation.
	% \begin{align*}
% 			\hat{I}(x) & = \inf_{(\boldsymbol{\zeta},\psi) \in \mathcal{C}_N : \zeta_0(N)-\psi(N) = x} \int_0^N \left[ r_0(\boldsymbol{\zeta}(t)) \ell\left( -\frac{\psi'(t)+\sum_{k=1}^\infty (k-2) \zeta'_k(t)}{2r_0(\boldsymbol{\zeta}(t))} \right) \right. \\
% 			& \left. \qquad + \sum_{k=1}^\infty r_k(\boldsymbol{\zeta}(t)) \ell\left(-\frac{\zeta'_k(t)}{r_k(\boldsymbol{\zeta}(t))}\right) \right] dt.
% 		\end{align*}
	\begin{equation*}
	\Scale[0.9]{\hat{I}(x)  = \inf_{(\boldsymbol{\zeta},\psi) \in \mathcal{C}_N : \zeta_0(N)-\psi(N) = 2x, \boldsymbol{\zeta}(N)=\zero} \int_0^N \left[ r_0(\boldsymbol{\zeta}(t)) \ell\left( -\frac{\psi'(t)+\sum_{k=1}^\infty (k-2) \zeta'_k(t)}{2r_0(\boldsymbol{\zeta}(t))} \right) 
	 + \sum_{k=1}^\infty r_k(\boldsymbol{\zeta}(t)) \ell\left(-\frac{\zeta'_k(t)}{r_k(\boldsymbol{\zeta}(t))}\right) \right] dt.} 
	\end{equation*}
%	\begin{equation*}
%		\frac{C^n}{n} = \sup_{t >0} \frac{\eta^n(t)}{2} \approx \frac{\eta^n(N)}{2}. 
%	\end{equation*}
%	 where $N \doteq 1 + \lfloor \sup_n \frac{1}{2}\sum_{k=1}^\infty k 
%	\frac{n_k^{(n)}}{n} \rfloor < \infty$.
%	Observe from \eqref{eq:X_n_0} that $\eta^n = X^n_0 - Y^n$.
%	Using this fact, Theorem \ref{thm:main-ldp}, and the contraction principle one can establish that
%		$\frac{C^n}{n}$ satisfies a large deviation principle in $\mathbb{R}_+$ with rate function $\hat{I}$ defined by
%			\begin{equation*}
%				\hat{I}(x) = \inf_{(\boldsymbol{\zeta},\psi) \in \mathcal{C}_N : \zeta_0(N)-\psi(N) = x} I_N(\boldsymbol{\zeta},\psi).
%			\end{equation*}
%		The rate function $\hat{I}(x)$ has the following alternative representation.
%		% \begin{align*}
%% 			\hat{I}(x) & = \inf_{(\boldsymbol{\zeta},\psi) \in \mathcal{C}_N : \zeta_0(N)-\psi(N) = x} \int_0^N \left[ r_0(\boldsymbol{\zeta}(t)) \ell\left( -\frac{\psi'(t)+\sum_{k=1}^\infty (k-2) \zeta'_k(t)}{2r_0(\boldsymbol{\zeta}(t))} \right) \right. \\
%% 			& \left. \qquad + \sum_{k=1}^\infty r_k(\boldsymbol{\zeta}(t)) \ell\left(-\frac{\zeta'_k(t)}{r_k(\boldsymbol{\zeta}(t))}\right) \right] dt.
%% 		\end{align*}
%		\begin{align*}
%		\Scale[0.9]{\hat{I}(x)  = \inf_{(\boldsymbol{\zeta},\psi) \in \mathcal{C}_N : \zeta_0(N)-\psi(N) = x} \int_0^N \left[ r_0(\boldsymbol{\zeta}(t)) \ell\left( -\frac{\psi'(t)+\sum_{k=1}^\infty (k-2) \zeta'_k(t)}{2r_0(\boldsymbol{\zeta}(t))} \right) 
%			 + \sum_{k=1}^\infty r_k(\boldsymbol{\zeta}(t)) \ell\left(-\frac{\zeta'_k(t)}{r_k(\boldsymbol{\zeta}(t))}\right) \right] dt.}
%		\end{align*}
\end{Remark}
\subsubsection{Law of large number limits}

\label{sec:LLN} The LDP in Theorem \ref{thm:main-ldp}
can be used to identify the LLN limit $(\boldsymbol{\zeta} ,\psi )$ of
the exploration process $(\boldsymbol{X}^{n},Y^{n})$, which corresponds to the \textit{unique}
pair satisfying $I_{T}(\boldsymbol{\zeta} ,\psi )=0$. In particular we recover well known
results for the asymptotics of the largest component in the configuration
model \cite{MolloyReed1998size,Janson2009new}. We assume the following
strengthened version of Assumption \ref{asp:exponential-boundN}.

\begin{Assumption}
\label{asp:exponential-boundN-S}   
$\sup_{n \in \mathbb{N}} \sum_{k=1}^{\infty} \frac{n_k}{n}k^{2} < \infty$. 
%	So by Fatou's lemma, in particular $\sum_{k=1}^{\infty} p_k k^{1+\varepsilon_{\boldsymbol{p}}} < \infty$.
\end{Assumption}
\begin{Remark}
	Under our standing assumptions, namely Assumption \ref{asp:convgN} and \ref{asp:exponential-boundN}, one can show by following the arguments in  Section \ref{sec:repnWCCP} that $\{(\boldsymbol{X}^n,
Y^n)\}_{n \in \mathbb{N}}$ is tight and any weak limit point $(\boldsymbol{\zeta}
,\psi )$ of this sequence is in $\mathcal{C}_{T}$  and satisfies \eqref{eq:psi} and %
\eqref{eq:phi_k} with $\varphi_k =1$ for $k \in \mathbb{N}_0$. However it seems hard to argue the uniqueness of this limiting system of equations without additional conditions. Instead we show that if  Assumption \ref{asp:exponential-boundN} is replaced with the stronger condition in
Assumption \ref{asp:exponential-boundN-S} then there is an explicit trajectory $(\boldsymbol{\zeta}
,\psi )$ for which the rate function vanishes and in fact it is the unique such trajectory. This is the content of  Theorem \ref{thm:LLN}  and
Proposition \ref{prop:uniqueness_LLN}. From these results the LLN follows immediately. Whether the LLN holds under the weaker Assumption \ref{asp:exponential-boundN} is an open problem.
\end{Remark}

% Recall ${\boldsymbol{p}} = \set{p_k}_{k \in \mathbb{N}}$ introduced in Assumptions \ref{asp:convgN} and \ref{asp:exponential-boundN}. 
% \textbf{In order to show that LLN limit $\psi$ proposed in Theorem \ref{thm:LLN} is absolutely continuous, we might have to further assume that Assumption \ref{asp:convgN} holds for some $\varepsilon_{\boldsymbol{p}} \ge 1$ instead of $\varepsilon_{\boldsymbol{p}} > 0$.}
Recall $\mu \doteq \sum_{k=1}^{\infty }kp_{k}$ and note that $\mu <\infty $.
 Define, for $z\in \lbrack 0,1]$, 
\begin{equation*}
G_{0}(z)\doteq \sum_{k=1}^{\infty }p_{k}z^{k}\;\;\mbox{ and }%
\;\;G_{1}(z)\doteq \sum_{k=1}^{\infty }\frac{kp_{k}}{\mu }z^{k-1}.
\end{equation*}%
Define $F_{s}(t)\doteq G_{0}(s)-G_{0}(st)$ for $s\in (0,1]$ and $t\in
\lbrack 0,1]$. Then $F_{s}\colon \lbrack 0,1]\rightarrow \lbrack 0,G_{0}(s)]$
is strictly decreasing and continuous. Let $F_{s}^{-1}(\cdot )$ denote the
inverse of $F_{s}$. Define 
\begin{equation*}
f_{s}(t)\doteq \left\{ 
\begin{array}{ll}
F_{s}^{-1}(t) & \mbox{ when }0\leq t\leq G_{0}(s), \\ 
0 & \mbox{ when }t>G_{0}(s).
\end{array}%
\right.
\end{equation*}%
Then $f_{s}(t)$ is strictly decreasing until it hits zero. Note that in
particular, $f_{1}(t)=F_{1}^{-1}(t){{1}}_{[0,1]}(t)$. Define $%
f_{0}(t)\doteq 0$ for $t\geq 0$.

Fix $T\geq \frac{\mu }{2}$. The following theorem together with Proposition \ref{prop:uniqueness_LLN}
characterizes the unique $(\boldsymbol{\zeta} ,\psi )\in \mathcal{C}_{T}$ that minimizes
the rate function $I_{T}(\boldsymbol{\zeta} ,\psi )$. Letting 
\begin{equation*}
\nu \doteq \frac{\sum_{k=1}^{\infty }k(k-1)p_{k}}{\sum_{k=1}^{\infty }kp_{k}}%
,
\end{equation*}%
part 1 of the theorem considers the subcritical and critical cases $\nu \leq 1$, where the size of the largest component is $o(n)$, while part 2 considers the supercritical case $\nu >1$, where the size of the largest component is $O(n)$. Proofs of Theorem \ref{thm:LLN}  and
Proposition \ref{prop:uniqueness_LLN} are provided in Section \ref{sec:examples}.

\begin{Theorem}
\phantomsection
\label{thm:LLN} 
%	This phatom section command is used to prevent hyper-ref jumping to the first page
Suppose that Assumptions \ref{asp:convgN} and \ref{asp:exponential-boundN-S}
hold.

\begin{enumerate}[\upshape(1)]

\item Suppose $\sum_{k=1}^\infty k(k-2)p_k \le 0$. Define $\boldsymbol{\zeta}(t) =
(\zeta_k(t))_{k \in \mathbb{N}_0}$ and $\psi(t)$ by 
\begin{align*}
\zeta_0(t) & \doteq 0, \zeta_k(t) \doteq p_k(f_1(t))^k, k \in \mathbb{N}, \\
\psi(t) & \doteq -2 \int_0^t r_0(\boldsymbol{\zeta}(s))\,ds + \sum_{k=1}^\infty (k-2) (p_k
- \zeta_k(t)).
\end{align*}
Then $(\boldsymbol{\zeta} ,\psi ) \in \mathcal{C}_T$ and $I_T(\boldsymbol{\zeta} ,\psi )=0$.

\item Suppose $\sum_{k=1}^\infty k(k-2)p_k > 0$. If $p_1 > 0$, then there
exists a unique $\rho \in (0,1)$ such that $G_1(\rho) = \rho$. If $p_1 = 0$, 
$G_1(\rho) = \rho$ with $\rho\doteq 0$. Define $\tau = \frac{\mu}{2}%
(1-\rho^2)>0$ and define $\boldsymbol{\zeta}(t) = (\zeta_k(t))_{k \in \mathbb{N}_0}$ and $%
\psi(t)$ by 
\begin{align*}
\zeta_0(t) & \doteq \left[ \mu-2t -\mu \sqrt{1-2t/\mu}G_1 ( \sqrt{1-2t/\mu}) %
\right] {{1}}_{[0,\tau]}(t), \\
\zeta_k(t) & \doteq \left\{ 
\begin{array}{ll}
p_k(1-{2t}/{\mu})^{k/2} & \mbox{ when } 0 \le t \le \tau, \\ 
p_k \rho^k (f_\rho(t - \tau))^k & \mbox{ when } t > \tau,
\end{array}
\right. k \in \mathbb{N}, \\
\psi(t) & \doteq -2 \int_0^t r_0(\boldsymbol{\zeta}(s))\,ds + \sum_{k=1}^\infty (k-2) (p_k
- \zeta_k(t)).
\end{align*}
Then $(\boldsymbol{\zeta} ,\psi ) \in \mathcal{C}_T$ and $I_T(\boldsymbol{\zeta} ,\psi )=0$.
\end{enumerate}
\end{Theorem}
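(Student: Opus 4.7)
My plan is to exploit the fact that $\ell(x)=x\log x-x+1$ vanishes precisely at $x=1$, so that $I_T(\boldsymbol{\zeta},\psi)=0$ will follow as soon as the constant control sequence $\varphi_k\equiv 1$, $k\in\mathbb{N}_0$, is shown to belong to $\mathcal{S}_T(\boldsymbol{\zeta},\psi)$ for the explicit candidate $(\boldsymbol{\zeta},\psi)$ in the statement. Inserting $\varphi_k\equiv 1$ into \eqref{eq:psi}–\eqref{eq:phi_k} and performing the trivial $y$-integration reduces the system to
\begin{equation*}
\psi(t)=\sum_{k=0}^{\infty}(k-2)\int_{0}^{t}r_{k}(\boldsymbol{\zeta}(s))\,ds,\qquad \zeta_k(t)=p_k-\int_0^t r_k(\boldsymbol{\zeta}(s))\,ds,\ k\in\mathbb{N}.
\end{equation*}
Thus my task is (i) to verify the pointwise identity $\zeta_k'(t)=-r_k(\boldsymbol{\zeta}(t))$ for $k\geq 1$ together with the corresponding integral formula for $\psi$, and (ii) to verify that $(\boldsymbol{\zeta},\psi)\in\mathcal{C}_T$. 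The monotonicity, non-negativity, absolute continuity and initial value conditions on each $\zeta_k$ are immediate from the explicit expressions, so the only nontrivial condition is the Skorokhod identity $\zeta_0=\Gamma(\psi)$.

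For part (1), I set $z(t)\doteq f_1(t)$; the inverse function theorem applied to $F_1(z)=1-G_0(z)$ yields $z'(t)=-1/(\mu G_1(z(t)))$ as long as $z(t)>0$. Differentiating $\zeta_k=p_k z^k$ and using the identity $r(\boldsymbol{\zeta})=\sum_{j\ge 1} j p_j z^j=z\mu G_1(z)$ (valid because $\zeta_0\equiv 0$) gives $\zeta_k'=-r_k(\boldsymbol{\zeta})$ after cancellation; since $r_0(\boldsymbol{\zeta})\equiv 0$, the integral formula for $\psi$ reduces to the formula in the statement. To see $\Gamma(\psi)\equiv 0$, it suffices, given $\psi(0)=0$, to show $\psi'\leq 0$. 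A short calculation gives
\begin{equation*}
\psi'(t)=\frac{h(z(t))}{\mu G_1(z(t))},\qquad h(z)\doteq \sum_{k=1}^{\infty}k(k-2)p_k z^{k-1}.
\end{equation*}
The only negative contribution to $h$ is the constant $-p_1$ (the $k=2$ coefficient is zero), while all $k\geq 3$ coefficients are non-negative, so $h$ is non-decreasing on $[0,1]$; hence $h(z)\leq h(1)=\sum k(k-2)p_k\leq 0$ by the subcritical hypothesis, yielding $\psi'\leq 0$.

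For part (2), set $g(t)=\sqrt{1-2t/\mu}$ on $[0,\tau]$, so $g'=-1/(\mu g)$, $g(0)=1$, $g(\tau)=\rho$. The explicit formula for $\zeta_0$ is designed so that $r(\boldsymbol{\zeta}(t))=\zeta_0(t)+g\mu G_1(g)=\mu g^2$, and then $\zeta_k'=-r_k(\boldsymbol{\zeta})$ reduces to the identity from case (1) with $z$ replaced by $g$. Next, differentiating $\int_0^t G_1(g(s))/g(s)\,ds+G_0(g(t))$ and using $G_0'=\mu G_1$ together with $g'=-1/(\mu g)$ yields identically $0$, so from $G_0(1)=1$ I conclude $\int_0^t G_1(g)/g\,ds + G_0(g(t))\equiv 1$; substituting this into the explicit formula for $\psi$ shows $\psi(t)=\zeta_0(t)$ on $[0,\tau]$. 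Because $G_1$ is convex and agrees with the identity at $\rho$ and at $1$, the chord inequality $G_1(z)\leq z$ holds on $[\rho,1]$, so $\zeta_0=\mu g^2-\mu g G_1(g)\geq 0$, giving $\Gamma(\psi)=\psi=\zeta_0$ on $[0,\tau]$. For $t>\tau$ I set $z(t)\doteq f_\rho(t-\tau)$; the inverse function theorem applied to $F_\rho(z)=G_0(\rho)-G_0(\rho z)$ gives $z'=-1/(\rho\mu G_1(\rho z))$, and the computation of case (1) goes through with $f_1$ replaced by $\rho z$, delivering both $\zeta_k'=-r_k(\boldsymbol{\zeta})$ and $\psi'(t)=h(\rho z(t))/(\mu G_1(\rho z(t)))$ on this interval. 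Since $h$ is non-decreasing, $h(\rho z)\leq h(\rho)=\mu\rho(G_1'(\rho)-1)$; convexity of $G_1-\mathrm{id}$ with zeros at $\rho$ and $1$ and strictly positive slope at $1$ ($G_1'(1)-1=\nu-1>0$) forces $G_1'(\rho)\leq 1$, so $h(\rho z)\leq 0$ and $\psi'\leq 0$ on $(\tau,N]$; combined with $\psi(\tau)=0$ this gives $\psi\leq 0$ and $\Gamma(\psi)\equiv 0=\zeta_0$ there, completing the Skorokhod verification.

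The differentiation and algebraic manipulations above are routine; the real content is the sign control on $\psi$ required by $\zeta_0=\Gamma(\psi)$. The most delicate part is the supercritical case on $[0,\tau]$, where $\psi$ must start at $0$, rise to a strictly positive maximum, and return to $0$; both the identity $\zeta_0=\psi$ and the non-negativity $\psi\geq 0$ on this interval are ultimately consequences of the convexity-based chord inequality $G_1(z)\leq z$ on $[\rho,1]$, so the crux of the argument is to recognize and exploit the fixed-point structure of $G_1$.
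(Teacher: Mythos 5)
Your proposal is correct and follows essentially the same strategy as the paper's proof: plug in the constant control $\varphi_k\equiv 1$, observe that since $\ell$ vanishes only at $1$ it suffices to verify $\boldsymbol{\varphi}\in\mathcal{S}_T(\boldsymbol{\zeta},\psi)$ and $(\boldsymbol{\zeta},\psi)\in\mathcal{C}_T$, and then reduce the latter to sign control on $\psi$ via the Skorokhod identity. The computations $\zeta_k'=-r_k(\boldsymbol{\zeta})$ and the resulting formula $\psi'=h(z)/(\mu G_1(z))$ match what the paper does. Two places where you diverge in the details: for the supercritical case at $t>\tau$, the paper establishes $h(\rho)\le 0$ (equivalently $\sum k(k-2)p_k\rho^{k-1}\le 0$) by a telescoping algebraic manipulation exploiting $G_1(\rho)=\rho$, whereas you obtain it more slickly from the identity $h(\rho)=\mu\rho(G_1'(\rho)-1)$ and convexity of $G_1-\mathrm{id}$; both are valid and of comparable length. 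Second, you explicitly verify $\zeta_0\ge 0$ on $[0,\tau]$ via the chord inequality $G_1(z)\le z$ on $[\rho,1]$; the paper passes over this step, although it is logically needed both to justify $r(\boldsymbol{\zeta})=\mu g^2$ (which uses $(\zeta_0)^+=\zeta_0$) and to conclude $\Gamma(\psi)=\psi$ on $[0,\tau]$, so your treatment is actually a bit more complete here.

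One small gap on your side that you should not wave away: property (a) of $\mathcal{C}_T$ requires $\psi$ to be absolutely continuous, and the candidate $\psi$ involves the infinite sum $\sum_{k}(k-2)\zeta_k'$. Showing this sum is integrable (equivalently that $\sum_k k\,r_k(\boldsymbol{\zeta}(t))$ is controlled) is exactly where Assumption \ref{asp:exponential-boundN-S} (finite second moment of the degree distribution) enters; your draft does not invoke it and states absolute continuity as "immediate from the explicit expressions," which it is not for the series. This is precisely why the theorem is stated under the stronger moment hypothesis rather than Assumption \ref{asp:exponential-boundN}, so the appeal needs to be made explicit.
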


The following proposition says that there is a unique $(\boldsymbol{\zeta} ,\psi )$ satisfying $I_T(\boldsymbol{\zeta} ,\psi ) = 0$, so that this pair is the law of large numbers limit.

\begin{Proposition}
\label{prop:uniqueness_LLN} Suppose Assumptions \ref{asp:convgN} and \ref%
{asp:exponential-boundN-S} hold. Then the pair $(\boldsymbol{\zeta} ,\psi )$ defined in
Theorem \ref{thm:LLN} is the unique element of $\mathcal{D}_\infty\times 
\mathcal{D}$ such that $I_T(\boldsymbol{\zeta} ,\psi ) = 0$.
\end{Proposition}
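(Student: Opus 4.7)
My plan is to deduce from $I_T(\boldsymbol{\zeta}, \psi) = 0$ a rigid integral equation for $(\boldsymbol{\zeta}, \psi)$, extract a self-similar product form $\zeta_k(t) = p_k \alpha(t)^k$, and then integrate a single scalar ODE for $\alpha$, thereby recovering the formulas of Theorem \ref{thm:LLN}. The argument proceeds in three steps.

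\textbf{Reduction to $\varphi \equiv 1$.} Since $\ell(x) \geq 0$ with equality only at $x = 1$, any minimizing sequence $\boldsymbol{\varphi}^{(n)} \in \mathcal{S}_T(\boldsymbol{\zeta}, \psi)$ with $\sum_{k=0}^\infty \int \ell(\varphi_k^{(n)})\, ds\, dy \to 0$ has each $\varphi_k^{(n)} \to 1$ in $L^1([0,T] \times [0,1])$ by a standard de la Vall\'ee-Poussin/Pinsker argument (the bound $\int\ell(\varphi^{(n)}_k)\le C$ is a uniform integrability condition, and $\ell$-vanishing in the limit forces convergence in measure to $1$). Passing to the limit in \eqref{eq:psi}-\eqref{eq:phi_k}, using the uniform integrability over $k$ provided by Remark \ref{rmk:property_ODE}(a), one obtains that $(\boldsymbol{\zeta}, \psi)$ satisfies, for each $t \in [0,T]$,
\begin{equation*}
\zeta_k(t) = p_k - \int_0^t r_k(\boldsymbol{\zeta}(s))\, ds \text{ for } k \in \Nmb, \quad \psi(t) = \sum_{k=0}^\infty (k-2) \int_0^t r_k(\boldsymbol{\zeta}(s))\, ds,
\end{equation*}
together with $\zeta_0 = \Gamma(\psi)$. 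By Remark \ref{rmk:property_ODE}(c), it suffices to prove uniqueness on $[0, T_0]$ where $T_0 \doteq \tau_{\boldsymbol{\zeta}}$, since $\boldsymbol{\zeta} \equiv \zero$ afterwards.

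\textbf{Product form and the identity $r = \mu - 2t + L$.} On $[0, T_0)$ the integral equation gives $\zeta_k'(t) = -k\zeta_k(t)/r(\boldsymbol{\zeta}(t))$, so integrating $(\log \zeta_k)'$ yields
\begin{equation*}
\zeta_k(t) = p_k\, \alpha(t)^k, \qquad \alpha(t) \doteq \exp\!\bigg(\!-\!\int_0^t \frac{ds}{r(\boldsymbol{\zeta}(s))}\bigg) \in (0, 1].
\end{equation*}
Writing $L \doteq \zeta_0 - \psi$ for the Skorokhod regulator (non-decreasing, supported on $\{\zeta_0 = 0\}$), a short computation combining $\psi'(t) = \sum_{k} k^2\zeta_k(t)/r(\boldsymbol{\zeta}(t)) - 2$ (obtained directly from the integral equation for $\psi$ upon expanding $\sum_k (k-2) r_k$) with $r' = \zeta_0' + \sum_k k\zeta_k'$ gives the key identity
\begin{equation*}
r(\boldsymbol{\zeta}(t)) = \mu - 2t + L(t), \qquad t \in [0, T_0).
\end{equation*}
Combined with $r(\boldsymbol{\zeta}(t)) = \zeta_0(t) + \mu \alpha(t) G_1(\alpha(t))$, this yields, on any sub-interval where $L$ is constant (so $\zeta_0 > 0$), the relation $\zeta_0(t) = \mu\alpha(t)(\alpha(t) - G_1(\alpha(t)))$; on any sub-interval where $\zeta_0 \equiv 0$, $\alpha$ satisfies the closed scalar ODE $\alpha'(t) = -1/G_0'(\alpha(t))$.

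\textbf{Integration by regime and main obstacle.} If $\sum_k k(k-2) p_k \leq 0$, then $\nu \leq 1$ and convexity of $G_1$ with $G_1(1)=1,\ G_1'(1) \leq 1$ forces $\alpha - G_1(\alpha) \leq 0$ on $(0, 1]$, so $\zeta_0 > 0$ is impossible and $\zeta_0 \equiv 0$; then $G_0'(\alpha)\, d\alpha = -dt$ integrates to $G_0(\alpha(t)) = 1 - t$, giving $\alpha = f_1$ and Theorem \ref{thm:LLN}(1). When $\sum_k k(k-2) p_k > 0$, on the initial interval with $L \equiv 0$ one has $r(t) = \mu - 2t$ and $\alpha(t) = \sqrt{1 - 2t/\mu}$; this interval ends precisely at $\tau = \frac{\mu}{2}(1 - \rho^2)$, when $\alpha = \rho$ and $\zeta_0$ first returns to $0$. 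The hard step will be ruling out re-entry of $\zeta_0$ into $(0, \infty)$ after $\tau$: this requires the strict convexity of $G_1$ and the equality $G_1(\rho) = \rho$, which together give $\alpha - G_1(\alpha) < 0$ for $\alpha \in (0, \rho)$, so any re-entry would contradict the sign constraint on $\zeta_0 = \mu\alpha(\alpha - G_1(\alpha))$ during $L$-constant phases. Once re-entry is excluded, the quadrature $G_0(\alpha(t)) = G_0(\rho) - (t - \tau)$ on $[\tau, T_0]$ gives $\alpha(t) = \rho f_\rho(t - \tau)$, matching Theorem \ref{thm:LLN}(2). This convexity/sign analysis of the reflection, together with the $L^1$ limit passage in Step 1, forms the technical core of the proof.
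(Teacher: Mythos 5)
Your route is genuinely different from the paper's. You try to explicitly solve the zero-cost system by deriving the product form $\zeta_k(t)=p_k\alpha(t)^k$, the conservation law $r(\boldsymbol{\zeta}(t))=\mu-2t+L(t)$, and then integrating $\alpha$ by cases according to whether the regulator $L$ is constant or increasing. The paper instead never solves anything: it fixes two zero-cost pairs $(\boldsymbol{\zeta}^{(i)},\psi^{(i)})$, invokes Remark \ref{rmk:unique_varphi} to replace your minimizing-sequence/de la Vall\'ee-Poussin argument by a one-line achievement statement, shows $\frac{d}{dt}r(\boldsymbol{\zeta}^{(i)})\in[-2,0]$ so that $r$ is Lipschitz, and then performs the time change $(g^{(i)})'=r(\boldsymbol{\zeta}^{(i)}\circ g^{(i)})$ on $[0,\tau_{\boldsymbol{\zeta}^{(i)}})$. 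In the new clock the $\tilde{\zeta}_k$-equations become $\tilde{\zeta}_k'=-k\tilde{\zeta}_k$ with explicit solution, and $\tilde{\psi}$ satisfies a Skorokhod-type ODE whose right side is Lipschitz (via Lipschitzness of $\Gamma$); Gronwall gives uniqueness, and $g^{(1)}=g^{(2)}$ follows. This avoids the regime-by-regime sign analysis entirely, which is exactly where your proposal has a gap.

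The gap: your claim that $\zeta_0(t)=\mu\alpha(t)\bigl(\alpha(t)-G_1(\alpha(t))\bigr)$ holds ``on any sub-interval where $L$ is constant'' is only valid on the \emph{initial} such interval (where $L\equiv 0$). The identity $\alpha'=-\alpha/r$ together with $r'=-2+L'$ gives $\bigl(r/\alpha^2\bigr)'=L'/\alpha^2$, so $r/\alpha^2\equiv\mu$ only as long as $L$ has not yet increased; after the first excursion of $\zeta_0$ at level zero one has $r/\alpha^2=\mu+\int_0^t L'/\alpha^2>\mu$, and the displayed relation for $\zeta_0$ changes. Consequently the stated ``sign constraint'' contradiction does not rule out re-entry as written. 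The argument can be repaired, but by a different route: at a putative re-entry time $t_1$ with $\zeta_0(t_1)=0$ and $\alpha(t_1)=\alpha_1$, one needs $\psi'(t_1)>0$, which (using $r(t_1)=\sum_k k\zeta_k(t_1)$) is equivalent to $\sum_k k(k-2)p_k\alpha_1^{\,k}>0$, i.e.\ $\alpha_1 G_1'(\alpha_1)>G_1(\alpha_1)$. Ruling this out for $\alpha_1\le\rho$ requires noting that $\alpha\mapsto G_1(\alpha)/\alpha$ is convex (its second derivative is a nonnegative power series), so that $\bigl(G_1(\alpha)/\alpha\bigr)'\le 0$ for $\alpha\le\rho$ follows from the sign at $\rho$, which is what the proof of Theorem \ref{thm:LLN}(2) establishes. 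This extra convexity input is needed to make your step work; the paper's time-change sidesteps it.
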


%Once these Laplace principle upper and lower bounds hold with the rate function $I_T$, large deviation principle holds for $\{(\boldsymbol{X}^{n},Y^{n})\}_{n \in \mathbb{N}}$ with rate function $I_T$.
%For proof of this statement we refer to .

\section{Representation and Weak Convergence of Controlled Processes}

\label{sec:repnWCCP} We will use the following useful representation formula
proved in \cite{BudhirajaDupuisMaroulas2011variational}. For the second
equality in the theorem see the proof of Theorem $2.4$ in \cite%
{BudhirajaChenDupuis2013large}. The representation in the cited papers is
given in terms of a single Poisson random measure with points in a locally
compact Polish space. However for the current work it is convenient to
formulate the representation in terms of a countable sequence of independent
Poisson random measures on $[0,T]\times \lbrack 0,1]$. This representation
is immediate from the results in \cite%
{BudhirajaDupuisMaroulas2011variational} and \cite%
{BudhirajaChenDupuis2013large} by viewing the countable sequence of Poisson
random measures with points in $[0,T]\times \lbrack 0,1]$ and intensity the
Lebesgue measure $\lambda _{T}$ on $[0,T]\times \lbrack 0,1]$ as a single
PRM with points in the augmented space $[0,T]\times \lbrack 0,1]\times 
\mathbb{N}_{0}$ with intensity $\lambda _{T}\otimes \varrho $, where $%
\varrho $ is the counting measure on $\mathbb{N}$. Recall that $\bar{%
\mathcal{A}}_{+}$ denotes the class of $(\mathcal{\bar{P}}\times \mathcal{B}%
([0,1]))/\mathcal{B}(\mathbb{R}_{+})$-measurable maps from $\Omega \times
\lbrack 0,T]\times \lbrack 0,1]$ to $\mathbb{R}_{+}$. For each $m\in \mathbb{%
N}$ let 
\begin{align*}
& \bar{\mathcal{A}}_{b,m}\doteq \{(\varphi _{k})_{k\in \mathbb{N}%
_{0}}:\varphi _{k}\in \bar{\mathcal{A}}_{+}\mbox{ for each }k\in \mathbb{N}%
_{0}\mbox{ such that for all }(\omega ,t,y)\in \Omega \times \lbrack
0,T]\times \lbrack 0,1], \\
& \quad \qquad \qquad \qquad \qquad {1}/{m}\leq \varphi _{k}(\omega
,t,y)\leq m\mbox{ for }k\leq m\mbox{ and }\varphi _{k}(\omega ,t,y)=1\mbox{
for }k>m\}
\end{align*}%
and let $\bar{\mathcal{A}}_{b}\doteq \cup _{m=1}^{\infty }\bar{\mathcal{A}}%
_{b,m}$. Recall the function $\ell $ defined in \eqref{eq:ell}.

\begin{Theorem}
\label{thm:var_repn} Let $F \in \mathbb{M}_b([\mathcal{M}_{FC}([0,T]\times[%
0,1])]^\infty)$. Then for $\theta > 0$, 
\begin{align*}
-\log {{E}} e^{-F((N_k^\theta)_{k \in \mathbb{N}_0})} & =
\inf_{\varphi_k\in \bar{\mathcal{A}}_+, k \in \mathbb{N}_0} {{E}} %
\left[ \theta \sum_{k=0}^\infty \int_{[0,T] \times [0,1]}
\ell(\varphi_k(s,y))\,ds\,dy + F((N_k^{\theta\varphi_k})_{k \in \mathbb{N}%
_0}) \right] \\
& = \inf_{\boldsymbol{\varphi} = (\varphi_k)_{k \in \mathbb{N}_0} \in \bar{\mathcal{A}}%
_b} {{E}} \left[ \theta \sum_{k=0}^\infty \int_{[0,T] \times [0,1]}
\ell(\varphi_k(s,y))\,ds\,dy + F((N_k^{\theta\varphi_k})_{k \in \mathbb{N}%
_0}) \right].
\end{align*}
\end{Theorem}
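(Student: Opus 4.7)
The plan is to reduce the stated representation to the single-PRM version proved in \cite{BudhirajaDupuisMaroulas2011variational} by bundling the countable independent family $\{N_k\}_{k \in \mathbb{N}_0}$ into a single PRM on an augmented space. Specifically, take $\mathbb{X} \doteq [0,T] \times [0,1] \times \mathbb{N}_0$, which is a locally compact Polish space when $\mathbb{N}_0$ carries the discrete topology, and equip it with the $\sigma$-finite intensity $\mu \doteq \lambda_T \otimes \varrho$, where $\lambda_T$ is Lebesgue measure on $[0,T] \times [0,1]$ and $\varrho$ is counting measure on $\mathbb{N}_0$. Define the bundled measure $\hat N$ on $\mathbb{X}$ by $\hat N(A \times \{k\}) \doteq N_k(A)$; the independence and Poisson properties of the $N_k$ imply that $\hat N$ is a PRM on $\mathbb{X}$ with intensity $\mu$.

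Next, encode the data of the problem on the augmented space. A bounded measurable $F$ on $[\mathcal{M}_{FC}([0,T]\times[0,1])]^\infty$ lifts to a bounded measurable $\hat F$ on $\mathcal{M}_{FC}(\mathbb{X})$ via $\hat F(\eta) \doteq F((\eta(\cdot \times \{k\}))_{k \in \mathbb{N}_0})$, and $F((N_k^\theta)_{k \in \mathbb{N}_0}) = \hat F(\hat N^\theta)$, where $\hat N^\theta$ denotes the thinned PRM on $\mathbb{X}$ with intensity $\theta \mu$. Similarly, a predictable nonnegative control $\hat\varphi \colon \Omega \times [0,T] \times [0,1] \times \mathbb{N}_0 \to \mathbb{R}_+$ corresponds bijectively to a sequence $(\varphi_k)_{k \in \mathbb{N}_0}$ with $\varphi_k \in \bar{\mathcal{A}}_+$, via $\varphi_k(\cdot,\cdot,\cdot) \doteq \hat\varphi(\cdot,\cdot,\cdot,k)$, and Fubini/Tonelli yields the cost decomposition
\[
\theta \int_{\mathbb{X}} \ell(\hat\varphi(\omega,s,y,k)) \, \mu(ds\,dy\,dk) = \theta \sum_{k=0}^{\infty} \int_{[0,T]\times[0,1]} \ell(\varphi_k(s,y))\,ds\,dy.
\]
The thinned measure on the slice $\{k\}$ equals $N_k^{\theta\varphi_k}$, so under the identification the first equality is exactly the main variational representation of \cite[Theorem~2.1]{BudhirajaDupuisMaroulas2011variational} applied to $\hat N$ and $\hat F$.

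For the second equality, restricting the infimum to the subclass $\bar{\mathcal{A}}_b$ of controls that are bounded above and away from zero on finitely many coordinates and equal to $1$ on the remaining coordinates, I would follow the approximation scheme in the proof of \cite[Theorem~2.4]{BudhirajaChenDupuis2013large}, reinterpreted on $\mathbb{X}$. Given $\varepsilon > 0$, pick a nearly optimal $\hat\varphi$ in the first representation and construct $\hat\varphi^{(m)}$ by setting $\hat\varphi^{(m)}(\cdot,\cdot,\cdot,k) \doteq (\hat\varphi \vee 1/m) \wedge m$ for $k \le m$ and $\hat\varphi^{(m)}(\cdot,\cdot,\cdot,k) \doteq 1$ for $k > m$. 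Since $\ell(1) = 0$, the tail coordinates contribute zero cost, and the convexity/lower semicontinuity of $\ell$ together with dominated convergence control the truncation error on the finite block; the change in the functional value is handled by the fact that in expectation the thinned PRMs $\hat N^{\theta\hat\varphi^{(m)}}$ agree with $\hat N^{\theta\hat\varphi}$ on all but finitely many slices and converge on the finite block, combined with boundedness of $F$.

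The main obstacle I anticipate is the tail estimate in the truncation step: one must verify that under the variational representation the optimal controls can indeed be assumed to concentrate the nontrivial deviation from $1$ on a finite set of coordinates with the cost tail going to zero. This is where the assumption that $F$ is bounded (and the non-negativity of $\ell$) is essential, since it allows a straightforward comparison of costs and caps the penalty from simply setting $\varphi_k \equiv 1$ for large $k$. The rest is bookkeeping against the already established single-PRM representation.
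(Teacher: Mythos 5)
Your proposal matches the paper's argument exactly: the paper likewise obtains the representation by bundling the countable family $\{N_k\}_{k\in\mathbb{N}_0}$ into a single PRM on the augmented space $[0,T]\times[0,1]\times\mathbb{N}_0$ with intensity $\lambda_T\otimes\varrho$ and then invoking the single-PRM representation of \cite{BudhirajaDupuisMaroulas2011variational} for the first equality and the proof of Theorem 2.4 in \cite{BudhirajaChenDupuis2013large} for the second. The extra detail you supply on the truncation and tail-cost steps simply fleshes out what the cited references already establish.
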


Fix $h\in \mathbb{C}_{b}(\mathcal{D}_{\infty }\times \mathcal{D})$. Since $%
(\boldsymbol{X}^{n},Y^{n})$ can be written as $\Psi ((N_{k}^{n})_{k\in \mathbb{N}_{0}})$
for some measurable function $\Psi $ from $[\mathcal{M}_{FC}([0,T]\times
\lbrack 0,1])]^{\infty }$ to $\mathcal{D}_{\infty }\times \mathcal{D}$, we
have from the second equality in Theorem \ref{thm:var_repn} that with $%
(\theta ,F)=(n,nh\circ \Psi )$, 
\begin{equation}
-\frac{1}{n}\log {{E}}e^{-nh(\boldsymbol{X}^{n},Y^{n})}=\inf_{\boldsymbol{\varphi}
^{n}=(\varphi _{k}^{n})_{k\in \mathbb{N}_{0}}\in \bar{\mathcal{A}}_{b}}{%
{E}}\left\{ \sum_{k=0}^{\infty }\int_{[0,T]\times \lbrack 0,1]}\ell
(\varphi _{k}^{n}(s,y))\,ds\,dy+h(\bar{\boldsymbol{X}}^{n},{\bar{Y}}%
^{n})\right\} .  \label{eq:mainrepn17}
\end{equation}%
Here $(\bar{\boldsymbol{X}}^{n},{\bar{Y}}^{n})=\Psi
((N_{k}^{n\varphi _{k}^{n}})_{k\in \mathbb{N}_{0}})$, which solves the
controlled analogue of \eqref{eq:Y_n}--\eqref{eq:X_n_k}, namely $\bar{\boldsymbol{X}}%
^{n}(0)\doteq \frac{1}{n}(-1,n_{1},n_{2},\dotsc )$, and for $%
t\in \lbrack 0,T]$, 

\begin{align}
{\bar{Y}}^{n}(t)& ={\bar{X}}_{0}^{n}(0)+\sum_{k=0}^{\infty }\frac{k-2}{n}\int_{[0,t]\times \lbrack 0,1]}{%
{1}}_{[0,r_{k}(\bar{\boldsymbol{X}}^{n}(s-)))}(y)\,N_{k}^{n\varphi _{k}^{n}}(ds\,dy)
\label{eq:Ybar_n_upper_temp} \\
{\bar{X}}_{0}^{n}(t)& ={\bar{Y}}^{n}(t) +\frac{2}{n}\sum_{k=0}^{\infty }\int_{[0,t]\times \lbrack 0,1]}{%
{1}}_{\{{\bar{X}}_{0}^{n}(s-)<0\}}{{1}}%
_{[0,r_{k}(\bar{\boldsymbol{X}}^{n}(s-)))}(y)\,N_{k}^{n\varphi
_{k}^{n}}(ds\,dy),  \label{eq:Xbar_n_0_upper_temp} \\
{\bar{X}}_{k}^{n}(t)& ={\bar{X}}_{k}^{n}(0)-\frac{1%
}{n}\int_{[0,t]\times \lbrack 0,1]}{{1}}_{[0,r_{k}(\bar{\boldsymbol{X}}%
^{n}(s-)))}(y)\,N_{k}^{n\varphi _{k}^{n}}(ds\,dy),\; k\in \mathbb{%
N}.  \label{eq:Xbar_n_k_upper_temp}
\end{align}%
There is a bar in the notation $\bar{\boldsymbol{X}}^n, \bar{Y}^n$ (and $\bar{\nu}^n$ defined in \eqref{eq:nu_n_upper} below) to indicate that these are `controlled' processes, given in terms of the 
control sequence $\boldsymbol{\varphi}^{n}\doteq (\varphi _{k}^{n})_{k\in \mathbb{N}_{0}}$.
We will occasionally suppress the dependence on $\boldsymbol{\varphi}^{n}$ in the notation and will make this dependence explicit if there are multiple controls (e.g.\ as in Section \ref{sec:upper})

In the proof of both the upper and lower bound we will show it is sufficient to
consider a sequence $\{\varphi _{k}^{n}\in \bar{\mathcal{A}}_{+},k\in 
\mathbb{N}_{0}\}$ that satisfies the following uniform bound for some $%
M_{0}<\infty$: 
\begin{equation}
\sup_{n\in \mathbb{N}}\sum_{k=0}^{\infty }\int_{[0,T]\times \lbrack
0,1]}\ell (\varphi _{k}^{n}(s,y))\,ds\,dy\leq M_{0},\mbox{ a.s. }{%
P}.  \label{eq:cost_bd_upper}
\end{equation}
In the rest of this section we study tightness and convergence properties of
controlled processes $(\bar{\boldsymbol{X}}^{n},{\bar{Y}}^{n})$
that are driven by controls $\{\varphi_k^n\}$ that satisfy the above a.s.
bound. 
%From now on we suppress $\boldsymbol{\varphi}^n$ from $(\bar{\boldsymbol{X}}^{n,\boldsymbol{\varphi}^n},{%
%\bar{Y}}^{n,\boldsymbol{\varphi}^n})$ for notational simplicity.

From \eqref{eq:Ybar_n_upper_temp}--\eqref{eq:Xbar_n_k_upper_temp} we can
rewrite
\begin{align}
{\bar{Y}}^{n}(t)& ={\bar{X}}_{0}^{n}(0)+\sum_{k=0}^{\infty }(k-2){\bar{B}}%
_{k}^{n}(t),  \label{eq:Ybar_n_upper} \\
{\bar{X}}_{0}^{n}(t)& ={\bar{Y}}^{n}(t)+{\bar{\eta}}^{n}(t),
\label{eq:Xbar_n_0_upper} \\
{\bar{X}}_{k}^{n}(t)& ={\bar{X}}_{k}^{n}(0)-{\bar{B}}_{k}^{n}(t),k\in 
\mathbb{N},  \label{eq:Xbar_n_k_upper}
\end{align}%
where 
\begin{align}
{\bar{B}}_{k}^{n}(t)& \doteq \frac{1}{n}\int_{[0,t]\times \lbrack 0,1]}{%
{1}}_{[0,r_{k}(\bar{\boldsymbol{X}}^{n}(s-)))}(y)\,N_{k}^{n\varphi
_{k}^{n}}(ds\,dy),k\in \mathbb{N}_{0},  \label{eq:Bbar_n_k_upper} \\
{\bar{\eta}}^{n}(t)& \doteq \sum_{k=0}^{\infty }\frac{2}{n}\int_{[0,t]\times
\lbrack 0,1]}{{1}}_{\{{\bar{X}}_{0}^{n}(s-)<0\}}{{1}}%
_{[0,r_{k}(\bar{\boldsymbol{X}}^{n}(s-)))}(y)\,N_{k}^{n\varphi _{k}^{n}}(ds\,dy)  \notag
\\
& =\sum_{k=1}^{\infty }\frac{2}{n}\int_{[0,t]\times \lbrack 0,1]}{%
{1}}_{\{{\bar{X}}_{0}^{n}(s-)<0\}}{{1}}_{[0,r_{k}(\bar{\boldsymbol{X}}^{n}(s-)))}(y)\,N_{k}^{n\varphi _{k}^{n}}(ds\,dy).
\label{eq:etabar_n_upper}
\end{align}%
Here the last line follows on observing that ${{1}}_{\{{\bar{X}}_{0}^{n}(s-)<0\}}{{1}}_{[0,r_{0}(\bar{\boldsymbol{X}}^{n}(s-)))}(y)\equiv 0$.

Since $m_1 \doteq \sup_{n \in \mathbb{N}} \sum_{k=1}^\infty k \frac{n_k}{n}
< \infty$ by Assumption \ref{asp:exponential-boundN}, \blue{using \eqref{eq:r_k}} we have $-\frac{1}{n} \le {\bar{X}}^n_0(t) \le m_1$, $0 \le r(\bar{\boldsymbol{X}}^n(t)) \le m_1$ and $0 \le {\bar{X}}^n_k(t) \le \frac{n_k}{n}$ for $t \in [0,T]$. 
\cg{In particular, the nonnegativity of $\bar{X}_{k}^{n}(t)$ is an immediate consequence of the evolution equation \eqref{eq:Xbar_n_k_upper_temp}  on observing that $r_k(\bar{\boldsymbol{X}}^{n}(s-))=0$ if 
$\bar{X}_{k}^{n}(s-)=0$ and that the jumps of $\bar{X}_{k}^{n}$ are of size $1/n$.}
Also note that both $r(\bar{\boldsymbol{X}}^n(\cdot))$ and ${\bar{X}}^n_k(\cdot)$ for $k \in \mathbb{N}$ are non-increasing.

The following lemma summarizes some elementary properties of $\ell$. For
part (a) we refer to \cite[Lemma 3.1]{BudhirajaDupuisGanguly2015moderate},
and part (b) is an easy calculation that is omitted.

\begin{Lemma}
\phantomsection
%	This phatom section command is used to prevent hyper-ref jumping to the first page
\label{lem:property_ell}

\begin{enumerate}[\upshape(a)] %	\item
%		\textbf{May not be used:} For $a,b \in (0,\infty)$ and $\lambda \in [1,\infty)$, $ab \le e^{\lambda a} + \frac{1}{\lambda} \ell(b)$.

\item For each $\beta > 0$, there exists $\gamma(\beta) \in (0,\infty)$ such
that $\gamma(\beta) \to 0$ as $\beta \to \infty$ and $x \le \gamma(\beta)
\ell(x)$, for $x \ge \beta > 1$.

\item For $x \ge 0$, $x \le \ell(x) + 2$.
\end{enumerate}
\end{Lemma}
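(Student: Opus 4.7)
My plan is to handle the two parts separately, since they are of quite different flavor but both elementary.

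For part (a), the key observation is that $\ell(x) = x\log x - x + 1$ grows like $x\log x$ for large $x$, so the ratio $x/\ell(x)$ behaves like $1/\log x$ and vanishes as $x\to\infty$. Concretely, I would rewrite $x/\ell(x) = 1/(\log x - 1 + 1/x)$, valid whenever the denominator is positive. Since $\ell$ is strictly convex with a unique minimum $\ell(1)=0$, we have $\ell(x)>0$ for all $x>1$, so the denominator is strictly positive on $(1,\infty)$. For fixed $\beta>1$ I would then simply set
\begin{equation*}
\gamma(\beta) \doteq \sup_{x \ge \beta} \frac{x}{\ell(x)} = \sup_{x \ge \beta} \frac{1}{\log x - 1 + 1/x}.
\end{equation*}
On $(1,\infty)$ the function $x \mapsto \log x - 1 + 1/x$ has derivative $1/x - 1/x^2 = (x-1)/x^2>0$, so it is strictly increasing, and hence the supremum above is attained at $x=\beta$, giving $\gamma(\beta) = 1/(\log\beta - 1 + 1/\beta)$. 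From this explicit formula it is immediate that $\gamma(\beta)\in(0,\infty)$ for every $\beta>1$ and that $\gamma(\beta)\to 0$ as $\beta\to\infty$, and by construction $x \le \gamma(\beta)\ell(x)$ for all $x\ge \beta$.

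For part (b), I want to show $f(x) \doteq \ell(x) + 2 - x = x\log x - 2x + 3 \ge 0$ for all $x\ge 0$. This is an elementary one-variable optimization: $f'(x) = \log x - 1$, with a unique zero at $x=e$, and $f''(x) = 1/x >0$, so $x=e$ is the global minimum on $(0,\infty)$, with value $f(e) = e - 2e + 3 = 3 - e > 0$. At the left endpoint $f(0) = 3 > 0$ (using the convention $0\log 0 = 0$), and $f(x)\to \infty$ as $x\to\infty$. Hence $f\ge 3-e > 0$ throughout, which yields $x \le \ell(x) + 2$ as required.

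Neither part presents a real obstacle; part (a) is stated as known in the cited reference and part (b) is a routine calculus exercise, so the proposal above essentially writes out what the paper leaves to the reader.
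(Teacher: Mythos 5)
Your proof is correct, and both parts are complete. The paper supplies no argument of its own here: it cites part (a) to Lemma 3.1 of the Budhiraja--Dupuis--Ganguly reference and declares part (b) an omitted ``easy calculation.'' Your computations — the explicit choice $\gamma(\beta) = 1/(\log\beta - 1 + 1/\beta)$, justified by monotonicity of $x\mapsto \log x - 1 + 1/x$ on $(1,\infty)$ in (a), and the one-variable minimization of $x\log x - 2x + 3$ at $x=e$ in (b) — simply fill in what the paper delegates to the reader and to the cited lemma, and they do so correctly.
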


The next lemma gives some uniform integrability properties for the
control sequence $\boldsymbol{\varphi}^n$.

\begin{Lemma}
\label{lem:UI_upper} 
% Suppose that Assumptions \ref{asp:convgN} and \ref%
% {asp:exponential-boundN} hold.
For $K\in \mathbb{N}$ define%
\begin{equation}
{\bar{U}}_{K}\doteq \sup_{n\in \mathbb{N}}{{E}}\left\{
\sum_{k=K}^{\infty }\int_{[0,T]\times \lbrack 0,1]}k\varphi _{k}^{n}(s,y){%
{1}}_{[0,r_{k}(\bar{\boldsymbol{X}}^{n}(s)))}(y)\,ds\,dy\right\} .
\label{eq:Ubar_K}
\end{equation}%
%		${\bar{U}}_K < \infty$ for each $K \in \mathbb{N}$ and 
Then ${\bar{U}}_{K}<\infty $ for each $K\in \mathbb{N}$ and $%
\lim_{K\rightarrow \infty }{\bar{U}}_{K}=0$.
\end{Lemma}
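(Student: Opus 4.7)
My plan is to bound $\bar{U}_K$ by using a pathwise mass conservation identity for the components $\bar{X}_k^n$ together with the compensation formula for the controlled Poisson random measures. The key observation is that the quantity $\int_{[0,T]\times[0,1]} \mathbf{1}_{[0,r_k(\bar{\boldsymbol{X}}^n(s)))}(y)\,\varphi_k^n(s,y)\,ds\,dy$ is, up to a factor of $n$, the compensator of the jump process $\bar{B}_k^n$, whose total variation on $[0,T]$ is controlled \emph{deterministically} by $n_k/n$ via \eqref{eq:Xbar_n_k_upper}.

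First, I would observe that by \eqref{eq:Xbar_n_k_upper} and the nonnegativity and non-increase of $\bar{X}_k^n$ recorded just after \eqref{eq:etabar_n_upper}, we have pathwise
\begin{equation*}
0 \le \bar{B}_k^n(T) = \bar{X}_k^n(0) - \bar{X}_k^n(T) \le \bar{X}_k^n(0) = \frac{n_k}{n}.
\end{equation*}
Next, since $\mathbf{1}_{[0,r_k(\bar{\boldsymbol{X}}^n(s-)))}(y)$ is a bounded predictable integrand and $N_k^{n\varphi_k^n}$ has compensator $n\varphi_k^n(s,y)\,ds\,dy$ (with $\varphi_k^n$ bounded, as may be assumed by restricting to $\bar{\mathcal{A}}_b$ as in \eqref{eq:mainrepn17}; the general case follows by a routine truncation/monotone convergence argument), the compensation formula yields
\begin{equation*}
\Ebf\left[\int_{[0,T]\times[0,1]} \mathbf{1}_{[0,r_k(\bar{\boldsymbol{X}}^n(s)))}(y)\,\varphi_k^n(s,y)\,ds\,dy\right] = \Ebf[\bar{B}_k^n(T)] \le \frac{n_k}{n}.
\end{equation*}

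Finally, interchanging expectation and the sum over $k$ (justified by Tonelli since the integrand is nonnegative), multiplying by $k$ and invoking Assumption \ref{asp:exponential-boundN}, I would conclude
\begin{equation*}
\bar{U}_K \le \sup_{n\in\Nmb}\sum_{k=K}^{\infty} k\cdot \frac{n_k}{n} \le K^{-\varepsilon_{\boldsymbol{p}}}\sup_{n\in\Nmb}\sum_{k=1}^{\infty} k^{1+\varepsilon_{\boldsymbol{p}}}\frac{n_k}{n} < \infty,
\end{equation*}
which is finite for every $K$ and tends to $0$ as $K\to\infty$.

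There is no substantive obstacle here; the argument is essentially the pathwise mass balance for $\bar{X}_k^n$ combined with the first moment (compensator) identity for controlled Poisson random measures, plus the tail decay of $\sum k n_k/n$ furnished by Assumption \ref{asp:exponential-boundN}. The only minor point deserving care is the compensation identity when $\varphi_k^n$ is not a priori bounded, which is handled by approximating $\varphi_k^n$ by $\varphi_k^n\wedge M$ and passing to the limit using monotone convergence on both sides; the pathwise bound by $n_k/n$ on the left ensures the identity survives the limit.
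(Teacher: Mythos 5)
Your proof is correct and follows essentially the same route as the paper's: the identity $\bar{B}_k^n(T) = \bar{X}_k^n(0) - \bar{X}_k^n(T)$ from \eqref{eq:Xbar_n_k_upper}, the pathwise bound $\bar{B}_k^n(T)\le n_k/n$, the compensation formula to equate $E[\hat B_k^n(T)]$ with $E[\bar B_k^n(T)]$, Tonelli, and the tail bound from Assumption \ref{asp:exponential-boundN}. The paper compresses the compensation step into the phrase ``From \eqref{eq:Bbar_n_k_upper} and \eqref{eq:Xbar_n_k_upper} it follows that $\bar U_K = \sup_n E\{\sum_{k\ge K} k\bar B_k^n(T)\}$,'' whereas you spell it out, which is a useful clarification.

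One remark on the technical point you flag: the concern about unbounded $\varphi_k^n$ is not a real obstruction. The compensation identity $E[\int H\,dN] = E[\int H\,dA]$ for a counting process $N$ with compensator $A$ holds for \emph{any} nonnegative predictable integrand $H$, with both sides allowed to equal $+\infty$; no boundedness of the control is needed. Moreover, the truncation fix as you describe it — replacing $\varphi_k^n$ by $\varphi_k^n\wedge M$ — would change the controlled measure $N_k^{n\varphi_k^n}$ and hence the entire trajectory $\bar{\boldsymbol{X}}^n$, so it is not a monotone approximation of the fixed process you want to analyze. If one insisted on a limiting argument, the correct device would be to stop the (fixed) local martingale $\tilde B_k^n = \bar B_k^n - \hat B_k^n$ at localizing times and use that $\bar B_k^n(T)\le n_k/n$ and $\hat B_k^n(T)\le M_0+2T$ a.s.\ under \eqref{eq:cost_bd_upper} to conclude it is a bounded, hence true, martingale. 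But none of this is needed for the lemma; the unconditional compensation identity suffices.
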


\begin{proof}
	From \eqref{eq:Bbar_n_k_upper} and \eqref{eq:Xbar_n_k_upper} it follows that
	\begin{equation*}
		{\bar{U}}_K = \sup_{n \in \mathbb{N}} {{E}} \left\{ \sum_{k=K}^\infty k {\bar{B}}^n_k(T) \right\} = \sup_{n \in \mathbb{N}} {{E}} \left\{ \sum_{k=K}^\infty k \left[ {\bar{X}}^n_k(0) - {\bar{X}}^n_k(T) \right] \right\} \le \sup_{n \in \mathbb{N}} \sum_{k=K}^\infty \frac{kn_k}{n}.
	\end{equation*}
	Recalling $\varepsilon_{\boldsymbol{p}} \in (0,\infty)$ introduced in Assumption \ref{asp:exponential-boundN}, we have
	\begin{equation*}
%		\label{eq:UI_pf}
		\sup_{n \in \mathbb{N}} \sum_{k=K}^\infty \frac{kn_k}{n} \le 
		%\sup_{n \in \mathbb{N}} K^{-\varepsilon_{\boldsymbol{p}}} \sum_{k=K}^\infty \frac{n_k}{n} k^{1+ \varepsilon_{\boldsymbol{p}}} \le 
		K^{-\varepsilon_{\boldsymbol{p}}} \sup_{n \in \mathbb{N}} \sum_{k=1}^\infty \frac{n_k}{n} k^{1+ \varepsilon_{\boldsymbol{p}}} \to 0
	\end{equation*}
	as $K \to \infty$.
	The result follows.
%	This gives part (a).
%
%	Similarly, since
%	\begin{equation*}
%		\sup_{n \in \mathbb{N}} \sum_{k=K}^\infty k \sup_{0 \le t \le T} |{\bar{B}}^n_k(t)| \le \sup_{n \in \mathbb{N}} \sum_{k=K}^\infty k \sup_{0 \le t \le T} |{\bar{X}}^n_k(t)| = \sup_{n \in \mathbb{N}} \sum_{k=K}^\infty \frac{kn_k}{n},
%	\end{equation*}
%	we have part (b) and (c).
\end{proof}

The following lemma proves some key tightness properties. Write $\bar{\boldsymbol{B}}^n
\doteq \{{\bar{B}}^n_k\}_{k \in \mathbb{N}_0}$.
Define $\bar{\boldsymbol{\nu}}^{n}\doteq \{\bar{\nu}_{k}^{n}\}_{k\in 
\mathbb{N}_{0}}$, where for $k\in \mathbb{N}_{0}$,
\begin{equation}
\bar{\nu} _{k}^{n}([0,t]\times A)\doteq \int_{\lbrack 0,t]\times
A}\varphi _{k}^{n}(s,y)\,ds\,dy,\quad t\in \lbrack 0,T],A\in \mathcal{B}%
([0,1]).  \label{eq:nu_n_upper}
\end{equation}%

\begin{Lemma}
\label{lem:tightness} Suppose that the bound in \eqref{eq:cost_bd_upper} is
satisfied. Then the sequence of random variables $\{(\bar{\boldsymbol{\nu}}^{n}, \bar{\boldsymbol{X}}^n, {\bar{Y}}^n, \bar{\boldsymbol{B}}^n, {\bar{\eta}}^n)\}$ is tight in $[%
\mathcal{M}([0,T]\times[0,1])]^\infty \times \mathcal{D}_\infty \times 
\mathcal{D} \times \mathcal{D}_\infty \times \mathcal{D}$.
\end{Lemma}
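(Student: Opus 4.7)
The plan is to establish tightness of each coordinate separately, then combine. Throughout, I will use that all the processes have jumps of order $O(1/n)$ (with a tail control for $\bar Y^n$), so that $\mathcal{D}$-tightness will automatically upgrade to $\mathcal{C}$-tightness in later uses.

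\textbf{Step 1 (Tightness of $\bar{\boldsymbol{\nu}}^n$).} For each $k\in \mathbb{N}_0$, by Lemma~\ref{lem:property_ell}(b) and \eqref{eq:cost_bd_upper},
\begin{equation*}
	\bar{\nu}_k^n([0,T]\times[0,1]) \;=\; \int_{[0,T]\times[0,1]}\varphi_k^n(s,y)\,ds\,dy \;\leq\; \int_{[0,T]\times[0,1]}\ell(\varphi_k^n)+2T \;\leq\; M_0+2T \quad \text{a.s.}
\end{equation*}
The superlinear growth of $\ell$ (Lemma~\ref{lem:property_ell}(a)) combined with \eqref{eq:cost_bd_upper} gives uniform integrability of $\{\varphi_k^n\}_n$ over $[0,T]\times[0,1]$ for each $k$, whence each $\{\bar\nu_k^n\}_n$ is tight in $\mathcal{M}([0,T]\times[0,1])$. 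Tightness in the countable product $[\mathcal{M}([0,T]\times[0,1])]^\infty$ follows.

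\textbf{Step 2 (Coordinates $\bar X_k^n$, $\bar B_k^n$ for $k\geq 1$).} Each $\bar X_k^n$ is non-increasing, takes values in $[0, n_k/n]\subset[0,p_k+1]$, and has jumps of size $1/n$; each $\bar B_k^n$ is non-decreasing with the same bound on range. For any bounded, monotone real-valued sequence of $\mathcal{D}$-processes with vanishing jumps, $\mathcal{C}$-tightness is immediate (equicontinuity from monotonicity plus the small-jump estimate). This gives $\mathcal{C}$-tightness of each of these coordinates, hence of the $\mathbb{R}^\infty$-valued process $\{\bar X_k^n\}_{k\geq 1}$ and $\{\bar B_k^n\}_{k\geq 1}$ in $\mathcal{D}_\infty$.

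\textbf{Step 3 (The coordinate $\bar B_0^n$).} Decompose $\bar B_0^n = A_0^n + M_0^n$ where
\begin{equation*}
	A_0^n(t)\doteq \int_{[0,t]\times[0,1]} \mathbf{1}_{[0,r_0(\bar{\boldsymbol{X}}^n(s))]}(y)\varphi_0^n(s,y)\,ds\,dy
\end{equation*}
is the compensator and $M_0^n$ is a local martingale with jumps of size $1/n$. Since $r_0(\bar{\boldsymbol{X}}^n(\cdot))\leq 1$, the increments of $A_0^n$ are controlled by those of $\bar\nu_0^n$; hence $A_0^n(T)\leq M_0+2T$ and, by the tightness of $\bar\nu_0^n$, $A_0^n$ is $\mathcal{C}$-tight. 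Doob's inequality applied to $M_0^n$ together with the $1/n$ jump bound gives $\sup_{t\leq T}|M_0^n(t)|\to 0$ in probability. Thus $\bar B_0^n$ is $\mathcal{C}$-tight.

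\textbf{Step 4 (The coordinate $\bar Y^n$; main obstacle).} From \eqref{eq:Ybar_n_upper},
\begin{equation*}
	\bar Y^n(t) \;=\; \bar X_0^n(0) - 2\bar B_0^n(t) + \sum_{k=1}^{K-1}(k-2)\bar B_k^n(t) + R_K^n(t), \qquad R_K^n(t)\doteq \sum_{k=K}^\infty (k-2)\bar B_k^n(t).
\end{equation*}
The finite sum is tight by Steps 2--3. For the tail, since $\bar B_k^n$ is non-decreasing and $\bar B_k^n(T)=\bar X_k^n(0)-\bar X_k^n(T)\leq n_k/n$ for $k\geq 1$, we have $|R_K^n(t)|\leq \sum_{k=K}^\infty k\,\bar B_k^n(T)$ for all $t$, and Lemma~\ref{lem:UI_upper} together with the observation $E\sum_{k\geq K} k\bar B_k^n(T)\leq \sum_{k\geq K}k n_k/n\to 0$ (Assumption~\ref{asp:exponential-boundN}) gives $\lim_{K\to\infty}\sup_n E\sup_{t\leq T}|R_K^n(t)|=0$. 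This is exactly what is needed to transfer $\mathcal{C}$-tightness of the finite truncations to $\mathcal{C}$-tightness of $\bar Y^n$ in $\mathcal{D}$. Controlling this infinite-sum tail uniformly in $n$ is the main technical point of the proof, and it is precisely the role of Lemma~\ref{lem:UI_upper}.

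\textbf{Step 5 (The coordinates $\bar\eta^n$ and $\bar X_0^n$).} From \eqref{eq:Xbar_n_0_upper}--\eqref{eq:etabar_n_upper}, $\bar\eta^n$ is non-decreasing and increases only when $\bar X_0^n(s-)<0$, i.e., $\bar X_0^n(s-)=-1/n$. A standard discrete Skorokhod reflection argument, which I will carry out explicitly, gives $\bar\eta^n(T)\leq -\inf_{0\leq s\leq T}\bar Y^n(s) + C/n$ and the Lipschitz property of $\Gamma$ (up to an $O(1/n)$ error). Tightness of $\bar Y^n$ from Step 4 therefore yields $\mathcal{C}$-tightness of $\bar\eta^n$, and consequently of $\bar X_0^n = \bar Y^n + \bar\eta^n$.

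\textbf{Step 6 (Conclusion).} Steps 1--5 give marginal tightness of each component. Tightness in the product space $[\mathcal{M}([0,T]\times[0,1])]^\infty\times\mathcal{D}_\infty\times\mathcal{D}\times\mathcal{D}_\infty\times\mathcal{D}$ follows from marginal tightness, completing the proof. The only nontrivial step is Step~4, where the infinite-dimensional nature of the state forces us to invoke the tail uniform integrability from Lemma~\ref{lem:UI_upper}; the remaining steps are monotonicity and compensator arguments of a standard flavor.
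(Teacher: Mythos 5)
Your Step 2 contains a genuine gap. The assertion that a bounded, monotone $\mathcal{D}$-valued sequence with jumps of order $O(1/n)$ is automatically $\mathcal{C}$-tight is false: the deterministic, monotone, $[0,1]$-valued, continuous functions $f_n(t)=\min(nt,1)$ satisfy $w'(f_n,\delta)\geq 1$ whenever $1/n<\delta$, so $\{f_n\}$ is not even relatively compact in $\mathbb{D}([0,T]:\mathbb{R})$. Boundedness, monotonicity and vanishing jump sizes say nothing about how rapidly increments can accumulate over a short interval. In the present setting that accumulation rate is governed by the jump intensity $n\varphi_k^n\,r_k(\bar{\boldsymbol{X}}^n)$, and the only handle on it is the cost bound \eqref{eq:cost_bd_upper} combined with the superlinear growth of $\ell$ (Lemma~\ref{lem:property_ell}(a)); that is what supplies a modulus of continuity uniform in $n$. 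Your Step~3 correctly carries out precisely this argument for $k=0$ (compensator $A_0^n$ plus vanishing martingale), but the identical split $\bar B_k^n={\hat{B}}_k^n+{\tilde{B}}_k^n$ is needed for every $k\geq 1$ as well; Step~2 cannot bypass the rate control by invoking monotonicity.

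Once Step~2 is repaired by applying the Step~3 argument for all $k$, the remainder of your plan (truncation of the $\bar Y^n$ tail via Lemma~\ref{lem:UI_upper}, a discrete Skorokhod/Lipschitz argument for $\bar\eta^n$) would give a workable alternative to the paper's route. The paper instead treats $(\bar Y^n, \bar{\boldsymbol{B}}^n, \bar\eta^n)$ jointly, proving the single Aldous--Kurtz estimate \eqref{eq:tightness_upper} over stopping times, which combines the $k$-truncation, the $\gamma(M)$ bound of Lemma~\ref{lem:property_ell}(a), and Lemma~\ref{lem:UI_upper} in one display, and then upgrades to $\mathcal{C}$-tightness via the a.s.\ jump bounds and \cite{Billingsley1999}; it does not use the Skorokhod-map structure at the tightness stage (that enters only in Lemma~\ref{lem:char_limit}(d)). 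If you pursue your Step~5 route, note that the reflection here is at $-1/n$ and the pushing increment $2/n$ occurs together with a state-dependent jump of $\bar Y^n$, so the discrete Skorokhod comparison $\bar\eta^n \approx \Gamma(\bar Y^n)-\bar Y^n$ should be verified explicitly rather than taken as standard.
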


\begin{proof}
	We will argue the tightness of $\{\bar{\boldsymbol{\nu}}^{n}\}$ in $[\mathcal{M}([0,T]\times[0,1])]^\infty$ and the $\mathcal{C}$-tightness of $\{{\bar{\boldsymbol{X}}}^n\}$, $\{{\bar{Y}}^n\}$, $\{{\bar{\boldsymbol{B}}}^n\}$, and
	$\{{\bar{\eta}}^n\}$ in $\mathcal{D}_\infty$, $\mathcal{D}$, $\mathcal{D}_\infty$, and $\mathcal{D}$ respectively.
	This will complete the proof.
	
	Consider first $\{\bar{\boldsymbol{\nu}}^{n}\}$. 
	Note that $[0,T] \times [0,1]$ is a compact metric space.
	Also from Lemma \ref{lem:property_ell}(b) and \eqref{eq:cost_bd_upper} we have a.s.\ for each $k \in \mathbb{N}_0$,
	\begin{equation*}
		\bar{\nu}^{n}_k([0,T]\times[0,1]) = \int_{[0,T] \times [0,1]} \varphi_k^n(s,y) \,ds\,dy \le \int_{[0,T] \times [0,1]} \left( \ell(\varphi_k^n(s,y))+2 \right) ds\,dy \le M_0 + 2T.
	\end{equation*}
	Hence $\{\bar{\nu}^{n}_k\}$ is tight in $\mathcal{M}([0,T]\times[0,1])$.	
	
	Next, since ${\bar{X}}^n_k(0) \in [0,1]$ for $k \in \mathbb{N}$ a.s.,  we see from \eqref{eq:Xbar_n_0_upper} and \eqref{eq:Xbar_n_k_upper} that $\mathcal{C}$-tightness of $\{{\bar{\boldsymbol{X}}}^n\}$ in $\mathcal{D}_\infty$ follows once we show $\mathcal{C}$-tightness of $\{{\bar{Y}}^n\}$, $\{{\bar{\boldsymbol{B}}}^n\}$ and  $\{{\bar{\eta}}^n\}$.

	We now show that $\{({\bar{Y}}^n(t), {\bar{\boldsymbol{B}}}^n(t), {\bar{\eta}}^n(t)))\}$ is tight for each $t$.
	From \eqref{eq:Ybar_n_upper}, \eqref{eq:Bbar_n_k_upper} and \eqref{eq:etabar_n_upper},
	 
	\begin{align*}
		& {{E}} \left[ |{\bar{Y}}^n(t)| + \sum_{k=0}^\infty |{\bar{B}}^n_k(t)| + |{\bar{\eta}}^n(t)| \right] \\
		& \le \frac{1}{n}+ \sum_{k=0}^\infty [|k-2|+1] {{E}}|{\bar{B}}^n_k(t)| + {{E}}|{\bar{\eta}}^n(t)| \\
		& \le \frac{1}{n}+{{E}} \sum_{k=0}^\infty \int_{[0,T] \times [0,1]} [|k-2| + 1 + 2 \cdot {{1}}_{\{k\ge 1\}}] \varphi^n_k(s,y) {{1}}_{[0,r_k(\bar{\boldsymbol{X}}^n(s)))}(y) \,ds\,dy \\
		& \le \frac{1}{n}+ 3 {{E}} \int_{[0,T] \times [0,1]} \varphi^n_0(s,y) \,ds\,dy + 4{\bar{U}}_1,
	\end{align*}
	where the last line uses the definition of ${\bar{U}}_1$ in \eqref{eq:Ubar_K}.
	From Lemma \ref{lem:property_ell}(b) and \eqref{eq:cost_bd_upper}, we have
	\begin{equation*}
		{{E}} \int_{[0,T] \times [0,1]} \varphi^n_0(s,y) \,ds\,dy \le {{E}} \int_{[0,T] \times [0,1]} \left[ \ell(\varphi^n_0(s,y))+2 \right] ds\,dy \le M_0 + 2T.
	\end{equation*}
	Therefore $\sup_{n \in \mathbb{N}} {{E}} \left[ |{\bar{Y}}^n(t)| + \sum_{k=0}^\infty |{\bar{B}}^n_k(t)| + |{\bar{\eta}}^n(t)| \right] < \infty$ and we have tightness of $\{({\bar{Y}}^n(t), \bar{\boldsymbol{B}}^n(t), {\bar{\eta}}^n(t)))\}$ in $\mathbb{R} \times \mathbb{R}^\infty \times \mathbb{R}$ for each $t \in [0,T]$.
	
	We now consider fluctuations of $({\bar{Y}}^n, \bar{\boldsymbol{B}}^n, {\bar{\eta}}^n)$.
%	argue tightness of $\{{\bar{X}}^n_k\}$ in $\mathcal{C}$ for each $k \in \mathbb{N}$.
%	Clearly, ${\bar{X}}^n_0(0) = 0$, ${\bar{Y}}^n(0)=0$, ${\bar{B}}^n_k(0) = 0$ for each $k \in \mathbb{N}_0$ and ${\bar{\eta}}^n(0) = 0$ are tight.
	Recall the filtration $\{\mathcal{F}_t\}_{0 \le t \le T}$.
	For $\delta \in [0,T]$, let $\mathcal{T}^\delta$ be the collection of all $[0,T-\delta]$-valued stopping times $\tau$.
	Note that for $\tau \in \mathcal{T}^\delta$,
	\begin{equation*}
		{{E}} |{\bar{Y}}^n(\tau+\delta) - {\bar{Y}}^n(\tau)| \le {{E}} \left[ \sum_{k=0}^\infty (k+2) \left| {\bar{B}}^n_k(\tau+\delta) - {\bar{B}}^n_k(\tau) \right|  \right].
	\end{equation*}
	Thus in order to argue tightness of $\{({\bar{Y}}^n, \bar{\boldsymbol{B}}^n, {\bar{\eta}}^n)\}$, by the Aldous--Kurtz tightness criterion (cf.\ \cite[Theorem 2.7]{Kurtz1981approximation}) it suffices to show that
	\begin{equation}
		\label{eq:tightness_upper}
		\limsup_{\delta \to 0} \limsup_{n \to \infty} \sup_{\tau \in \mathcal{T}^\delta} {{E}} \left[ \sum_{k=0}^\infty (k+2) \left| {\bar{B}}^n_k(\tau+\delta) - {\bar{B}}^n_k(\tau) \right| + \left| {\bar{\eta}}^n(\tau+\delta) - {\bar{\eta}}^n(\tau) \right| \right] = 0.
	\end{equation}
%	\begin{align}
%		& \limsup_{\delta \to 0} \limsup_{n \to \infty} \sup_{\tau \in \mathcal{T}^\delta} {{E}} |{\bar{X}}^n_0(\tau+\delta) - {\bar{X}}^n_0(\tau)| = 0, \label{eq:tightness_Xbar_0} \\
%		& \limsup_{\delta \to 0} \limsup_{n \to \infty} \sup_{\tau \in \mathcal{T}^\delta} {{E}} |{\bar{B}}^n_k(\tau+\delta) - {\bar{B}}^n_k(\tau)| = 0, \forall k \in \mathbb{N}_0, \label{eq:tightness_Bbar_k} \\
%		& \limsup_{\delta \to 0} \limsup_{n \to \infty} \sup_{\tau \in \mathcal{T}^\delta} {{E}} |{\bar{\eta}}^n(\tau+\delta) - {\bar{\eta}}^n(\tau)| = 0. \label{eq:tightness_etabar}
%	\end{align}
%	Note that from \eqref{eq:Xbar_n_0_upper} we have
%	\begin{equation}
%		{{E}} |{\bar{X}}^n_0(\tau+\delta) - {\bar{X}}^n_0(\tau)| \le {{E}} \left[ \sum_{k=0}^\infty (k+2) \left| {\bar{B}}^n_k(\tau+\delta) - {\bar{B}}^n_k(\tau) \right| + \left| {\bar{\eta}}^n(\tau+\delta) - {\bar{\eta}}^n(\tau) \right| \right] \label{eq:temp_upper}.
%	\end{equation}
%	Therefore \eqref{eq:tightness_Bbar_k} and \eqref{eq:tightness_etabar} will follow once we show \eqref{eq:tightness_Xbar_0} using \eqref{eq:temp_upper}.
%	Recall the definition of ${\bar{U}}_K$ in \eqref{eq:Ubar_K}.
	From \eqref{eq:Bbar_n_k_upper} and \eqref{eq:etabar_n_upper} it follows that for every $K \in \mathbb{N}$ and $M \in (0,\infty)$,
	\begin{align*}
		& {{E}} \left[ \sum_{k=0}^\infty (k+2) \left| {\bar{B}}^n_k(\tau+\delta) - {\bar{B}}^n_k(\tau) \right| + \left| {\bar{\eta}}^n(\tau+\delta) - {\bar{\eta}}^n(\tau) \right| \right] \\
		& \le {{E}} \sum_{k=0}^\infty \int_{(\tau,\tau+\delta] \times [0,1]} (k+4) \varphi^n_k(s,y)  {{1}}_{[0,r_k(\bar{\boldsymbol{X}}^n(s)))}(y) \,ds\,dy \\
		& \le {{E}} \sum_{k=0}^{K-1} \left[ \int_{(\tau,\tau+\delta] \times [0,1]} (k+4) \varphi^n_k(s,y) {{1}}_{\{ \varphi^n_k(s,y) > M\}}  \,ds\,dy \right. \\
		& \qquad \left. + \int_{(\tau,\tau+\delta] \times [0,1]} (k+4) \varphi^n_k(s,y) {{1}}_{\{ \varphi^n_k(s,y) \le M\}}  \,ds\,dy \right] + 5 {\bar{U}}_K.
	\end{align*}
	Using Lemma \ref{lem:property_ell}(a) and \eqref{eq:cost_bd_upper}, we can bound the last display by
	\begin{align*}
		& {{E}} \sum_{k=0}^{K-1} \int_{(\tau,\tau+\delta] \times [0,1]} (K+3)\gamma(M) \ell(\varphi^n_k(s,y)) \,ds\,dy + K(K+3)M\delta + 5 {\bar{U}}_K \\
		& \quad \le (K+3)\gamma(M) M_0 + K(K+3)M\delta + 5 {\bar{U}}_K.
	\end{align*}
	Therefore
	\begin{align*}
		& \limsup_{\delta \to 0} \limsup_{n \to \infty} \sup_{\tau \in \mathcal{T}^\delta} {{E}} \left[ \sum_{k=0}^\infty (k+2) \left| {\bar{B}}^n_k(\tau+\delta) - {\bar{B}}^n_k(\tau) \right| + \left| {\bar{\eta}}^n(\tau+\delta) - {\bar{\eta}}^n(\tau) \right| \right] \\
		& \quad \le (K+3)\gamma(M) M_0 + 5 {\bar{U}}_K.
	\end{align*}
	Taking $M \to \infty$ and then $K \to \infty$, we have from Lemmas \ref{lem:property_ell}(a) and  \ref{lem:UI_upper} that \eqref{eq:tightness_upper} holds.
	Finally $\mathcal{C}$-tightness is immediate from the following a.s.\ bounds, Assumption \ref{asp:exponential-boundN}, \blue{and \cite[Theorem 13.4]{Billingsley1999}}: for any $k \in \mathbb{N}_0$, $K \in \mathbb{N}$ and $t \in (0,T]$,
	\begin{align*}
		|{\bar{B}}^n_k(t)- {\bar{B}}^n_k(t-)|  \le \frac{1}{n}, \;
		|{\bar{\eta}}^n(t)- {\bar{\eta}}^n(t-)|  \le \frac{2}{n}, \;
		|{\bar{Y}}^n_k(t)- {\bar{Y}}^n_k(t-)|  \le \frac{K}{n} + \sum_{j=K+1}^\infty \frac{jn_j}{n}.
	\end{align*}
	This completes the proof.
\end{proof}

Next we will characterize weak limit points of $\{(\bar{\boldsymbol{\nu}} ^{n},\bar{\boldsymbol{X}}^{n},{\bar{Y}}^{n},\bar{\boldsymbol{B}}^{n},{\bar{\eta}}^{n})\}$. For that, we
need the following notation. For $k\in \mathbb{N}_{0}$ define the
compensated process 
\begin{equation*}
{\tilde{N}}_{k}^{n\varphi _{k}^{n}}(ds\,dy)\doteq N_{k}^{n\varphi
_{k}^{n}}(ds\,dy)-n\varphi _{k}^{n}(s,y)\,ds\,dy.
\end{equation*}%
Then ${\tilde{N}}_{k}^{n\varphi _{k}^{n}}([0,t]\times A)$ is an $\{\mathcal{F%
}_{t}\}$-martingale for $A\in \mathcal{B}([0,1])$ and $k\in \mathbb{N}_{0}$.
Let
\begin{equation}
{\bar{B}}_{k}^{n}(t)={\tilde{B}}_{k}^{n}(t)+{\hat{B}}_{k}^{n}(t), \; t \in [0,T], \; k \in \mathbb{N}_{0}.
\label{eq:Bbar_n_k_upper_decomp}
\end{equation}%
%can write \eqref{eq:Xbar_n_0_upper} as
%\begin{equation}
%	{\bar{X}}^n(t) = {\bar{X}}^n(0) + \left[ {\tilde{B}}^n_0(t) + {\hat{B}}^n_0(t) \right] {\boldsymbol{e}}_0 + \sum_{k=1}^\infty \left[ {\tilde{B}}^n_k(t) + {\hat{B}}^n_k(t) \right] \left[ (k-2){\boldsymbol{e}}_0 - {\boldsymbol{e}}_k \right] + {\bar{\eta}}^n(t) {\boldsymbol{e}}_0, \label{eq:Xbar_n_upper_margingale}
%%	& = {\bar{X}}^n(0) + \sum_{k=1}^\infty \frac{1}{n} \int_{[0,t] \times [0,1]} {{1}}_{\{r(\bar{\boldsymbol{X}}^n(s-))>0\}} \left[ k{\boldsymbol{e}}_0-{\boldsymbol{e}}_k \right] {{1}}_{[0,r_k(\bar{\boldsymbol{X}}^n(s-)))}(y) \, N^n_k(ds\,dy) \\
%%	& \quad -2{\boldsymbol{e}}_0 \sum_{k=0}^\infty \frac{1}{n} \int_{[0,t] \times [0,1]} {{1}}_{\{r(\bar{\boldsymbol{X}}^n(s-))>0\}} {{1}}_{\{{\bar{X}}^n_0(s-) \ge 0\}} {{1}}_{[0,r_k(\bar{\boldsymbol{X}}^n(s-)))}(y) \, N^n_k(ds\,dy)
%\end{equation}
where 
\begin{equation*}
{\tilde{B}}_{k}^{n}(t)\doteq \frac{1}{n}\int_{[0,t]\times \lbrack 0,1]}{%
{1}}_{[0,r_{k}(\bar{\boldsymbol{X}}^{n}(s-)))}(y)\,{\tilde{N}}_{k}^{n\varphi
_{k}^{n}}(ds\,dy)
\end{equation*}%
is an $\{\mathcal{F}_{t}\}$-martingale and 
\begin{equation*}
{\hat{B}}_{k}^{n}(t)\doteq \int_{\lbrack 0,t]\times \lbrack 0,1]}{%
{1}}_{[0,r_{k}(\bar{\boldsymbol{X}}^{n}(s)))}(y)\varphi
_{k}^{n}(s,y)\,ds\,dy.
\end{equation*}%
Write $\tilde{\boldsymbol{B}}^n \doteq ({\tilde{B}}^n_k)_{k \in \mathbb{N}_0}$ and $\hat{\boldsymbol{B}}^n \doteq ({\hat{B}}^n_k)_{k \in \mathbb{N}_0}$.
Let $\lambda _{t}$ be Lebesgue measure on $%
[0,t]\times \lbrack 0,1]$.

We have the following characterization of the weak limit points.
Recall $\mathcal{S}_{T}(\boldsymbol{\zeta} ,\psi )$ defined in \eqref{eq:psi} and \eqref{eq:phi_k}.

\begin{Lemma}
\label{lem:char_limit} Suppose Assumptions \ref{asp:convgN} and \ref%
{asp:exponential-boundN} hold. Also assume that the bound %
\eqref{eq:cost_bd_upper} is satisfied and suppose that $(\bar{\boldsymbol{\nu}}^{n}, \bar{\boldsymbol{X}}^n, {\bar{Y}}^n, \bar{\boldsymbol{B}}^n, {\bar{\eta}}^n)$ converges along a
subsequence, in distribution, to $(\bar{\boldsymbol{\nu}}, \bar{\boldsymbol{X}}, {\bar{Y}}, \bar{\boldsymbol{B}}, {%
\bar{\eta}}) \in [\mathcal{M}([0,T]\times[0,1])]^\infty \times \mathcal{D}%
_\infty \times \mathcal{D} \times \mathcal{D}_\infty \times \mathcal{D}$
given on some probability space $(\Omega^*,\mathcal{F}^*,{P}^*)$%
. Then the following holds ${P}^*$-a.s.

\begin{enumerate}[\upshape(a)]

\item For each $k \in \mathbb{N}_0$, $\bar{\nu}_k \ll \lambda_T$.

\item $(\bar{\boldsymbol{X}},{\bar{Y}},\bar{\boldsymbol{B}},{\bar{\eta}})\in \mathcal{C}_{\infty
}\times \mathcal{C}\times \mathcal{C}_{\infty }\times \mathcal{C}$, and for $%
t\in \lbrack 0,T]$%
\begin{align}
{\bar{X}}_{k}(t)& =p_{k}-{\bar{B}}_{k}(t)\geq 0,k\in \mathbb{N},
\label{eq:Xbar_k_upper} \\
{\bar{Y}}(t)& =\sum_{k=0}^{\infty }(k-2){\bar{B}}_{k}(t),
\label{eq:Ybar_upper} \\
{\bar{X}}_{0}(t)& ={\bar{Y}}(t)+{\bar{\eta}}(t)\geq 0.
\label{eq:Xbar_0_upper}
\end{align}

\item For $k\in \mathbb{N}_{0}$ let
$
\varphi _{k}(s,y)\doteq \frac{d\bar{\nu} _{k}}{d\lambda _{T}}(s,y),\;(s,y)\in
\lbrack 0,T]\times \lbrack 0,1].
$
Then for $t\in \lbrack 0,T]$ and $k\in \mathbb{N}_{0}$%
\begin{equation}
\label{eq:Bbar_k_upper}
{\bar{B}}_{k}(t)=\int_{[0,t]\times \lbrack 0,1]}{{1}}_{[0,r_{k}(\bar{\boldsymbol{X}}(s)))}(y)\,\varphi _{k}(s,y)ds\,dy.
\end{equation}

\item ${\bar{X}}_{0}=\Gamma ({\bar{Y}})$. In particular, $(\bar{\boldsymbol{X}},{\bar{Y}%
})\in \mathcal{C}_{T}$ and $\boldsymbol{\varphi} \in \mathcal{S}_{T}(\bar{\boldsymbol{X}},{\bar{Y}})$%
.
\end{enumerate}
\end{Lemma}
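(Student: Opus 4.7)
The plan is to use the Skorokhod representation theorem to assume the convergence along the subsequence holds $P^*$-a.s., and then verify (a)--(d) by passing to the limit in the prelimit SDEs \eqref{eq:Ybar_n_upper}--\eqref{eq:etabar_n_upper}. The $\mathcal{C}$-tightness established in Lemma \ref{lem:tightness} immediately gives continuity of the limit processes, and the a.s.\ bounds $\bar{X}_k^n \ge 0$ and $\bar{X}_0^n \ge -1/n$ pass to the limit to give the nonnegativity assertions in (b). For (a), the cost bound \eqref{eq:cost_bd_upper}, the superlinear growth of $\ell$, and the standard lower semicontinuity of $\nu \mapsto \int \ell(d\nu/d\lambda_T)\,d\lambda_T$ under weak convergence of finite measures force $\bar{\nu}_k \ll \lambda_T$ with density $\varphi_k$ satisfying $\sum_k \int \ell(\varphi_k)\,d\lambda_T \le M_0$.

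The technical heart of the proof is the identity \eqref{eq:Bbar_k_upper} in (c); granting this, the equations \eqref{eq:Xbar_k_upper}--\eqref{eq:Xbar_0_upper} in (b) follow by inserting it into \eqref{eq:Ybar_n_upper}--\eqref{eq:Xbar_n_k_upper} and passing to the limit, with the infinite sum $\sum_k(k-2)\bar{B}_k^n$ in \eqref{eq:Ybar_n_upper} controlled by truncating at index $K$ and invoking Lemma \ref{lem:UI_upper}. To establish \eqref{eq:Bbar_k_upper} I use the decomposition \eqref{eq:Bbar_n_k_upper_decomp}, $\bar{B}_k^n = \tilde{B}_k^n + \hat{B}_k^n$. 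The predictable quadratic variation of the martingale piece is $\langle \tilde{B}_k^n\rangle_t = \frac{1}{n}\hat{B}_k^n(t)$, whose expectation at $t=T$ is $O(1/n)$ uniformly in $n$ via Lemma \ref{lem:UI_upper} and the elementary mass bound $E[\bar{B}_k^n(T)] \le n_k/n$; Doob's $L^2$ inequality then yields $\sup_{t\le T}|\tilde{B}_k^n(t)| \to 0$ in $L^2$, so this part contributes nothing.

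I expect the main obstacle to be passing to the limit in the compensator $\hat{B}_k^n$, since the integrand $\mathbf{1}_{[0, r_k(\bar{\boldsymbol{X}}^n(s)))}(y)$ is discontinuous in the state variable while the control measures $\varphi_k^n\,ds\,dy$ converge only weakly to $\varphi_k\,ds\,dy$. The resolution rests on two observations. First, since $\bar{\boldsymbol{X}}$ is continuous and $r_k$ is continuous on its domain (Remark \ref{rmk:property_ODE}(a) handles the tail in $r_0$), the set $\{(s,y): y = r_k(\bar{\boldsymbol{X}}(s))\}$ is the graph of a continuous function, hence has $\lambda_T$-measure zero and, by part (a), also $\bar{\nu}_k$-measure zero. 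Second, for each $\eps>0$ the uniform convergence $r_k(\bar{\boldsymbol{X}}^n)\to r_k(\bar{\boldsymbol{X}})$ permits sandwiching $\mathbf{1}_{[0, r_k(\bar{\boldsymbol{X}}^n(s)))}(y)$, for large $n$, between continuous bounded functions of $(s,y)$ that agree with the limit indicator outside an $\eps$-neighborhood of the discontinuity graph; integrating against $\varphi_k^n\,ds\,dy$ and $\varphi_k\,ds\,dy$, invoking weak convergence on continuous bounded test functions, and then sending $\eps\to 0$ yields the desired identity.

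Finally, for (d) the regulator $\bar{\eta}^n$ is non-decreasing and increases only at instants where $\bar{X}_0^n(s-) < 0$, so
\[
\int_0^T \bigl(\bar{X}_0^n(t-) + 1/n\bigr)^+\,d\bar{\eta}^n(t) \le \frac{2}{n}\bar{\eta}^n(T) \to 0,
\]
which in the limit becomes $\int_0^T \bar{X}_0(t)\,d\bar{\eta}(t) = 0$. Combined with $\bar{\eta}(0)=0$, $\bar{\eta}$ non-decreasing, and $\bar{X}_0 = \bar{Y} + \bar{\eta} \ge 0$, the uniqueness for the one-dimensional Skorokhod problem gives $\bar{X}_0 = \Gamma(\bar{Y})$, completing the verification that $(\bar{\boldsymbol{X}},\bar{Y}) \in \mathcal{C}_T$ and $\boldsymbol{\varphi} \in \mathcal{S}_T(\bar{\boldsymbol{X}},\bar{Y})$.
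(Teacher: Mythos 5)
Your overall strategy — Skorokhod representation, $\tilde{\boldsymbol{B}}^n \to 0$ via Doob, pass to the limit in the compensator, then characterize the reflection via the complementarity condition — is the same as the paper's. Parts (a), (b), and (d) are essentially sound (in (d) you should write the prelimit identity as an equality, $\int_0^T (\bar{X}^n_0(t-)+1/n)\,\bar\eta^n(dt)=0$, and spell out the passage to the limit using $\bar\eta^n \to \bar\eta$ as finite measures; the paper does this with \eqref{eq:cvg_etabar_upper}). But there is a genuine gap in your argument for (c).

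Your sandwich argument rests on two claims: that $s \mapsto r_k(\bar{\boldsymbol{X}}(s))$ is continuous so its graph is the graph of a continuous function, and that $r_k(\bar{\boldsymbol{X}}^n)\to r_k(\bar{\boldsymbol{X}})$ uniformly. Neither holds once $r(\bar{\boldsymbol{X}}(\cdot))$ hits zero. Remark \ref{rmk:property_ODE}(a) gives continuity of $r(\bar{\boldsymbol{X}}(\cdot))$, not of the ratios $r_k$: as $t\uparrow\bar\tau \doteq \inf\{t:r(\bar{\boldsymbol{X}}(t))=0\}\wedge T$, both numerator and denominator in $r_k(\bar{\boldsymbol{X}}(t))=k\bar X_k(t)/r(\bar{\boldsymbol{X}}(t))$ vanish, while by convention $r_k(\bar{\boldsymbol{X}}(t))=0$ for $t\ge\bar\tau$; so $r_k(\bar{\boldsymbol{X}}(\cdot))$ is in general discontinuous at $\bar\tau$. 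Likewise $r_k(\bar{\boldsymbol{X}}^n(t))$ need not tend to $r_k(\bar{\boldsymbol{X}}(t))=0$ for $t\ge\bar\tau$ — it can sit anywhere in $[0,1]$ while $r(\bar{\boldsymbol{X}}^n(t))\downarrow 0$. Your argument therefore does not establish \eqref{eq:Bbar_k_upper} for $t>\bar\tau$, and indeed the pointwise a.e.\ convergence of indicators on which the dominated/uniform-integrability step relies fails there. The paper handles this by splitting into $t<\bar\tau$ (where your argument is essentially correct), $t=\bar\tau$ (by continuity), and $t>\bar\tau$, and for the last case it proves directly that both sides of \eqref{eq:Bbar_k_upper} are constant on $[\bar\tau,T]$; for $k\ge 1$ this is quick from $\bar X_k(\bar\tau)=0$, but for $k=0$ one must go through \eqref{eq:cvg_upper_Bbar_0}--\eqref{eq:middl}, bounding $\sup_{t>\bar\tau}|\bar B^n_0(t)-\bar B^n_0(\bar\tau)|$ by $7\,r(\bar{\boldsymbol{X}}^n(\bar\tau))+2/n\to 0$, which uses the reflection term $\bar\eta^n$ and the monotonicity of $r(\bar{\boldsymbol{X}}^n(\cdot))$. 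This step is missing from your proposal and cannot be recovered by the sandwich device.
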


\begin{proof}
	Assume without loss of generality that $(\bar{\boldsymbol{\nu}}^{n}, \bar{\boldsymbol{X}}^n, {\bar{Y}}^n, \bar{\boldsymbol{B}}^n, {\bar{\eta}}^n) \Rightarrow (\bar{\boldsymbol{\nu}}, \bar{\boldsymbol{X}}, {\bar{Y}}, \bar{\boldsymbol{B}}, {\bar{\eta}})$ along the whole sequence as $n \to \infty$. 
%	Using Skorokhod representation theorem, we can further assume that the convergence holds a.s.
	
	(a) 
	This is an immediate consequence of the bound in \eqref{eq:cost_bd_upper} and Lemma A.1 of \cite{BudhirajaChenDupuis2013large}.

	(b) 
	The first statement is an immediate consequence of the $\mathcal{C}$-tightness argued in the proof of Lemma \ref{lem:tightness}. 
	% From \eqref{eq:Bbar_n_k_upper} and \eqref{eq:etabar_n_upper} we see that a.s., $\sup_{0 < t \le T} |{\bar{B}}^n_k(t)-{\bar{B}}^n_k(t-)| \le \frac{1}{n}$ for $k \in \mathbb{N}_0$ and $\sup_{0 < t \le T} |{\bar{\eta}}^n(t)-{\bar{\eta}}^n(t-)| \le \frac{2}{n}$.
	% 	Therefore $({\bar{X}}_k)_{k \in \mathbb{N}} \in \mathcal{C}_\infty$, ${\bar{B}} \in \mathcal{C}_\infty$ and ${\bar{\eta}} \in \mathcal{C}$ a.s.
	Then using \eqref{eq:Xbar_n_k_upper}, Assumption \ref{asp:convgN} and the fact that ${\bar{X}}^n_k(t) \ge 0$ a.s., we have \eqref{eq:Xbar_k_upper}.
	Next, note that 
	% for each $K \in \mathbb{N}$ a.s.,
	% 	\begin{equation*}
	% 		\sup_{0 < t \le T} \left| \sum_{k=0}^{K-1} (k-2) \left( {\bar{B}}^n_k(t) - {\bar{B}}^n_k(t-) \right) \right| \le \frac{K+2}{n}
	% 	\end{equation*}
	 by Assumption \ref{asp:exponential-boundN}, as $K \to \infty$
	\begin{equation}
		\label{eq:UI_Xbar_n}
		\sup_{n \in \mathbb{N}} \sup_{0 \le t \le T} \left| \sum_{k=K}^\infty (k-2){\bar{B}}^n_k(t) \right| \le \sup_{n \in \mathbb{N}} \sum_{k=K}^\infty \frac{kn_k}{n} \le K^{-\varepsilon_{\boldsymbol{p}}} \sup_{n \in \mathbb{N}} \sum_{k=K}^\infty \frac{n_k}{n} k^{1+\varepsilon_{\boldsymbol{p}}} \to 0,
	\end{equation}
	\cg{where in obtaining the first inequality we have used the fact that 
	 due to \eqref{eq:Xbar_n_k_upper} and the nonnegativity of $\bar{X}_{k}^{n}(t)$, $\bar B^n_k(t) \le \bar X^n_k(0)$.}
	Hence $\sum_{k=0}^\infty (k-2) {\bar{B}}^n_k \Rightarrow \sum_{k=0}^\infty (k-2) {\bar{B}}_k \in \mathcal{C}$.
	From this and \eqref{eq:Ybar_n_upper} we see that \eqref{eq:Ybar_upper} holds.
	Next, since $({\bar{Y}}^n,{\bar{\eta}}^n) \Rightarrow ({\bar{Y}},{\bar{\eta}}) \in \mathcal{C}^2$ and ${\bar{X}}^n_0(t) \ge -\frac{1}{n}$ a.s., we have from \eqref{eq:Xbar_n_0_upper} that \eqref{eq:Xbar_0_upper} holds.
%	This completes the proof of part (b). 
	
	(c)
%	The statement that ${\bar{B}} \in \mathcal{C}_\infty$ follows from \eqref{eq:tightness_Bbar_k} in the proof of Lemma \ref{lem:tightness}.
%	To show \eqref{eq:Bbar_0_upper} and \eqref{eq:Bbar_k_upper}, 
	By Doob's inequality, as $n \to \infty$
	\begin{align*}
		{{E}} \sum_{k=0}^\infty \sup_{0 \le t \le T} |{\tilde{B}}^n_k(t)|^2 
		& \le \frac{4}{n} {{E}} \sum_{k=0}^\infty \int_{[0,T] \times [0,1]} \varphi^n_k(s,y)  {{1}}_{[0,r_k(\bar{\boldsymbol{X}}^n(s)))}(y) \,ds\,dy \\
		& \le \frac{4}{n} {{E}} \sum_{k=0}^\infty \int_{[0,T] \times [0,1]} \left[ \ell(\varphi^n_k(s,y)) + 2 \right] {{1}}_{[0,r_k(\bar{\boldsymbol{X}}^n(s)))}(y) ds\,dy\\
%		& \quad + \frac{\kappa}{n} \sum_{k=2}^\infty \int_{[0,T] \times [0,1]} e^{k} \varphi^n_k(s,y) {{1}}_{\{r(\bar{\boldsymbol{X}}^n(s))>0\}} {{1}}_{[0,r_k(\bar{\boldsymbol{X}}^n(s)))}(y) \,ds\,dy \\
		& \le \frac{4}{n} \left( M_0 + 2 T \right) \to 0,
	\end{align*}
	where the second inequality follows from Lemma \ref{lem:property_ell}(b) and the third inequality follows from \eqref{eq:cost_bd_upper}.
	Therefore as $n \to \infty$
	\begin{equation}
		\label{eq:cvg_Atil_Btil_upper}
		{\tilde{\boldsymbol{B}}}^n \Rightarrow \boldsymbol{0}.
	\end{equation}	
	By appealing to the Skorokhod representation theorem, we can assume without loss of generality that $(\bar{\boldsymbol{\nu}}^{n}, \bar{\boldsymbol{X}}^n, {\bar{Y}}^n, \bar{\boldsymbol{B}}^n, {\bar{\eta}}^n, {\tilde{\boldsymbol{B}}}^n) \to (\bar{\boldsymbol{\nu}}, \bar{\boldsymbol{X}}, {\bar{Y}}, \bar{\boldsymbol{B}}, {\bar{\eta}}, \boldsymbol{0})$ a.s.\ on $(\Omega^*,\mathcal{F}^*,{P}^*)$, namely there exists some event $F \in \mathcal{F}^*$ such that ${P}^*(F^c) = 0$ and 
	\begin{equation*}
%		\label{eq:cvg_upper_joint}
		(\bar{\boldsymbol{\nu}}^{n}, \bar{\boldsymbol{X}}^n, {\bar{Y}}^n, \bar{\boldsymbol{B}}^n, {\bar{\eta}}^n, {\tilde{\boldsymbol{B}}}^n) \to (\bar{\boldsymbol{\nu}}, \bar{\boldsymbol{X}}, {\bar{Y}}, \bar{\boldsymbol{B}}, {\bar{\eta}}, \boldsymbol{0}) \mbox{ on } F.
	\end{equation*}
	Fix ${\bar{\omega}} \in F$. The rest of the argument will be made for such an ${\bar{\omega}}$ which will be suppressed from the notation.
%	From Assumption \ref{asp:exponential-boundN} we have that as $K \to \infty$, 
%	Combining this with the convergence that ${\bar{X}}^n(t) \to {\bar{X}}(t)$, 
	From \eqref{eq:UI_Xbar_n} we have that as $n \to \infty$ $$r(\bar{\boldsymbol{X}}^n(t)) = ({\bar{X}}^n_0(t))^+ + \sum_{k=1}^\infty k {\bar{X}}^n_k(t) \to ({\bar{X}}_0(t))^+ + \sum_{k=1}^\infty k {\bar{X}}_k(t) = r(\bar{\boldsymbol{X}}(t))$$
	uniformly in $t \in [0,T]$, and $r(\bar{\boldsymbol{X}}(\cdot))$ is continuous \blue{and hence bounded}.
	Let ${\bar{\tau}} \doteq \tau_{\bar{\boldsymbol{X}}}$, where $\tau_{\bar{\boldsymbol{X}}}$ is defined through \eqref{eq:defntauphi}, namely
	${\bar{\tau}} = \inf \{ t \in [0,T] : r(\bar{\boldsymbol{X}}(t)) = 0\} \wedge T$.
%	We will show \eqref{eq:Bbar_0_upper} and \eqref{eq:Bbar_k_upper} for two possible cases of values of $r(\bar{\boldsymbol{X}}(t))$: $r(\bar{\boldsymbol{X}}(t)) > 0$ and $r(\bar{\boldsymbol{X}}(t)) = 0$.
	We will argue that \eqref{eq:Bbar_k_upper} holds for all $t < {\bar{\tau}}$, $t = {\bar{\tau}}$ and $t >{\bar{\tau}}$.

	For $t < {\bar{\tau}}$, we have $r(\bar{\boldsymbol{X}}(t)) > 0$.
	Hence for each $k \in \mathbb{N}_0$,
	\begin{equation}\label{eq:cgceofindic}
		 {{1}}_{[0,r_k(\bar{\boldsymbol{X}}^n(s)))}(y) \to  {{1}}_{[0,r_k(\bar{\boldsymbol{X}}(s)))}(y)
	\end{equation}
	as $n \to \infty$ for $\lambda_t$-a.e.\ $(s,y) \in [0,t] \times [0,1]$
	since $\lambda_t\{(y,s): y = r_k(\bar{\boldsymbol{X}}(s))\}=0$.
	From \eqref{eq:cgceofindic} and the uniform integrability of $(s,y) \mapsto ({{1}}_{[0,r_k(\bar{\boldsymbol{X}}^n(s)))}(y) -{{1}}_{[0,r_k(\bar{\boldsymbol{X}}(s)))}(y))  \varphi^n_k(s,y)$ (with respect to the normalized Lebesgue measure on
	$[0,T]\times [0,1]$) which follows from \eqref{eq:cost_bd_upper} and the superlinearity of $\ell$, we have that
	$${\hat{B}}^n_k(t) - \int_{[0,t] \times [0,1]}  {{1}}_{[0,r_k(\bar{\boldsymbol{X}}(s)))}(y) \, \varphi_k^n(s,y) ds\,dy \to 0.$$
	Also, from the bound in \eqref{eq:cost_bd_upper} it follows that
	$$\int_{[0,t] \times [0,1]}  {{1}}_{[0,r_k(\bar{\boldsymbol{X}}(s)))}(y) \, \varphi_k^n(s,y) ds\,dy \to \int_{[0,t] \times [0,1]}  {{1}}_{[0,r_k(\bar{\boldsymbol{X}}(s)))}(y) \, \varphi_k(s,y) ds\,dy.$$
	Combining the two convergence statements we have
	\begin{equation}
%		{\hat{B}}^n_0(t) & \to - 2 \int_{[0,t] \times [0,1]} {{1}}_{\{r(\bar{\boldsymbol{X}}(s))>0\}} {{1}}_{[0,r_0({\bar{X}}(s)))}(y) \varphi_0(s,y) \, ds\,dy, \\
		{\hat{B}}^n_k(t) \to \int_{[0,t] \times [0,1]}  {{1}}_{[0,r_k(\bar{\boldsymbol{X}}(s)))}(y) \, \varphi_k(s,y) ds\,dy.\label{eq:cgcebhatkt}
	\end{equation}
	The above convergence along with \eqref{eq:Bbar_n_k_upper_decomp} and \eqref{eq:cvg_Atil_Btil_upper} gives \eqref{eq:Bbar_k_upper} for $t < {\bar{\tau}}$.
Since \eqref{eq:Bbar_k_upper} holds for $t < {\bar{\tau}}$, it also holds for $t = {\bar{\tau}}$ by continuity of $\bar{\boldsymbol{B}}$ and of the right side in \eqref{eq:Bbar_k_upper}.	
	
	Now suppose $T\ge t > {\bar{\tau}}$.
	Since $r(\bar{\boldsymbol{X}}(\cdot))$ is continuous, we see from the definition of ${\bar{\tau}}$ that $r(\bar{\boldsymbol{X}}({\bar{\tau}})) = 0$.
	Noting that $r(\bar{\boldsymbol{X}}^n(\cdot))$ is non-negative and non-increasing, so is $r(\bar{\boldsymbol{X}}(\cdot))$.
	Therefore $r(\bar{\boldsymbol{X}}(t)) = 0$ and $\bar{\boldsymbol{X}}(t) = \boldsymbol{0}$ for ${\bar{\tau}} \le t \le T$.
	From this we see that the right hand side of \eqref{eq:Bbar_k_upper} remains constant for ${\bar{\tau}} \le t \le T$ and it suffices to show that $\bar{\boldsymbol{B}}(t) = \bar{\boldsymbol{B}}({\bar{\tau}})$ for ${\bar{\tau}} < t \le T$.
	 From \eqref{eq:Bbar_n_k_upper} it follows that, for $k \in \mathbb{N}$,
	\begin{equation}
		\label{eq:cvg_upper_Bbar_k}
		\sup_{{\bar{\tau}} < t \le T} |{\bar{B}}^n_k(t) - {\bar{B}}^n_k({\bar{\tau}})| = {\bar{B}}^n_k(T) - {\bar{B}}^n_k({\bar{\tau}}) = {\bar{X}}^n_k({\bar{\tau}}) - {\bar{X}}^n_k(T) \le {\bar{X}}^n_k({\bar{\tau}}),
	\end{equation}
	which converges to ${\bar{X}}_k({\bar{\tau}}) = 0$ as $n \to \infty$.
	Hence ${\bar{B}}_k(t) = {\bar{B}}_k({\bar{\tau}})$ for ${\bar{\tau}} < t \le T$ and this gives \eqref{eq:Bbar_k_upper} for each $k \in \mathbb{N}$.
	Next we show ${\bar{B}}_0(t) = {\bar{B}}_0({\bar{\tau}})$ for ${\bar{\tau}} < t \le T$.
	From \eqref{eq:Ybar_n_upper} and \eqref{eq:Xbar_n_0_upper},
	\begin{align}
		& \sup_{{\bar{\tau}} < t \le T} |{\bar{B}}^n_0(t) - {\bar{B}}^n_0({\bar{\tau}})| \notag \\
		& \le \sup_{{\bar{\tau}} < t \le T} |{\bar{X}}^n_0(t) - {\bar{X}}^n_0({\bar{\tau}})| + \sup_{{\bar{\tau}} < t \le T} |{\bar{\eta}}^n(t) -{\bar{\eta}}^n({\bar{\tau}})| + \sum_{k=1}^\infty |k-2| \sup_{{\bar{\tau}} < t \le T} |{\bar{B}}^n_k(t) - {\bar{B}}^n_k({\bar{\tau}})|. \label{eq:cvg_upper_Bbar_0}
	\end{align}
	Since ${\bar{X}}^n_0(t) \ge -\frac{1}{n}$, we have
	\begin{align*}
		\sup_{{\bar{\tau}} < t \le T} |{\bar{X}}^n_0(t) - {\bar{X}}^n_0({\bar{\tau}})| & \le \sup_{{\bar{\tau}} < t \le T} |{\bar{X}}^n_0(t)| + |{\bar{X}}^n_0({\bar{\tau}})| 
		 \le \sup_{{\bar{\tau}} < t \le T} ({\bar{X}}^n_0(t))^+ + \frac{1}{n} + ({\bar{X}}^n_0({\bar{\tau}}))^+ + \frac{1}{n} \\
		& \le \sup_{{\bar{\tau}} < t \le T} r(\bar{\boldsymbol{X}}^n(t)) + r(\bar{\boldsymbol{X}}^n({\bar{\tau}})) + \frac{2}{n} 
		 \le 2r(\bar{\boldsymbol{X}}^n({\bar{\tau}})) + \frac{2}{n},
	\end{align*}
	where the last line follows from the fact that $r(\bar{\boldsymbol{X}}^n(t))$ is non-increasing for $t \in [0,T]$.
	From \eqref{eq:etabar_n_upper} and \eqref{eq:Bbar_n_k_upper} it follows that
	\begin{align*}
		& \sup_{{\bar{\tau}} < t \le T} |{\bar{\eta}}^n(t) -{\bar{\eta}}^n({\bar{\tau}})| \\
		& = \sup_{{\bar{\tau}} < t \le T} 2 \sum_{k=1}^\infty \frac{1}{n} \int_{({\bar{\tau}},t] \times [0,1]}  {{1}}_{\{{\bar{X}}^n_0(u-) < 0\}} {{1}}_{[0,r_k(\bar{\boldsymbol{X}}^n(u-)))}(y) \, N^{n\varphi^n_k}_k(du\,dy) \\
		& \le \sup_{{\bar{\tau}} < t \le T} 2 \sum_{k=1}^\infty \frac{1}{n} \int_{({\bar{\tau}},t] \times [0,1]}  {{1}}_{[0,r_k(\bar{\boldsymbol{X}}^n(u-)))}(y) \, N^{n\varphi^n_k}_k(du\,dy) \\
		& = \sup_{{\bar{\tau}} < t \le T} 2 \sum_{k=1}^\infty |{\bar{B}}^n_k(t) - {\bar{B}}^n_k({\bar{\tau}})|. 
%		\le 2 \sum_{k=1}^\infty {\bar{X}}^n_k({\bar{\tau}}) \le 2 r(\bar{\boldsymbol{X}}^n({\bar{\tau}})), \\
%		& \sum_{k=1}^\infty |k-2|\sup_{{\bar{\tau}} < t \le T} |{\bar{B}}^n_k({\bar{\tau}}) - {\bar{B}}^n_k({\bar{\tau}})| \le \sum_{k=1}^\infty k {\bar{X}}^n_k({\bar{\tau}}) \le r(\bar{\boldsymbol{X}}^n({\bar{\tau}})).
	\end{align*}
	Combining above two estimates with \eqref{eq:cvg_upper_Bbar_0}, we see that as $n \to \infty$,
	\begin{align}
		\sup_{{\bar{\tau}} < t \le T} |{\bar{B}}^n_0(t) - {\bar{B}}^n_0({\bar{\tau}})| & \le 2r(\bar{\boldsymbol{X}}^n({\bar{\tau}})) + \frac{2}{n} + \sup_{{\bar{\tau}} < t \le T} \sum_{k=1}^\infty (k+4) |{\bar{B}}^n_k(t) - {\bar{B}}^n_k({\bar{\tau}})| \nonumber\\
		& \le 2r(\bar{\boldsymbol{X}}^n({\bar{\tau}})) + \frac{2}{n} + \sum_{k=1}^\infty (k+4) {\bar{X}}^n_k({\bar{\tau}}) \le 7 r(\bar{\boldsymbol{X}}^n({\bar{\tau}})) + \frac{2}{n} \label{eq:middl}\\
		& \to 7 r(\bar{\boldsymbol{X}}({\bar{\tau}})) = 0,\nonumber
	\end{align}
	where the second inequality follows from \eqref{eq:cvg_upper_Bbar_k}.
Since we have proved \eqref{eq:Bbar_k_upper} for all $t < {\bar{\tau}}$, $t = {\bar{\tau}}$ and $t >{\bar{\tau}}$, part (c) follows.

	(d)
	From \eqref{eq:Xbar_0_upper} and a well known characterization of the solution of the Skorohod problem (see, e.g., \cite[Section 3.6.C]{KaratzasShreve1991brownian}), it suffices to show that ${\bar{\eta}}(0) = 0$, ${\bar{\eta}}(t) \ge 0$, ${\bar{\eta}}(t)$ is non-decreasing for $t \in [0,T]$ and $\int_0^T {\bar{X}}_0(t) \, {\bar{\eta}}(dt) = 0$.	
%	The statement that ${\bar{\eta}} \in \mathcal{C}$ follows from \eqref{eq:tightness_etabar} in the proof of Lemma \ref{lem:tightness}.
	Since ${\bar{\eta}}^n(0) = 0$, ${\bar{\eta}}^n(t) \ge 0$ and ${\bar{\eta}}^n(t)$ is non-decreasing for $t \in [0,T]$, so is ${\bar{\eta}}$.
%	we must have that ${\bar{\eta}}(0) = 0$, ${\bar{\eta}}(t) \ge 0$ and ${\bar{\eta}}(t)$ is non-decreasing for $t \in [0,T]$.
	It remains to show $\int_0^T {\bar{X}}_0(t) \, {\bar{\eta}}(dt) = 0$.
	Note that ${\bar{\eta}}^n(t)$ increases only when ${\bar{X}}^n_0(t-)<0$, namely ${\bar{X}}^n_0(t-) = - \frac{1}{n}$.
	Therefore
	\begin{equation*}
		\int_0^T \left( {\bar{X}}^n_0(t-)+\frac{1}{n} \right) {\bar{\eta}}^n(dt) = 0.
	\end{equation*}
	From this we have
	\begin{equation}\label{eq:cvg_etabar_upper}
		\Scale[0.9]{\begin{aligned}
		\left| \int_0^T {\bar{X}}_0(t) \, {\bar{\eta}}(dt) \right| 
		& = \left| \int_0^T {\bar{X}}_0(t) \, {\bar{\eta}}(dt) - \int_0^T \left( {\bar{X}}^n_0(t-)+\frac{1}{n} \right) {\bar{\eta}}^n(dt) \right| \\
		& \le \left| \int_0^T {\bar{X}}_0(t) \, {\bar{\eta}}(dt) - \int_0^T {\bar{X}}_0(t) \, {\bar{\eta}}^n(dt) \right| + \int_0^T |{\bar{X}}_0(t) - {\bar{X}}^n_0(t-)| \, {\bar{\eta}}^n(dt) + \frac{{\bar{\eta}}^n(T)}{n}. 
	\end{aligned}}
	\end{equation}
	Since both ${\bar{\eta}}^n$ and ${\bar{\eta}}$ are non-decreasing, we see that ${\bar{\eta}}^n \to {\bar{\eta}}$ as finite measures on $[0,T]$.
	Combining this with the fact that ${\bar{X}}_0 \in \mathbb{C}_b([0,T]:\mathbb{R})$, we get $$\left| \int_0^T {\bar{X}}_0(t) \, {\bar{\eta}}(dt) - \int_0^T {\bar{X}}_0(t) \, {\bar{\eta}}^n(dt) \right| \to 0$$ as $n \to \infty$.
	Also from continuity of ${\bar{X}}_0$, we have uniform convergence of ${\bar{X}}^n_0$ to ${\bar{X}}_0$ and hence
	\begin{equation*}
		\int_0^T |{\bar{X}}_0(t) - {\bar{X}}^n_0(t-)| \, {\bar{\eta}}^n(dt) + \frac{{\bar{\eta}}^n(T)}{n} \le \left( \sup_{0 \le t \le T} |{\bar{X}}^n_0(t-)-{\bar{X}}_0(t)| + \frac{1}{n} \right) {\bar{\eta}}^n(T) \to 0
	\end{equation*}
	as $n \to \infty$.	
	Combining these two convergence results with \eqref{eq:cvg_etabar_upper},
%	Noting that $\int_0^T {\bar{X}}_0(t) \, {\bar{\eta}}(dt) \ge 0$, 
	we see that
	$
		\int_0^T {\bar{X}}_0(t) \, {\bar{\eta}}(dt) = 0.
	$
	This proves part (d)  and  completes the proof.
\end{proof}

\section{Laplace upper bound}

\label{sec:upper}

In this section we prove the Laplace upper bound \eqref{eq:lappriupp}.

From \eqref{eq:mainrepn17}, for every $n \in \mathbb{N}$, we can choose ${\tilde{\boldsymbol{\varphi}}}^n \doteq ({\tilde{\varphi}}^n_k)_{k \in \mathbb{N}_0} \in 
\bar{\mathcal{A}}_b$ such that 
\begin{equation*}
-\frac{1}{n} \log {{E}} e^{-nh(\boldsymbol{X}^{n},Y^{n})} \ge {{E}} \left\{
\sum_{k=0}^\infty \int_{[0,T] \times [0,1]} \ell({\tilde{\varphi}}_k^n(s,y))
\, ds\,dy + h({\bar{\boldsymbol{X}}}^{n,{\tilde{\boldsymbol{\varphi}}}^n}, {\bar{Y}}^{n,{\tilde{\boldsymbol{\varphi}}}^n}) \right\} - \frac{1}{n},
\end{equation*}
where $(\bar{\boldsymbol{X}}^{n,{\tilde{\boldsymbol{\varphi}}}^n}, {\bar{Y}}^{n,{\tilde{\boldsymbol{\varphi}}}%
^n}) $ are defined by \eqref{eq:Ybar_n_upper_temp}--%
\eqref{eq:Xbar_n_k_upper_temp} by replacing $\boldsymbol{\varphi}^n$ with ${\tilde{\boldsymbol{\varphi}}}%
^n$. Since $\|h\|_\infty < \infty$, 
\begin{align*}
\sup_{n \in \mathbb{N}} {{E}} \sum_{k=0}^\infty \int_{[0,T] \times
[0,1]} \ell({\tilde{\varphi}}_k^n(s,y)) \, ds\,dy & \color{black} \le \sup_{n \in \mathbb{N}} \left[ -\frac{1}{n} \log {{E}} e^{-nh(\boldsymbol{X}^{n},Y^{n})} - E h({\bar{\boldsymbol{X}}}^{n,{\tilde{\boldsymbol{\varphi}}}^n}, {\bar{Y}}^{n,{\tilde{\boldsymbol{\varphi}}}^n}) + \frac{1}{n} \right] \\
& \le 2\|h\|_\infty + 1
\doteq M_h.
\end{align*}
Now we modify ${\tilde{\boldsymbol{\varphi}}}^n$ so that the last inequality holds not in
the sense of expectation, but rather almost surely, for a possibly larger
constant [see \eqref{eq:cost_bd_upper}]. Fix $\sigma \in (0,1)$ and define 
\begin{equation*}
{\tilde{\tau}}^n \doteq \inf \left\{t \in [0,T] : \sum_{k=0}^\infty
\int_{[0,t] \times [0,1]} \ell({\tilde{\varphi}}_k^n(s,y)) \, ds\,dy > 2 M_h
\|h\|_\infty / \sigma \right\} \wedge T.
\end{equation*}
For $k \in \mathbb{N}_0$, letting 
$
\varphi^n_k(s,y) \doteq {\tilde{\varphi}}^n_k(s,y) {{1}}_{\{s \le 
{\tilde{\tau}}^n\}} + {{1}}_{\{s > {\tilde{\tau}}^n\}}$, $(s,y) \in
[0,T]\times[0,1]$,
we have ${\boldsymbol{\varphi}}^n \doteq ({\varphi}^n_k)_{k \in \mathbb{N}_0} \in \bar{\mathcal{A}}_b$ since ${\tilde{\tau}}^n$
is an $\{\mathcal{F}_t\}$-stopping time. Also 
\begin{equation*}
{{E}} \sum_{k=0}^\infty \int_{[0,T] \times [0,1]}
\ell(\varphi_k^n(s,y)) \, ds\,dy \le {{E}} \sum_{k=0}^\infty
\int_{[0,T] \times [0,1]} \ell({\tilde{\varphi}}_k^n(s,y)) \, ds\,dy
\end{equation*}
and 
\begin{align*}
{P}(\boldsymbol{\varphi}^n \ne{\tilde{\boldsymbol{\varphi}}}^n) & \le {P}
\left( \sum_{k=0}^\infty \int_{[0,T] \times [0,1]} \ell({\tilde{\varphi}}%
_k^n(s,y)) \, ds\,dy > 2 M_h \|h\|_\infty / \sigma \right) \\
& \le \frac{\sigma}{2 M_h \|h\|_\infty} {{E}} \sum_{k=0}^\infty
\int_{[0,T] \times [0,1]} \ell({\tilde{\varphi}}_k^n(s,y)) \, ds\,dy 
 \le \frac{\sigma}{2 \|h\|_\infty}.
\end{align*}
Letting $(\bar{\boldsymbol{X}}^{n,{\boldsymbol{\varphi}}^n}, {\bar{Y}}^{n,{\boldsymbol{\varphi}}%
^n}) $ be defined through \eqref{eq:Ybar_n_upper_temp}--%
\eqref{eq:Xbar_n_k_upper_temp} using $\boldsymbol{\varphi}^n$, we have
\begin{equation*}
\left| {{E}} h(\bar{\boldsymbol{X}}^{n,\boldsymbol{\varphi}^n},{\bar{Y}}^{n,\boldsymbol{\varphi}^n}) - {%
{E}} h(\bar{\boldsymbol{X}}^{n,{\tilde{\boldsymbol{\varphi}}}^n},{\bar{Y}}^{n,{\tilde{\boldsymbol{\varphi}}%
}^n}) \right| \le 2 \|h\|_\infty {P}(\boldsymbol{\varphi}^n \ne{\tilde{%
\boldsymbol{\varphi}}}^n) \le \sigma.
\end{equation*}
Hence we have 
\begin{equation*}
-\frac{1}{n} \log {{E}} e^{-nh(\boldsymbol{X}^{n},Y^{n})} \ge {{E}} \left\{
\sum_{k=0}^\infty \int_{[0,T] \times [0,1]} \ell(\varphi_k^n(s,y)) \, ds\,dy
+ h(\bar{\boldsymbol{X}}^{n,\boldsymbol{\varphi}^n},{\bar{Y}}^{n,\boldsymbol{\varphi}^n}) \right\} - \frac{1}{n}
- \sigma
\end{equation*}
and 
\begin{equation}  \label{eq:cost_bd_upper_B}
\sup_{n \in \mathbb{N}} \sum_{k=0}^\infty \int_{[0,T] \times [0,1]}
\ell(\varphi_k^n(s,y)) \, ds\,dy \le 2M_h \|h\|_{\infty}/\sigma \doteq K_0, %
\mbox{ a.s. } {P}.
\end{equation}

Now we can complete the proof of the Laplace upper bound. Recall that 
$h \in 
\mathbb{C}_b(\mathcal{D}_\infty \times \mathcal{D})$.
Write $(\bar{\boldsymbol{\nu}}^{n},\bar{\boldsymbol{X}}^{n},{\bar{Y}}^{n}) \doteq (\bar{\boldsymbol{\nu}}^{n,\boldsymbol{\varphi}^n}, \bar{\boldsymbol{X}}^{n,{\boldsymbol{\varphi}}^n}, {\bar{Y}}^{n,{\boldsymbol{\varphi}}%
^n}) $, where $\bar{\boldsymbol{\nu}}^{n,\boldsymbol{\varphi}^n}$ is as defined in \eqref{eq:nu_n_upper} using $\boldsymbol{\varphi}^n$.
Noting from %
\eqref{eq:cost_bd_upper_B} that \eqref{eq:cost_bd_upper} is satisfied with $%
M_0= K_0$, we have from Lemma \ref{lem:tightness} that $\{(\bar{\boldsymbol{\nu}}^{n},%
\bar{\boldsymbol{X}}^{n},{\bar{Y}}^{n})\}$ is tight. Assume without
loss of generality that $(\bar{\boldsymbol{\nu}}^{n},\bar{\boldsymbol{X}}^{n},{\bar{Y}}%
^{n})$ converges along the whole sequence weakly to $(\bar{\boldsymbol{\nu}},\bar{\boldsymbol{X}}%
,{\bar{Y}})$, given on some probability space $(\Omega^*,\mathcal{F}^*,{%
P}^*)$. By Lemma \ref{lem:char_limit} we have $(\bar{\boldsymbol{X}},{\bar{%
Y}}) \in \mathcal{C}_T$ and $\bar{\boldsymbol{\nu}} = \bar{\boldsymbol{\nu}}^{\boldsymbol{\varphi}}$ for some $\boldsymbol{\varphi} \in 
\mathcal{S}_T(\bar{\boldsymbol{X}},{\bar{Y}})$ a.s.\ ${P}^*$, where $\bar{\boldsymbol{\nu}}^{\boldsymbol{\varphi}}$ is as defined in \eqref{eq:nu_n_upper} using $\boldsymbol{\varphi}$. 
Owing to the topology used for the measure component and the relation \eqref{eq:nu_n_upper},
 Lemma A.1 in \cite%
{BudhirajaChenDupuis2013large} (see also \cite[Appendix A.4.3, Lemma A.11]{buddupbook}) implies the lower semicontinuity of the cost that is needed for the second inequality below.
Using
Fatou's lemma and the definition of $I_T$ in \eqref{eq:rate_function}
\begin{align*}
\liminf_{n \to \infty} -\frac{1}{n} \log {{E}} e^{-nh(\boldsymbol{X}^{n},Y^{n})} & \ge
\liminf_{n \to \infty} {{E}} \left\{ \sum_{k=0}^\infty \int_{[0,T]
\times [0,1]} \ell(\varphi_k^n(s,y)) \, ds\,dy + h(\bar{\boldsymbol{X}}^n,{\bar{Y}}^n)
- \frac{1}{n} - \sigma \right\} \\
& \ge {{E}}^* \left\{ \sum_{k=0}^\infty \int_{[0,T] \times [0,1]}
\ell(\varphi_k(s,y)) \, ds\,dy + h(\bar{\boldsymbol{X}},{\bar{Y}}) \right\} - \sigma \\
& \ge \inf_{(\boldsymbol{\zeta} ,\psi ) \in \mathcal{D}_\infty \times \mathcal{D}}
\{I_T(\boldsymbol{\zeta} ,\psi ) + h(\boldsymbol{\zeta} ,\psi )\} - \sigma.
\end{align*}
Since $\sigma \in (0,1)$ is arbitrary, this
completes the proof of the Laplace upper bound.

\section{Laplace lower bound}

\label{sec:lower}

In this section we prove the Laplace lower bound \eqref{eq:lapprilow}.

The following lemma, which shows unique solvability of the ODE \eqref{eq:psi}
and \eqref{eq:phi_k} for controls $\boldsymbol{\varphi}$ in a suitable class, is key in
the proof.

\begin{Lemma}
\label{lem:uniqueness} Fix $\sigma \in (0,1)$. Given $(\boldsymbol{\zeta} ,\psi ) \in 
\mathcal{C}_T$ with $I_T(\boldsymbol{\zeta} ,\psi ) < \infty$, there exists $\boldsymbol{\varphi}^* \in 
\mathcal{S}_T(\boldsymbol{\zeta} ,\psi )$ such that

\begin{enumerate}[\upshape(a)]

\item $\sum_{k=0}^\infty \int_{[0,T] \times [0,1]} \ell(\varphi_k^*(s,y)) \,
ds\,dy \le I_T(\boldsymbol{\zeta} ,\psi ) + \sigma$.

\item If $(\tilde{\boldsymbol{\zeta}},\tilde\psi)$ is another pair in $\mathcal{C}_T$ such
that $\boldsymbol{\varphi}^* \in \mathcal{S}_T(\tilde{\boldsymbol{\zeta}},\tilde\psi)$, then $%
(\tilde{\boldsymbol{\zeta}},\tilde\psi) = (\boldsymbol{\zeta} ,\psi )$.
\end{enumerate}

%	Suppose $\rho=(\rho^1,\rho^2), \pi=(\pi^1,\pi^2) \in \mathcal{S}_T(\boldsymbol{\zeta} ,\psi )$ for some $\zeta \in \mathcal{C}_T$. There is at most one solution $\zeta$ to \eqref{eq:phi_k} satisfying property (a) of $\mathcal{C}_T$.
\end{Lemma}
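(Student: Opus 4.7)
The plan is to construct $\boldsymbol{\varphi}^*$ by first picking $\boldsymbol{\varphi}^{(0)}\in\mathcal{S}_T(\boldsymbol{\zeta},\psi)$ with cost within $\sigma/2$ of $I_T(\boldsymbol{\zeta},\psi)$, and then using the remaining $\sigma/2$ of cost slack to perturb $\boldsymbol{\varphi}^{(0)}$ into a control that still yields $(\boldsymbol{\zeta},\psi)$ but admits this as its \emph{unique} solution in $\mathcal{C}_T$.

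Because only the restriction of $\varphi^{(0)}_k(s,\cdot)$ to the active interval $[0, r_k(\boldsymbol{\zeta}(s)))$ enters equations \eqref{eq:psi}--\eqref{eq:phi_k}, I would first apply Jensen's inequality in the $y$ variable (using convexity of $\ell$) to replace $\varphi^{(0)}_k(s,\cdot)|_{[0,r_k(\boldsymbol{\zeta}(s)))}$ by its mean $c_k(s)$ without increasing cost. For $k\ge 1$ one reads off $c_k(s) = -\zeta_k'(s)/r_k(\boldsymbol{\zeta}(s))$ from \eqref{eq:phi_k}, while $c_0(s)$ is the constant value that preserves the $k=0$ contribution to $\psi'(s)$. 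On the inactive region $[r_k(\boldsymbol{\zeta}(s)),1]$ the value of $\varphi^{(0)}_k$ is irrelevant for $(\boldsymbol{\zeta},\psi)$, so I would set it to $1$ there (costing nothing since $\ell(1)=0$). This gives a piecewise-constant-in-$y$ candidate
\[
\varphi^*_k(s,y) = c_k(s)\mathbf{1}_{[0,r_k(\boldsymbol{\zeta}(s)))}(y) + \mathbf{1}_{[r_k(\boldsymbol{\zeta}(s)),1]}(y),
\]
still in $\mathcal{S}_T(\boldsymbol{\zeta},\psi)$ and of cost at most $I_T(\boldsymbol{\zeta},\psi) + \sigma/2$, establishing (a).

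For (b), any competitor $(\tilde{\boldsymbol{\zeta}},\tilde\psi)\in\mathcal{C}_T$ with $\boldsymbol{\varphi}^*\in\mathcal{S}_T(\tilde{\boldsymbol{\zeta}},\tilde\psi)$ would satisfy
\[
-\tilde\zeta_k'(s) = c_k(s)\min\!\bigl(r_k(\tilde{\boldsymbol{\zeta}}(s)),r_k(\boldsymbol{\zeta}(s))\bigr) + \bigl(r_k(\tilde{\boldsymbol{\zeta}}(s))-r_k(\boldsymbol{\zeta}(s))\bigr)^+,
\]
so the discrepancy $-(\tilde\zeta_k-\zeta_k)'(s)$ depends monotonically on $r_k(\tilde{\boldsymbol{\zeta}}(s))-r_k(\boldsymbol{\zeta}(s))$ with local Lipschitz constant $c_k(s)\vee 1$; $\tilde\psi$ is controlled analogously, and $\tilde\zeta_0-\zeta_0$ via the Lipschitz continuity of the Skorokhod map $\Gamma$. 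Summing over $k$ with weights $k$ and using Assumption \ref{asp:exponential-boundN} to handle the tail (as in the uniform integrability of Lemma \ref{lem:UI_upper}) yields a weighted $\ell^1$ Gronwall inequality, provided $c_k(s)$ is integrable in $s$.

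The main obstacle is that $c_k(s)$ can blow up as $s\uparrow \tau_{\boldsymbol{\zeta}}$ where $r(\boldsymbol{\zeta}(s))\to 0$, so the Gronwall step does not close directly. Here I would use the remaining $\sigma/2$ of slack: on any time set where some $c_k(s)$ exceeds a large threshold, I would add a bounded supplement to $\varphi^*_k$ supported on a thin $y$-slice \emph{inside} the inactive region $[r_k(\boldsymbol{\zeta}(s)),1]$. This supplement leaves the equation at $\boldsymbol{\zeta}$ undisturbed but penalizes any competitor with $r_k(\tilde{\boldsymbol{\zeta}}(s))>r_k(\boldsymbol{\zeta}(s))$, ruling this case out; the reverse inequality is then excluded by the mass-balance identity obtained by summing $k$ times \eqref{eq:phi_k} and combining with Remark \ref{rmk:property_ODE}(b). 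Choosing the $y$-thickness of each supplement inversely proportional to the peak of $c_k(s)$ keeps its total cost below $\sigma/2$, and uniqueness on $[0,\tau_{\boldsymbol{\zeta}}]$ extends to $[0,T]$ by Remark \ref{rmk:property_ODE}(c).
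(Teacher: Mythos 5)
Your first step for part~(a) -- applying Jensen's inequality in $y$ to reduce to controls of the form $\varphi^*_k(s,y)=c_k(s)\mathbf{1}_{[0,r_k(\boldsymbol{\zeta}(s)))}(y)+\mathbf{1}_{[r_k(\boldsymbol{\zeta}(s)),1]}(y)$ -- coincides with the paper's reduction and is fine.

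For part~(b) there is a genuine gap. You identify the right obstruction (the ``local Lipschitz constants'' $c_k(s)$ can blow up, so a weighted Gronwall argument cannot close directly), but the patch you propose does not repair it. Adding a bounded positive supplement on a thin $y$-slice inside the inactive region $[r_k(\boldsymbol{\zeta}(s)),1]$ does not ``penalize'' a competitor: there is no optimality or cost mechanism acting on the competitor $(\tilde{\boldsymbol{\zeta}},\tilde\psi)$, which is simply any path in $\mathcal{C}_T$ solving \eqref{eq:psi}--\eqref{eq:phi_k} with the given control. If anything, a positive bump near $r_k(\boldsymbol{\zeta}(s))$ makes $-\tilde\zeta_k'$ \emph{more} sensitive to $r_k(\tilde{\boldsymbol{\zeta}}(s))-r_k(\boldsymbol{\zeta}(s))$, which is the opposite of what you need, and the ``mass-balance'' appeal for excluding $r_k(\tilde{\boldsymbol{\zeta}})<r_k(\boldsymbol{\zeta})$ is not a valid substitute for a uniqueness estimate.

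The missing idea, used in the paper, is to perturb $\boldsymbol{\varphi}^*$ so that each $\varphi^*_k(s,\cdot)$ vanishes identically on a \emph{relative} neighborhood of the boundary, namely
\[
\varphi_k^\varepsilon(s,y)=\frac{c_k(s)}{1-\varepsilon}\,\mathbf{1}_{[0,(1-\varepsilon)r_k(\boldsymbol{\zeta}(s)))}(y)+\mathbf{1}_{[(1+\varepsilon)r_k(\boldsymbol{\zeta}(s)),1]}(y),
\]
with $\varphi_k^\varepsilon=0$ for $(1-\varepsilon)r_k(\boldsymbol{\zeta}(s))<y<(1+\varepsilon)r_k(\boldsymbol{\zeta}(s))$. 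The shrinking leaves $(\boldsymbol{\zeta},\psi)$ a solution, the cost increase is $O(\varepsilon(I_T+T))$ and can be made $\le\sigma/2$, and -- crucially -- the dead zone makes $\int_0^1\varphi^\varepsilon_k(s,y)\mathbf{1}_{[0,r)}(y)\,dy$ \emph{constant in $r$} for $r/r_k(\boldsymbol{\zeta}(s))\in(1-\varepsilon,1+\varepsilon)$. Uniqueness then follows by a contradiction at the first disagreement time $\tau$: by continuity one has $|r(\tilde{\boldsymbol{\zeta}})/r(\boldsymbol{\zeta})-1|<\varepsilon$ on $[\tau,\tau+\delta]$, so the $k\ge1$ equations force $\tilde\zeta_k=\zeta_k$ there \emph{exactly} (no Gronwall, no Lipschitz constant), and the $\zeta_0$ coordinate is handled by a quadratic Lyapunov estimate on $(\zeta_0-\tilde\zeta_0)^2$ using monotonicity of $r_0(\cdot)$ in $\zeta_0$ and the Skorokhod complementarity $\int\zeta_0\,d\eta=0$, not by Lipschitz continuity of $\Gamma$. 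Your write-up also leaves the reduction to the Skorokhod map and the $k=0$ coordinate at a vague level, which is where a Gronwall route would face its own difficulty because the reflection term is not a nice function of $\tilde\psi-\psi$.
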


\begin{proof}
	Since $I_T(\boldsymbol{\zeta} ,\psi ) < \infty$, we can choose some $\boldsymbol{\varphi} \in \mathcal{S}_T(\boldsymbol{\zeta} ,\psi )$ such that
	\begin{equation*}
		\sum_{k=0}^\infty \int_{[0,T] \times [0,1]} \ell(\varphi_k(s,y)) \, ds\,dy \le I_T(\boldsymbol{\zeta} ,\psi ) + \frac{\sigma}{2}.
	\end{equation*}
	Next we will modify $\boldsymbol{\varphi}$ to get the desired $\boldsymbol{\varphi}^*$.
	\blue{
	For $k \in \Nmb_0$, let 
	\begin{align*}
		\rho_k(t) & \doteq 1_{\{r_k(\boldsymbol{\zeta}(t)) = 0\}} + \frac{\int_0^1 {{1}}_{[0,r_k(\boldsymbol{\zeta}(t)))}(y) \varphi_k(t,y)\,dy}{r_k(\boldsymbol{\zeta}(t))} 1_{\{r_k(\boldsymbol{\zeta}(t)) \ne 0\}}, \\
		\varphitil_k(t,y) & \doteq \rho_k(t) {{1}}_{[0,r_k(\boldsymbol{\zeta}(t)))}(y) + {{1}}_{[r_k(\boldsymbol{\zeta}(t)),1]}(y).
	\end{align*}
	Then $$\int_{[0,t]\times \lbrack 0,1]}{{1}}_{[0,r_{k}(\boldsymbol{\zeta} (s)))}(y)\,\varphitil_{k}(s,y)\,ds\,dy = \int_{[0,t]\times \lbrack 0,1]}{{1}}_{[0,r_{k}(\boldsymbol{\zeta} (s)))}(y)\,\varphi_{k}(s,y)\,ds\,dy$$ and hence $(\varphitil_k)_{k \in \Nmb_0} \in \Smc_T(\boldsymbol{\zeta},\psi)$.
	}	
	Since $\ell$ is convex and nonnegative and $\ell(1)=0$, 
	\blue{
	we have
	\begin{align*}
		\int_{[0,T] \times [0,1]} \ell(\varphitil_k(s,y)) \, ds\,dy & = \int_0^T 1_{\{r_k(\boldsymbol{\zeta}(s)) \ne 0\}} r_k(\boldsymbol{\zeta}(s)) \ell(\rho_k(s)) \, ds \le \int_{[0,T] \times [0,1]} \ell(\varphi_k(s,y)) \, ds\,dy
	\end{align*}
	for each $k \in \Nmb_0$.
	Therefore we can assume without loss of generality (and abusing notation) that
	$
		\varphi_k(t,y) = \rho_k(t) {{1}}_{[0,r_k(\boldsymbol{\zeta}(t)))}(y) + {{1}}_{[r_k(\boldsymbol{\zeta}(t)),1]}(y)
	$
	for some $\rho_k(t) \in [0,\infty)$, for each $k \in \mathbb{N}_0$ and $(t,y) \in [0,T]\times[0,1]$.
	}
	Fix $\varepsilon \in (0,1)$.  We will shrink the support of $\boldsymbol{\varphi}$ to get the desired $\boldsymbol{\varphi}^*$ for sufficiently small $\varepsilon$.
%	Define  measures as follows.
	For $t \in [0,T]$, let
	\begin{equation*}
		\varphi_k^\varepsilon(t,y) = \frac{\rho_k(t)}{1-\varepsilon} {{1}}_{[0,(1-\varepsilon)r_k(\boldsymbol{\zeta}(t)))}(y) + {{1}}_{[(1+\varepsilon)r_k(\boldsymbol{\zeta}(t)),1]}(y). 
	\end{equation*}
	Clearly $\boldsymbol{\varphi}^\varepsilon \in \mathcal{S}_T(\boldsymbol{\zeta} ,\psi )$.	
	Note that $\varphi_k^\varepsilon(t,y) = 0$ for $(1-\varepsilon)r_k(\boldsymbol{\zeta}(t)) < y < (1+\varepsilon)r_k(\boldsymbol{\zeta}(t))$, which will be a key when we prove uniqueness in part (b).
	Recall $\tau_{\boldsymbol{\zeta}}$ introduced in \eqref{eq:defntauphi}.
	Then
	\begin{align*}
		& \sum_{k=0}^\infty \int_{[0,T] \times [0,1]} \ell(\varphi_k^\varepsilon(t,y)) \, dt\,dy - \sum_{k=0}^\infty \int_{[0,T] \times [0,1]} \ell(\varphi_k(t,y)) \, dt\,dy \\
		& = \sum_{k=0}^\infty \int_0^{\tau_{\boldsymbol{\zeta}}} \left[ (1-\varepsilon)r_k(\boldsymbol{\zeta}(t)) \ell\left(\frac{\rho_k(t)}{1-\varepsilon}\right) + 2\varepsilon r_k(\boldsymbol{\zeta}(t)) \ell(0) - r_k(\boldsymbol{\zeta}(t)) \ell(\rho_k(t)) \right] dt \\
		& = \sum_{k=0}^\infty \int_0^{\tau_{\boldsymbol{\zeta}}} r_k(\boldsymbol{\zeta}(t)) \bigg[ \left( \rho_k(t) \log \left(\frac{\rho_k(t)}{1-\varepsilon}\right) - \rho_k(t) + 1-\varepsilon \right) + 2\varepsilon  \\
		& \qquad  - \left( \rho_k(t) \log \rho_k(t) - \rho_k(t) + 1 \right) \bigg] dt \\
		& = \sum_{k=0}^\infty \int_0^{\tau_{\boldsymbol{\zeta}}} r_k(\boldsymbol{\zeta}(t)) \left[ \rho_k(t) \log \left(\frac{1}{1-\varepsilon}\right) + \varepsilon \right] dt.
	\end{align*}
	From Lemma \ref{lem:property_ell}(b) we have
	\begin{align*}
		& \sum_{k=0}^\infty \int_{[0,T] \times [0,1]} \ell(\varphi_k^\varepsilon(t,y)) \, dt\,dy - \sum_{k=0}^\infty \int_{[0,T] \times [0,1]} \ell(\varphi_k(t,y)) \, dt\,dy \\
		& \le \sum_{k=0}^\infty \int_0^{\tau_{\boldsymbol{\zeta}}} r_k(\boldsymbol{\zeta}(t)) \left[ \left( \ell(\rho_k(t)) + 2 \right) \log \left(\frac{1}{1-\varepsilon}\right) + \varepsilon \right] dt \\
		& = \log \left(\frac{1}{1-\varepsilon}\right)\sum_{k=0}^\infty \int_{[0,T] \times [0,1]} \ell(\varphi_k(t,y)) \, dt\,dy  + 2 \tau_{\boldsymbol{\zeta}} \log \left(\frac{1}{1-\varepsilon}\right) + \tau_{\boldsymbol{\zeta}} \varepsilon \\
		& \le \left(I_T(\boldsymbol{\zeta} ,\psi ) + \frac{\sigma}{2}\right)\log \left(\frac{1}{1-\varepsilon}\right) + 2T\log \left(\frac{1}{1-\varepsilon}\right) + T \varepsilon .
	\end{align*}	
	Choosing $\varepsilon$ small enough so that the last display is no larger than $\frac{\sigma}{2}$, we have
	\begin{align*}
		\sum_{k=0}^\infty \int_{[0,T] \times [0,1]} \ell(\varphi_k^\varepsilon(s,y)) \, ds\,dy \le \sum_{k=0}^\infty \int_{[0,T] \times [0,1]} \ell(\varphi_k(s,y)) \, ds\,dy + \frac{\sigma}{2} \le I_T(\boldsymbol{\zeta} ,\psi ) + \sigma.
	\end{align*}
	Part (a) then holds with $\boldsymbol{\varphi}^* = \boldsymbol{\varphi}^\varepsilon$ for  such an $\varepsilon$.
	
	We now show that  part (b) is satisfied with such a $\boldsymbol{\varphi}^*$. Suppose that, in addition to $(\boldsymbol{\zeta} ,\psi )$, there is another pair of $({\tilde{\boldsymbol{\zeta}}},{\tilde{\psi}})$ such that $({\tilde{\boldsymbol{\zeta}}},{\tilde{\psi}}) \in \mathcal{C}_T$ and $\boldsymbol{\varphi}^* \in \mathcal{S}_T({\tilde{\boldsymbol{\zeta}}},{\tilde{\psi}})$.
%	Let $\psi$ and ${\tilde{\psi}}$ be corresponding functions defined in Remark \ref{rmk:Skorokhod}. 
	Let 
	$\tau \doteq \inf \{ t \in [0,T] : \boldsymbol{\zeta}(t) \ne {\tilde{\boldsymbol{\zeta}}}(t) \} \wedge T.$
	We claim that $\tau = T$.
	Once the claim is verified, it follows from continuity of $\boldsymbol{\zeta}$ and ${\tilde{\boldsymbol{\zeta}}}$ that $\boldsymbol{\zeta}(t) = {\tilde{\boldsymbol{\zeta}}}(t)$ for all $t \in [0,T]$.
	Then from \eqref{eq:psi},  $\psi = {\tilde{\psi}}$ proving part (b).
	
	Now we prove the claim that $\tau = T$. We will argue via contradiction.
	Suppose that $\tau < T$.
	To complete the proof, it suffices to reach the following contradiction
	\begin{equation}
		\label{eq:uniqueness_contradiction}
		\boldsymbol{\zeta}(t) = {\tilde{\boldsymbol{\zeta}}}(t), t \in [\tau,\tau+\delta] \mbox{ for some } \delta > 0.
	\end{equation}	
	From definition of $\tau$ and \eqref{eq:psi} it follows that $(\boldsymbol{\zeta}(t),r(\boldsymbol{\zeta}(t)),\psi(t)) = ({\tilde{\boldsymbol{\zeta}}}(t),r({\tilde{\boldsymbol{\zeta}}}(t)),{\tilde{\psi}}(t))$ for all $t < \tau$.
	From Remark \ref{rmk:property_ODE}(a) we have that $r(\boldsymbol{\zeta}(\cdot)), r({\tilde{\boldsymbol{\zeta}}}(\cdot)) \in \mathcal{C}$.
%	It then follows from continuity of $\zeta,{\tilde{\zeta}},r(\boldsymbol{\zeta}),r({\tilde{\zeta}}),\eta,{\tilde{\eta}}$ that 
	Then by continuity, $(\boldsymbol{\zeta}(t),r(\boldsymbol{\zeta}(t)),\psi(t)) = ({\tilde{\boldsymbol{\zeta}}}(t),r({\tilde{\boldsymbol{\zeta}}}(t)),{\tilde{\psi}}(t))$ for all $t \le \tau$.
	If $r(\boldsymbol{\zeta}(\tau)) = r({\tilde{\boldsymbol{\zeta}}}(\tau)) = 0$, then from Remark \ref{rmk:property_ODE}(c) we have $\boldsymbol{\zeta}(t) = {\tilde{\boldsymbol{\zeta}}}(t) = \boldsymbol{0}$ for all $t \ge \tau$, which gives \eqref{eq:uniqueness_contradiction}.	
	Now we show \eqref{eq:uniqueness_contradiction} for the remaining case: $r(\boldsymbol{\zeta}(\tau)) = r({\tilde{\boldsymbol{\zeta}}}(\tau)) > 0$.
	For this, note that by continuity of $r(\boldsymbol{\zeta})$ and $r({\tilde{\boldsymbol{\zeta}}})$, there exists some $\delta > 0$ such that for all $t \in [\tau,\tau+\delta]$, 
	\begin{equation}
		\label{eq:uniqueness_key_fraction}
		r(\boldsymbol{\zeta}(t)) > 0, r({\tilde{\boldsymbol{\zeta}}}(t)) > 0, \left| \frac{r(\boldsymbol{\zeta}(t))}{r({\tilde{\boldsymbol{\zeta}}}(t))} - 1 \right| < \varepsilon,
	\end{equation}
	where $\varepsilon$ is as in part (a) and recall that $\boldsymbol{\varphi}^*=\boldsymbol{\varphi}^\varepsilon$.
%	We will assume so in the rest argument without explicitly referring to this neighborhood.
	We will argue in two steps.\\
	
	Step $1$: We will prove that
	\begin{equation}
		\label{eq:step_1_claim}
		\zeta_k(t) = {\tilde{\zeta}}_k(t) \mbox{ for all } t \in [\tau,\tau+\delta], k \in \mathbb{N}.
	\end{equation}
	Suppose not, namely there exists $k\in \mathbb{N}$ such that
	$\tau_k \doteq \inf \{t \in [\tau,\tau+\delta] : \zeta_k(t) \ne {\tilde{\zeta}}_k(t) \}\wedge T$
	satisfies  $\tau \le \tau_k < \tau+\delta$.
	By continuity, we have $\zeta_k(t) = {\tilde{\zeta}}_k(t)$ for $t \le \tau_k$.
	We must have $\zeta_k(\tau_k) = {\tilde{\zeta}}_k(\tau_k) > 0$, since otherwise $\zeta_k(\tau_k) = {\tilde{\zeta}}_k(\tau_k) = 0$ and so from Remark \ref{rmk:property_ODE}(b)  $\zeta_k(t) = {\tilde{\zeta}}_k(t) = 0$ for all $t \ge \tau_k$, which contradicts the definition of $\tau_k$.
	From \eqref{eq:uniqueness_key_fraction} it then follows that
	\begin{align*}
		r_k(\boldsymbol{\zeta}(\tau_k)) & = \frac{k\zeta_k(\tau_k)}{r(\boldsymbol{\zeta}(\tau_k))} > 0, \\
		| r_k(\boldsymbol{\zeta}(\tau_k)) - r_k({\tilde{\boldsymbol{\zeta}}}(\tau_k)) | & = \left| \frac{k\zeta_k(\tau_k)}{r(\boldsymbol{\zeta}(\tau_k))} - \frac{k{\tilde{\zeta}}_k(\tau_k)}{r({\tilde{\boldsymbol{\zeta}}}(\tau_k))} \right| = \frac{k\zeta_k(\tau_k)}{r(\boldsymbol{\zeta}(\tau_k))} \left| 1 - \frac{r(\boldsymbol{\zeta}(\tau_k)}{r({\tilde{\boldsymbol{\zeta}}}(\tau_k))} \right| < \varepsilon r_k(\boldsymbol{\zeta}(\tau_k)).
	\end{align*}
%	Note that the last inequality crucially uses the property that $r_k(\boldsymbol{\zeta}(\tau_k))>0$.
Once more by continuity, there exists some $\delta_k>0$ such that last two inequalities hold for $t \in [\tau_k,\tau_k+\delta_k]$, namely $$r_k(\boldsymbol{\zeta}(t)) > 0, \: (1-\varepsilon)r_k(\boldsymbol{\zeta}(t)) < r_k({\tilde{\boldsymbol{\zeta}}}(t)) < (1+\varepsilon)r_k(\boldsymbol{\zeta}(t)).$$
	From construction of $\varphi^\varepsilon$, we see that for $t \in [\tau_k,\tau_k+\delta_k]$,
	\begin{equation*}
%	\label{eq:phitotilphi}
		\int_{(\tau_k,t] \times [0,1]} {{1}}_{[0,r_k({\tilde{\boldsymbol{\zeta}}}(s)))}(y) \varphi_k^\varepsilon(s,y) \, ds\,dy = \int_{(\tau_k,t] \times [0,1]}  {{1}}_{[0,r_k(\boldsymbol{\zeta}(s)))}(y) \varphi_k^\varepsilon(s,y) \, ds\,dy.
	\end{equation*}
	It then follows from \eqref{eq:phi_k} that $\zeta_k(t) = {\tilde{\zeta}}_k(t)$ for all $t \le \tau_k+\delta_k$.
	This contradicts the definition of $\tau_k$.
	Therefore \eqref{eq:step_1_claim} must hold.\\
	
	Step $2$: We will prove that
	\begin{equation}
		\label{eq:step_2_claim}
		\zeta_0(t) = {\tilde{\zeta}}_0(t) \mbox{ for all } t \in [\tau,\tau+\delta].
	\end{equation}
	Let $\eta(t) \doteq \zeta_0(t) - \psi(t)$ and ${\tilde{\eta}}(t) \doteq {\tilde{\zeta}}_0(t) - {\tilde{\psi}}(t)$.
	From properties of the Skorokhod map $\Gamma$ (see, e.g., \cite[Section 3.6.C]{KaratzasShreve1991brownian}), we have that
	\begin{align}
		& \eta(0) = 0, \eta(t) \mbox{ is non-decreasing and } \int_0^T \zeta_0(t) \, \eta(dt) = 0, \label{eq:step_2_eta} \\
		& {\tilde{\eta}}(0) = 0, {\tilde{\eta}}(t) \mbox{ is non-decreasing and } \int_0^T {\tilde{\zeta}}_0(t) \, {\tilde{\eta}}(dt) = 0 \label{eq:step_2_etatil}.
	\end{align}
%	and similar property holds for $({\tilde{\eta}},{\tilde{\zeta}}_0)$.
	Consider $[\zeta_0(t) - {\tilde{\zeta}}_0(t)]^2$.
	Since $\zeta_0,\psi,{\tilde{\zeta}}_0,{\tilde{\psi}}$ are absolutely continuous, we have for $t \in [\tau,\tau+\delta]$,
	\begin{equation}
	\Scale[0.9]
	{\begin{aligned}
		 (\zeta_0(t) - {\tilde{\zeta}}_0(t))^2 
		& = (\zeta_0(\tau) - {\tilde{\zeta}}_0(\tau))^2 + 2 \int_\tau^t (\zeta_0(s) - {\tilde{\zeta}}_0(s)) (\zeta_0'(s) - {\tilde{\zeta}}_0'(s)) \, ds  \\
		& = 2 \int_\tau^t (\zeta_0(s) - {\tilde{\zeta}}_0(s)) (\psi'(s) - {\tilde{\psi}}'(s)) \, ds + 2 \int_\tau^t (\zeta_0(s) - {\tilde{\zeta}}_0(s)) (\eta_0'(s) - {\tilde{\eta}}_0'(s)) \, ds. 
	\end{aligned}}
	\label{eq:step_2_phi}
	\end{equation}
	From \eqref{eq:psi} and \eqref{eq:phi_k} we see that for $t \in [\tau,\tau+\delta]$,
	\begin{align*}
		\psi(t) & = \sum_{k=1}^\infty (k-2) (p_k-\zeta_k(t)) - 2\int_{[0,t] \times [0,1]} {{1}}_{[0,r_0(\boldsymbol{\zeta}(s)))}(y) \, \varphi_0^\varepsilon(s,y) ds\,dy, \\
		{\tilde{\psi}}(t) & = \sum_{k=1}^\infty (k-2) (p_k-{\tilde{\zeta}}_k(t)) - 2\int_{[0,t] \times [0,1]} {{1}}_{[0,r_0({\tilde{\boldsymbol{\zeta}}}(s)))}(y) \, \varphi_0^\varepsilon(s,y) ds\,dy.
	\end{align*}
	Taking the difference of these two displays and using \eqref{eq:step_1_claim}, we have that for $t \in [\tau,\tau+\delta]$,
	\begin{equation}
		\psi(t) - {\tilde{\psi}}(t) = - 2 \int_{[0,t] \times [0,1]} \left( {{1}}_{[0,r_0(\boldsymbol{\zeta}(s)))}(y) - {{1}}_{[0,r_0({\tilde{\boldsymbol{\zeta}}}(s)))}(y) \right) \varphi_0^\varepsilon(s,y) \, ds\,dy. \label{eq:step_2_psi}
	\end{equation}	
	Since for each fixed $y \ge 0$ the function $x\mapsto \frac{x}{x+y}$ is non-decreasing on $(-y,\infty)$, we have from \eqref{eq:step_1_claim} and \eqref{eq:uniqueness_key_fraction} that if for some
	$t \in [\tau,\tau+\delta]$,  $\zeta_0(t) \ge {\tilde{\zeta}}_0(t)$, then
	\begin{align*}
		\Scale[0.95]{r_0(\boldsymbol{\zeta}(t)) = \frac{\zeta_0(t)}{\zeta_0(t) + \sum_{k=1}^\infty k \zeta_k(t)} 
		= \frac{\zeta_0(t)}{\zeta_0(t) + \sum_{k=1}^\infty k {\tilde{\zeta}}_k(t)} \ge \frac{{\tilde{\zeta}}_0(t)}{{\tilde{\zeta}}_0(t) + \sum_{k=1}^\infty k {\tilde{\zeta}}_k(t)} = r_0({\tilde{\boldsymbol{\zeta}}}(t)).}	
	\end{align*}
%	where the second equality is from Step 1.
	Therefore for $t \in [\tau,\tau+\delta]$,
	\begin{equation*}
%		\label{eq:step_2_mono_A}
		{{1}}_{[0,r_0(\boldsymbol{\zeta}(t)))}(y) \ge {{1}}_{[0,r_0({\tilde{\boldsymbol{\zeta}}}(t)))}(y) \mbox{ when } \zeta_0(t) \ge {\tilde{\zeta}}_0(t)
	\end{equation*}
	and similarly
	\begin{equation*}
%		\label{eq:step_2_mono_B}
		{{1}}_{[0,r_0(\boldsymbol{\zeta}(t)))}(y) \le {{1}}_{[0,r_0({\tilde{\boldsymbol{\zeta}}}(t)))}(y) \mbox{ when } \zeta_0(t) \le {\tilde{\zeta}}_0(t).
	\end{equation*}
	Combining these two inequalities with \eqref{eq:step_2_psi}, we see that
	\begin{equation}
		\label{eq:step_2_mono_1}
		(\zeta_0(s) - {\tilde{\zeta}}_0(s)) (\psi'(s) - {\tilde{\psi}}'(s)) \le 0, \mbox{ a.e. } s \in [\tau,\tau+\delta].
	\end{equation}
	Next from \eqref{eq:step_2_eta} and \eqref{eq:step_2_etatil} we see that for $t \in [\tau,\tau+\delta]$,
	\begin{align*}
		\int_\tau^t {{1}}_{\{\zeta_0(s) > {\tilde{\zeta}}_0(s)\}} (\zeta_0(s) - {\tilde{\zeta}}_0(s)) (\eta_0'(s) - {\tilde{\eta}}_0'(s)) \, ds & \le \int_\tau^t {{1}}_{\{\zeta_0(s) > {\tilde{\zeta}}_0(s)\}} (\zeta_0(s) - {\tilde{\zeta}}_0(s)) \eta_0'(s) \, ds \\
		& \le \int_\tau^t {{1}}_{\{\zeta_0(s) > 0\}} \zeta_0(s) \, \eta_0(ds) 
		 = 0,
	\end{align*}
	and similarly
	\begin{equation*}
		\int_\tau^t {{1}}_{\{\zeta_0(s) < {\tilde{\zeta}}_0(s)\}} (\zeta_0(s) - {\tilde{\zeta}}_0(s)) (\eta_0'(s) - {\tilde{\eta}}_0'(s)) \, ds \le 0.
	\end{equation*}
	Combining these two inequalities with \eqref{eq:step_2_mono_1} and \eqref{eq:step_2_phi}, we have for $t \in [\tau,\tau+\delta]$,
	$
		[\zeta_0(t) - {\tilde{\zeta}}_0(t)]^2 \le 0,
	$
	proving \eqref{eq:step_2_claim}.	
Combining \eqref{eq:step_1_claim} and \eqref{eq:step_2_claim}  gives \eqref{eq:uniqueness_contradiction} and completes the proof.	
\end{proof}

We can now complete the proof of the Laplace lower bound. Fix $h \in \mathbb{%
C}_b(\mathcal{D}_\infty \times \mathcal{D})$ and $\sigma \in (0,1)$. Fix
some $\sigma$-optimal $(\boldsymbol{\zeta}^*,\psi^*) \in \mathcal{C}_T$ with $%
I_T(\boldsymbol{\zeta}^*,\psi^*) < \infty$, namely 
\begin{equation*}
I_T(\boldsymbol{\zeta}^*,\psi^*) + h(\boldsymbol{\zeta}^*,\psi^*) \le \inf_{(\boldsymbol{\zeta} ,\psi ) \in \mathcal{D}%
_\infty \times \mathcal{D}} \left\{ I_T(\boldsymbol{\zeta} ,\psi ) + h(\boldsymbol{\zeta} ,\psi ) \right\} +
\sigma.
\end{equation*}
%with $I_T(\zeta^*,\psi^*) < \infty$.
Let $\boldsymbol{\varphi}^* \in \mathcal{S}_T(\boldsymbol{\zeta}^*,\psi^*)$ be as in Lemma \ref%
{lem:uniqueness} (with $(\boldsymbol{\zeta},\psi)$ there replaced by $(\boldsymbol{\zeta}^*,\psi^*)$). %\begin{enumeratea}
%\item
%	$\sum_{k=0}^\infty \int_{[0,T] \times [0,1]} \ell(\varphi^*_k(s,y)) \, ds\,dy \le I_T(\zeta^*) + \sigma$.
%\item
%	$\zeta^*$ and $\eta^*$ is the unique pair such that $\zeta^* \in \mathcal{C}_T$ and $(\varphi^*,\eta^*) \in \mathcal{S}_T(\zeta^*)$.
%\end{enumeratea}
For each $n \in \mathbb{N}$ and $(s,y) \in [0,T] \times [0,1]$, consider the
deterministic control 
\begin{align*}
\varphi^n_k(s,y) & \doteq \frac{1}{n} {{1}}_{\{\varphi^*_k(s,y)
\le \frac{1}{n}\}} + \varphi^*_k(s,y) {{1}}_{\{\frac{1}{n} <
\varphi^*_k(s,y) < n\}} + n {{1}}_{\{\varphi^*_k(s,y) \ge n\}}, k
\le n, \\
\varphi^n_k(s,y) & \doteq 1, k > n.
\end{align*}
Then $\boldsymbol{\varphi}^n \doteq (\varphi^n_k) \in \bar{\mathcal{A}}_b$ and from %
\eqref{eq:mainrepn17} we have 
\begin{equation*}
-\frac{1}{n} \log {{E}} e^{-nh(\boldsymbol{X}^{n},Y^{n})} \le {{E}} \left\{
\sum_{k=0}^\infty \int_{[0,T] \times [0,1]} \ell(\varphi_k^n(s,y)) \, ds\,dy
+ h({\bar{\boldsymbol{X}}}^{n},{\bar{Y}}^{n}) \right\},
\end{equation*}
where $({\bar{\boldsymbol{X}}}^{n}, {\bar{Y}}^{n})$ are given as in %
\eqref{eq:Ybar_n_upper_temp}--\eqref{eq:Xbar_n_k_upper_temp}. Noting that
for all $n \in \mathbb{N}$, $k \in \mathbb{N}_0$ and $(s,y) \in [0,T] \times
[0,1]$, $\ell(\varphi^n_k(s,y)) \le \ell(\varphi^*_k(s,y))$, we have from
Lemma \ref{lem:uniqueness}(a) that \eqref{eq:cost_bd_upper} holds with $M_0$
replaced by $I_T(\boldsymbol{\zeta}^*,\psi^*) + 1$. Define $\{\bar{\boldsymbol{\nu}}^{n}\}$ as in \eqref{eq:nu_n_upper} with controls $\boldsymbol{\varphi}^n$. From Lemma \ref{lem:tightness} it follows that $%
\{(\bar{\boldsymbol{\nu}}^{n},{\bar{\boldsymbol{X}}}^{n},{\bar{Y}}^{n})\}$ is
tight. Assume without loss of generality that $(\bar{\boldsymbol{\nu}}^{n},{\bar{\boldsymbol{X}}}^{n},{\bar{Y}}^{n})$ converges along the whole sequence
weakly to $(\bar{\boldsymbol{\nu}},{\bar{\boldsymbol{X}}},{\bar{Y}})$, given on some probability space $%
(\Omega^*,\mathcal{F}^*,{P}^*)$. From the construction of $%
\boldsymbol{\varphi}^n$ we must have $\bar{\boldsymbol{\nu}} = \bar{\boldsymbol{\nu}}^{\boldsymbol{\varphi}^*}$ a.s.\ ${P}^*$, where $\bar{\boldsymbol{\nu}}^{\boldsymbol{\varphi}^*}$ is as defined in \eqref{eq:nu_n_upper} using $\boldsymbol{\varphi}^*$.
By Lemma \ref{lem:char_limit} we have $({\bar{\boldsymbol{X}}},{\bar{Y}}) \in \mathcal{C}%
_T$ and $\boldsymbol{\varphi}^* \in \mathcal{S}_T({\bar{\boldsymbol{X}}},{\bar{Y}})$ a.s.\ ${%
P}^*$. From Lemma \ref{lem:uniqueness}(b) it now follows that $(%
{\bar{\boldsymbol{X}}},{\bar{Y}}) = (\boldsymbol{\zeta}^*,\psi^*)$ a.s.\ ${P}^*$. Finally,
from Lemma \ref{lem:uniqueness}(a), 
\begin{align*}
\limsup_{n \to \infty} -\frac{1}{n} \log {{E}} e^{-nh(\boldsymbol{X}^{n},Y^{n})} & \le
\limsup_{n \to \infty} {{E}} \left\{ \sum_{k=0}^\infty \int_{[0,T]
\times [0,1]} \ell(\varphi_k^n(s,y)) \, ds\,dy + h({\bar{\boldsymbol{X}}}^n,{\bar{Y}}^n)
\right\} \\
& \le \sum_{k=0}^\infty \int_{[0,T] \times [0,1]} \ell(\varphi_k^*(s,y)) \,
ds\,dy + {{E}}^* h({\bar{\boldsymbol{X}}}, \bar Y) \\
& = \sum_{k=0}^\infty \int_{[0,T] \times [0,1]} \ell(\varphi_k^*(s,y)) \,
ds\,dy + h(\boldsymbol{\zeta}^*,\psi^*) \\
& \le I_T(\boldsymbol{\zeta}^*,\psi^*) + h(\boldsymbol{\zeta}^*,\psi^*) + \sigma \\
& \le \inf_{(\boldsymbol{\zeta} ,\psi ) \in \mathcal{D}_\infty \times \mathcal{D}} \left\{
I_T(\boldsymbol{\zeta} ,\psi ) + h(\boldsymbol{\zeta} ,\psi ) \right\} + 2\sigma.
\end{align*}
Since $\sigma \in (0,1)$ is arbitrary, this completes the proof of the
Laplace lower bound.

\section{Compact Sub-level Sets}

\label{sec:rate_function}
In this section we prove that the function $I_T$ defined in %
\eqref{eq:rate_function} is a rate function, namely the set $\Gamma_N \doteq
\{ (\boldsymbol{\zeta} ,\psi ) \in \mathcal{D}_\infty \times \mathcal{D} : I_T(\boldsymbol{\zeta} ,\psi )
\le N \}$ is compact for each fixed $N \in [0,\infty)$.
Since the proof (as is usual) is very similar to the proof of the Laplace upper bound we will only provide details on steps that are significantly different.

Take any sequence $\{(\boldsymbol{\zeta}^n,\psi^n)_{n \in \mathbb{N}}\} \subset \Gamma_N$.
Then $(\boldsymbol{\zeta}^n,\psi^n) \in \mathcal{C}_T$ and there exists some $\frac{1}{n}$%
-optimal $\boldsymbol{\varphi}^n \in \mathcal{S}_T(\boldsymbol{\zeta}^n,\psi^n)$, namely 
\begin{equation}  \label{eq:cost_bd_rate}
\sum_{k=0}^\infty \int_{[0,T] \times [0,1]} \ell(\varphi_k^n(s,y)) \, ds\,dy
\le I_T(\boldsymbol{\zeta}^n,\psi^n) + \frac{1}{n} \le N + \frac{1}{n}.
\end{equation}
Recalling \eqref{eq:psi} and \eqref{eq:phi_k} and letting $\eta^n(t) \doteq
\zeta^n_0(t) - \psi^n(t)$, we can write for $t \in [0,T]$, 
\begin{equation}
\zeta^n_0(t) = \Gamma(\psi^n)(t) = \psi^n(t) + \eta^n(t) = \sum_{k=0}^\infty
(k-2) B_k^n(t) + \eta^n(t),  \label{eq:phi_n_0_rate}
\end{equation}
where 
\begin{equation}
B_k^n(t) \doteq \int_{[0,t] \times [0,1]} {{1}}%
_{[0,r_k(\boldsymbol{\zeta}^n(s)))}(y) \, \varphi_k^n(s,y) \,ds\,dy, k \in \mathbb{N}_0.
\label{eq:B_n_k_rate}
\end{equation}
From standard properties of the one-dimensional Skorokhod Problem we have 
\begin{equation}
\eta^n(0) = 0, \eta^n(t) \mbox{ is non-decreasing and } \int_0^T {%
{1}}_{\{\zeta^n_0(t)>0\}} \, \eta^n(dt) = 0.  \label{eq:eta_n_rate}
\end{equation}
%where $\Gamma \colon \mathbb{C}([0,T]:\mathbb{R}) \to \mathbb{C}([0,T]:\mathbb{R}_+)$ is the one-dimensional Skorokhod map defined as
%\begin{equation*}
%	\Gamma(z)(t) \doteq z(t) - \inf_{0 \le s \le t} z(s), z \in \mathbb{C}([0,T]:\mathbb{R}).
%\end{equation*}

% \begin{Lemma}
% \label{lem:UI_rate} Define for $K \in \mathbb{N}$,
% \begin{equation*}
% U_K \doteq \sup_{n \in \mathbb{N}} \sum_{k=K}^\infty \int_{[0,T] \times
% [0,1]} k \varphi^n_k(s,y) {{1}}_{[0,r_k(\boldsymbol{\zeta}^n(s)))}(y) \,ds\,dy.
% \end{equation*}
% Then as $K \to \infty$,
% \begin{equation*}
% \left(U_K, \sup_{n \in \mathbb{N}} \sum_{k=K}^\infty k \|B^n_k\|_\infty, \sup_{n
% \in \mathbb{N}} \sum_{k=K}^\infty k \|\zeta^n_k\|_\infty\right) \to {\boldsymbol{0}}%
% .
% \end{equation*}
% \end{Lemma}
%
% \begin{proof}
% 	From \eqref{eq:B_n_k_rate} and \eqref{eq:phi_k} (applied to $\boldsymbol{\zeta}^n$) it follows that for $K \in \mathbb{N}$,
% 	\begin{align*}
% 		& U_K = \sup_{n \in \mathbb{N}} \sum_{k=K}^\infty k B^n_k(T) = \sup_{n \in \mathbb{N}} \sum_{k=K}^\infty k \left( p_k - \zeta^n_k(T) \right) \le \sum_{k=K}^\infty k p_k, \\
% 		& \sup_{n \in \mathbb{N}} \sum_{k=K}^\infty k \|B^n_k\|_\infty \le \sup_{n \in \mathbb{N}} \sum_{k=K}^\infty k \|\zeta^n_k\|_\infty = \sum_{k=K}^\infty k p_k.
% 	\end{align*}
% 	The result then follow from Assumptions \ref{asp:convgN} and \ref{asp:exponential-boundN}.
% %	The result follows.
% \end{proof}

Write $\boldsymbol{B}^n = (B^n_k)_{n \in \mathbb{N}_0}$ and let $\boldsymbol{\nu}^n$ be
defined as in \eqref{eq:nu_n_upper} with deterministic controls $\boldsymbol{\varphi}^n$. The following lemma shows that $%
\{(\boldsymbol{\nu}^n,\boldsymbol{\zeta}^n,\psi^n,\boldsymbol{B}^n,\eta^n)\}$ is pre-compact. The proof is
similar to that of Lemma \ref{lem:tightness} and is therefore omitted.

\begin{Lemma}
\label{lem:pre_compact_rate} $\{(\boldsymbol{\nu}^n,\boldsymbol{\zeta}^n,\psi^n,\boldsymbol{B}^n,\eta^n)\} 
$ is pre-compact in $[\mathcal{M}_{FC}([0,T]\times[0,1])]^\infty \times 
\mathcal{C}_\infty \times \mathcal{C} \times \mathcal{C}_\infty \times 
\mathcal{C}$.
\end{Lemma}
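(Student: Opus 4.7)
The plan is to mirror the steps of the proof of Lemma \ref{lem:tightness} but in a purely deterministic setting, replacing the Aldous--Kurtz criterion with an Arzel\`a--Ascoli argument since the candidate limits live in continuous path spaces. First I would establish pre-compactness of $\{\boldsymbol{\nu}^n\}$ in $[\mathcal{M}_{FC}([0,T]\times[0,1])]^\infty$. Since $[0,T]\times[0,1]$ is compact, it suffices to bound $\nu^n_k([0,T]\times[0,1])$ uniformly in $n$ for each $k$. By Lemma \ref{lem:property_ell}(b),
\begin{equation*}
\nu^n_k([0,T]\times[0,1]) = \int_{[0,T]\times[0,1]} \varphi^n_k(s,y)\,ds\,dy \le \int_{[0,T]\times[0,1]} \bigl(\ell(\varphi^n_k(s,y))+2\bigr)\,ds\,dy \le N+1+2T,
\end{equation*}
where the last bound uses \eqref{eq:cost_bd_rate}. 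Coordinate-wise pre-compactness together with the product topology on $[\mathcal{M}_{FC}([0,T]\times[0,1])]^\infty$ then gives pre-compactness of $\{\boldsymbol{\nu}^n\}$.

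Next I would establish the deterministic analogue of Lemma \ref{lem:UI_upper}, namely that $U_K\doteq \sup_n \sum_{k=K}^\infty k B^n_k(T)\to 0$ as $K\to\infty$. From \eqref{eq:B_n_k_rate} and \eqref{eq:phi_k} we have $B^n_k(T) = p_k-\zeta^n_k(T)\le p_k$, so $U_K\le \sum_{k=K}^\infty k p_k$, which tends to zero by Assumption \ref{asp:exponential-boundN}. Using this and \eqref{eq:B_n_k_rate}, I would then verify equicontinuity of each $B^n_k$ (and of $\sum_{k=0}^\infty (k-2) B^n_k$) by the splitting argument used for \eqref{eq:tightness_upper}: for any $0\le s<t\le T$, $K\in\Nmb$, $M\in(0,\infty)$,
\begin{align*}
\sum_{k=0}^\infty (k+2)|B^n_k(t)-B^n_k(s)| &\le \sum_{k=0}^{K-1}(k+2)\!\!\int_{(s,t]\times[0,1]}\!\!\varphi^n_k(u,y)\bigl(1_{\{\varphi^n_k\le M\}}+1_{\{\varphi^n_k>M\}}\bigr)du\,dy + 3 U_K \\
&\le K(K+2)M(t-s) + (K+2)\gamma(M)\bigl(N+1\bigr) + 3U_K,
\end{align*}
where Lemma \ref{lem:property_ell}(a) and \eqref{eq:cost_bd_rate} are used in the last inequality. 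Sending $t-s\to 0$, then $M\to\infty$, then $K\to\infty$ yields equicontinuity of the family $\boldsymbol{B}^n$ (viewed in the product topology) and also of $\sum_k(k-2)B^n_k$. Combined with uniform boundedness ($0\le B^n_k(t)\le p_k$), Arzel\`a--Ascoli gives pre-compactness of $\{\boldsymbol{B}^n\}$ in $\Cmc_\infty$.

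From $\zeta^n_k=p_k-B^n_k$ for $k\ge 1$, pre-compactness of $\{\zeta^n_k\}_{k\ge 1}$ in $\mathcal{C}_\infty$ follows immediately. Pre-compactness of $\{\psi^n\}$ in $\Cmc$ is then inherited from \eqref{eq:phi_n_0_rate} written as $\psi^n=\sum_{k=0}^\infty(k-2)B^n_k$; the controlled tail bound via $U_K$ makes this series equicontinuous in $n$. For $\zeta^n_0$ and $\eta^n$ I would invoke the Lipschitz continuity of the Skorokhod map $\Gamma\colon \Cmc\to\Cmc$ with respect to the uniform topology (as in \cite{KaratzasShreve1991brownian}): $\zeta^n_0=\Gamma(\psi^n)$ and $\eta^n=\zeta^n_0-\psi^n$, so pre-compactness of $\{\psi^n\}$ transfers to $\{\zeta^n_0\}$ and $\{\eta^n\}$. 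Assembling the coordinate pre-compactness gives the claim.

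The only genuinely delicate step is the tail control needed to establish equicontinuity of $\psi^n$ (and of $\zeta^n_0$) in $\Cmc$ rather than merely in the product topology: one must rule out mass escaping to $k=\infty$ in the sum $\sum_k(k-2)B^n_k$. This is exactly what $U_K\to 0$ provides, combined with the uniform bound on the low modes obtained from Lemma \ref{lem:property_ell}(a)--(b) and the cost bound \eqref{eq:cost_bd_rate}. All other steps are essentially deterministic transcriptions of the arguments in Lemma \ref{lem:tightness}.
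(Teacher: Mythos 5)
Your proof is correct and follows exactly the route the paper has in mind (the paper states the proof is "similar to that of Lemma \ref{lem:tightness} and is therefore omitted"): the key observation, that in the deterministic setting the Aldous--Kurtz criterion is replaced by Arzel\`a--Ascoli while the uniform mass bound for $\{\nu^n_k\}$, the tail control $U_K\to 0$ from $B^n_k(T)\le p_k$, and the split of $\varphi^n_k$ into $\{\le M\}$ and $\{>M\}$ via Lemma \ref{lem:property_ell} are all transcribed verbatim, is precisely the intended adaptation. One tiny gloss: the bound $0\le B^n_k(t)\le p_k$ only applies to $k\ge 1$; for $k=0$ one should instead use $B^n_0(T)\le \int\varphi^n_0 \le (N+1)+2T$ from Lemma \ref{lem:property_ell}(b) and \eqref{eq:cost_bd_rate}, which you have already implicitly in hand.
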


The following lemma characterizes limit points of $(\boldsymbol{\nu}^n,\boldsymbol{\zeta}^n,%
\psi^n,\boldsymbol{B}^n,\eta^n)$. 

\begin{Lemma}
\label{lem:cvg_rate} Suppose $(\boldsymbol{\nu}^n,\boldsymbol{\zeta}^n,\psi^n,\boldsymbol{B}^n,\eta^n)$
converges along a subsequence to $(\boldsymbol{\nu},\boldsymbol{\zeta},\psi,\boldsymbol{B},\eta) \in [\mathcal{M}%
([0,T]\times[0,1])]^\infty \times \mathcal{C}_\infty \times \mathcal{C}
\times \mathcal{C}_\infty \times \mathcal{C}$. Then the following hold.

\begin{enumerate}[\upshape(a)]

\item For each $k \in \mathbb{N}_0$, $\nu_k \ll \lambda_T$, and letting $%
\varphi_k \doteq \frac{d\nu_k}{d\lambda_T}$, 
$
\sum_{k=0}^\infty \int_{[0,T] \times [0,1]} \ell(\varphi_k(s,y)) \, ds\,dy
\le N.
$

\item For each $t\in[0,T]$, 
\begin{align*}  \label{eq:phi_k_rate}
\zeta_0(t) & = \Gamma(\psi)(t) = \psi(t) + \eta(t), \;\; \psi(t)  = \sum_{k=0}^\infty (k-2) B_k(t)\\
\zeta_k(t) & = p_k - B_k(t), \; k \in \mathbb{N}.
\end{align*}

\item For each $t\in[0,T]$, 
\begin{equation}
B_k(t) = \int_{[0,t] \times [0,1]} {{1}}_{[0,r_k(\boldsymbol{\zeta}(s)))}(y)
\, \varphi_k(s,y) ds\,dy, k \in \mathbb{N}_0,  \label{eq:B_k_rate}
\end{equation}
and in particular $(\boldsymbol{\zeta}, \psi) \in \mathcal{C}_T$ and $\boldsymbol{\varphi} \in \mathcal{S}%
_T(\boldsymbol{\zeta}, \psi)$.
\end{enumerate}
\end{Lemma}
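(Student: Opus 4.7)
The plan is to mirror the proof of Lemma \ref{lem:char_limit}, with the simplification that here the setting is deterministic rather than stochastic, so compensator/martingale arguments become trivial. Assume without loss of generality that the subsequence is the whole sequence, so $(\boldsymbol{\nu}^n,\boldsymbol{\zeta}^n,\psi^n,\boldsymbol{B}^n,\eta^n)\to(\boldsymbol{\nu},\boldsymbol{\zeta},\psi,\boldsymbol{B},\eta)$ in the ambient product topology.

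For part (a), I would invoke Lemma A.1 of \cite{BudhirajaChenDupuis2013large} (the same lower semicontinuity result used in Section \ref{sec:upper}) applied componentwise to the uniform cost bound \eqref{eq:cost_bd_rate}. This simultaneously yields $\nu_k\ll\lambda_T$ for each $k\in\mathbb{N}_0$, produces the densities $\varphi_k=d\nu_k/d\lambda_T$, and gives the desired inequality $\sum_{k=0}^\infty\int_{[0,T]\times[0,1]}\ell(\varphi_k(s,y))\,ds\,dy\le N$ by lower semicontinuity plus $I_T(\boldsymbol{\zeta}^n,\psi^n)\le N$.

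For part (b), the identity $\zeta_0(t)=\Gamma(\psi)(t)=\psi(t)+\eta(t)$ follows by passing to the limit in \eqref{eq:phi_n_0_rate} and invoking the standard characterization of the Skorokhod map: $\eta^n(0)=0$, monotonicity, and the complementarity condition in \eqref{eq:eta_n_rate} transfer to the limit exactly as in the proof of Lemma \ref{lem:char_limit}(d) (using that $\eta^n\to\eta$ as finite measures and $\zeta_0^n\to\zeta_0$ uniformly, since the limits lie in $\mathcal{C}$). For the identities $\psi(t)=\sum_{k=0}^\infty(k-2)B_k(t)$ and $\zeta_k(t)=p_k-B_k(t)$, the obstacle is the infinite-sum tail; I would bound $\sup_{0\le t\le T}|\sum_{k\ge K}(k-2)B_k^n(t)|$ by $\sum_{k\ge K}\tfrac{k n_k}{n}$ using that $B_k^n(t)\le B_k^n(T)=X_k^n(0)$, and then apply Assumption \ref{asp:exponential-boundN} exactly as in \eqref{eq:UI_Xbar_n} to get uniform smallness in $n$. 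This yields the full series convergence and hence (b).

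Part (c) is the main obstacle, since the integrands in \eqref{eq:B_n_k_rate} involve the discontinuous indicator $\mathbf{1}_{[0,r_k(\boldsymbol{\zeta}^n(s)))}(y)$ while $\boldsymbol{\zeta}^n$ converges only in path space. I would follow the three-regime strategy of Lemma \ref{lem:char_limit}(c): set $\bar\tau\doteq\tau_{\boldsymbol{\zeta}}$ as in \eqref{eq:defntauphi}. For $t<\bar\tau$, uniform convergence of $r(\boldsymbol{\zeta}^n)$ to the strictly positive continuous $r(\boldsymbol{\zeta})$ on $[0,t]$ gives pointwise a.e.\ convergence of the indicators (since $\{(s,y):y=r_k(\boldsymbol{\zeta}(s))\}$ has Lebesgue measure zero); combined with the superlinearity of $\ell$ and \eqref{eq:cost_bd_rate} via de la Vallée--Poussin one obtains uniform integrability of $(\mathbf{1}_{[0,r_k(\boldsymbol{\zeta}^n(\cdot)))}-\mathbf{1}_{[0,r_k(\boldsymbol{\zeta}(\cdot)))})\varphi_k^n$, yielding $B_k^n(t)\to\int_{[0,t]\times[0,1]}\mathbf{1}_{[0,r_k(\boldsymbol{\zeta}(s)))}(y)\varphi_k(s,y)\,ds\,dy$. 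Continuity extends this to $t=\bar\tau$. For $t>\bar\tau$ the right-hand side of \eqref{eq:B_k_rate} is constant in $t$ because $\boldsymbol{\zeta}\equiv\boldsymbol{0}$ after $\bar\tau$ by Remark \ref{rmk:property_ODE}(c), so it suffices to show $B_k(t)=B_k(\bar\tau)$; this follows for $k\in\mathbb{N}$ because $B_k^n(t)-B_k^n(\bar\tau)\le X_k^n(\bar\tau)\to\zeta_k(\bar\tau)=0$, and the $k=0$ case is handled by the telescoping bound in \eqref{eq:middl} (with $\eta$ playing the role of $\bar\eta$) using $r(\boldsymbol{\zeta}^n(\bar\tau))\to0$. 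Once \eqref{eq:B_k_rate} is established, $\boldsymbol{\varphi}\in\mathcal{S}_T(\boldsymbol{\zeta},\psi)$ is immediate from (b), which together with the nonnegativity/monotonicity properties preserved in the limit gives $(\boldsymbol{\zeta},\psi)\in\mathcal{C}_T$, completing the proof.
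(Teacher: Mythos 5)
Your handling of parts (a), (b), and the $t<\bar\tau$ and $t=\bar\tau$ regimes of part (c) matches the paper's approach, which indeed defers entirely to the proof of Lemma \ref{lem:char_limit} for those steps. The gap is precisely where the paper concentrates its effort: the case $k=0$, $t>\bar\tau$. You write that this case ``is handled by the telescoping bound in \eqref{eq:middl} (with $\eta$ playing the role of $\bar\eta$),'' but that bound is not directly transferable. In the proof of Lemma \ref{lem:char_limit}(c), the chain leading to \eqref{eq:middl} uses the estimate
$\sup_{\bar\tau<t\le T}|\bar\eta^n(t)-\bar\eta^n(\bar\tau)|\le 2\sum_{k\ge1}\sup_{\bar\tau<t\le T}|\bar B^n_k(t)-\bar B^n_k(\bar\tau)|$,
and this estimate rests on the explicit representation of $\bar\eta^n$ as a sum of integrals against the controlled Poisson random measures in \eqref{eq:etabar_n_upper}, where one simply drops the factor $\mathbf{1}_{\{\bar X^n_0(s-)<0\}}\le 1$. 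In the present deterministic setting, $\eta^n=\zeta_0^n-\psi^n$ is the Skorokhod regulator and has no such integral representation, so one cannot ``let $\eta$ play the role of $\bar\eta$.''

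The replacement argument the paper supplies is genuinely different and you need to spell it out: from the complementarity property $\int_0^T\mathbf{1}_{\{\zeta_0^n(t)>0\}}\,\eta^n(dt)=0$ in \eqref{eq:eta_n_rate} together with \eqref{eq:phi_n_0_rate}, one writes, for $\bar\tau<t\le T$,
\begin{equation*}
|\eta^n(t)-\eta^n(\bar\tau)|=\int_{\bar\tau}^t \mathbf{1}_{\{\zeta_0^n(s)=0\}}\,d\eta^n(s)=\int_{\bar\tau}^t \mathbf{1}_{\{\zeta_0^n(s)=0\}}\Bigl(d\zeta_0^n(s)-\textstyle\sum_{k=0}^\infty(k-2)\,dB^n_k(s)\Bigr).
\end{equation*}
One then observes that $\int_{\bar\tau}^t\mathbf{1}_{\{\zeta_0^n(s)=0\}}\,dB_0^n(s)=0$ (because $r_0(\boldsymbol{\zeta}^n(s))=0$ when $\zeta_0^n(s)=0$, see \eqref{eq:B_n_k_rate}), and that $\mathbf{1}_{\{\zeta_0^n(s)=0\}}(\zeta_0^n)'(s)=0$ a.e.\ by absolute continuity and nonnegativity of $\zeta_0^n$. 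This yields $|\eta^n(t)-\eta^n(\bar\tau)|\le\sum_{k\ge1}|k-2|\,|B^n_k(t)-B^n_k(\bar\tau)|$, after which the telescoping estimate proceeds as you describe. Without this step your proposal does not establish the oscillation bound for $\eta^n$, which is exactly what the paper flags as the only part of the proof not inherited verbatim from Lemma \ref{lem:char_limit}.
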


\begin{proof}
	Assume without loss of generality that
	\begin{equation}
		\label{eq:cvg_rate_joint}
		(\boldsymbol{\nu}^n,\boldsymbol{\zeta}^n,\psi^n,\boldsymbol{B}^n,\eta^n) \to (\boldsymbol{\nu},\boldsymbol{\zeta},\psi,\boldsymbol{B},\eta)
	\end{equation} 
	as $n \to \infty$ along the whole sequence.
	Much of the proof is similar to that of Lemma \ref{lem:char_limit} except the proof of \eqref{eq:B_k_rate} for $k=0$. Thus we only give details for the latter statement.
	
	%(a) This is an immediate consequence of the bound in \eqref{eq:cost_bd_rate} and Lemma A.1 of \cite{BudhirajaChenDupuis2013large}.
	% The second statement is a consequence of the lower semi-continuity of the map 
	% 	\begin{align*}
	% 		\sum_{k=0}^\infty R(\nu^\varphi_k\|\lambda_T) & \le \liminf_{n \to \infty} \sum_{k=0}^\infty R(\nu^{\varphi^n}_k\|\lambda_T) = \liminf_{n \to \infty} \sum_{k=0}^\infty \int_{[0,T] \times [0,1]} \ell(\varphi_k^n(s,y)) \, ds\,dy \\
	% 		& \le \liminf_{n \to \infty} \left( I_T(\zeta^n,\psi^n) + \frac{1}{n} \right) \le N,
	% 	\end{align*}
	% 	where the last line follows from \eqref{eq:cost_bd_rate}.
	% 	So $\nu_k^\varphi \ll \lambda_T$ for each $k \in \mathbb{N}_0$ and
	% 	\begin{equation*}
	% 		\sum_{k=0}^\infty \int_{[0,T] \times [0,1]} \ell(\varphi_k(s,y)) \, ds\,dy = \sum_{k=0}^\infty R(\nu^\varphi_k\|\lambda_T) \le N.
	% 	\end{equation*}
	% 	This gives part (a).

	% (b)
% 	Using \eqref{eq:cvg_rate_joint} we see that the first equation follows from \eqref{eq:phi_n_0_rate}
% 	and continuity of $\Gamma$, the second equation follows from the relation $\zeta_k^n = p_k- B_k^n$, and the last equation follows from the equality $\psi^n = \sum_{k=0}^\infty (k-2) B_k^n$ and Lemma \ref{lem:UI_rate}.
%
% 	(c)
	From \eqref{eq:cvg_rate_joint} and arguments similar to Lemma \ref{lem:UI_upper}  it follows that 
	%and Lemma \ref{lem:UI_rate} we have
	\begin{equation*}
		r(\boldsymbol{\zeta}^n(t)) = (\zeta^n_0(t))^+ + \sum_{k=1}^\infty k \zeta^n_k(t) \to (\zeta_0(t))^+ + \sum_{k=1}^\infty k \zeta_k(t) = r(\boldsymbol{\zeta}(t))
	\end{equation*}
	uniformly in $t \in [0,T]$ as $n \to \infty$.
	Therefore $r(\boldsymbol{\zeta}(\cdot))$ is continuous.
	Let $\tau \doteq \inf \{ t \in [0,T] : r(\boldsymbol{\zeta}(t)) = 0\} \wedge T$.
	We will argue that \eqref{eq:B_k_rate}, for $k=0$, holds for all $t < \tau$, $t = \tau$ and $t > \tau$. The proof of the cases
	 $t < \tau$ and $t = \tau$ is similar to that of \eqref{eq:Bbar_k_upper} and is omitted.
	
	% For $t < \tau$, we have $r(\boldsymbol{\zeta}(t)) > 0$.
% 	Hence, as in proof of \eqref{eq:cgceofindic}, for each $k \in \mathbb{N}_0$,
% 	\begin{equation*}
% 		 {{1}}_{[0,r_k(\boldsymbol{\zeta}^n(s)))}(y) \to  {{1}}_{[0,r_k(\boldsymbol{\zeta}(s)))}(y)
% 	\end{equation*}
% 	as $n \to \infty$ for $\lambda_t$-a.e.\ $(s,y) \in [0,t] \times [0,1]$.
% 	Using \eqref{eq:cost_bd_rate},  as in the proof of \eqref{eq:cgcebhatkt},
% 	as $n \to \infty$,
% 	\begin{align*}
% %		B^n_0(t) & \to - 2 \int_{[0,t] \times [0,1]} {{1}}_{\{r(\boldsymbol{\zeta}(s))>0\}} {{1}}_{[0,r_0(\boldsymbol{\zeta}(s)))}(y) \varphi_0(s,y) \, ds\,dy, \\
% 		B^n_k(t) & \to \int_{[0,t] \times [0,1]} {{1}}_{[0,r_k(\boldsymbol{\zeta}(s)))}(y) \, \varphi_k(s,y) ds\,dy.
% 	\end{align*}
% 	So \eqref{eq:B_k_rate} holds for $t < \tau$.
% 	Since \eqref{eq:B_k_rate} holds for $t < \tau$, it also holds for $t = \tau$ by continuity of $B$ and of the right side in \eqref{eq:B_k_rate}.
	
	Now consider 	 $T\ge t > \tau$. 
	% The proof of the case $k \in \mathbb{N}$ is identical to the proof of \eqref{eq:Bbar_k_upper} for $k \in \mathbb{N}$ given below \eqref{eq:cgcebhatkt}.
% 	Next we show \eqref{eq:B_k_rate} for $k =0$.
	From \eqref{eq:eta_n_rate} and \eqref{eq:phi_n_0_rate}, for $\tau < t \le T$,
	\begin{equation*}
		|\eta^n(t) - \eta^n(\tau)| = \int_\tau^t \, d\eta^n(s) = \int_\tau^t {{1}}_{\{\zeta^n_0(s) = 0\}} \, d\eta^n(s) = \int_\tau^t {{1}}_{\{\zeta^n_0(s) = 0\}} \, \left (d\zeta^n_0(s) - \sum_{k=0}^\infty (k-2)dB^n_k(s)\right).
	\end{equation*}
	From \eqref{eq:B_n_k_rate} we see that  $\int_\tau^t {{1}}_{\{\zeta^n_0(s) = 0\}} \, dB^n_0(s) = 0$.
	Also since $\zeta_0^n$ is non-negative and absolutely continuous, we have ${{1}}_{\{\zeta^n_0(s) = 0\}} (\zeta^n_0)'(s) = 0$ for a.e.\ $s \in [0,T]$.
	Therefore
	\begin{equation*}
		|\eta^n(t) - \eta^n(\tau)| 
%		& = \int_\tau^t {{1}}_{\{\zeta^n_0(s) = 0\}} \, (\zeta^n_0 - \sum_{k=1}^\infty (k-2)B^n_k)(ds) \\
		\le \sum_{k=1}^\infty |k-2| |B^n_k(t) - B^n_k(\tau)|.
%		& \le |\zeta^n_0(t)|+|\zeta^n_0(\tau)| + \sum_{k=1}^\infty k (\zeta^n_k(\tau) - \zeta^n_k(t)) \\
%		& \le 2\zeta^n_0(\tau) + \sum_{k=1}^\infty k \zeta^n_k(\tau) \\
%		& \le 2 r(\boldsymbol{\zeta}^n(\tau)),
	\end{equation*}
%	where the second line uses .
	Applying the triangle inequality to \eqref{eq:phi_n_0_rate} and using this estimate, we see that
	\begin{align*}
		 \sup_{\tau < t \le T} |B^n_0(t) - B^n_0(\tau)| 
		%& \le \sup_{\tau < t \le T} |\zeta^n_0(t) - \zeta^n_0(\tau)| + \sup_{\tau < t \le T} |\eta^n(t) -\eta^n(\tau)| + \sup_{\tau < t \le T} \sum_{k=1}^\infty |k-2| |B^n_k(t) - B^n_k(\tau)| \\
		 \le \sup_{\tau < t \le T} |\zeta^n_0(t) - \zeta^n_0(\tau)| + 2 \sum_{k=1}^\infty |k-2|\sup_{\tau < t \le T} |B^n_k(t) - B^n_k(\tau)|.
	\end{align*}
	Now as in the proof of \eqref{eq:middl} we have
	$\sup_{\tau < t \le T} |B^n_0(t) - B^n_0(\tau)|  \le 
							4 r(\boldsymbol{\zeta}^n(\tau))$,
which converges to $4 r(\boldsymbol{\zeta}(\tau))=0$ as $n \to \infty$.
	Hence $B_0(t) = B_0(\tau)$ for $\tau < t \le T$ and this gives \eqref{eq:B_k_rate} for $k=0$.
	
	Since we have proved \eqref{eq:B_k_rate} for $k=0$ and all $t < \tau$, $t = \tau$ and $t > \tau$, the proof is complete.
\end{proof}

\noindent\textbf{Proof of compact sub-level sets $\Gamma_M$:} Now we are
ready to prove that $\Gamma_M$ is compact for each fixed $M \in [0,\infty)$.
Recall $(\boldsymbol{\zeta}^n,\psi^n)$ introduced above %
\eqref{eq:cost_bd_rate} and $\boldsymbol{\nu}^n$ introduced above Lemma \ref{lem:pre_compact_rate}. From Lemma \ref{lem:pre_compact_rate} we have
pre-compactness of $\{(\boldsymbol{\nu}^n,\boldsymbol{\zeta}^n,\psi^n)\}$ in $[\mathcal{M}%
([0,T]\times[0,1])]^\infty \times \mathcal{C}_\infty \times \mathcal{C}$.
Assume without loss of generality that $(\boldsymbol{\nu}^n,\boldsymbol{\zeta}^n,\psi^n)$
converges along the whole sequence to some $(\boldsymbol{\nu},\boldsymbol{\zeta},\psi)$.
By Lemma \ref{lem:cvg_rate} $(\boldsymbol{\zeta},\psi) \in \mathcal{C}_T$ and $\boldsymbol{\nu}= \boldsymbol{\nu}^{\boldsymbol{\varphi}}$,
where for $k \in \mathbb{N}_0$, ${\nu}_k^{\boldsymbol{\varphi}}$ is as defined by the right side of  \eqref{eq:nu_n_upper} replacing ${\varphi}_k^n$ with ${\varphi}_k$,
% . By Lemma \ref{lem:cvg_rate} we have $(\boldsymbol{\zeta} ,\psi ) \in \mathcal{C}_T$, $\boldsymbol{\varphi} \in 
% \mathcal{S}_T(\boldsymbol{\zeta} ,\psi )$ 
and 
\begin{equation*}
I_T(\boldsymbol{\zeta} ,\psi ) \le \sum_{k=0}^\infty \int_{[0,T] \times [0,1]}
\ell(\varphi_k(s,y)) \, ds\,dy \le M.
\end{equation*}
Therefore $(\boldsymbol{\zeta} ,\psi ) \in \Gamma_M$ which proves that $\Gamma_M$ is
compact. 
% This completes the proof that $I_T(\cdot)$ defined in %
% \eqref{eq:rate_function} is a rate function.

\begin{Remark}
\label{rmk:unique_varphi} Suppose that for all $n \in \mathbb{N}$,  $(\boldsymbol{\zeta}^n,\psi^n) =
(\boldsymbol{\zeta} ,\psi )$ for some $(\boldsymbol{\zeta} ,\psi ) \in \mathcal{C}_T$ with $I_T(\boldsymbol{\zeta} ,\psi ) <
\infty$ and $M = I_T(\boldsymbol{\zeta} ,\psi )$. Then taking $\boldsymbol{\varphi}^n$  satisfying \eqref{eq:cost_bd_rate} (with $(\boldsymbol{\zeta}^n,\psi^n)$ replaced with $(\boldsymbol{\zeta} ,\psi )$),
we see from the above argument that there
exists some $\boldsymbol{\varphi} \in \mathcal{S}_T(\boldsymbol{\zeta} ,\psi )$ such that 
\begin{equation*}
I_T(\boldsymbol{\zeta} ,\psi ) \le \sum_{k=0}^\infty \int_{[0,T] \times [0,1]}
\ell(\varphi_k(s,y)) \, ds\,dy \le I_T(\boldsymbol{\zeta} ,\psi ),
\end{equation*}
namely $I_T(\boldsymbol{\zeta} ,\psi )$ is achieved at some $\boldsymbol{\varphi} \in \mathcal{S}%
_T(\boldsymbol{\zeta} ,\psi )$.
\end{Remark}

\section{Calculus of Variations Problem}
\label{sec:cal}

In this section we study a calculus of variations problem that is key in  proof of Theorem \ref{thm:ldg_degree_distribution}.
We begin by giving an overview of the proof strategy. Let $0 \le \boldsymbol{q} \le \boldsymbol{p}$. 
First note that, \cg{in view of Remark \ref{rmk:track-component} and since, as noted in Section \ref{sec:model},  $\{(nX^n_0(\sigma_j^n)+1, nX^n_k(\sigma_j^n)), k,j \in \mathbb{N}\}$ has the same distribution as
$\{A(j), V_k(j), k,j \in \mathbb{N}\}$, where $\{\sigma^n_j\}$ denote the jump instants of the process $\boldsymbol{X}^{n}$, 
the set $E^{n,\varepsilon}(\boldsymbol{q})$ can be written, in distributionally equivalent form (namely the probabilities of the events on the left and the right of the display below are the same), as}
\begin{align}
	E^{n,\varepsilon}(\boldsymbol{q}) 
	% &= \{\exists \text{ a component
	% 	with degree configuration }\{\bar{n}_{k}\} \\
	% 	& \quad \quad \text{ satisfying }(q_{k}-\varepsilon )n\leq \bar{n}_{k}\leq
	% 	(q_{k}+\varepsilon )n,\,k\in \mathbb{N}\} \\
	& = \{ \exists\, t_1, t_2 \in [0,\infty) \text{ such that } X^n_0(t_1-) = X^n_0(t_2) = - 1/n, X^n_0(t) > - 1/n \mbox{ for } t\in [t_1, t_2), \nonumber\\
	& \qquad |X_k^n(t_1-) - X_k^n(t_2) - q_k| \le \varepsilon \mbox{ for all } k \in \mathbb{N} \}. \label{eq:enverps}
\end{align}
Here $t_1$ (resp.\ $t_2$) corresponds to the time instant the first vertex (resp.\ the last edge) in a component is woken up (resp.\ is formed).

%The following notation is needed.
For $t_2 \ge t_1 \ge 0$ and $(\boldsymbol{\zeta},\psi) \in \mathcal{C}_{t_2}$, define
\begin{equation}
	\label{eq:I_t1_t2}
	I_{t_1,t_2}(\boldsymbol{\zeta},\psi) \doteq \inf_{\boldsymbol{\varphi} \in \mathcal{S}_{t_2}(\boldsymbol{\zeta},\psi)} \sum_{k=0}^\infty \int_{[t_1,t_2] \times [0,1]} \ell(\varphi_k(s,y)) \,ds\,dy.
\end{equation}
Further for $ \boldsymbol{x}^{(1)}, \boldsymbol{x}^{(2)} \in \mathbb{R}_+^\infty$, define
\begin{align*}
	& \mathcal{J}^0_{t_1,t_2} (\boldsymbol{x}^{(1)}, \boldsymbol{x}^{(2)}) \doteq \{ (\boldsymbol{\zeta},\psi) \in \mathcal{C}_{t_2}: \boldsymbol{\zeta}(t_1) = \boldsymbol{x}^{(1)}, \boldsymbol{\zeta}(t_2) = \boldsymbol{x}^{(2)} \}, \\
	& \mathcal{J}^1_{t_1,t_2}(\boldsymbol{x}^{(1)}, \boldsymbol{x}^{(2)}) \doteq \{ (\boldsymbol{\zeta},\psi) \in \mathcal{J}^0_{t_1,t_2} (\boldsymbol{x}^{(1)}, \boldsymbol{x}^{(2)}): \psi(t) \ge \psi(t_1)-x^{(1)}_0 \mbox{ for } t \in (t_1,t_2) \}, \\
	& \mathcal{J}^2_{t_1,t_2}(\boldsymbol{x}^{(1)}, \boldsymbol{x}^{(2)}) \doteq \{ (\boldsymbol{\zeta},\psi) \in \mathcal{J}^1_{t_1,t_2} (\boldsymbol{x}^{(1)}, \boldsymbol{x}^{(2)}): d r(\boldsymbol{\zeta}(t))/dt = -2 \mbox{ for a.e. } t \in (t_1,t_2) \},
\end{align*}
and
\begin{equation}
	\label{eq:def-I-j}
	I_{t_1,t_2}^j(\boldsymbol{x}^{(1)}, \boldsymbol{x}^{(2)}) \doteq \inf_{(\boldsymbol{\zeta},\psi) \in \mathcal{J}^j_{t_1,t_2}(\boldsymbol{x}^{(1)}, \boldsymbol{x}^{(2)})} I_{t_1,t_2}(\boldsymbol{\zeta},\psi), \quad j=0,1,2. 
%	& \tilde I_{t_1,t_2}(\boldsymbol{x}^{(1)}, \boldsymbol{x}^{(2)}) \doteq \inf_{(\boldsymbol{\zeta},\psi) \in \ti \mathcal{J}_{t_1,t_2}(\boldsymbol{x}^{(1)}, \boldsymbol{x}^{(2)})} \inf_{\varphi \in \mathcal{S}_{t_2}(\boldsymbol{\zeta},\psi)} \sum_{k=0}^\infty \int_{[t_1,t_2] \times [0,1]} \ell(\varphi_k(s,y)) \,ds\,dy. \notag
\end{equation}
Here as usual, the infimum over an empty set is infinity.

 The proof of Theorem \ref{thm:ldg_degree_distribution} proceeds through the following steps.  Let $\tau \doteq \frac{1}{2} \sum_{k=1}^\infty kq_k$
 and assume $\sum_{k=1}^\infty k q_k  > 2\sum_{k=1}^\infty q_k$.
 \blue{Note that the limit as $\varepsilon \to 0$ in fact exists because the set $E^{n,\varepsilon}(\boldsymbol{q})$ is decreasing as $\varepsilon$ decreases.}
 \begin{itemize}
	 \item Lemma \ref{lem:puhalskii-lower-bound} shows the lower bound 
	 		\begin{equation}\label{star1227}
	 			\cg{\liminf_{\varepsilon \to 0}} \liminf_{n \to \infty } \frac{1}{n} \log P( E^{n,\varepsilon}(\boldsymbol{q})) \ge - I^2_{0,\tau}( (0,\boldsymbol{p}),  (0 ,\boldsymbol{p} - \boldsymbol{q})).
		\end{equation}
\item In Lemma \ref{lem:puhalskii-upper-bound}
	we show the upper bound 
	\begin{equation}
		\label{eq:upperbd_to_improve*}
		\cg{\limsup_{\varepsilon \to 0}} \limsup_{n \to \infty } \frac{1}{n} \log P( E^{n,\varepsilon}(\boldsymbol{q})) \le - \inf_{\boldsymbol{q} \le \bar{\boldsymbol{p}} \le \boldsymbol{p}, t_1 \ge 0}[ I^0_{0,t_1}((0,\boldsymbol{p}), (0,\bar{\boldsymbol{p}})) + I^2_{t_1,t_1+\tau}((0,\bar{\boldsymbol{p}}), (0,\bar{\boldsymbol{p}} - \boldsymbol{q}))].
%		- \inf_{\tilde \boldsymbol{p}, t_1 }[\tilde I_{0,t_1}(\boldsymbol{p}, \tilde \boldsymbol{p}) + \hat I_{t_1, t_1 + \tau}(\tilde \boldsymbol{p}, \tilde \boldsymbol{p} - \boldsymbol{q})],
	\end{equation}
\item Lemma \ref{lem:puhalskii-upper-bound-improvement} shows that when $p_1=0$  the upper and lower bounds coincide.
\item Finally Proposition \ref{prop:minimizer-summary} shows that 
$$I^2_{0,\tau}( (0,\boldsymbol{p}),  (0 ,\boldsymbol{p} - \boldsymbol{q})) = H({\boldsymbol{q}}) + H(\boldsymbol{p}-{\boldsymbol{q}}) - H({\boldsymbol{p}}) + K({\boldsymbol{q}})$$
completing the proof of Theorem \ref{thm:ldg_degree_distribution}.
\end{itemize}
Note that for $(\boldsymbol{\zeta},\psi) \in \mathcal{J}^1_{t_1,t_2}(\boldsymbol{x}^{(1)}, \boldsymbol{x}^{(2)})$,
$\zeta_0(t) = {x}_0^{(1)} + \psi(t)- \psi(t_1)$ for $t \in [t_1, t_2]$.
Intuitively, on the event $\{(\boldsymbol{X}^n, Y^n) \in \mathcal{J}^1_{t_1,t_2}(\boldsymbol{x}^{(1)}, \boldsymbol{x}^{(2)})\}$
the exploration remains in the same component over $[t_1,t_2]$, and on the smaller event $\{(\boldsymbol{X}^n, Y^n) \in \mathcal{J}^2_{t_1,t_2}(\boldsymbol{x}^{(1)}, \boldsymbol{x}^{(2)})\}$
 the  exploration pace  matches  that for the discrete-time exploration process (with time steps of length $1/n$), in which 
 at each   step $2$ half-edges are killed. The main idea in the proof of the theorem is that in characterizing the asymptotics of
 the probability of interest one can restrict to $\mathcal{J}^2_{0,\tau}( (0,\boldsymbol{p}),  (0 ,\boldsymbol{p} - \boldsymbol{q}))$,
 which roughly means that one can restrict to trajectories that avoid the boundary and whose evolution matches that of the original discrete time process of interest removing the artificial ``continuous time'' aspect of the evolution.

Define for $\boldsymbol{x} = (x_k)_{k \in \mathbb{N}_0} \in \mathbb{R}_+^\infty$ and $\boldsymbol{\beta} = (\beta_k)_{k \in \mathbb{N}_0} \in \mathbb{R} \times [-1,0]^\infty$ with $\sum_{k=1}^\infty \beta_k \ge -1$,
\begin{equation}
	\label{eq:L}
	\cg{L(\boldsymbol{x},\boldsymbol{\beta}) \doteq 
	%R(\nu(\cdot|\boldsymbol{\beta}) \| \mu(\cdot|\boldsymbol{x})) 
	 \sum_{k=0}^\infty \nu(k|\boldsymbol{\beta}) \log \left( \frac{\nu(k|\boldsymbol{\beta})}{\mu(k|\boldsymbol{x})} \right), \;
	   L_k(\boldsymbol{x},\boldsymbol{\beta}) \doteq \nu(k|\boldsymbol{\beta}) \log \left( \frac{\nu(k|\boldsymbol{\beta})}{\mu(k|\boldsymbol{x})} \right),}
\end{equation}
where
\begin{align}
	\nu(0|\boldsymbol{\beta}) & \doteq 1+\sum_{k=1}^\infty \beta_k, \quad \nu(k|\boldsymbol{\beta}) \doteq -\beta_k, \quad k \in \mathbb{N}, \label{eq:nu} \\
	\mu(k|\boldsymbol{x}) & \doteq r_k(\boldsymbol{x}), \boldsymbol{x} \ne \boldsymbol{0}, \quad \mu(k|\boldsymbol{x}) \doteq 1_{\{k=0\}}, \boldsymbol{x} = \boldsymbol{0}, \quad k \in \mathbb{N}_0. \label{eq:mu}
\end{align}
We set $L(\boldsymbol{x},\boldsymbol{\beta}) = \infty$, if $\boldsymbol{\beta}  \in \mathbb{R} \times [-1,0]^\infty$ and $\sum_{k=1}^\infty \beta_k < -1$.
Note that $\beta_0$ actually does not play a role in the definition of $L(\boldsymbol{x},\boldsymbol{\beta})$ or $\nu(\cdot|\boldsymbol{\beta})$.
Later on $(\boldsymbol{x},\boldsymbol{\beta})$ will be usually replaced by $(\boldsymbol{\zeta}(t),\boldsymbol{\zeta}'(t))$ for some absolutely continuous path \blue{$\boldsymbol{\zeta}=(\zeta_k)_{k \in \Nmb_0}$ and $t \ge 0$, where $\boldsymbol{\zeta}'(t) \doteq (\zeta_k'(t))_{k \in \Nmb_0}$}.

% We want to show that the function $\tilde{I}_1(\boldsymbol{q})$ introduced in \eqref{eq:Itil_1} is actually $I^2_{0,\tau}(\boldsymbol{x}^{(1)}, \boldsymbol{x}^{(2)})$, where $\boldsymbol{x}^{(1)} \doteq (0,\boldsymbol{p})$, $\boldsymbol{x}^{(2)} \doteq (0,\boldsymbol{p}-\boldsymbol{q})$, $\tau \doteq \frac{1}{2} r(\boldsymbol{x}^{(1)} - \boldsymbol{x}^{(2)})$.

In the next six lemmas
$\boldsymbol{x}^{(1)} \doteq (x^{(1)}_0,\boldsymbol{p}^{(1)})$ and $\boldsymbol{x}^{(2)} \doteq (x^{(2)}_0,\boldsymbol{p}^{(2)})$ where $x^{(1)}_0,x^{(2)}_0 \in \mathbb{R}_+$ and $\zero \le \boldsymbol{p}^{(2)} \le \boldsymbol{p}^{(1)} \le \boldsymbol{p}$.
Let $\newbd \doteq \boldsymbol{x}^{(1)} - \boldsymbol{x}^{(2)}$. 
Define
\begin{equation}
	\label{eq:tau}
	\ttau(\boldsymbol{x}^{(1)},\boldsymbol{x}^{(2)}) \doteq \frac{1}{2} (r(\boldsymbol{x}^{(1)}) - r(\boldsymbol{x}^{(2)})) = \frac{1}{2} \left((x^{(1)}_0-x^{(2)}_0) + \sum_{k=1}^\infty k (p_k^{(1)}-p_k^{(2)})\right).
\end{equation}
\blue{We write $\ttau \equiv \ttau(\boldsymbol{x}^{(1)},\boldsymbol{x}^{(2)})$ for short in the next six lemmas.}
The following lemma relates $I^1,I^2$ and $L$.

\begin{Lemma}
	\label{lem:I1I2L}
	Fix $t_1 \in [0,\infty)$.
	% $x^{(1)}_0,x^{(2)}_0 \in \mathbb{R}_+$ and $\zero \le \boldsymbol{p}^{(2)} \le \boldsymbol{p}^{(1)} \le \boldsymbol{p}$.
	% 	Let $\boldsymbol{x}^{(0)} \doteq (0,\boldsymbol{p})$, $\boldsymbol{x}^{(1)} \doteq (x^{(1)}_0,\boldsymbol{p}^{(1)})$ and $\boldsymbol{x}^{(2)} \doteq (x^{(2)}_0,\boldsymbol{p}^{(2)})$. 
%	and $\boldsymbol{q} \doteq \boldsymbol{x}^{(1)} - \boldsymbol{x}^{(2)}$.
	Suppose $\ttau \ge 0$.
	Let $\boldsymbol{x}^{(0)} \doteq (0,\boldsymbol{p})$.
	Suppose there exists some $(\boldsymbol{\zeta}^*,\psi^*) \in \mathcal{J}^0_{0,t_1}(\boldsymbol{x}^{(0)}, \boldsymbol{x}^{(1)})$ such that $I_{0,t_1}(\boldsymbol{\zeta}^*,\psi^*) < \infty$.
	Then
	\begin{equation}
		\label{eq:I1I2}
		\inf_{t_2 \ge t_1} I^1_{t_1,t_2}(\boldsymbol{x}^{(1)}, \boldsymbol{x}^{(2)}) = I^2_{t_1,t_1+\ttau}(\boldsymbol{x}^{(1)}, \boldsymbol{x}^{(2)}).
	\end{equation}
	Furthermore, for $(\boldsymbol{\zeta},\psi) \in \mathcal{J}^2_{t_1,t_1+\ttau}(\boldsymbol{x}^{(1)}, \boldsymbol{x}^{(2)})$,
	\begin{equation}
		\label{eq:IL}
		I_{t_1,t_1+\ttau}(\boldsymbol{\zeta},\psi) = \int_{t_1}^{t_1+\ttau} L(\boldsymbol{\zeta}(s), \boldsymbol{\zeta}'(s))\,ds,
	\end{equation}
	and if $I_{t_1,t_1+\ttau}(\boldsymbol{\zeta},\psi)<\infty$, then $\sum_{k=1}^{\infty} \zeta'_k(t) \ge -1$ for a.e.\ $t \in [t_1, t_1+\ttau]$.
	In particular,
	\begin{equation}
		\label{eq:I1I2L}
		\inf_{t_2 \ge t_1} I^1_{t_1,t_2}(\boldsymbol{x}^{(1)}, \boldsymbol{x}^{(2)}) = I^2_{t_1,t_1+\ttau}(\boldsymbol{x}^{(1)}, \boldsymbol{x}^{(2)}) = \inf_{(\boldsymbol{\zeta},\psi) \in \mathcal{J}^2_{t_1,t_1+\ttau}(\boldsymbol{x}^{(1)}, \boldsymbol{x}^{(2)})} \int_{t_1}^{t_1+\ttau} L(\boldsymbol{\zeta}(s), \boldsymbol{\zeta}'(s))\,ds.
	\end{equation}
\end{Lemma}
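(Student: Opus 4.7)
The plan is to first prove the representation \eqref{eq:IL} by pointwise minimization over controls, then derive \eqref{eq:I1I2} via a time-change argument that transports any path in $\mathcal{J}^1_{t_1,t_2}$ to a path in $\mathcal{J}^2_{t_1,t_1+\ttau}$ without increasing the cost; \eqref{eq:I1I2L} then follows immediately by combining the two.

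For \eqref{eq:IL}, fix $(\boldsymbol{\zeta},\psi) \in \mathcal{J}^2_{t_1,t_1+\ttau}(\boldsymbol{x}^{(1)},\boldsymbol{x}^{(2)})$ and any $\boldsymbol{\varphi} \in \mathcal{S}_{t_1+\ttau}(\boldsymbol{\zeta},\psi)$, and set $a_k(t) \doteq \int_0^1 1_{[0,r_k(\boldsymbol{\zeta}(t)))}(y) \varphi_k(t,y)\,dy$. From \eqref{eq:phi_k} one has $a_k(t) = -\zeta_k'(t) = \nu(k|\boldsymbol{\zeta}'(t))$ for $k \ge 1$, while the $\mathcal{J}^2$ constraint $dr(\boldsymbol{\zeta})/dt = -2$ combined with $\psi'(t) = \sum_k (k-2) a_k(t)$ forces $a_0(t) = 1 + \sum_{k \ge 1} \zeta_k'(t) = \nu(0|\boldsymbol{\zeta}'(t))$ and $\sum_{k=0}^\infty a_k(t) = 1$ a.e. For each fixed $t$, convexity of $\ell$ and Jensen's inequality applied under the constraint $\int_0^1 1_{[0,r_k(\boldsymbol{\zeta}(t)))}(y) \varphi_k(t,y)\,dy = a_k(t)$ show that the pointwise minimum of $\int_0^1 \ell(\varphi_k(t,y))\,dy$ is achieved by $\varphi_k(t,y) = (a_k(t)/r_k(\boldsymbol{\zeta}(t))) 1_{[0,r_k(\boldsymbol{\zeta}(t)))}(y) + 1_{[r_k(\boldsymbol{\zeta}(t)),1]}(y)$ (using $\ell(1)=0$), with value $r_k(\boldsymbol{\zeta}(t)) \ell(a_k(t)/r_k(\boldsymbol{\zeta}(t)))$. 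Summing over $k$ and using the identity $\sum_k \mu_k \ell(\nu_k/\mu_k) = \sum_k \nu_k \log(\nu_k/\mu_k)$ (valid when $\sum_k \mu_k = \sum_k \nu_k = 1$) yields the integrand $L(\boldsymbol{\zeta}(t),\boldsymbol{\zeta}'(t))$. Finiteness of the cost forces $a_0(t) \ge 0$ a.e., equivalently $\sum_{k \ge 1} \zeta_k'(t) \ge -1$.

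For \eqref{eq:I1I2}, the inclusion $\mathcal{J}^2_{t_1,t_1+\ttau} \subset \mathcal{J}^1_{t_1,t_1+\ttau}$ gives the easy direction $\inf_{t_2 \ge t_1} I^1_{t_1,t_2} \le I^2_{t_1,t_1+\ttau}$. For the reverse, take any $(\boldsymbol{\zeta},\psi) \in \mathcal{J}^1_{t_1,t_2}$ with finite cost; the $\mathcal{J}^1$ property $\psi \ge \psi(t_1) - x_0^{(1)}$ on $(t_1,t_2)$ gives $\zeta_0(t) = x_0^{(1)} + \psi(t) - \psi(t_1)$ there, and the same pointwise minimization yields
\[ I_{t_1,t_2}(\boldsymbol{\zeta},\psi) = \int_{t_1}^{t_2} \Bigl[\sum_{k \ge 0} a_k(t) \log\bigl(a_k(t)/r_k(\boldsymbol{\zeta}(t))\bigr) - A(t) + 1\Bigr] dt, \]
where $A(t) \doteq \sum_k a_k(t)$ satisfies $A(t) = -\tfrac{1}{2} dr(\boldsymbol{\zeta}(t))/dt$ but need not equal $1$. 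Define $f(t) \doteq t_1 + \int_{t_1}^t A(u)\,du$ (so $f(t_2) = t_1+\ttau$), let $g$ be its right-continuous generalized inverse, and on $[t_1,t_1+\ttau]$ set $\tilde{\boldsymbol{\zeta}}(s) \doteq \boldsymbol{\zeta}(g(s))$, $\tilde\psi(s) \doteq \psi(g(s))$, extending by $(\boldsymbol{\zeta},\psi)$ on $[0,t_1]$. Direct checks show $(\tilde{\boldsymbol{\zeta}},\tilde\psi) \in \mathcal{J}^2_{t_1,t_1+\ttau}(\boldsymbol{x}^{(1)},\boldsymbol{x}^{(2)})$, and a change of variables gives
\[ \int_{t_1}^{t_1+\ttau} L(\tilde{\boldsymbol{\zeta}}(s),\tilde{\boldsymbol{\zeta}}'(s))\,ds = \int_{t_1}^{t_2} \Bigl[\sum_k a_k \log(a_k/r_k) - A \log A\Bigr] dt, \]
so the difference is precisely $\int_{t_1}^{t_2} \ell(A(t))\,dt \ge 0$, yielding $I^2_{t_1,t_1+\ttau}(\boldsymbol{x}^{(1)},\boldsymbol{x}^{(2)}) \le I_{t_1,t_2}(\boldsymbol{\zeta},\psi)$, as required.

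The main obstacle will be the degenerate set $E \doteq \{t \in (t_1,t_2) : A(t) = 0\}$, on which $\boldsymbol{\zeta}$ is stationary and $f$ is flat; the reparameterization collapses $E$ and the change of variables must be carried out on $E^c$, with the contribution of $E$ to the original cost being exactly the $\int_E \ell(0)\,dt = |E|$ portion of the $\int \ell(A)\,dt$ excess. A related technicality is verifying $\tilde\zeta_0 = \Gamma(\tilde\psi)$ on $[0,t_1+\ttau]$, which reduces to the fact that the $\mathcal{J}^1$ lower bound on $\psi$ prevents the Skorokhod regulator from increasing beyond its value at $t_1$.
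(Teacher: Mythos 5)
Your approach is essentially the one taken in the paper, with the two halves in the opposite order: you establish the Lagrangian representation \eqref{eq:IL} first by pointwise minimization over controls (Jensen on each fiber $\{t\}\times[0,1]$ subject to $\int_0^1 1_{[0,r_k)}\varphi_k\,dy = a_k(t)$, then $\ell(1)=0$ on the complement), and then push $\mathcal{J}^1$-paths into $\mathcal{J}^2$ via the time change $f' = A = -\tfrac{1}{2}\,dr(\boldsymbol{\zeta})/dt$, with the excess cost identified as $\int \ell(A)\,dt$. The paper proves \eqref{eq:I1I2} first, working directly with $\rho_k$-form controls rather than Jensen, and the cost difference appears there as $\int\ell(f')\,dt$; these are the same identity. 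The central computation, and the structure of both directions, match.

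Where you differ is in handling the set $E=\{A=0\}$. The paper preprocesses the trajectory: it iteratively excises the (possibly countably many) constant pieces of $r(\boldsymbol{\zeta})$, then passes to a limit using compactness of sub-level sets of $I_T$, and only afterward introduces the time change with $f$ now strictly increasing. You instead work with the generalized right-continuous inverse $g$ of $f$ directly and argue that $E$ contributes exactly the $\int_E \ell(0)\,dt=|E|$ part of the excess. Your route avoids the iterative shrinking argument, which is a genuine simplification, but it carries a hidden technicality you should make explicit: $g$ need not be absolutely continuous even when $f$ is strictly increasing (take $A = 1_{E^c}$ with $E$ a fat Cantor set: then $f(E)$ is Lebesgue-null while $E$ has positive measure, so $g$ fails Luzin's condition N). Absolute continuity of $\tilde{\boldsymbol{\zeta}}_k$ therefore cannot be obtained via the chain rule $\tilde{\zeta}_k'(s)=\zeta_k'(g(s))g'(s)$; one should instead observe that $\tilde{\zeta}_k(s) = p_k^{(1)} - (f_*\mu_k)([t_1,s])$ with $\mu_k(dt) = a_k(t)\,dt$, and that $f_*\mu_k \ll \mathrm{Leb}$ because $\mu_k \ll A\,dt$ on $E^c$ and $f_*(A\,dt)=\mathrm{Leb}$. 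The resulting density is $\tilde a_k(s)=a_k(g(s))/A(g(s))$, which is exactly what your change-of-variables formula uses. A similar remark applies to verifying membership of $(\tilde{\boldsymbol{\zeta}},\tilde\psi)$ in $\mathcal{C}_{t_1+\ttau}$, in particular $\tilde\zeta_0=\Gamma(\tilde\psi)$; your closing observation about the Skorokhod regulator is the right idea, but it should be spelled out. With these technical points filled in, the argument goes through and gives the result by a slightly cleaner route than the paper's.
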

\begin{Lemma}
	\label{lem:uniqbeta}
%	Let $\newbd \doteq \boldsymbol{x}^{(1)} - \boldsymbol{x}^{(2)}$. 
	Suppose that
	$\sum_{k=1}^\infty k \new_k + \new_0 > 2 \sum_{k=1}^\infty \new_k$ and, $x_0^{(2)} > 0$ or $\new_1 > 0$.
	Then there is a unique $\beta \equiv \beta (\boldsymbol{x}^{(1)}, \boldsymbol{x}^{(2)}) \in (0,1)$ such that
	\begin{equation}
		\label{eq:def-beta}
		\sum_{k=1}^\infty k \new_k = (1-\beta^2)\sum_{k=1}^\infty \frac{k \new_k}{1-\beta^k} + x_0^{(2)} - \beta^2 x_0^{(1)}.
	\end{equation}
\end{Lemma}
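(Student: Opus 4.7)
My plan is to adapt the argument in Remark~\ref{rmk:uniqueness_beta} to the equation \eqref{eq:def-beta}, treating the extra terms $x_0^{(2)} - \beta^2 x_0^{(1)}$ as perturbations. First, I would reuse the algebraic factorization underlying Remark~\ref{rmk:uniqueness_beta}, with $q_k$ there replaced by $\new_k$, to rewrite \eqref{eq:def-beta} as $\beta F(\beta) + \beta^2 x_0^{(1)} = x_0^{(2)}$, where
$$F(\beta) \doteq \sum_{k \ge 3} k \new_k F_k(\beta) - \new_1, \qquad F_k(\beta) \doteq \frac{\beta - \beta^{k-1}}{1 - \beta^k}.$$
Since $\beta > 0$, this is equivalent to $\Phi(\beta) = \Psi(\beta)$ with $\Phi(\beta) \doteq F(\beta) + \beta x_0^{(1)}$ and $\Psi(\beta) \doteq x_0^{(2)}/\beta$. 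The problem then reduces to showing that $\Phi - \Psi$ has a unique zero in $(0,1)$.

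The key step is to verify that $\Phi - \Psi$ is continuous and strictly increasing on $(0,1)$. From Remark~\ref{rmk:uniqueness_beta}, each $F_k$ is strictly increasing on $(0,1)$ with $F_k(0+)=0$ and $F_k(1-)=(k-2)/k$. Using the uniform bound $0 \le k F_k(\beta) \le k-2$ together with $\sum_{k} k \new_k \le \sum_{k} k p_k < \infty$ (from Assumption~\ref{asp:exponential-boundN} applied to $\newbd \le \boldsymbol{p}$), dominated convergence gives continuity of $\Phi$ on $[0,1]$ with $\Phi(0+) = -\new_1$ and $\Phi(1-) = \sum_{k=1}^\infty (k-2)\new_k + x_0^{(1)}$. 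The function $\Psi$ is either strictly decreasing (if $x_0^{(2)} > 0$, with $\Psi(0+) = +\infty$ and $\Psi(1-) = x_0^{(2)}$) or identically zero (if $x_0^{(2)} = 0$). Hence $\Phi - \Psi$ is strictly increasing whenever at least one of $x_0^{(1)}$, $x_0^{(2)}$, or $\new_k$ for some $k \ge 3$ is positive; the only remaining degenerate configuration, in which all of these vanish (and therefore $\new_0 = x_0^{(1)} - x_0^{(2)} = 0$), would force the hypothesis $\sum k \new_k + \new_0 > 2\sum \new_k$ to reduce to $-\new_1 > 0$, which is impossible, so this case is excluded.

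Finally I would verify the boundary signs of $\Phi - \Psi$. At $\beta \to 1-$ one has $(\Phi-\Psi)(1-) = \sum_{k=1}^\infty(k-2)\new_k + \new_0 = \sum k\new_k - 2\sum \new_k + \new_0 > 0$ by hypothesis. At $\beta \to 0+$, the dichotomy ``$x_0^{(2)} > 0$ or $\new_1 > 0$'' forces $(\Phi-\Psi)(0+)$ to equal $-\infty$ in the first case and $-\new_1 < 0$ in the second. The intermediate value theorem, combined with strict monotonicity of $\Phi - \Psi$, then yields a unique $\beta \in (0,1)$ satisfying \eqref{eq:def-beta}. The main point requiring care is the bookkeeping that distinguishes the two halves of the hypothesized dichotomy at the boundary $\beta \to 0+$, together with the verification that the degenerate configuration in which monotonicity of $\Phi - \Psi$ could fail is ruled out by the strict inequality $\sum k \new_k + \new_0 > 2\sum \new_k$; once these are handled, the rest is a routine generalization of Remark~\ref{rmk:uniqueness_beta}.
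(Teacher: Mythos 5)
Your proof is correct and follows essentially the same approach as the paper: the function $\Phi(\beta)-\Psi(\beta)$ you study is exactly $-B(\beta)$ from the paper's proof, obtained by the same algebraic factorization that isolates $F_k(\beta)=(\beta-\beta^{k-1})/(1-\beta^k)$ for $k\ge 3$. The monotonicity of each $F_k$, the boundary evaluations, and the use of the hypothesis to rule out the degenerate configuration and to fix the sign at $\beta\to 1-$ all match the paper's argument.
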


The construction given below will be used to give an explicit representation for the minimizer for the right side of
\eqref{eq:I1I2L}.

\begin{Construction}
	\label{cons:cont}
Suppose that either (i) or (ii) holds, where
\begin{enumerate}[\upshape(i)]
\item
 $x_0^{(2)}=0$ and $\new_1=0$.
\item $\sum_{k=1}^\infty k \new_k + \new_0 > 2 \sum_{k=1}^\infty \new_k$ and, $x_0^{(2)} > 0$ or $\new_1 > 0$.
\end{enumerate}
 Let $\beta \equiv \beta (\boldsymbol{x}^{(1)}, \boldsymbol{x}^{(2)}) \in [0,1)$ be $0$ in case (i) and 
 be the unique solution in $(0,1)$ of  \eqref{eq:def-beta} 
in case (ii) (as ensured by Lemma \ref{lem:uniqbeta}).
Note that $\beta$ satisfies \eqref{eq:def-beta} in both cases (i) and (ii).

Define $\ttau$ as in \eqref{eq:tau} and suppose that $\ttau \ge 0$. Let $\tilde \ttau \doteq \ttau/(1-\beta^2)$ and $\tilde \new_k \doteq \new_k/(1-\beta^k)$ for $k \in \mathbb{N}$.
Fix $t_1\ge 0$ and let $\boldsymbol{x}^{(0)}$,
	 $(\boldsymbol{\zeta}^*,\psi^*)$ be as in Lemma \ref{lem:I1I2L}.
Define $({\boldsymbol{\tilde\zeta}},\tilde{\psi})$ by $({\boldsymbol{\tilde\zeta}}(t),\tilde{\psi}(t)) = (\boldsymbol{\zeta}^*(t),\psi^*(t))$ for $t \in [0,t_1]$ and 	for $t \in [t_1,t_1+\ttau]$
%	The expression in \eqref{eq:I1I2L} equals $\int_0^\tau L(\tilde{\zeta}(s), \tilde{\zeta}'(s))\,ds$, where $\tilde{\zeta}$ is of the form
\begin{align}
	\tilde{\zeta}_k(t) & \doteq p_k^{(1)} - \tilde \new_k \left[ 1- \left( 1-\frac{t-t_1}{\tilde \ttau}\right)^{k/2} \right], \quad k\in \mathbb{N}, \label{eq:def-minimizer} \\
	\tilde{\zeta}_0(t) & \doteq x_0^{(1)} + \sum_{k=1}^\infty k(p_k^{(1)}-\tilde{\zeta}_k(t)) - 2(t-t_1), \label{eq:def-minimizer-0} \\
	\tilde{\psi}(t) & \doteq \tilde{\psi}(t_1) + \sum_{k=1}^\infty k(p_k^{(1)}-\tilde{\zeta}_k(t)) - 2(t-t_1). \label{eq:def-minimizer-psi}
\end{align}
\end{Construction}

The next two lemmas give some properties of the various quantities  in the above construction.
Let
\begin{align*}
	\Xi &\doteq \Big\{(\boldsymbol{x}^{(1)}, \boldsymbol{x}^{(2)}): \mbox{ for } i= 1,2,\; \boldsymbol{x}^{(i)} \doteq (x^{(i)}_0,\boldsymbol{p}^{(i)}),
x^{(i)}_0 \in \mathbb{R}_+,\\
&\quad \quad   \zero \le \boldsymbol{p}^{(2)} \le \boldsymbol{p}^{(1)} \le \boldsymbol{p}
\mbox{ and } \sum_{k=1}^\infty k (p^{(1)}_k - p^{(2)}_k) + (x^{(1)}_0- x^{(2)}_0)> 2 \sum_{k=1}^\infty (p^{(1)}_k - p^{(2)}_k)\Big\}.
\end{align*}
We will equip $\Xi$ with the topology corresponding to coordinatewise convergence.
\begin{Lemma}
	\label{lem:lemctybeta}
	Both $\beta$ and $x_0^{(2)} \log \beta$ are continuous 
	on $\Xi$: for $(\boldsymbol{x}^{(1),n}, \boldsymbol{x}^{(2),n}) \in \Xi$ with 
	$(\boldsymbol{x}^{(1),n}, \boldsymbol{x}^{(2),n}) \to (\boldsymbol{x}^{(1)}, \boldsymbol{x}^{(2)}) \in \Xi$,
	$\beta^n \doteq \beta (\boldsymbol{x}^{(1),n}, \boldsymbol{x}^{(2),n}) \to \beta (\boldsymbol{x}^{(1)}, \boldsymbol{x}^{(2)}) \doteq \beta$ and $x_0^{(2),n} \log \beta^n \to x_0^{(2)} \log \beta$.
\end{Lemma}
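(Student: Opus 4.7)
The plan is to characterize $\beta$ as the root of a scalar function of a real variable that depends continuously on the parameter $(\boldsymbol{x}^{(1)},\boldsymbol{x}^{(2)})$, and then use uniqueness together with a subsequential compactness argument. For $(\boldsymbol{x}^{(1)},\boldsymbol{x}^{(2)}) \in \Xi$, introduce the function
\begin{equation*}
G(\beta;\boldsymbol{x}^{(1)},\boldsymbol{x}^{(2)}) \doteq (1-\beta^2)\sum_{k=1}^\infty \frac{k\new_k}{1-\beta^k} + x_0^{(2)} - \beta^2 x_0^{(1)} - \sum_{k=1}^\infty k\new_k,
\end{equation*}
so that $\beta = \beta(\boldsymbol{x}^{(1)},\boldsymbol{x}^{(2)})$ is a root of $G(\cdot) = 0$ in $[0,1)$. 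Assumption \ref{asp:exponential-boundN} (applied to $\new_k \le p_k$) gives uniform integrability of the tails, from which $G$ is continuous in $(\beta,\boldsymbol{x}^{(1)},\boldsymbol{x}^{(2)})$ on $[0,1)\times \Xi$ and satisfies $G(\beta;\cdot) \to -h(\boldsymbol{x}^{(1)},\boldsymbol{x}^{(2)})$ as $\beta \uparrow 1$, where $h \doteq (\sum k\new_k + \new_0) - 2\sum \new_k > 0$ on $\Xi$. By continuity of $h$, for any sequence $(\boldsymbol{x}^{(1),n},\boldsymbol{x}^{(2),n}) \to (\boldsymbol{x}^{(1)},\boldsymbol{x}^{(2)})$ in $\Xi$ the associated $\beta^n$ is bounded away from $1$.

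For continuity of $\beta$, extract any convergent subsequence $\beta^{n_j} \to \beta^* \in [0,1)$. Passing to the limit in $G^{n_j}(\beta^{n_j})=0$ via dominated convergence (justified by Assumption \ref{asp:exponential-boundN}) yields $G(\beta^*;\boldsymbol{x}^{(1)},\boldsymbol{x}^{(2)})=0$. In case (ii) of Construction \ref{cons:cont}, Lemma \ref{lem:uniqbeta} forces $\beta^* = \beta$. In case (i) ($x_0^{(2)}=0$, $\new_1=0$, so $\beta = 0$), any $\beta^* \in (0,1)$ satisfying $G(\beta^*;\cdot)=0$ contradicts the $\Xi$-strict inequality, upon using that $(1-\beta^2)/(1-\beta^k)\le 1$ for $k\ge 2$, with strict inequality for $k\ge 3$, together with $\Xi$'s implication that either $x_0^{(1)}>0$ or $\new_k>0$ for some $k\ge 3$. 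Hence $\beta^n \to \beta$ along the full sequence.

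For continuity of $x_0^{(2)} \log \beta$, the main tool is the algebraic rearrangement of $G=0$:
\begin{equation*}
x_0^{(2)} + \beta \new_1 = \beta^2 x_0^{(1)} + \sum_{k=3}^\infty k\new_k\,\frac{\beta^2-\beta^k}{1-\beta^k} \le \beta^2\left(x_0^{(1)} + \sum_{k=3}^\infty k p_k\right) \doteq C\beta^2,
\end{equation*}
where $C$ is finite by Assumption \ref{asp:exponential-boundN}, and one obtains an analogous bound $x_0^{(2),n} \le C^n(\beta^n)^2$ with $\sup_n C^n < \infty$. Continuity is now argued by cases. If $x_0^{(2)}>0$ at the limit, the bound forces $\beta^n \ge \sqrt{x_0^{(2)}/(2C)}$ for large $n$, so $\log\beta^n$ is bounded and $x_0^{(2),n}\log\beta^n \to x_0^{(2)}\log\beta$ using the already proved continuity of $\beta^n$. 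If $x_0^{(2)}=0$, write
\begin{equation*}
x_0^{(2),n} |\log \beta^n| \le C^n (\beta^n)^2 |\log \beta^n|,
\end{equation*}
and observe that either $\beta^n$ stays bounded away from $0$ (so $|\log \beta^n|$ is bounded while $x_0^{(2),n}\to 0$), or $\beta^n \to 0$, in which case $(\beta^n)^2 |\log \beta^n| \to 0$. In both sub-cases the product tends to $0 = x_0^{(2)}\log\beta$ under the convention $0\log 0 = 0$.

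The genuinely delicate step is continuity of $x_0^{(2)}\log\beta$ at limit points of case (i): one cannot simply invoke continuity of $\log$ since $\beta^n$ may tend to $0$, and an implicit function argument is unavailable there. The quantitative estimate $x_0^{(2)} \le C\beta^2$ extracted from the defining equation is what rescues the analysis, dominating $x_0^{(2),n}|\log\beta^n|$ by $(\beta^n)^2|\log\beta^n|$; controlling the tails of the sums defining $G$ and $C$ uniformly in the parameter uses the moment bound in Assumption \ref{asp:exponential-boundN}.
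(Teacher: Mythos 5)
Your proof is correct and takes a recognizably different route from the paper's, especially in the treatment of $x_0^{(2)}\log\beta$. For continuity of $\beta$, the paper works directly with the strictly decreasing function $B(\alpha)$ from the proof of Lemma \ref{lem:uniqbeta}: when the limiting $\beta$ equals $0$ it notes $B(\alpha)<0$ for every $\alpha\in(0,1)$, hence $B^n(\alpha)<0$ eventually and $\beta^n\le\alpha$; when $\beta>0$ it notes $B(\beta/2)>0$, hence $\beta^n\ge\beta/2$ eventually, and then passes to the limit in $B^n(\beta^n)=0$. You instead use subsequential compactness plus uniqueness of the root of $G(\alpha)=\alpha B(\alpha)$, which is fine, but the claim that $\beta^n$ is bounded away from $1$ needs a sharper justification than ``by continuity of $h$'': fix $\alpha_0<1$ with $G(\alpha_0;\boldsymbol{x}^{(1)},\boldsymbol{x}^{(2)})<0$ (such $\alpha_0$ exists since $G\to -h<0$ as $\alpha\uparrow 1$), note $G^n(\alpha_0)\to G(\alpha_0)<0$, and invoke the strict monotonicity of $B^n$ (established in the proof of Lemma \ref{lem:uniqbeta}) to conclude $\beta^n<\alpha_0$ for all large $n$.

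The genuinely different step is the second one. The paper bounds $\beta^n$ from below by evaluating $B^n$ at $(x_0^{(2),n})^2$ and checking that $B^n\bigl((x_0^{(2),n})^2\bigr)>0$ when $x_0^{(2),n}$ is small, whence $\beta^n\ge(x_0^{(2),n})^2$ and $|x_0^{(2),n}\log\beta^n|\le 2\,|x_0^{(2),n}\log x_0^{(2),n}|\to 0$. You instead rearrange the defining identity into $x_0^{(2),n}\le C^n(\beta^n)^2$ and then bound $x_0^{(2),n}|\log\beta^n|\le C^n(\beta^n)^2|\log\beta^n|$. Your bound is in fact stronger (it gives $\beta^n\ge\sqrt{x_0^{(2),n}/C^n}$, compared with $\beta^n\ge(x_0^{(2),n})^2$), and it treats both limiting cases $x_0^{(2)}>0$ and $x_0^{(2)}=0$ in a unified way via the elementary fact $t^2\log t\to 0$ as $t\downarrow 0$. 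One cosmetic fix: ``either $\beta^n$ stays bounded away from $0$ or $\beta^n\to 0$'' is not literally a dichotomy; phrase it via subsequences (every subsequence of $\beta^n$ has a further subsequence converging to some $\beta^{**}\in[0,1)$, and both $\beta^{**}=0$ and $\beta^{**}>0$ yield limit $0$).
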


\begin{Lemma}
	\label{lem:minimizer-def}
%	Define $\tau$ as in \eqref{eq:tau} and 
	Suppose that $\ttau \ge 0$.
	Also suppose that $\sum_{k=1}^\infty k \new_k + \new_0 > 2 \sum_{k=1}^\infty \new_k$. Fix $t_1\ge 0$.
	Let
	$(\boldsymbol{\zeta}^*,\psi^*)$ be as in Lemma \ref{lem:I1I2L} and $({\boldsymbol{\tilde\zeta}},\tilde{\psi})$ be as introduced in Construction \ref{cons:cont}.
	Then 
	\begin{enumerate}[\upshape(a)]
		\item
		$\ttau \le \tilde \ttau = \frac{1}{2} \left( x^{(1)}_0 + \sum_{k=1}^\infty k \tilde{\new}_k \right)$.
	\item
		$({\boldsymbol{\tilde\zeta}},\tilde{\psi}) \in \mathcal{J}^2_{t_1,t_1+\ttau}(\boldsymbol{x}^{(1)}, \boldsymbol{x}^{(2)})$.
	\item
		$\tilde{\zeta}_0(t) > 0$ for $t \in (t_1,t_1+\ttau)$. 
	\end{enumerate}
\end{Lemma}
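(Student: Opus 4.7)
My plan is to prove the three parts of the lemma in sequence: (a) is a direct algebraic computation from \eqref{eq:def-beta}; (b) follows from (c) together with the identities built into Construction \ref{cons:cont}; and (c) is the main technical step.

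For part (a), the inequality $\ttau \le \tilde \ttau$ is immediate since $\tilde \ttau = \ttau/(1-\beta^2)$ and $\beta \in [0,1)$. For the identity $2\tilde \ttau = x_0^{(1)} + \sum_{k=1}^\infty k\tilde{\new}_k$, I would substitute $\tilde{\new}_k = \new_k/(1-\beta^k)$ and rearrange \eqref{eq:def-beta} as $(1-\beta^2)\sum_{k=1}^\infty k\tilde{\new}_k = \sum_{k=1}^\infty k\new_k + \beta^2 x_0^{(1)} - x_0^{(2)}$; combined with $2\ttau = x_0^{(1)} - x_0^{(2)} + \sum_k k\new_k$ from \eqref{eq:tau}, this yields $2\ttau/(1-\beta^2) = x_0^{(1)} + \sum_k k\tilde{\new}_k$. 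In case (i) of Construction \ref{cons:cont}, $\beta = 0$, so $\tilde \ttau = \ttau$ and the identity reduces to $2\ttau = x_0^{(1)} + \sum_k k\new_k$, which is direct from $x_0^{(2)} = 0$.

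The bulk of the work lies in part (c). I would introduce the change of variable $v \doteq (1 - (t-t_1)/\tilde \ttau)^{1/2}$, so that $v$ decreases from $1$ to $\beta$ as $t$ ranges over $[t_1, t_1+\ttau]$. A direct substitution into \eqref{eq:def-minimizer-0}, combined with part (a), gives the clean representation
\[
\tilde\zeta_0(t) = v^2 \bigl( 2\tilde \ttau - h(v) \bigr), \qquad h(v) \doteq \sum_{k=1}^\infty k\tilde{\new}_k v^{k-2}.
\]
Differentiating twice yields $h''(v) = 2\tilde{\new}_1 v^{-3} + \sum_{k \ge 4} k(k-2)(k-3)\tilde{\new}_k v^{k-4} \ge 0$, so $h$ is convex on $(0,1]$. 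At the endpoints, $h(1) = \sum_k k\tilde{\new}_k = 2\tilde \ttau - x_0^{(1)}$ by part (a), and a direct rewriting of \eqref{eq:def-beta} gives $h(\beta) = 2\tilde \ttau - x_0^{(2)}/\beta^2$ whenever $\beta > 0$. Thus $h(\beta), h(1) \le 2\tilde \ttau$ and convexity yields $h(v) \le 2\tilde \ttau$, i.e., $\tilde\zeta_0(t) \ge 0$ on $[t_1, t_1+\ttau]$. The strict positivity on $(t_1, t_1+\ttau)$ is argued in two sub-cases. When $\beta > 0$ (case (ii) of Construction \ref{cons:cont}): writing $v = \lambda\beta + (1-\lambda)$ with $\lambda \in (0,1)$, convexity gives $h(v) \le \lambda h(\beta) + (1-\lambda) h(1) = 2\tilde \ttau - \lambda x_0^{(2)}/\beta^2 - (1-\lambda) x_0^{(1)}$; if either $x_0^{(1)}$ or $x_0^{(2)}$ is positive, this is strictly less than $2\tilde \ttau$, while if both vanish then the case-(ii) requirement forces $\new_1 > 0$ and hence $h''(v) \ge 2\tilde{\new}_1 v^{-3} > 0$, making $h$ strictly convex and again giving the strict inequality. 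When $\beta = 0$ (case (i)): $\new_1 = 0$, so $h$ has only non-negative-index coefficients and is non-decreasing on $(0,1]$, giving $h(v) \le h(1) = 2\tilde \ttau - x_0^{(1)}$; if $x_0^{(1)} > 0$ this is already strict, while if $x_0^{(1)} = 0$ the standing hypothesis $\sum_k k\new_k + \new_0 > 2\sum_k \new_k$ reduces to $\sum_{k\ge 3}(k-2)\new_k > 0$, forcing $\new_k > 0$ for some $k \ge 3$, whence $h$ is strictly increasing on $(0,1)$ and $h(v) < h(1) = 2\tilde \ttau$ for $v \in (0,1)$.

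Given part (c), part (b) reduces to routine verification. Non-negativity, monotonicity and absolute continuity of $\tilde{\zeta}_k$ for $k \in \mathbb{N}$ on $[t_1, t_1+\ttau]$ are immediate from the closed form \eqref{eq:def-minimizer}, while the boundary values $\tilde{\boldsymbol{\zeta}}(t_1) = \boldsymbol{x}^{(1)}$ and $\tilde{\boldsymbol{\zeta}}(t_1+\ttau) = \boldsymbol{x}^{(2)}$ follow from $(1-\ttau/\tilde \ttau)^{k/2} = \beta^k$ and \eqref{eq:def-beta}. A direct computation, valid once $\tilde\zeta_0 \ge 0$, yields $r(\tilde{\boldsymbol{\zeta}}(t)) = r(\boldsymbol{x}^{(1)}) - 2(t-t_1)$ on $[t_1, t_1+\ttau]$, giving $dr(\tilde{\boldsymbol{\zeta}})/dt = -2$ a.e. Finally, the difference $\tilde\zeta_0(t) - \tilde\psi(t) = x_0^{(1)} - \tilde\psi(t_1)$ is a non-negative constant equal to the Skorokhod regulator $\eta^*$ of $\psi^*$ at $t_1$, so the pointwise bound $\tilde\zeta_0(t) \ge 0$ from part (c) translates to $\tilde\psi(t) \ge \tilde\psi(t_1) - x_0^{(1)}$, which simultaneously supplies the $\mathcal{J}^1$ requirement and ensures that the Skorohod map relation $\tilde\zeta_0 = \Gamma(\tilde\psi)$ extends past $t_1$. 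The principal obstacle is the strict positivity in (c), and especially the degenerate sub-case $\beta = 0$ with $x_0^{(1)} = x_0^{(2)} = 0$, where the convexity-based endpoint interpolation is inconclusive and one must invoke the strict inequality in the standing hypothesis to produce a positive $\new_k$ with $k \ge 3$.
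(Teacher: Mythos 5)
Your proof is correct, and it takes a genuinely different route to part (c) than the paper. The paper pulls out the factor $\alpha_t(1-\alpha_t)$, writing $\tilde\zeta_0(t) = \alpha_t(1-\alpha_t)\tilde{B}(\alpha_t)$ where $\tilde{B}(\alpha) = \frac{\alpha x_0^{(1)}}{1-\alpha} - \tilde{\new}_1 + \sum_{k\ge 3}k\tilde{\new}_k\frac{\alpha-\alpha^{k-1}}{1-\alpha}$, and shows $\tilde{B}$ is \emph{strictly increasing} on $[\beta,1)$ (each summand has positive derivative, and \eqref{eq:cor_assump} guarantees at least one contributes) with $\tilde{B}(\beta) = \frac{x_0^{(2)}}{\beta(1-\beta)}1_{\{\beta>0\}}\ge 0$ coming out of \eqref{eq:def-beta}. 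Your factorization $\tilde\zeta_0(t) = v^2\bigl(2\tilde\ttau - h(v)\bigr)$ with $h(v)=\sum_k k\tilde{\new}_k v^{k-2}$ exposes instead the \emph{convexity} of $h$; the two are linked via $\tilde{B}(v)=\frac{v}{1-v}\bigl(2\tilde\ttau-h(v)\bigr)$. The paper's monotonicity route has a small structural advantage: a single one-sided endpoint bound $\tilde{B}(\beta)\ge 0$ plus strict monotonicity immediately gives strict positivity in the interior, so (b) and (c) follow from the same computation with no further case-splitting. Your convexity route uses two endpoint bounds $h(\beta),h(1)\le 2\tilde\ttau$ (both direct from \eqref{eq:def-beta} and part (a)), but the interpolation only yields $\le$, so the strictness in (c) requires the sub-case analysis you carry out — interpolation when $x_0^{(1)}>0$ or $x_0^{(2)}>0$, strict convexity via $\tilde{\new}_1>0$ otherwise in case (ii), and monotonicity from a positive $\new_k$ with $k\ge 3$ (supplied by the strict inequality $\sum_k k\new_k+\new_0>2\sum_k\new_k$) in case (i). You also flip the order, proving the non-negativity needed for (b) as a by-product of (c); the paper proves (b) first with the non-strict bound and upgrades to strictness for (c). Both organizations are sound, and your observation that $\tilde\zeta_0-\tilde\psi$ is the constant Skorohod regulator value at $t_1$ is a cleaner way of seeing why $\tilde\zeta_0=\Gamma(\tilde\psi)$ persists past $t_1$ than is made explicit in the paper.
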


The next lemma calculates $\int_{t_1}^{t_1+\ttau} L({\boldsymbol{\tilde\zeta}}(s), {\boldsymbol{\tilde\zeta}}'(s))\,ds$ for $({\boldsymbol{\tilde\zeta}},\tilde{\psi})$ introduced in Construction \ref{cons:cont}.
Recall that $\boldsymbol{z}= \boldsymbol{x}^{(1)}- \boldsymbol{x}^{(2)}$.

\begin{Lemma}
	\label{lem:minimizer-cost}
%	Define $\tau$ as in \eqref{eq:tau} and 
	Suppose that $\ttau \ge 0$. Suppose that either (i) or (ii) in Construction \ref{cons:cont} is satisfied. Also, let 
	$(\boldsymbol{\zeta}^*,\psi^*)$ be as in Lemma \ref{lem:I1I2L} with some $t_1\ge 0$ and 
	let $({\boldsymbol{\tilde\zeta}},\tilde{\psi})$ be given as in Construction \ref{cons:cont}.
	Define the function $\tilde{K}(\boldsymbol{x}^{(1)}, \boldsymbol{x}^{(2)})$ by
	\begin{equation*}
		\tilde{K}(\boldsymbol{x}^{(1)}, \boldsymbol{x}^{(2)}) \doteq \frac{\new_0+\sum_{k=1}^\infty k \new_k}{2} \log(1-\beta^2) - \sum_{k=1}^\infty \new_k \log(1-\beta^k) + x_0^{(2)} \log \beta.
	\end{equation*}
	For $\boldsymbol{x} \in \mathbb{R} \times \mathbb{R}_+^\infty$ such that $x_0+ \sum_{k=1}^{\infty} k x_k \ge 0$, define  $\tilde{H}(\boldsymbol{x})$ by
	\begin{equation*}
		\tilde{H}(\boldsymbol{x}) \doteq \sum_{k=1}^\infty x_k \log x_k - \frac{x_0+\sum_{k=1}^\infty k x_k}{2} \log \frac{x_0+\sum_{k=1}^\infty k x_k}{2}.
	\end{equation*}
	Then
%	, with $\boldsymbol{\new} =\boldsymbol{x}^{(1)} - \boldsymbol{x}^{(2)}$,
	\begin{equation*}
		\int_{t_1}^{t_1+\ttau} L({\boldsymbol{\tilde\zeta}}(s), {\boldsymbol{\tilde\zeta}}'(s))\,ds = \tilde{H}(\boldsymbol{\new}) + \tilde{H}(\boldsymbol{x}^{(2)}) - \tilde{H}(\boldsymbol{x}^{(1)}) + \tilde{K}(\boldsymbol{x}^{(1)}, \boldsymbol{x}^{(2)}) < \infty.
	\end{equation*}
	Moreover, the right hand side is lower semicontinuous in $(\boldsymbol{x}^{(1)}, \boldsymbol{x}^{(2)}) \in \Xi$, namely
	for $(\boldsymbol{x}^{(1),n}, \boldsymbol{x}^{(2),n}) \in \Xi$ with 
	$(\boldsymbol{x}^{(1),n}, \boldsymbol{x}^{(2),n}) \to (\boldsymbol{x}^{(1)}, \boldsymbol{x}^{(2)}) \in \Xi$,
	\begin{align*}
		&\liminf_{n\to \infty} \left(\tilde{H}(\boldsymbol{\new}^n) + \tilde{H}(\boldsymbol{x}^{(2),n}) - \tilde{H}(\boldsymbol{x}^{(1),n}) + \tilde{K}(\boldsymbol{x}^{(1),n}, \boldsymbol{x}^{(2),n})\right)\\
		&\quad \ge \tilde{H}(\boldsymbol{\new}) + \tilde{H}(\boldsymbol{x}^{(2)}) - \tilde{H}(\boldsymbol{x}^{(1)}) + \tilde{K}(\boldsymbol{x}^{(1)}, \boldsymbol{x}^{(2)}),
	\end{align*}
	where $\boldsymbol{\new}^n = \boldsymbol{x}^{(1),n}-\boldsymbol{x}^{(2),n}$, $\boldsymbol{\new}=\boldsymbol{x}^{(1)}-\boldsymbol{x}^{(2)}$.
\end{Lemma}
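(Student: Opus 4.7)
The plan is to first reduce the cost to an explicit integral and then compute it term by term. Since case (i) or (ii) of Construction \ref{cons:cont} is assumed to hold, Lemma \ref{lem:minimizer-def}(b) applies in both situations (the hypothesis $\sum_{k\ge 1} k z_k + z_0 > 2\sum_{k\ge 1} z_k$ of that lemma is implied by our hypothesis via the definition of $\tau$ and $\tilde\tau$ when $\tau>0$; the trivial case $\tau=0$ is handled separately and reduces to $\boldsymbol{x}^{(1)}=\boldsymbol{x}^{(2)}$). Thus $({\boldsymbol{\tilde\zeta}},\tilde\psi)\in \Jmc^2_{t_1,t_1+\tau}(\boldsymbol{x}^{(1)},\boldsymbol{x}^{(2)})$ and Lemma \ref{lem:I1I2L} yields $I_{t_1,t_1+\tau}({\boldsymbol{\tilde\zeta}},\tilde\psi)=\int_{t_1}^{t_1+\tau} L({\boldsymbol{\tilde\zeta}}(s),{\boldsymbol{\tilde\zeta}}'(s))\,ds$. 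The strategy is to compute this integral and match it against the claimed closed form.

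To carry out the computation, substitute $u=1-(t-t_1)/\tilde\tau$, so $u$ runs from $1$ down to $\beta^2$. Using \eqref{eq:def-minimizer}--\eqref{eq:def-minimizer-0}, one obtains $\tilde\zeta_k(t)=\alpha_k+\tilde z_k u^{k/2}$ with $\alpha_k=p_k^{(1)}-\tilde z_k$, $-\tilde\zeta_k'(t)=\tilde z_k(k/2)u^{k/2-1}/\tilde\tau$ for $k\ge 1$, and the identity $r({\boldsymbol{\tilde\zeta}}(t))=2\tilde\tau u+\gamma$ with $\gamma=r(\boldsymbol{x}^{(1)})-2\tilde\tau=\sum_{k\ge 1} k\alpha_k$ (using Lemma \ref{lem:minimizer-def}(a) and Lemma \ref{lem:minimizer-def}(c) to know $\tilde\zeta_0>0$ on the interior). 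Splitting $L=\sum_{k\ge 0} L_k$ and evaluating each $L_k$ for $k\ge 1$ via the elementary integrals $\int u^{k/2-1}du = \tfrac{2}{k}u^{k/2}$ and $\int u^{k/2-1}\log u\, du$ (integration by parts), together with $\int L_0\,dt$, produces explicit contributions in terms of $\tilde z_k\beta^k$, $\alpha_k$, $\log\tilde z_k$, $\log\alpha_k$, and boundary values $\log r(\boldsymbol{x}^{(i)})$. The crucial simplification is to combine the $\sum_k \tilde z_k\log(\alpha_k+\tilde z_k u^{k/2})$ and $\sum_k \tilde z_k\log(2\tilde\tau u+\gamma)$ contributions at the endpoints $u=1$ and $u=\beta^2$, then apply the defining equation \eqref{eq:def-beta} (which can be rewritten as $2\tilde\tau \beta^2-\sum_k k\tilde z_k\beta^k=x_0^{(2)}$) to collapse the $\beta$-dependent sums. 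After bookkeeping, the result equals $\tilde H(\boldsymbol{z})+\tilde H(\boldsymbol{x}^{(2)})-\tilde H(\boldsymbol{x}^{(1)})+\tilde K(\boldsymbol{x}^{(1)},\boldsymbol{x}^{(2)})$.

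Finiteness follows because $\tilde H(\boldsymbol{x})$ is finite under the moment condition of Assumption \ref{asp:exponential-boundN} by the argument in Remark \ref{rem:finhk}, while $\tilde K$ is finite upon noting $|\log(1-\beta^k)|\le C_\beta k$ for $\beta\in[0,1)$ fixed (so $\sum z_k|\log(1-\beta^k)|\le C_\beta\sum k z_k<\infty$) and using the convention $0\log 0=0$ for the term $x_0^{(2)}\log\beta$ in case (i). For lower semicontinuity on $\Xi$, first apply Lemma \ref{lem:lemctybeta} to obtain continuity of $\beta$ and of $x_0^{(2)}\log\beta$ along a sequence $(\boldsymbol{x}^{(1),n},\boldsymbol{x}^{(2),n})\to(\boldsymbol{x}^{(1)},\boldsymbol{x}^{(2)})$. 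Then $\tilde K(\boldsymbol{x}^{(1),n},\boldsymbol{x}^{(2),n})\to \tilde K(\boldsymbol{x}^{(1)},\boldsymbol{x}^{(2)})$ via dominated convergence with dominant $\sum k p_k$. For the entropy combination, rewrite
\[
\tilde H(\boldsymbol{z})+\tilde H(\boldsymbol{x}^{(2)})-\tilde H(\boldsymbol{x}^{(1)}) = \sum_{k\ge 1}\Bigl[z_k\log\tfrac{z_k}{p^{(1)}_k}+p^{(2)}_k\log\tfrac{p^{(2)}_k}{p^{(1)}_k}\Bigr] + (\text{continuous in } r(\boldsymbol{x}^{(i)})),
\]
where the bracketed sum is a sum of nonpositive terms (a relative-entropy-like quantity), so Fatou's lemma applied to the negative of each summand yields lower semicontinuity along coordinatewise-convergent sequences. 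The $r(\boldsymbol{x})\log r(\boldsymbol{x})$ terms are continuous because $\sum k p_k^{(i),n}\to \sum k p_k^{(i)}$ by Assumption \ref{asp:exponential-boundN} and uniform tail control. The main obstacle is the bookkeeping in the second paragraph: showing that the messy boundary contributions from integrating $\log(\alpha_k+\tilde z_k u^{k/2})$ and $\log(2\tilde\tau u+\gamma)$ pieces at $u=1$ and $u=\beta^2$ recombine precisely, via \eqref{eq:def-beta}, to the compact form $\tilde H+\tilde K$, which demands careful tracking of every $\log\tilde\tau$, $\log k$, and $\log(1-\beta^k)$ term.
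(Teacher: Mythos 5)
Your overall strategy matches the paper's: derive the explicit form of $\tilde\zeta_k$, compute $\int_{t_1}^{t_1+\ttau} L$ via elementary antiderivatives, and collapse the boundary terms using \eqref{eq:def-beta} (and \eqref{eq:taub}); the paper works directly in $t$ using $\tilde\ell(x)=x\log x - x$ while you reparametrize by $u=1-(t-t_1)/\tilde\ttau$, but that is cosmetic. However, there is a genuine error in your lower semicontinuity argument. You rewrite the entropy combination as
\[
\sum_{k\ge 1}\Bigl[z_k\log\tfrac{z_k}{p^{(1)}_k}+p^{(2)}_k\log\tfrac{p^{(2)}_k}{p^{(1)}_k}\Bigr]
\]
(correct, since $z_k+p_k^{(2)}=p_k^{(1)}$), observe that each summand is nonpositive, and claim that Fatou ``applied to the negative of each summand yields lower semicontinuity.'' This goes the wrong way: if $g_n(k)\doteq -[\cdots]\ge 0$ converges coordinatewise, Fatou gives $\liminf_n\sum_k g_n(k)\ge\sum_k g(k)$, hence $\limsup_n\bigl(-\sum_k g_n(k)\bigr)\le -\sum_k g(k)$ — that is \emph{upper} semicontinuity of your sum, not lower. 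So your argument does not establish the required inequality $\liminf_n[\cdots]\ge[\cdots]$.

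The paper repairs exactly this by extending the sum to $k=0$, with $z_0\doteq 1-\sum_{k\ge 1}z_k$, $p_0^{(i)}\doteq 1-\sum_{k\ge 1}p_k^{(i)}$, so that $\sum_{k\ge 0}z_k\log(z_k/p_k^{(1)})$ and $\sum_{k\ge 0}p_k^{(2)}\log(p_k^{(2)}/p_k^{(1)})$ become bona fide relative entropies between probability measures on $\Nmb_0$, which are nonnegative and lower semicontinuous (this is a general fact about relative entropy, not a consequence of Fatou applied termwise — the summands of a relative entropy are not all of one sign). The correction term involving $z_0\log z_0$, $p_0^{(i)}\log p_0^{(i)}$ is then continuous under the uniform-tail control coming from Assumption \ref{asp:exponential-boundN}. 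Without this completion to a probability distribution your Fatou step collapses. A secondary, smaller gap is that you do not justify the interchange of summation and integration when computing $\int_{t_1}^{t_1+\ttau}\sum_k L_k$: the paper devotes a separate paragraph to bounding $\sum_{k\ge M}\int|\tilde\zeta'_k\log(\cdots)|$ using the monotonicity of $\tilde\zeta_k$ and the estimate $-\sum p_k^{(1)}\log p_k^{(1)}<\infty$; your proposal flags ``bookkeeping'' as the main obstacle but does not address this integrability/interchange step, which is where the actual analytic work lies before the algebraic collapse can be performed.
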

Recall the functions $H$ and $K$ from \eqref{eq:hdefn} and \eqref{eq:kdefnn} respectively. We note that with $\tilde K$ and $\tilde H$ as introduced in the above lemma, for $\zero \le \boldsymbol{q} \le \bar{\boldsymbol{q}} \le \boldsymbol{p}$
\begin{equation}
	\label{eq:hktil}
	H(\boldsymbol{q}) = \tilde H(0, \boldsymbol{q}), \; K(\boldsymbol{q}) = \tilde{K}((0,\bar{\boldsymbol{q}}), (0,\bar{\boldsymbol{q}}-\boldsymbol{q})).
\end{equation}
The next lemma shows that $({\boldsymbol{\tilde\zeta}},\tilde{\psi})$  in Construction \ref{cons:cont} is a minimizer for the right side of \eqref{eq:I1I2L}.

\begin{Lemma}
	\label{lem:minimizer-verify-general}
%	Define $\tau$ as in \eqref{eq:tau} and 
	Suppose that $\ttau \ge 0$. Suppose that $\sum_{k=1}^\infty k \new_k + \new_0 > 2 \sum_{k=1}^\infty \new_k$.
	Fix $t_1\ge 0$ and let
	$(\boldsymbol{\zeta}^*,\psi^*)$ be as in Lemma \ref{lem:I1I2L} and $({\boldsymbol{\tilde\zeta}},\tilde{\psi})$ as introduced in Construction \ref{cons:cont}.
	Then 
	\begin{equation}
		I^2_{t_1,t_1+\ttau}(\boldsymbol{x}^{(1)}, \boldsymbol{x}^{(2)}) = \inf_{(\boldsymbol{\zeta},\psi) \in \mathcal{J}^2_{t_1,t_1+\ttau}(\boldsymbol{x}^{(1)}, \boldsymbol{x}^{(2)})} \int_{t_1}^{t_1+\ttau} L(\boldsymbol{\zeta}(s), \boldsymbol{\zeta}'(s))\,ds = \int_{t_1}^{t_1+\ttau} L({\boldsymbol{\tilde\zeta}}(s), {\boldsymbol{\tilde\zeta}}'(s))\,ds.
		\label{eq:i2ttau}
	\end{equation}
\end{Lemma}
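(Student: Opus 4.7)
By Lemma \ref{lem:I1I2L}, the first equality in \eqref{eq:i2ttau} is already established, and by Lemmas \ref{lem:minimizer-def}(b) and \ref{lem:minimizer-cost} the candidate $(\tilde\boldsymbol{\zeta}, \tilde\psi)$ lies in $\mathcal{J}^2_{t_1, t_1+\ttau}(\boldsymbol{x}^{(1)}, \boldsymbol{x}^{(2)})$ and has finite cost $\int_{t_1}^{t_1+\ttau} L(\tilde\boldsymbol{\zeta}, \tilde\boldsymbol{\zeta}')\,dt$. Thus only the matching lower bound
\[
\int_{t_1}^{t_1+\ttau} L(\boldsymbol{\zeta}(s), \boldsymbol{\zeta}'(s))\, ds \;\ge\; \int_{t_1}^{t_1+\ttau} L(\tilde\boldsymbol{\zeta}(s), \tilde\boldsymbol{\zeta}'(s))\, ds
\]
remains to be proved, for every $(\boldsymbol{\zeta}, \psi) \in \mathcal{J}^2_{t_1, t_1+\ttau}(\boldsymbol{x}^{(1)}, \boldsymbol{x}^{(2)})$ of finite cost. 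My plan is to obtain this via a Legendre duality bound in which the dual variables are chosen to satisfy the Euler--Lagrange equation of the reduced calculus of variations problem, so that the candidate is automatically extremal.

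The starting point is the elementary identity $\ell(x) = \sup_{y \in \Rmb}\{xy - e^y + 1\}$ (with equality at $y = \log x$), together with the representation $L(\boldsymbol{\zeta}, \boldsymbol{\zeta}') = \sum_{k \ge 0} r_k(\boldsymbol{\zeta})\, \ell(\rho_k)$ where $\rho_k \doteq \nu(k|\boldsymbol{\zeta}')/\mu(k|\boldsymbol{\zeta})$. Since on $\mathcal{J}^2$ the total mass $R(t) \doteq r(\boldsymbol{\zeta}(t)) = r(\boldsymbol{x}^{(1)}) - 2(t-t_1)$ is a prescribed affine function, $\zeta_0(t) = R(t) - \sum_{k \ge 1} k\zeta_k(t)$ is determined by the remaining coordinates. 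Substituting $\nu(k|\boldsymbol{\zeta}') = -\zeta_k'$ and $\nu(0|\boldsymbol{\zeta}') = 1 + \sum_{k \ge 1}\zeta_k'$ into the pointwise Legendre bound and integrating by parts yields, for any absolutely continuous multipliers $y_0, (y_k)_{k \ge 1}$,
\[
\int_{t_1}^{t_1+\ttau} L\, dt \;\ge\; \int_{t_1}^{t_1+\ttau}\bigl[y_0 - e^{y_0} + 1\bigr] dt + \sum_{k \ge 1}\bigl[\zeta_k(y_0 - y_k)\bigr]_{t_1}^{t_1+\ttau} - \int_{t_1}^{t_1+\ttau}\sum_{k \ge 1} \zeta_k \Bigl[(y_0 - y_k)' + \tfrac{k}{R}(e^{y_k} - e^{y_0})\Bigr] dt.
\]

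The crucial choice is $y_k(t) \doteq \log \tilde\rho_k(t)$ for $k \ge 0$, where $\tilde\rho_k \doteq \nu(k|\tilde\boldsymbol{\zeta}')/\mu(k|\tilde\boldsymbol{\zeta})$ are the ratios along the candidate. With this choice, equality in Legendre's inequality holds pointwise along $\tilde\boldsymbol{\zeta}$, so the displayed lower bound is attained at $(\tilde\boldsymbol{\zeta}, \tilde\psi)$. Moreover, the $\zeta_k$-dependent bracket vanishes for each $k \ge 1$ iff
\[
\frac{d}{dt}\log \frac{\tilde\rho_k}{\tilde\rho_0} \;=\; \frac{k}{R}\bigl(\tilde\rho_k - \tilde\rho_0\bigr), \qquad k \ge 1,
\]
which is the Euler--Lagrange ODE for the reduced problem. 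Using the closed form $\tilde\zeta_k = p_k^{(1)} - \tilde{z}_k(1 - s^k)$ from Construction \ref{cons:cont} with $s(t) = \sqrt{1 - (t-t_1)/\tilde\ttau}$, and invoking the defining equation \eqref{eq:def-beta} for $\beta$ (which yields $\tilde\zeta_0(t) = 2\tilde\ttau s(t)^2 - \sum_{k \ge 1} k\tilde{z}_k s(t)^k$, hence the remarkable simplification $\tilde\rho_0(t) = R(t)/(2\tilde\ttau s(t)^2)$), the EL ODE reduces to an algebraic identity that can be checked by direct differentiation. Once verified, the $\zeta_k$-dependent integral vanishes for every admissible $(\boldsymbol{\zeta}, \psi)$, and the lower bound coincides with $\int L(\tilde\boldsymbol{\zeta}, \tilde\boldsymbol{\zeta}')\,dt$, completing the proof.

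The main obstacle will be the careful handling of degeneracies and of infinite-sum manipulations. The multipliers $y_k = \log \tilde\rho_k$ may blow up at the endpoints (e.g.\ when $p_k^{(2)} = 0$ for some $k$, or when $x_0^{(2)} = 0$, in which case $s \to \beta = 0$), so one must first argue with truncated, bounded multipliers and then pass to a limit, using the explicit finite value of $\int L(\tilde\boldsymbol{\zeta}, \tilde\boldsymbol{\zeta}')\,dt$ from Lemma \ref{lem:minimizer-cost} to control the limiting procedure. The interchange of $\sum_{k \ge 1}$ with differentiation, integration, and integration by parts must be justified using the a priori bound $0 \le \zeta_k(t) \le p_k$ together with Assumption \ref{asp:exponential-boundN}, which provides the required tail control. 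The algebraic verification of the Euler--Lagrange equation itself, although routine, relies crucially on the defining identity \eqref{eq:def-beta} for $\beta$ in order to obtain the clean closed-form expression for $\tilde\rho_0$.
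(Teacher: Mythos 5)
Your proposal is correct and takes a genuinely different, though closely related, route. The paper argues by contradiction using convexity: it shows $\mathcal{J}^2$ is convex and $\tilde G$ is convex, assumes some competitor $(\boldsymbol{\zeta},\psi)$ beats the candidate, forms $g(\varepsilon)=\tilde G\bigl((1-\varepsilon)(\tilde\boldsymbol{\zeta},\tilde\psi)+\varepsilon(\boldsymbol{\zeta},\psi)\bigr)$, proves $g_+'(0)=0$ via the Euler--Lagrange identity \eqref{eq:Euler-Lagrange}, and contradicts convexity. You instead run a verification (duality) argument: Legendre-dualize $\ell$, choose the multipliers $y_k=\log\tilde\rho_k$ along the candidate, integrate by parts, and observe that the same Euler--Lagrange identity makes the resulting lower bound path-independent, with equality along $(\tilde\boldsymbol{\zeta},\tilde\psi)$. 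In fact your condition $(y_0-y_k)'+\tfrac{k}{R}(e^{y_k}-e^{y_0})=0$ is algebraically identical to \eqref{eq:Euler-Lagrange}: with $y_k=\log\tilde\rho_k$ one checks directly that $y_0-y_k=\tfrac{\partial\eta}{\partial v_k}$ and $\tfrac{k}{R}(\tilde\rho_k-\tilde\rho_0)=-\tfrac{\partial\eta}{\partial u_k}$, so both proofs ultimately rest on the same computation. What the duality formulation buys you is a direct inequality rather than a contradiction, and it makes conceptually transparent why the endpoint constraints together with the EL equation pin down the optimal value without any reference to the reflected problem. What it does not save you is the approximation work: the paper isolates a regular case (Lemma \ref{lem:minimizer-verify-special} under $x_0^{(i)}>0$ and $p_k^{(1)}>0\Rightarrow p_k^{(2)}>0$), applies it on the truncated interval $[t_1+\tfrac1n,\,t_1+\ttau-\tfrac1n]$ where both hypotheses hold, and passes to the limit using the lower semicontinuity from Lemma \ref{lem:minimizer-cost}; your truncated-multiplier limit will need comparable care, particularly for the boundary terms $[\zeta_k(y_0-y_k)]_{t_1}^{t_1+\ttau}$ and $\int(y_0-e^{y_0}+1)\,dt$, both of which become $0\cdot\infty$ indeterminacies when $p_k^{(2)}=0$ or $\beta=0$. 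You flag this correctly but should be aware that the truncation-in-time device the paper uses (rather than truncating the multipliers themselves, which would break the exact EL identity) is probably the cleanest way to implement your plan as well, since it keeps the verification identity exact on the subinterval.
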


Proofs of Lemmas \ref{lem:I1I2L}--\ref{lem:minimizer-verify-general} are given in Section \ref{sec:pfsect4}.
The following proposition summarizes an important consequence of the above lemmas  for the case when $x_0^{(1)}=x_0^{(2)}=0$.

\begin{Proposition}
	\label{prop:minimizer-summary}
	$\,$
	\phantomsection
	\begin{enumerate}[(a)]
	\item
		Suppose $\zero \le \boldsymbol{q} \le \bar{\boldsymbol{q}} \le  \boldsymbol{p}$ and that either $\sum_{k=1}^\infty kq_k > 2\sum_{k=1}^\infty q_k$
		or 
		$\sum_{k=1}^\infty kq_k = 2\sum_{k=1}^\infty q_k$ but $p_1=0$.
		Given $t_1 \ge 0$,  and with $\boldsymbol{x}^{(0)} \doteq (0, \boldsymbol{p})$, $\boldsymbol{x}^{(1)} \doteq (0, \bar{\boldsymbol{q}})$,
		suppose  there exists some $(\boldsymbol{\zeta}^*,\psi^*) \in \mathcal{J}^0_{0,t_1}(\boldsymbol{x}^{(0)}, \boldsymbol{x}^{(1)})$ such that $I_{0,t_1}(\boldsymbol{\zeta}^*,\psi^*) < \infty$.
		Then 
		\[\Scale[0.9]{\begin{aligned}
			\inf_{t_2 \ge t_1} I^1_{t_1,t_2}((0,\bar{\boldsymbol{q}}), (0,\bar{\boldsymbol{q}}-\boldsymbol{q})) = I^2_{t_1,t_1+\tau}((0,\bar{\boldsymbol{q}}), (0,\bar{\boldsymbol{q}}-\boldsymbol{q}))
			= H({\boldsymbol{q}}) + H(\bar{\boldsymbol{q}}-{\boldsymbol{q}}) - H(\bar{\boldsymbol{q}}) + K({\boldsymbol{q}}),
		\end{aligned}}\]
		% and in particular,
		% 	\begin{equation*}
		% 		\inf_{t_2 \ge 0} I^1_{0,t_2}((0,\boldsymbol{p}), (0,\boldsymbol{p}-\boldsymbol{q})) = I^2_{0,\tau}((0,\boldsymbol{p}), (0,\boldsymbol{p}-\boldsymbol{q})) = \tilde{I}_1(\boldsymbol{q}),
		% 	\end{equation*}
		where \blue{$\tau \doteq \ttau((0,\bar{\boldsymbol{q}}), (0,\bar{\boldsymbol{q}}-\boldsymbol{q})) = \frac{1}{2} \sum_{k=1}^\infty kq_k$}.
		% and $\tilde{I}_1(\boldsymbol{q})$ is introduced in \eqref{eq:Itil_1}.
%	\item
%		Suppose $\zero \le \boldsymbol{q} \le \bar{\boldsymbol{p}} \le \boldsymbol{p}$ and $\sum_{k=1}^\infty kq_k > 2\sum_{k=1}^\infty q_k$.
%		Then 
%		\begin{equation*}
%			\inf_{t_2 \ge t_1} I^1_{t_1,t_2}((0,\bar{\boldsymbol{p}}), (0,\bar{\boldsymbol{p}}-\boldsymbol{q})) = I^2_{t_1,t_1+\tau}((0,\bar{\boldsymbol{p}}), (0,\bar{\boldsymbol{p}}-\boldsymbol{q}))
%		\end{equation*}	
%		for each $t_1 > 0$ and the result is actually independent of $t_1>0$.	
	\item
		Suppose $p_1=0$, $\boldsymbol{q} \ge 0$, $\bar{\boldsymbol{q}} \ge \zero$, $\boldsymbol{q} + \bar{\boldsymbol{q}} \le \boldsymbol{p}$, $\sum_{k=1}^\infty kq_k \ge 2\sum_{k=1}^\infty q_k$, and $\sum_{k=1}^\infty k\bar{q}_k \ge 2\sum_{k=1}^\infty \bar{q}_k$.
		Let \blue{$\tau \doteq \ttau((0,\boldsymbol{p}), (0,\boldsymbol{p}-\boldsymbol{q})) = \frac{1}{2} \sum_{k=1}^\infty kq_k$} and $\bar{\tau} \doteq \frac{1}{2} \sum_{k=1}^\infty k \bar{q}_k$.
		Then
		\[\begin{aligned}
			 &I^2_{0,\bar \tau}((0,\boldsymbol{p}), (0,\boldsymbol{p} - \bar{\boldsymbol{q}})) + I^2_{\bar \tau, \bar \tau + \tau}((0,\boldsymbol{p} - \bar{\boldsymbol{q}}), (0,\boldsymbol{p} - \bar{\boldsymbol{q}} - \boldsymbol{q}))\\
			 &\quad= I^2_{0, \tau}((0,\boldsymbol{p}), (0,\boldsymbol{p} -  \boldsymbol{q})) + I^2_{ \tau,  \tau +\bar \tau}((0,\boldsymbol{p} -  \boldsymbol{q}), (0,\boldsymbol{p} -  \boldsymbol{q} - \bar{\boldsymbol{q}})).
		\end{aligned}\]
	\end{enumerate}	
\end{Proposition}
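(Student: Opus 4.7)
The plan is to combine Lemmas \ref{lem:I1I2L}, \ref{lem:minimizer-cost} and \ref{lem:minimizer-verify-general} with the translation identities in \eqref{eq:hktil} to deduce part (a), and then to chain applications of part (a) to obtain part (b). Throughout, for part (a) set $\boldsymbol{x}^{(1)} \doteq (0,\bar{\boldsymbol{q}})$, $\boldsymbol{x}^{(2)} \doteq (0,\bar{\boldsymbol{q}}-\boldsymbol{q})$, and $\boldsymbol{z} \doteq \boldsymbol{x}^{(1)} - \boldsymbol{x}^{(2)} = (0,\boldsymbol{q})$, so that $\ttau(\boldsymbol{x}^{(1)},\boldsymbol{x}^{(2)}) = \tfrac{1}{2}\sum_{k\ge 1}kq_k = \tau$.

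For part (a), the first equality is immediate from Lemma \ref{lem:I1I2L}, whose hypothesis is precisely the assumed existence of a finite-cost $(\boldsymbol{\zeta}^*,\psi^*) \in \mathcal{J}^0_{0,t_1}(\boldsymbol{x}^{(0)},\boldsymbol{x}^{(1)})$. For the second equality in the strict case $\sum kq_k > 2\sum q_k$, one has $\sum kz_k + z_0 > 2\sum z_k$, so Lemma \ref{lem:minimizer-verify-general} shows that the Construction \ref{cons:cont} trajectory $({\boldsymbol{\tilde\zeta}},\tilde{\psi})$ attains the infimum $I^2_{t_1,t_1+\tau}$, and Lemma \ref{lem:minimizer-cost} evaluates this infimum as $\tilde H(\boldsymbol{z}) + \tilde H(\boldsymbol{x}^{(2)}) - \tilde H(\boldsymbol{x}^{(1)}) + \tilde K(\boldsymbol{x}^{(1)},\boldsymbol{x}^{(2)})$. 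The identities in \eqref{eq:hktil} then rewrite this as $H(\boldsymbol{q}) + H(\bar{\boldsymbol{q}}-\boldsymbol{q}) - H(\bar{\boldsymbol{q}}) + K(\boldsymbol{q})$.

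The boundary case of part (a), where $p_1 = 0$ and $\sum kq_k = 2\sum q_k$, falls outside the hypothesis of Lemma \ref{lem:minimizer-verify-general} and is the main obstacle. However, $q_1 \le p_1 = 0$ forces $q_1 = 0$, and then the identity $\sum_{k\ge 3}(k-2)q_k = \sum kq_k - 2\sum q_k + q_1 = 0$ forces $q_k = 0$ for all $k \ge 3$, so $\boldsymbol{q} = q_2\,\boldsymbol{e}_2$. Consequently, any feasible $(\boldsymbol{\zeta},\psi) \in \mathcal{J}^2_{t_1,t_1+\tau}(\boldsymbol{x}^{(1)},\boldsymbol{x}^{(2)})$ is pinned down: coordinate monotonicity together with matching endpoint values forces $\zeta_k \equiv \bar q_k$ for $k \ne 2$, while the $\mathcal{J}^2$-condition $\frac{d}{dt}r(\boldsymbol{\zeta}(t)) = -2$ determines $\zeta_2(t) = \bar q_2 - (t-t_1)$ and thus $\zeta_0 \equiv 0$. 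This unique trajectory coincides with the Construction \ref{cons:cont} path at $\beta = 0$ (case (i) of the construction, since $z_1 = q_1 = 0$ and $x_0^{(2)} = 0$), and it is routinely verified to be feasible. Its cost, evaluated via \eqref{eq:IL} or Lemma \ref{lem:minimizer-cost}, equals $H(\boldsymbol{q}) + H(\bar{\boldsymbol{q}}-\boldsymbol{q}) - H(\bar{\boldsymbol{q}})$; this agrees with the target formula since the convention $\beta(\boldsymbol{q}) = 0$ when $q_1 = 0$ makes $K(\boldsymbol{q}) = 0$.

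Part (b) is an algebraic consequence of part (a). Apply (a) to each of the four $I^2$-terms appearing on the two sides; the finite-cost hypothesis is trivially met for the two outer terms (via the constant path from $(0,\boldsymbol{p})$), and is supplied by the Construction \ref{cons:cont} minimizer of the preceding outer term for the two inner terms. The assumption $p_1 = 0$ ensures the boundary case of (a) is available whenever $\sum kq_k = 2\sum q_k$ or $\sum k\bar q_k = 2\sum\bar q_k$. After cancellation of the intermediate $H(\boldsymbol{p}-\bar{\boldsymbol{q}})$ (on the LHS) and $H(\boldsymbol{p}-\boldsymbol{q})$ (on the RHS) terms, both sides evaluate to
\begin{equation*}
H(\boldsymbol{q}) + H(\bar{\boldsymbol{q}}) + H(\boldsymbol{p}-\boldsymbol{q}-\bar{\boldsymbol{q}}) - H(\boldsymbol{p}) + K(\boldsymbol{q}) + K(\bar{\boldsymbol{q}}),
\end{equation*}
proving the identity.
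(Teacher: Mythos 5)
Your proof is correct and follows essentially the same route as the paper: Lemma \ref{lem:I1I2L} for the first equality, Lemmas \ref{lem:minimizer-cost} and \ref{lem:minimizer-verify-general} with \eqref{eq:hktil} for the strict case of (a), a direct identification of the (unique) feasible $\mathcal{J}^2$-trajectory with the $\beta=0$ Construction \ref{cons:cont} path in the boundary case, and then chaining part (a) four times and cancelling the intermediate $H$-terms for (b). The only thing to tighten in the boundary case is the stated order of inference: $r'(\boldsymbol{\zeta})=-2$ together with constancy of $\zeta_k$, $k\ne 2$, gives only $\zeta_0'+2\zeta_2'=-2$, and one needs the additional input (from finite cost, $\sum_k\zeta_k'\ge -1$, which here means $\zeta_2'\ge -1$, so $\zeta_0'\le 0$, combined with $\zeta_0\ge 0$ and $\zeta_0(t_1)=0$) to first conclude $\zeta_0\equiv 0$ and then deduce $\zeta_2'=-1$; the conclusion and the construction identification are correct, and this matches what the paper does, just stated in the cleaner order.
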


\begin{proof}
	(a) The first equality in part (a) is a consequence of Lemma \ref{lem:I1I2L}. 
	For the second equality, consider first the case $\sum_{k=1}^\infty kq_k > 2\sum_{k=1}^\infty q_k$.
	From \eqref{eq:hktil} we have
	\[\Scale[0.9]{\begin{aligned}
		 H({\boldsymbol{q}}) + H(\bar{\boldsymbol{q}}-{\boldsymbol{q}}) - H(\bar{\boldsymbol{q}}) + K({\boldsymbol{q}}) 
		 = \tilde{H}(0,{\boldsymbol{q}}) + \tilde{H}(0,\bar{\boldsymbol{q}}-{\boldsymbol{q}}) - \tilde{H}(0,\bar{\boldsymbol{q}}) + \tilde{K}((0,\bar{\boldsymbol{q}}), (0,\bar{\boldsymbol{q}}-\boldsymbol{q})).
	\end{aligned}}\]
	Applying Lemma \ref{lem:minimizer-cost} with $\boldsymbol{x}^{(1)} = (0, \bar{\boldsymbol{q}})$, $\boldsymbol{x}^{(2)}
	= (0, \bar{\boldsymbol{q}}- \boldsymbol{q})$, the above expression equals
	$ \int_{t_1}^{t_1+\tau} L({\boldsymbol{\tilde\zeta}}(s), {\boldsymbol{\tilde\zeta}}'(s))\,ds$
	where ${\boldsymbol{\tilde\zeta}}$ is defined by \eqref{eq:def-minimizer} -- \eqref{eq:def-minimizer-psi}.
	Now from Lemma \ref{lem:minimizer-verify-general} 
	$$I^2_{t_1,t_1+\tau}((0, \bar{\boldsymbol{q}}), (0, \bar{\boldsymbol{q}}- \boldsymbol{q})) = H({\boldsymbol{q}}) + H(\bar{\boldsymbol{q}}-{\boldsymbol{q}}) - H(\bar{\boldsymbol{q}}) + K({\boldsymbol{q}})$$
	which proves the second equality in part (a) for the considered case.
	
	Now we consider the case $\sum_{k=1}^\infty kq_k = 2\sum_{k=1}^\infty q_k$ and $p_1=0$.
		% Next we claim that even though $\sum_{k=1}^\infty kq_k = 2\sum_{k=1}^\infty q_k$, which does not meet the assumption in Lemma \ref{lem:minimizer-def}, we still have
	% 	\begin{equation*}
	% 		I^2_{0, \tau}((0,\boldsymbol{p}), (0,\boldsymbol{p} -  \boldsymbol{q})) = H({\boldsymbol{q}}) + H({\boldsymbol{p}}-{\boldsymbol{q}}) - H({\boldsymbol{p}}) + K({\boldsymbol{q}}).
	% 	\end{equation*}
	Since $p_1=0$, we must have $q_k=0$ for each $k \ne 2$.
	Then for any $(\boldsymbol{\zeta},\psi) \in \mathcal{J}^2_{t_1,t_1+\tau}((0, \bar{\boldsymbol{q}}), (0, \bar{\boldsymbol{q}}- \boldsymbol{q}))$ with $I_{t_1,t_1+\tau}(\boldsymbol{\zeta},\psi)<\infty$, we must have (see \eqref{eq:psi} and the definition of $\mathcal{J}^2_{t_1,t_2}$) $\zeta_2'(t)=-1$ and $\zeta_k'(t)=\psi'(t)=0$, $k \ne 2$ for $t \in [t_1,t_1+\tau]$.
	Also, in this case $q_1=0$ and so we are in case (i) of Construction \ref{cons:cont} with $\boldsymbol{x}^{(1)} = (0, \bar{\boldsymbol{q}})$ and
	$\boldsymbol{x}^{(2)} = (0, \bar{\boldsymbol{q}}- \boldsymbol{q})$.
	It is easily checked that 
	any $(\boldsymbol{\zeta},\psi)$ with the above properties 
	is same as the minimizer $({\boldsymbol{\tilde\zeta}},\tilde{\psi})$  over
	$[t_1, t_1+\tau]$.
		% 
		% 
		% 
		% Note that, although conditions of Lemma \ref{lem:minimizer-def} are not satisfied here,  this matches with the form of the minimizer given in that lemma (in particular, $\beta=0$).
	Thus using Lemma \ref{lem:I1I2L} and  Lemma \ref{lem:minimizer-cost} we get 
	\begin{align*}
		I^2_{t_1, t_1+\tau}((0,\bar{\boldsymbol{q}}), (0,\bar{\boldsymbol{q}} -  \boldsymbol{q})) & = 
		\inf_{(\boldsymbol{\zeta},\psi) \in \mathcal{J}^2_{t_1,t_1+\tau}(\boldsymbol{x}^{(1)}, \boldsymbol{x}^{(2)})} \int_{t_1}^{t_1+\tau} L(\boldsymbol{\zeta}(s), \boldsymbol{\zeta}'(s))\,ds\\
		&=\int_{t_1}^{t_1+\tau} L({\boldsymbol{\tilde\zeta}}(s),{\boldsymbol{\tilde\zeta}}'(s)) \, ds \\
		& = \tilde{H}(0,{\boldsymbol{q}}) + \tilde{H}(0,{\bar{\boldsymbol{q}}}-{\boldsymbol{q}}) - \tilde{H}(0,{\bar{\boldsymbol{q}}}) + \tilde{K}((0,{\bar{\boldsymbol{q}}}), (0,{\bar{\boldsymbol{q}}}-\boldsymbol{q})) \\
		& = H({\boldsymbol{q}}) + H({\bar{\boldsymbol{q}}}-{\boldsymbol{q}}) - H({\bar{\boldsymbol{q}}}) + K({\boldsymbol{q}}).
	\end{align*}
	This proves part (a) in this case.
	
	(b) 
	From part (a),
	$$I^2_{0,\bar \tau}((0,\boldsymbol{p}), (0,\boldsymbol{p} - \bar{\boldsymbol{q}})) = H({\bar{\boldsymbol{q}}}) + H({\boldsymbol{p}}-{\bar{\boldsymbol{q}}}) - H({\boldsymbol{p}}) + K(\bar{\boldsymbol{q}})$$
	and since the right side is finite, again from part (a),
	$$I^2_{\bar \tau, \bar \tau + \tau}((0,\boldsymbol{p} - \bar{\boldsymbol{q}}), (0,\boldsymbol{p} - \bar{\boldsymbol{q}} - \boldsymbol{q})) = H({\boldsymbol{q}}) + H({\boldsymbol{p}}-\bar{\boldsymbol{q}}-\boldsymbol{q}) - H({\boldsymbol{p}}-\bar{\boldsymbol{q}}) + K({\boldsymbol{q}}).$$
	Therefore,
	\begin{align}
		& I^2_{0,\bar \tau}((0,\boldsymbol{p}), (0,\boldsymbol{p} - \bar{\boldsymbol{q}})) + I^2_{\bar \tau, \bar \tau + \tau}((0,\boldsymbol{p} - \bar{\boldsymbol{q}}), (0,\boldsymbol{p} - \bar{\boldsymbol{q}} - \boldsymbol{q})) \notag \\
		& = \left[ H({\bar{\boldsymbol{q}}}) + H({\boldsymbol{p}}-{\bar{\boldsymbol{q}}}) - H({\boldsymbol{p}}) + K(\bar{\boldsymbol{q}}) \right] + \left[ H({\boldsymbol{q}}) + H({\boldsymbol{p}}-\bar{\boldsymbol{q}}-\boldsymbol{q}) - H({\boldsymbol{p}}-\bar{\boldsymbol{q}}) + K({\boldsymbol{q}}) \right] \notag \\
		& = \left[ H({\boldsymbol{q}}) + H({\boldsymbol{p}}-{\boldsymbol{q}}) - H({\boldsymbol{p}}) + K({\boldsymbol{q}}) \right] + \left[ H({\bar{\boldsymbol{q}}}) + H({\boldsymbol{p}}-\boldsymbol{q}-\bar{\boldsymbol{q}}) - H({\boldsymbol{p}}-\boldsymbol{q}) + K(\bar{\boldsymbol{q}}) \right] \notag \\
		& = I^2_{0, \tau}((0,\boldsymbol{p}), (0,\boldsymbol{p} -  \boldsymbol{q})) + I^2_{ \tau,  \tau +\bar \tau}((0,\boldsymbol{p} -  \boldsymbol{q}), (0,\boldsymbol{p} -  \boldsymbol{q} - \bar{\boldsymbol{q}})), \label{eq:minimizer-summary-pf}
	\end{align}
	where the last line follows, once more, from  (a). This proves  (b) and completes the proof.	
\end{proof}

\section{Proof of Theorem \ref{thm:ldg_degree_distribution}}
\label{sec:pf_LDP_degree}

In this section we will use Theorem \ref{thm:main-ldp} and results in Section \ref{sec:cal} to prove Theorem \ref{thm:ldg_degree_distribution}.
Let $0 \le \boldsymbol{q} \le \boldsymbol{p}$. Recall the \cg{(distributionally equivalent)} representation of the event $E^{n,\varepsilon}(\boldsymbol{q})$ given in \eqref{eq:enverps}, \blue{in terms of $\Xbd^n$}.
Define
\begin{align}
	E^{n,\varepsilon,T}(\boldsymbol{q}) & \doteq \{ \exists\, t_1, t_2 \in [0,T] \text{ such that } X^n_0(t_1-) = X^n_0(t_2) = - 1/n, X^n_0(t) > - 1/n \mbox{ for } t\in [t_1, t_2), \notag \\
	& \qquad |X_k^n(t_1-) - X_k^n(t_2) - q_k| \le \varepsilon \mbox{ for all } k \in \mathbb{N} \} \notag \\
	& = \{ \exists\, t_1, t_2 \in [0,T] \text{ such that } X^n_0(t_1-) = X^n_0(t_2) = - 1/n, \notag \\
	& \qquad Y^n(t) > Y^n(t_1-) - 2/n \mbox{ for } t\in [t_1, t_2), |X_k^n(t_1-) - X_k^n(t_2) - q_k| \le \varepsilon \mbox{ for all } k \in \mathbb{N} \}. \label{eq:app_E_n_eps}
\end{align}
Note that $E^{n,\varepsilon,T}(\boldsymbol{q}) \subset E^{n,\varepsilon}(\boldsymbol{q})$ but they are not equal, since the continuous-time EEA may not terminate by time $T$.
Consider the event that the continuous-time EEA terminates before time  $T$, namely the event $F^{n,T}$ defined as
\begin{equation}
	\label{eq:F-n-T}
	F^{n,T} \doteq \{X^n(T)=(-{1}/{n},\boldsymbol{0})\}.
\end{equation}
Then
\begin{equation}
	\label{eq:E-Etil}
	E^{n,\varepsilon}(\boldsymbol{q}) \cap F^{n,T} \subset E^{n,\varepsilon,T}(\boldsymbol{q}) \subset E^{n,\varepsilon}(\boldsymbol{q}).
\end{equation}
The following lemma guarantees that in order to study the exponential rate of decay of $P(E^{n,\varepsilon}(\boldsymbol{q}))$, it suffices to study that of $P(E^{n,\varepsilon,T}(\boldsymbol{q}))$.

\begin{Lemma}
	\label{lem:choosing-T}
	$\limsup_{n \to \infty} \frac{1}{n} \log P((F^{n,T})^c) \to -\infty$ as $T \to \infty$.
\end{Lemma}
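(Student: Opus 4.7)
The plan is to reduce the event $(F^{n,T})^c$ to a standard large deviation estimate for a sum of i.i.d.\ exponentials, exploiting the fact that $\boldsymbol{X}^n$ jumps at total rate $n$ until it terminates and that the total number of jumps is deterministically $O(n)$.

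First I would identify the termination time as $\tau^n \doteq \inf\{t \ge 0: \boldsymbol{X}^n(t) = (-1/n, \zero)\}$, so that $(F^{n,T})^c = \{\tau^n > T\}$, and observe from the EEA description in Section \ref{sec:eea} that $\boldsymbol{X}^n$ has exactly $m+C^n$ jumps (one per pairing plus one per new component wake-up), where $C^n \le n$ is the number of components of $G([n],\boldsymbol{d}(n))$. By Assumption \ref{asp:exponential-boundN} there is a constant $L_0$ with $m+C^n \le nL_0$ a.s.\ for every $n$. Next, from \eqref{eq:eq903e} together with the identity $\sum_{k=0}^\infty r_k(\boldsymbol{x}) = 1$ for all $\boldsymbol{x}$ with $r(\boldsymbol{x}) > 0$, the counting process $J^n(t) \doteq \#\{\text{jumps of } \boldsymbol{X}^n \text{ in } [0,t]\}$ has $\{\mathcal{F}_t\}$-compensator $\int_0^t n \cdot 1_{\{r(\boldsymbol{X}^n(s-)) > 0\}}\, ds$, and its successive inter-jump times $\xi_k$ are, up to termination, conditionally $\text{Exp}(n)$ given the past.

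I would then, on a possibly enlarged probability space, extend $\{\xi_k\}_{k \le m+C^n}$ to an i.i.d.\ sequence $\{\xi_k\}_{k \in \Nmb}$ of $\text{Exp}(n)$ random variables by appending fresh independent samples after termination. Since $m+C^n \le nL_0$ a.s., this yields the deterministic domination
\[
	\tau^n = \sum_{k=1}^{m+C^n} \xi_k \le T_n^* \doteq \sum_{k=1}^{nL_0} \xi_k \quad \text{a.s.},
\]
so $P((F^{n,T})^c) \le P(T_n^* > T)$. The variable $T_n^*$ is a sum of $nL_0$ i.i.d.\ $\text{Exp}(n)$ random variables with mean $L_0$, so by Cram\'er's theorem, for $T > L_0$,
\[
	\limsup_{n\to\infty} \frac{1}{n} \log P(T_n^* > T) \le - L_0 \cdot \Lambda^*(T/L_0),
\]
where $\Lambda^*(a) \doteq a - 1 - \log a$ is the Cram\'er rate function of $\text{Exp}(1)$. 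Since $\Lambda^*(a) \to \infty$ as $a \to \infty$, the right side tends to $-\infty$ as $T \to \infty$, giving the desired conclusion.

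The main obstacle will be justifying the conditionally $\text{Exp}(n)$ structure of the inter-jump times directly from the SDE \eqref{eq:eq903e}, together with the coupling extension to an i.i.d.\ sequence — care is needed because the stopping index $m+C^n$ is itself random, although deterministically bounded. Once these structural facts are in place, the final bound is a routine Cram\'er estimate.
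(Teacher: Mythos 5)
Your proposal is correct and takes essentially the same route as the paper: bound the number of jumps of $\boldsymbol{X}^n$ deterministically by a constant multiple of $n$ (the paper uses $nN$ with $N \doteq \lfloor \sup_n \frac{1}{2}\sum_k k\,n_k/n\rfloor + 1$), observe that before absorption the total jump rate is exactly $n$ so inter-jump times are i.i.d.\ $\mathrm{Exp}(n)$, stochastically dominate $\tau^n$ by a sum of $nN$ such variables, and invoke Cram\'er's theorem with the rate function $x - 1 - \log x$. You are a bit more explicit than the paper about extending the inter-jump times to an i.i.d.\ sequence on an enlarged space to handle the random stopping index, which is a worthwhile clarification, but it is not a different argument.
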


\begin{proof}
	Recall from Section \ref{sec:eea} that  the discrete-time EEA terminates in at most $nN$ steps where $N \doteq \lfloor \sup_n \frac{1}{2} \sum_{k=1}^\infty k \frac{n_k}{n} \rfloor + 1 < \infty$.
	So since the discrete time EEA is the embedded chain associated with the continuous time EEA (see Section \ref{sec:model}), $\boldsymbol{X}^n$ will have at most $nN$ jumps before arriving at the absorbing state $(-\frac{1}{n},\boldsymbol{0})$. 
	Since the total jump rate for $\boldsymbol{X}^n(t)$ at any instant  before getting absorbed is $n\sum_{k=0}^\infty r_k(\boldsymbol{X}^n(t)) = n$, we have
	\begin{equation*}
		P(F^{n,T}) \ge P \left( \sum_{i=1}^{nN} \xi_i \le T \right) = P \left( \frac{1}{n} \sum_{i=1}^{nN} \tilde{\xi}_i \le T \right),
	\end{equation*}
	where $\xi_i$ are  i.i.d. $\exp(n)$  and $\tilde{\xi}_i$ are  i.i.d. $\exp(1)$.
	Therefore
	\begin{align*}
		\limsup_{n \to \infty} \frac{1}{n} \log P((F^{n,T})^c) & \le 
		%\limsup_{n \to \infty} \frac{1}{n} \log P \left( \frac{1}{n} \sum_{i=1}^{nN} \tilde{\xi}_i > T \right) \\
		%& =
		 N \limsup_{n \to \infty} \frac{1}{nN} \log \mathbb{P} \left( \frac{1}{nN} \sum_{i=1}^{nN} \tilde{\xi}_i > \frac{T}{N} \right) \\
		& = -N L_1 \left( \frac{T}{N} \right) \to -\infty
	\end{align*}
	as $T \to \infty$, where the second equality is  from Cram\'{e}r's theorem and where
	 $L_1(x) \doteq x - 1 - \log x$ for $x \ge 0$ is the Legendre transform of the log-moment generating function of $\tilde{\xi}_1$.
\end{proof}

The following lemma gives an upper bound for the exponential rate of decay of 
%$P \{E^{n,\varepsilon,T}(\boldsymbol{q}) \}$ and 
$P( E^{n,\varepsilon}(\boldsymbol{q}) )$.

\begin{Lemma}
	\label{lem:puhalskii-upper-bound}
	Suppose  $\sum_{k=1}^\infty k q_k  > 2\sum_{k=1}^\infty q_k$.
%	Let $\tau \doteq \frac{1}{2} \sum_{k=1}^\infty kq_k$.
	Then the upper bound in \eqref{eq:upperbd_to_improve*} holds, namely
		\begin{equation}
			\label{eq:upperbd_to_improve}
			\cg{\limsup_{\varepsilon \to 0}} \limsup_{n \to \infty } \frac{1}{n} \log P( E^{n,\varepsilon}(\boldsymbol{q})) \le - \inf_{\boldsymbol{q} \le \bar{\boldsymbol{p}} \le \boldsymbol{p}, t_1 \ge 0}[ I^0_{0,t_1}((0,\boldsymbol{p}), (0,\bar{\boldsymbol{p}})) + I^2_{t_1,t_1+\tau}((0,\bar{\boldsymbol{p}}), (0,\bar{\boldsymbol{p}} - \boldsymbol{q}))],
	%		- \inf_{\tilde \boldsymbol{p}, t_1 }[\tilde I_{0,t_1}(\boldsymbol{p}, \tilde \boldsymbol{p}) + \hat I_{t_1, t_1 + \tau}(\tilde \boldsymbol{p}, \tilde \boldsymbol{p} - \boldsymbol{q})],
		\end{equation}
		\blue{where $\tau \doteq \ttau((0,\bar{\boldsymbol{p}}), (0,\bar{\boldsymbol{p}} - \boldsymbol{q})) = \frac{1}{2} \sum_{k=1}^\infty kq_k$ for each $\boldsymbol{q} \le \bar{\boldsymbol{p}} \le \boldsymbol{p}$.}
%		where the infimum is taken over all choices of $\tilde \boldsymbol{p} \le \boldsymbol{p}$ and $t_1 \ge 0$.
\end{Lemma}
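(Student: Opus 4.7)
The plan proceeds in three conceptual steps. First, I reduce to a finite time horizon: from \eqref{eq:E-Etil} we have $P(E^{n,\varepsilon}(\boldsymbol{q})) \le P(E^{n,\varepsilon,T}(\boldsymbol{q})) + P((F^{n,T})^c)$, and Lemma \ref{lem:choosing-T} drives the rate of the last term to $-\infty$ as $T\to\infty$. So it suffices to establish the analogous bound for $P(E^{n,\varepsilon,T}(\boldsymbol{q}))$ at each fixed $T$, and then let $T\to\infty$ at the end, with the right-hand side of \eqref{eq:upperbd_to_improve} arising as the limit of the corresponding finite-$T$ infima (where $t_1$ is restricted to $[0,T]$).

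Second, I translate the event to a closed subset of the path space and invoke the LDP. Using the representation \eqref{eq:app_E_n_eps} and the fact that $X_0^n(t_1^-) = X_0^n(t_2) = -1/n$ becomes $\zeta_0(t_1) = \zeta_0(t_2)=0$ in the limit (after absorbing the $1/n$ offset into an arbitrarily small enlargement of $\varepsilon$), define
\[
C_{\varepsilon,T} \doteq \bigl\{(\boldsymbol{\zeta},\psi) \in \mathcal{D}_\infty \times \mathcal{D}:\; \exists\, 0 \le t_1 \le t_2 \le T,\; \zeta_0(t_1)=\zeta_0(t_2)=0,
\]
\[
\psi(t) \ge \psi(t_1)\text{ on }[t_1,t_2],\; |\zeta_k(t_1)-\zeta_k(t_2)-q_k|\le \varepsilon \text{ for all } k\in\Nmb\bigr\}.
\]
Since $I_T(\boldsymbol{\zeta},\psi)<\infty$ forces $(\boldsymbol{\zeta},\psi)\in\mathcal{C}_T$, the relevant convergence of trajectories is uniform, under which all defining conditions of $C_{\varepsilon,T}$ pass to the limit (using compactness of $[0,T]^2$ for the quantified times $t_1,t_2$). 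So $C_{\varepsilon,T}$ is effectively closed on the finite-cost set, and for large $n$ the event $E^{n,\varepsilon,T}(\boldsymbol{q})$ is contained in $\{(\boldsymbol{X}^n,Y^n)\in C_{\varepsilon+1/n,T}\}$. Theorem \ref{thm:main-ldp} then gives
\[
\limsup_n \tfrac{1}{n}\log P(E^{n,\varepsilon,T}(\boldsymbol{q})) \le -\inf_{C_{\varepsilon,T}} I_T.
\]

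Third, I take $\varepsilon\to 0$ and decompose the cost. Taking a near-minimizing sequence $(\boldsymbol{\zeta}^\varepsilon,\psi^\varepsilon)\in C_{\varepsilon,T}$ with associated $(t_1^\varepsilon,t_2^\varepsilon)$, the compactness of the level sets of $I_T$ (Section \ref{sec:rate_function}) and compactness of $[0,T]^2$ yield a subsequential limit $(\boldsymbol{\zeta}^*,\psi^*)\in\mathcal{C}_T$ and $t_1^*\le t_2^*$ in $[0,T]$ satisfying $\zeta_0^*(t_1^*)=\zeta_0^*(t_2^*)=0$, $\psi^*(t)\ge \psi^*(t_1^*)$ on $[t_1^*,t_2^*]$, and $\zeta_k^*(t_1^*)-\zeta_k^*(t_2^*)=q_k$ for all $k$. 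Setting $\bar{\boldsymbol{p}}\doteq(\zeta_k^*(t_1^*))_{k\in\Nmb}$ we have $\boldsymbol{q}\le\bar{\boldsymbol{p}}\le\boldsymbol{p}$, and the variational definition \eqref{eq:rate_function} is additive over the partition $[0,t_1^*]\cup[t_1^*,t_2^*]\cup[t_2^*,T]$, so
\[
I_T(\boldsymbol{\zeta}^*,\psi^*) \ge I^0_{0,t_1^*}((0,\boldsymbol{p}),(0,\bar{\boldsymbol{p}})) + I^1_{t_1^*,t_2^*}((0,\bar{\boldsymbol{p}}),(0,\bar{\boldsymbol{p}}-\boldsymbol{q})),
\]
by the inclusions of the restrictions into $\mathcal{J}^0_{0,t_1^*}$ and $\mathcal{J}^1_{t_1^*,t_2^*}$. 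Since the first summand is finite (being $\le I_T(\boldsymbol{\zeta}^*,\psi^*)<\infty$), the hypothesis of Lemma \ref{lem:I1I2L} is met, and $\inf_{t_2^*\ge t_1^*} I^1_{t_1^*,t_2^*}((0,\bar{\boldsymbol{p}}),(0,\bar{\boldsymbol{p}}-\boldsymbol{q})) = I^2_{t_1^*,t_1^*+\tau}((0,\bar{\boldsymbol{p}}),(0,\bar{\boldsymbol{p}}-\boldsymbol{q}))$ with $\tau=\tfrac{1}{2}\sum_k kq_k$. Taking the infimum over $\bar{\boldsymbol{p}}$ and $t_1^*\in[0,T]$, then letting $T\to\infty$, delivers \eqref{eq:upperbd_to_improve}.

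The main obstacle is the third step: extracting the decomposition of the cost into the $I^0$ and $I^1$ pieces and ensuring that the limit $(\boldsymbol{\zeta}^*,\psi^*)$ actually satisfies the rigid constraints (in particular $\zeta_0^*(t_1^*)=\zeta_0^*(t_2^*)=0$ with no additional reflection of $\psi^*$ on $(t_1^*,t_2^*)$) needed to place its restrictions into $\mathcal{J}^0$ and $\mathcal{J}^1$. The constraints involve pointwise evaluations at unknown, subsequentially chosen times, so closedness really uses that level sets of $I_T$ are contained in $\mathcal{C}_\infty\times\mathcal{C}$ (so uniform, not merely Skorohod, convergence applies) and that a simple diagonalization in $(t_1^\varepsilon,t_2^\varepsilon)$ preserves the $\varepsilon\to 0$ constraint on the degree-increment.
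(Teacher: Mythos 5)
Your strategy mirrors the paper's proof essentially step for step: finite-horizon reduction via Lemma \ref{lem:choosing-T} and \eqref{eq:E-Etil}, transfer to a path-space set, LDP upper bound, a lower-semicontinuity/compactness argument to pass $\varepsilon\to 0$, a three-interval decomposition of the rate function yielding $I^0_{0,t_1}+I^1_{t_1,t_2}$, and finally Lemma \ref{lem:I1I2L} to replace $I^1$ by $I^2$. The one point that needs tightening is the definition of $C_{\varepsilon,T}$: as written it imposes $\zeta_0(t_1)=\zeta_0(t_2)=0$ exactly, so the claimed inclusion $E^{n,\varepsilon,T}(\boldsymbol{q})\subset\{(\boldsymbol{X}^n,Y^n)\in C_{\varepsilon+1/n,T}\}$ fails because $X^n_0(t_1-)=X^n_0(t_2)=-1/n\neq 0$; enlarging $\varepsilon$ in the degree-increment constraint does not help with this coordinate. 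The paper sidesteps this by defining $\tilde E^{\varepsilon,T}(\boldsymbol{q})$ with the relaxed condition $\zeta_0(t_1-)=\zeta_0(t_2)\le 0$ (and $\psi(t)\ge\psi(t_1-)-\varepsilon$), which clearly contains the event, and then observes that on $\{I_T<\infty\}\subset\mathcal{C}_T$ the nonnegativity $\zeta_0=\Gamma(\psi)\ge 0$ automatically upgrades $\le 0$ to $=0$, so $cl\tilde E^{\varepsilon,T}\cap\mathcal{C}_T=\tilde E^{\varepsilon,T}\cap\mathcal{C}_T$; adopting that relaxed definition makes your argument go through cleanly.
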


\begin{proof} 
%	Recall the event $F^{n,T}$ defined in \eqref{eq:F-n-T}.
	From \eqref{eq:E-Etil} we have
	\begin{equation*}
		P( E^{n,\varepsilon}(\boldsymbol{q})) \le P (E^{n,\varepsilon,T}(\boldsymbol{q})) + P ( (F^{n,T})^c)
	\end{equation*}	
	and hence
	\begin{align*}
		 \limsup_{n \to \infty } \frac{1}{n} \log P( E^{n,\varepsilon}(\boldsymbol{q})) 
		 \le \max\left\{ \limsup_{n \to \infty } \frac{1}{n} \log P( E^{n,\varepsilon,T}(\boldsymbol{q})), \limsup_{n \to \infty} \frac{1}{n} \log P ( (F^{n,T})^c)\right\}.
	\end{align*}
	In view of Lemma \ref{lem:choosing-T}, it suffices to show that for all sufficiently large $T$
	\begin{equation*}
		\cg{\limsup_{\varepsilon \to 0}} \limsup_{n \to \infty } \frac{1}{n} \log P( E^{n,\varepsilon,T}(\boldsymbol{q})) \le - \inf_{\boldsymbol{q}\le \bar{\boldsymbol{p}} \le \boldsymbol{p}, t_1 \ge 0}[ I^0_{0,t_1}((0,\boldsymbol{p}), (0,\bar{\boldsymbol{p}})) + I^2_{t_1,t_1+\tau}((0,\bar{\boldsymbol{p}}), (0,\bar{\boldsymbol{p}} - \boldsymbol{q}))].
	\end{equation*}
	%\blue{Note that the limit as $\varepsilon \to 0$ exists because the set $E^{n,\varepsilon,T}(\boldsymbol{q})$ is decreasing as $\varepsilon$ decreases.}
Let $\mathfrak{P}_T \doteq \mathbb{D}([0,T]:\mathbb{R} \times \mathbb{R}_+^\infty \times \mathbb{R})$ and consider
	\begin{align*}
		\tilde{E}^{\varepsilon,T}(\boldsymbol{q}) &
		\doteq \{ (\boldsymbol{\zeta},\psi) \in \mathfrak{P}_T : \exists\, t_1, t_2 \in [0,T] \text{ such that } \zeta_0(t_1-) = \zeta_0(t_2) \le 0, \\
		& \qquad \psi(t) \ge \psi(t_1-) - \varepsilon \mbox{ for } t \in [t_1, t_2), |\zeta_k(t_1-) - \zeta_k(t_2) -q_k| \le \varepsilon \mbox{ for all } k \in \mathbb{N}\}.
	\end{align*}
	Denote the closure of $\tilde{E}^{\varepsilon,T}(\boldsymbol{q})$ by $cl\tilde{E}^{\varepsilon,T}(\boldsymbol{q})$.
	From the definition  in \eqref{eq:app_E_n_eps}, when $n > 2 \varepsilon^{-1}$
	\begin{equation*}
		E^{n,\varepsilon,T}(\boldsymbol{q}) \subset \{ (\boldsymbol{X}^n,Y^n) \in \tilde{E}^{\varepsilon,T}(\boldsymbol{q})\} \subset \{ (\boldsymbol{X}^n,Y^n) \in cl\tilde{E}^{\varepsilon,T}(\boldsymbol{q})\}.
	\end{equation*}
	From this and Theorem \ref{thm:main-ldp} we have
	\begin{equation*}
		\limsup_{n \to \infty } \frac{1}{n} \log P( E^{n,\varepsilon,T}(\boldsymbol{q})) \le \limsup_{n \to \infty } \frac{1}{n} \log P( (\boldsymbol{X}^n,Y^n) \in cl\tilde{E}^{\varepsilon,T}(\boldsymbol{q})) \le - \inf_{(\boldsymbol{\zeta},\psi) \in cl\tilde{E}^{\varepsilon,T}(\boldsymbol{q})} I_T(\boldsymbol{\zeta},\psi).
	\end{equation*}
	Since $I_T(\boldsymbol{\zeta},\psi) < \infty$ only when $(\boldsymbol{\zeta},\psi) \in \mathcal{C}_T$, we have
	\begin{align*}
		\limsup_{n \to \infty } \frac{1}{n} \log P( E^{n,\varepsilon,T}(\boldsymbol{q})) \le - \inf_{(\boldsymbol{\zeta},\psi) \in cl\tilde{E}^{\varepsilon,T}(\boldsymbol{q}) \cap \mathcal{C}_T} I_T(\boldsymbol{\zeta},\psi).
	\end{align*}
	
	It is easy to see that $cl\tilde{E}^{\varepsilon,T}(\boldsymbol{q}) \cap \mathcal{C}_T = \tilde{E}^{\varepsilon,T}(\boldsymbol{q}) \cap \mathcal{C}_T$.
	% To see this, suppose $(\boldsymbol{\zeta},\psi) \in cl\tilde{E}^{\varepsilon,T}(\boldsymbol{q}) \cap \mathcal{C}_T$, namely there exits $(\boldsymbol{\zeta}^m,\psi^m) \in \tilde{E}^{\varepsilon,T}(\boldsymbol{q})$ such that $(\boldsymbol{\zeta}^m,\psi^m) \to (\boldsymbol{\zeta},\psi)$ in $\mathfrak{P}_T$ as $m \to \infty$.
% 	Since $(\boldsymbol{\zeta},\psi) \in \mathcal{C}_T$, the convergence is uniform in $t \in [0,T]$.
% 	Let $t_1^m,t_2^m$ be the corresponding time instants for $(\boldsymbol{\zeta}^m,\psi^m)$ satisfying requirements in the definition of $\tilde{E}^{\varepsilon,T}(\boldsymbol{q})$.
% 	Using the uniform convergence of $(\boldsymbol{\zeta}^m,\psi^m)$ and the continuity of $(\boldsymbol{\zeta},\psi)$, one can show that $(\boldsymbol{\zeta},\psi) \in \tilde{E}^{\varepsilon,T}(\boldsymbol{q})$ with times instants $(t_1,t_2)$ satisfying the requirement in the definition of $\tilde{E}^{\varepsilon,T}(\boldsymbol{q})$, where $(t_1,t_2)$ is any limit point of $(t_1^m,t_2^m)$.
% 	Therefore $(\boldsymbol{\zeta},\psi) \in \tilde{E}^{\varepsilon,T}(\boldsymbol{q}) \cap \mathcal{C}_T$ and the claim holds.
	Thus we have
	\begin{align*}
		\limsup_{n \to \infty } \frac{1}{n} \log P( E^{n,\varepsilon,T}(\boldsymbol{q})) \le - \inf_{(\boldsymbol{\zeta},\psi) \in \tilde{E}^{\varepsilon,T}(\boldsymbol{q}) \cap \mathcal{C}_T} I_T(\boldsymbol{\zeta},\psi).
	\end{align*}
	Letting
	\begin{align*}
		\tilde{E}^T(\boldsymbol{q}) & 
		\doteq\{ (\boldsymbol{\zeta},\psi) \in \mathcal{C}_T : \exists\, t_1, t_2 \in [0,T] \text{ such that } \zeta_0(t_1) = \zeta_0(t_2) \le 0, \\
		& \qquad \psi(t) \ge \psi(t_1) \mbox{ for } t \in [t_1, t_2), \zeta_k(t_1) - \zeta_k(t_2) = q_k \mbox{ for all } k \in \mathbb{N}\},
	\end{align*}
	we have $\tilde{E}^T(\boldsymbol{q}) = \bigcap_{\varepsilon > 0} \left( \tilde{E}^{\varepsilon,T}(\boldsymbol{q}) \cap \mathcal{C}_T \right)$. 
	From this, the lower semi-continuity and compactness of level sets of $I_T(\boldsymbol{\zeta},\psi)$ (since $I_T$ is a rate function; see Theorem \ref{thm:main-ldp}), it follows
	\begin{equation*}
%		\label{eq:puhalskii-upper-bound}
		\cg{\limsup_{\varepsilon \to 0}}	\limsup_{n \to \infty } \frac{1}{n} \log P( E^{n,\varepsilon,T}(\boldsymbol{q})) \le - \cg{\liminf_{\varepsilon \to 0}} \inf_{(\boldsymbol{\zeta},\psi) \in \tilde{E}^{\varepsilon,T}(\boldsymbol{q}) \cap \mathcal{C}_T} I_T(\boldsymbol{\zeta},\psi) =  - \inf_{(\boldsymbol{\zeta},\psi) \in \tilde{E}^T(\boldsymbol{q})} I_T(\boldsymbol{\zeta},\psi).
	\end{equation*}
	Breaking $(\boldsymbol{\zeta},\psi) \in \tilde{E}^T(\boldsymbol{q})$ for $t \in [0,T]$  according to $t \le t_1$, $t_1 \le t \le t_2$ and $t \ge t_2$, 
	\begin{align*}
		\inf_{(\boldsymbol{\zeta},\psi) \in \tilde{E}^T(\boldsymbol{q})} I_T(\boldsymbol{\zeta},\psi) 
		& = \inf_{\boldsymbol{q} \le \bar{\boldsymbol{p}} \le \boldsymbol{p}, 0 \le t_1 < t_2 \le T} [I^0_{0,t_1}((0,\boldsymbol{p}), (0,\bar{\boldsymbol{p}})) + I^1_{t_1,t_2}((0,\bar{\boldsymbol{p}}), (0,\bar{\boldsymbol{p}} - \boldsymbol{q}))] \\
		& \ge \inf_{\boldsymbol{q} \le \bar{\boldsymbol{p}} \le \boldsymbol{p}, 0 \le t_1 < t_2<\infty} [I^0_{0,t_1}((0,\boldsymbol{p}), (0,\bar{\boldsymbol{p}})) + I^1_{t_1,t_2}((0,\bar{\boldsymbol{p}}), (0,\bar{\boldsymbol{p}} - \boldsymbol{q}))] \\
		& = \inf_{\boldsymbol{q} \le \bar{\boldsymbol{p}} \le \boldsymbol{p}, t_1 \ge 0} [I^0_{0,t_1}((0,\boldsymbol{p}), (0,\bar{\boldsymbol{p}})) + I^2_{t_1, t_1 + \tau}((0,\bar{\boldsymbol{p}}), (0,\bar{\boldsymbol{p}} - \boldsymbol{q}))],
	\end{align*}
	%where the last equation is because the fact $\ti I_{t_1,t_2}((0,\boldsymbol{p}), (0,\tilde{\boldsymbol{p}} - \boldsymbol{q})) = \infty$ when $t_2 - t_1 \neq \tau$. 
	where the last line follows from Lemma \ref{lem:I1I2L}.
	The result follows.
\end{proof}

The following lemma improves the upper bound \eqref{eq:upperbd_to_improve} in Lemma \ref{lem:puhalskii-upper-bound} when $p_1=0$.

\begin{Lemma}
	\label{lem:puhalskii-upper-bound-improvement}
	Suppose $p_1=0$ and $\zero \le \boldsymbol{q} \le \boldsymbol{p}$.
	Then
	\begin{enumerate}[\upshape(a)]
	\item
		$I^0_{0,t_1}((0,\boldsymbol{p}),(0,{\boldsymbol{q}})) = I^1_{0,t_1}((0,\boldsymbol{p}),(0,{\boldsymbol{q}}))$ for each $t_1 \ge 0$.
%		So $I^0_{0,t_1}((0,\boldsymbol{p}), (0,\tilde{\boldsymbol{p}})) = I^2_{0,t_1}((0,\boldsymbol{p}), (0,\tilde{\boldsymbol{p}}))$
	\item
		Let $\tau \doteq \frac{1}{2} \sum_{k=1}^\infty kq_k$ as in Lemma \ref{lem:puhalskii-upper-bound}.
		The infimum on the right side of \eqref{eq:upperbd_to_improve} is achieved at $t_1=0$:
		\begin{equation*}
			\inf_{\boldsymbol{q} \le \bar{\boldsymbol{p}} \le \boldsymbol{p}, t_1 \ge 0}[ I^0_{0,t_1}((0,\boldsymbol{p}), (0,\bar{\boldsymbol{p}})) + I^2_{t_1,t_1+\tau}((0,\bar{\boldsymbol{p}}), (0,\bar{\boldsymbol{p}} - \boldsymbol{q}))] = I^2_{0,\tau}((0,\boldsymbol{p}), (0,\boldsymbol{p} - \boldsymbol{q})).
		\end{equation*}
	\end{enumerate}
\end{Lemma}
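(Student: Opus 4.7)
The strategy splits neatly along the two parts: part (a) is the technical heart and is established by a decomposition-and-re-assembly argument that relies on the hypothesis $p_1=0$, while part (b) follows by a short formal manipulation from (a), Lemma \ref{lem:I1I2L}, and Proposition \ref{prop:minimizer-summary}(b).

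For part (a), the inclusion $\mathcal{J}^1_{0,t_1}\subset \mathcal{J}^0_{0,t_1}$ gives $I^0_{0,t_1}\le I^1_{0,t_1}$ for free. For the reverse, the plan is to start from an arbitrary $(\boldsymbol{\zeta},\psi)\in \mathcal{J}^0_{0,t_1}((0,\boldsymbol{p}),(0,\boldsymbol{q}))$ of finite cost and to decompose $[0,t_1]$ through the closed set $\{t:\zeta_0(t)=0\}$ into maximal open intervals $(\tau_{i-1},\tau_i)$, $i=1,\dots,m$, on which $\zeta_0>0$. The restriction of the trajectory to each such piece lies in $\mathcal{J}^1_{\tau_{i-1},\tau_i}(\boldsymbol{\zeta}(\tau_{i-1}),\boldsymbol{\zeta}(\tau_i))$, and by Lemma \ref{lem:I1I2L} its cost is bounded below by the corresponding $I^2$-cost. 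Finite total cost together with $p_1=0$ forces each increment $\boldsymbol{\zeta}(\tau_{i-1})-\boldsymbol{\zeta}(\tau_i)$ to satisfy $\sum_k k(\boldsymbol{\zeta}(\tau_{i-1})-\boldsymbol{\zeta}(\tau_i))_k \ge 2\sum_k (\boldsymbol{\zeta}(\tau_{i-1})-\boldsymbol{\zeta}(\tau_i))_k$---since every remaining vertex has degree at least two, no tree-like piece is possible---so Proposition \ref{prop:minimizer-summary}(b) applies. Applying it inductively to consecutive pieces (with Construction \ref{cons:cont} providing an explicit merged minimizer) produces a single trajectory in $\mathcal{J}^1_{0,t_1}((0,\boldsymbol{p}),(0,\boldsymbol{q}))$ whose cost is at most $I_{0,t_1}(\boldsymbol{\zeta},\psi)$; any portion of $[0,t_1]$ on which $\zeta_0=0$ carries additional non-negative cost in the original that the merged trajectory simply discards.

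For part (b), the inequality $\mathrm{LHS}\le\mathrm{RHS}$ is immediate by choosing $t_1=0$ and $\bar{\boldsymbol{p}}=\boldsymbol{p}$, in which case $I^0_{0,0}=0$ and the second summand is exactly the RHS. For the reverse, fix $\boldsymbol{q}\le\bar{\boldsymbol{p}}\le\boldsymbol{p}$ and $t_1\ge 0$; one may assume $I^0_{0,t_1}((0,\boldsymbol{p}),(0,\bar{\boldsymbol{p}}))<\infty$ (the case $=\infty$ is vacuous). By (a) one may replace $I^0$ with $I^1$, and since $I^2_{t_1,t_1+\tau}((0,\bar{\boldsymbol{p}}),(0,\bar{\boldsymbol{p}}-\boldsymbol{q}))$ does not depend on $t_1$ on the set where it is finite (for $t_1\ge\bar\tau$ one can insert any cost-free bridge from $(0,\boldsymbol{p})$ to $(0,\bar{\boldsymbol{p}})$ on $[0,t_1]$), the infimum over $t_1$ can be performed inside the first summand alone; Lemma \ref{lem:I1I2L} gives
\begin{equation*}
\inf_{t_1\ge 0}I^1_{0,t_1}((0,\boldsymbol{p}),(0,\bar{\boldsymbol{p}}))=I^2_{0,\bar\tau}((0,\boldsymbol{p}),(0,\bar{\boldsymbol{p}})),\quad \bar\tau\doteq\tfrac12\sum_k k(p_k-\bar p_k).
\end{equation*}
Proposition \ref{prop:minimizer-summary}(b), applied with $\bar{\boldsymbol{q}}\doteq\boldsymbol{p}-\bar{\boldsymbol{p}}$ (its hypotheses following from $p_1=0$ together with the finiteness of $I^0$), then yields
\begin{equation*}
I^2_{0,\bar\tau}((0,\boldsymbol{p}),(0,\bar{\boldsymbol{p}}))+I^2_{\bar\tau,\bar\tau+\tau}((0,\bar{\boldsymbol{p}}),(0,\bar{\boldsymbol{p}}-\boldsymbol{q}))=I^2_{0,\tau}((0,\boldsymbol{p}),(0,\boldsymbol{p}-\boldsymbol{q}))+I^2_{\tau,\tau+\bar\tau}((0,\boldsymbol{p}-\boldsymbol{q}),(0,\bar{\boldsymbol{p}}-\boldsymbol{q})),
\end{equation*}
whose right side is bounded below by $I^2_{0,\tau}((0,\boldsymbol{p}),(0,\boldsymbol{p}-\boldsymbol{q}))$ by non-negativity of the remaining $I^2$-term, and this is exactly the RHS of (b).

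The main obstacle is part (a): one must verify that the re-concatenated trajectory indeed fits inside the prescribed window $[0,t_1]$ and lies in $\mathcal{J}^1_{0,t_1}$, and that the cost accounting is correct when the boundary set $\{\zeta_0=0\}$ has complicated structure (for example, positive Lebesgue measure, where the control on that set contributes extra non-negative cost that can be cleanly discarded). The hypothesis $p_1=0$ enters in exactly one place---to rule out tree-like pieces in the decomposition---so that each $I^2$-cost extracted via Lemma \ref{lem:I1I2L} is finite and Proposition \ref{prop:minimizer-summary}(b) is available for each consecutive pair.
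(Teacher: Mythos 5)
Your part (b) follows the paper's proof essentially step by step: apply (a), use Lemma \ref{lem:I1I2L} to collapse the $t_1$-infimum, invoke Proposition \ref{prop:minimizer-summary}(b) to swap, and drop the nonnegative remainder.

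Part (a), however, takes a genuinely different and much heavier route than the paper, and the route has a real gap, not a bookkeeping detail. You propose to decompose $[0,t_1]$ through $\{t:\zeta_0(t)=0\}$, bound each excursion of $\zeta_0$ by its $I^2$-cost, and re-assemble. But on a nondegenerate interval where $\zeta_0\equiv 0$, the coordinates $\zeta_k$ for $k\ge 1$ can and generally do keep decreasing: $\zeta_0(t)=0$ does not force $r(\boldsymbol{\zeta}(t))=0$, so $r_k(\boldsymbol{\zeta}(t))>0$ and the controls can drive $\zeta_k$ downward at positive cost. Consequently the terminal state of one excursion need not match the initial state of the next, so the excursions do not abut and a naive concatenation does not produce a trajectory from $(0,\boldsymbol{p})$ to $(0,\boldsymbol{q})$. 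You would also have to handle countably many excursions and to fill the residual time up to $t_1$; freezing $\boldsymbol{\zeta}$ is not free, since setting $\varphi_k\equiv 0$ on $[0,r_k(\boldsymbol{\zeta}))$ contributes cost $r_k(\boldsymbol{\zeta})\,\ell(0)>0$ per unit time for every active $k$. None of these obstacles is addressed.

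The paper does something far simpler: it constructs no new trajectory at all. Take any $(\boldsymbol{\zeta},\psi)\in\mathcal{J}^0_{0,t_1}((0,\boldsymbol{p}),(0,\boldsymbol{q}))$ of finite cost. Since $\zeta_0=\Gamma(\psi)\ge 0$, one has $\zeta_0'=\psi'$ a.e.\ on $\{\zeta_0>0\}$ and $\zeta_0'=0$ a.e.\ on $\{\zeta_0=0\}$. On $\{\zeta_0=0\}$ the representation \eqref{eq:psi} gives $\psi'(t)\ge 0$: the $k=0$ integrand vanishes because $r_0=0$, the $k=1$ term vanishes because $p_1=0$ forces $\zeta_1\equiv 0$, the $k=2$ coefficient $(k-2)$ is zero, and every $k\ge 3$ term is nonnegative. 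Integrating term by term yields $\psi(t)\ge\zeta_0(t)\ge 0$ on $[0,t_1]$, which is exactly the membership condition for $\mathcal{J}^1_{0,t_1}$. So $\mathcal{J}^0_{0,t_1}\cap\{I_{0,t_1}<\infty\}\subset\mathcal{J}^1_{0,t_1}$ and the two infima coincide. The hypothesis $p_1=0$ enters exactly once, as in your sketch, but what it actually buys is that the original trajectory already lies in $\mathcal{J}^1$, not that a surrogate can be patched together.
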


\begin{proof}
	(a)
	Fix $t_1 \ge 0$.
	It suffices to show that if $(\boldsymbol{\zeta},\psi) \in \mathcal{J}^0_{0,t_1}((0,\boldsymbol{p}),(0,{\boldsymbol{q}}))$ satisfies
	$I_{0,t_1}(\boldsymbol{\zeta},\psi)<\infty$ then $(\boldsymbol{\zeta},\psi) \in \mathcal{J}^1_{0,t_1}((0,\boldsymbol{p}),(0,{\boldsymbol{q}}))$.
	% 
	% 
	% 
	% \begin{equation*}
	% 	\mathcal{J}^0_{0,t_1}((0,\boldsymbol{p}),(0,\bar{\boldsymbol{p}})) = \mathcal{J}^1_{0,t_1}((0,\boldsymbol{p}),(0,\bar{\boldsymbol{p}})).
	% \end{equation*}
	% By definition we have LHS $\supset$ RHS.
%	 $\mathcal{J}^0_{0,t_1}((0,\boldsymbol{p}),(0,\tilde{\boldsymbol{p}})) \supset \mathcal{J}^1_{0,t_1}((0,\boldsymbol{p}),(0,\tilde{\boldsymbol{p}}))$, we have $I^0_{0,t_1}((0,\boldsymbol{p}),(0,\tilde{\boldsymbol{p}})) \le I^1_{0,t_1}((0,\boldsymbol{p}),(0,\tilde{\boldsymbol{p}}))$.
	% For the reverse direction, take an arbitrary 
	For such a pair of $(\boldsymbol{\zeta},\psi)$, let $\boldsymbol{\varphi} \in \mathcal{S}_{t_1}(\boldsymbol{\zeta},\psi)$ be such that the associated cost is finite.
	In particular $\psi$, and consequently $\zeta_0$, is absolutely continuous.
	
	%\in \mathcal{J}^0_{0,t_1}((0,\boldsymbol{p}),(0,\bar{\boldsymbol{p}}))$ and $\boldsymbol{\varphi} \in \mathcal{S}_{t_1}(\boldsymbol{\zeta},\psi)$.
%	It suffices to show $\psi(t) \ge 0$ for $t \in [0,t_1]$, so that $(\boldsymbol{\zeta},\psi) \in \mathcal{J}^1_{0,t_1}((0,\boldsymbol{p}),(0,\tilde{\boldsymbol{p}}))$.
	Since $\zeta_0(t) = \Gamma(\psi)(t) \ge 0$ for $t \in [0,t_1]$, we have
	\begin{equation}
		\label{eq:puhalskii-upper-bound-improvement-1}
		1_{\{\zeta_0(t)>0\}}\zeta_0'(t) = 1_{\{\zeta_0(t)>0\}}\psi'(t), \quad 1_{\{\zeta_0(t)=0\}}\zeta_0'(t) = 0, \mbox{ a.e. } t \in [0,t_1].
	\end{equation}
	Since $p_1=0$, we see from \eqref{eq:psi} that $1_{\{\zeta_0(t)=0\}}\psi'(t) \ge 0$ for a.e.\ $t \in [0,t_1]$. Indeed, when $p_1=0$ the term for $k=1$ in the sum on the right side of \eqref{eq:psi} is zero. Also, the term for $k=2$ is always zero and the integrand for $k=0$ is zero on the set $\{\zeta_0(t)=0\}$. This shows that,
	on this set, the derivative of the sum on the right side of \eqref{eq:psi} is nonnegative.
Combining this with \eqref{eq:puhalskii-upper-bound-improvement-1}, we have for $t \in [0,t_1]$,
	\begin{align*}
		\psi(t) & = \int_0^t 1_{\{\zeta_0(s)>0\}}\psi'(s)\,ds + \int_0^t 1_{\{\zeta_0(s)=0\}}\psi'(s)\,ds \ge \int_0^t 1_{\{\zeta_0(s)>0\}}\zeta_0'(s)\,ds \\
		& = \int_0^t 1_{\{\zeta_0(s)>0\}}\zeta_0'(s)\,ds + \int_0^t 1_{\{\zeta_0(s)=0\}}\zeta_0'(s)\,ds = \zeta_0(t) \ge 0.
	\end{align*}
	This implies $(\boldsymbol{\zeta},\psi) \in \mathcal{J}^1_{0,t_1}((0,\boldsymbol{p}),(0,\boldsymbol{q}))$
	and part (a) follows.
	
	(b)
	For $\boldsymbol{0} \le \bar{\boldsymbol{p}} \le \boldsymbol{p}$, let $\bar{\boldsymbol{q}} \doteq \boldsymbol{p} - \bar{\boldsymbol{p}}$ and  $\bar{\tau} \doteq \frac{1}{2} \sum_{k=1}^\infty k \bar{q}_k$.
	Since $p_1=0$, we always have $\sum_{k=1}^\infty k q_k \ge 2\sum_{k=1}^\infty q_k$ and $\sum_{k=1}^\infty k \bar{q}_k \ge 2\sum_{k=1}^\infty \bar{q}_k$.
	Therefore
	\begin{align*}
		& \inf_{\boldsymbol{q} \le \bar{\boldsymbol{p}} \le \boldsymbol{p}, t_1 \ge 0}[ I^0_{0,t_1}((0,\boldsymbol{p}), (0,\bar{\boldsymbol{p}})) + I^2_{t_1,t_1+\tau}((0,\bar{\boldsymbol{p}}), (0,\bar{\boldsymbol{p}} - \boldsymbol{q}))] \\
		& = \inf_{\boldsymbol{q} \le \bar{\boldsymbol{p}} \le \boldsymbol{p}, t_1 \ge 0}[ I^1_{0,t_1}((0,\boldsymbol{p}), (0,\bar{\boldsymbol{p}})) + I^2_{t_1,t_1+\tau}((0,\bar{\boldsymbol{p}}), (0,\bar{\boldsymbol{p}} - \boldsymbol{q}))] \\
		& = \inf_{\boldsymbol{q} \le \bar{\boldsymbol{p}} \le \boldsymbol{p}}[ I^2_{0,\taubar}((0,\boldsymbol{p}), (0,\bar{\boldsymbol{p}})) + I^2_{\taubar,\taubar+\tau}((0,\bar{\boldsymbol{p}}), (0,\bar{\boldsymbol{p}} - \boldsymbol{q}))] \\
		& = \inf_{\boldsymbol{0} \le \bar{\boldsymbol{q}} \le \boldsymbol{p} - \boldsymbol{q}} [I^2_{0,\bar \tau}((0,\boldsymbol{p}), (0,\boldsymbol{p} - \bar{\boldsymbol{q}})) + I^2_{\bar \tau, \bar \tau + \tau}((0,\boldsymbol{p} - \bar{\boldsymbol{q}}), (0,\boldsymbol{p} - \bar{\boldsymbol{q}} - \boldsymbol{q}))]
	\end{align*}	
	where the first equality uses part (a) with ${\boldsymbol{q}} = \bar{\boldsymbol{p}}$ and the second equality follows from Lemma \ref{lem:I1I2L} and the observation that $I^2_{t,t+\tau}((0,\bar{\boldsymbol{p}}), (0,\bar{\boldsymbol{p}} - \boldsymbol{q})) = I^2_{t', t' + \tau}((0,\bar{\boldsymbol{p}}), (0,\bar{\boldsymbol{p}} - \boldsymbol{q}))$ for all $t, t'$ as long as 
	$I^0_{0, s}((0,{\boldsymbol{p}}), (0,\bar{\boldsymbol{p}})<\infty$ for $s= t, t'$.
	Using Proposition \ref{prop:minimizer-summary}(b), the right side on the last line equals	
	\begin{align*}	
		&  \inf_{\boldsymbol{0} \le \bar{\boldsymbol{q}} \le \boldsymbol{p} - \boldsymbol{q}} [I^2_{0, \tau}((0,\boldsymbol{p}), (0,\boldsymbol{p} -  \boldsymbol{q})) + I^2_{ \tau,  \tau +\bar \tau}((0,\boldsymbol{p} -  \boldsymbol{q}), (0,\boldsymbol{p} -  \boldsymbol{q} - \bar{\boldsymbol{q}}))]\\
		& = I^2_{0, \tau}((0,\boldsymbol{p}), (0,\boldsymbol{p} -  \boldsymbol{q})) + \inf_{\boldsymbol{0} \le \bar{\boldsymbol{q}} \le \boldsymbol{p} - \boldsymbol{q}} I^2_{ \tau,  \tau +\bar \tau}((0,\boldsymbol{p} -  \boldsymbol{q}), (0,\boldsymbol{p} -  \boldsymbol{q} - \bar{\boldsymbol{q}}))\\
		& = I^2_{0, \tau}((0,\boldsymbol{p}), (0,\boldsymbol{p} -  \boldsymbol{q})),
	\end{align*}
	where the last equality follows by considering $\bar\qbd=0$. 
	This completes the proof.
\end{proof}

Next we will prove the lower bound.

\begin{Lemma}
	\label{lem:puhalskii-lower-bound}
%	Let $\boldsymbol{p}=(p_k)$ be as shown in the basic assumption of the configuration model. 
	Suppose $\zero \le \boldsymbol{q} \le \boldsymbol{p}$ and $\sum_{k=1}^\infty k q_k  > 2\sum_{k=1}^\infty q_k$.
	Let $\tau \doteq \frac{1}{2} \sum_{k=1}^\infty kq_k$.
%	Further assume $p_{2m+1} >  q_{2m+1} \ge 0$ for some $m \in \mathbb{N}$. 
	Then the lower bound in \eqref{star1227} holds.
\end{Lemma}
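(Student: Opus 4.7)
The plan is to apply the LDP lower bound of Theorem~\ref{thm:main-ldp} to a suitably chosen open neighborhood of a near-minimizing trajectory in $\mathcal{C}_T$. First, fix $\sigma>0$ small (to be sent to zero at the end) and set $T\doteq\tau+\sigma$. I would let $({\boldsymbol{\tilde\zeta}},\tilde{\psi})$ be the path produced by Construction~\ref{cons:cont} with $t_1=0$, $\boldsymbol{x}^{(1)}=(0,\boldsymbol{p})$, $\boldsymbol{x}^{(2)}=(0,\boldsymbol{p}-\boldsymbol{q})$. The hypothesis $\sum_k kq_k>2\sum_k q_k$ ensures we are in case~(i) of the construction if $q_1=0$ and case~(ii) if $q_1>0$, so Lemmas~\ref{lem:minimizer-def} and~\ref{lem:minimizer-verify-general} yield $({\boldsymbol{\tilde\zeta}},\tilde{\psi})\in\mathcal{J}^2_{0,\tau}((0,\boldsymbol{p}),(0,\boldsymbol{p}-\boldsymbol{q}))$ with $\tilde{\zeta}_0(0)=\tilde{\zeta}_0(\tau)=0$, $\tilde{\zeta}_0>0$ on $(0,\tau)$, and cost exactly $I^2_{0,\tau}((0,\boldsymbol{p}),(0,\boldsymbol{p}-\boldsymbol{q}))$. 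A short computation gives $\tilde{\psi}(\tau)=0$, and since the Skorokhod regulator $\tilde{\eta}\doteq\tilde{\zeta}_0-\tilde{\psi}$ can only increase on the measure-zero set $\{\tilde{\zeta}_0=0\}=\{0,\tau\}$, one has $\tilde{\eta}\equiv 0$ and therefore $\tilde{\psi}=\tilde{\zeta}_0\ge 0$ on $[0,\tau]$. I would extend $({\boldsymbol{\tilde\zeta}},\tilde{\psi})$ to $(\boldsymbol{\zeta}^*,\psi^*)\in\mathcal{C}_T$ by declaring both paths constant on $[\tau,T]$; the identity $\zeta_0^*=\Gamma(\psi^*)$ persists because $\psi^*\equiv 0$ on $[\tau,T]$ and $\inf_{s\le T}\psi^*(s)=0$. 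Realizing this constant continuation via controls $\varphi_k=0$ on $[0,r_k(\boldsymbol{\zeta}^*(s)))$ and $\varphi_k=1$ otherwise adds cost $\sigma\sum_k r_k(\boldsymbol{\zeta}^*(\tau))=\sigma$ on $[\tau,T]$, hence $I_T(\boldsymbol{\zeta}^*,\psi^*)\le I^2_{0,\tau}((0,\boldsymbol{p}),(0,\boldsymbol{p}-\boldsymbol{q}))+\sigma$.

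Next, fix $\varepsilon>0$ and choose $K\in\mathbb{N}$ large so that $\sup_n\sum_{k\ge K}kn_k/n<\varepsilon/4$ (possible by Assumption~\ref{asp:exponential-boundN}), and $\delta>0$ small relative to $\varepsilon$, $\sigma$, and $\inf_{[\sigma,\tau-\sigma]}\tilde{\zeta}_0>0$. Consider the open ball
\[
U_\delta\doteq\Big\{(\boldsymbol{\zeta},\psi)\in\mathcal{D}_\infty\times\mathcal{D}:\sup_{t\in[0,T]}|\psi(t)-\psi^*(t)|+\sum_{k=0}^K\sup_{t\in[0,T]}|\zeta_k(t)-\zeta_k^*(t)|<\delta\Big\},
\]
which is open in the Skorokhod topology since $(\boldsymbol{\zeta}^*,\psi^*)$ is continuous. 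Setting $t_1^n\doteq\inf\{t>0:X_0^n(t)>-1/n\}$ and $t_2^n\doteq\inf\{t>t_1^n:X_0^n(t)=-1/n\}$, on $\{(\boldsymbol{X}^n,Y^n)\in U_\delta\}$ uniform closeness forces $t_1^n\le\sigma$ and $t_2^n\ge\tau-\sigma$ (the latter because $X_0^n>0$ on $[\sigma,\tau-\sigma]$); and provided $t_2^n\le T$, the degree condition $|X_k^n(t_1^n-)-X_k^n(t_2^n)-q_k|\le\varepsilon$ follows for $k\le K$ by closeness in $U_\delta$ and continuity of $\zeta_k^*$ at $0$ and on $[\tau,T]$, and for $k>K$ by $0\le X_k^n(t_1^n-)-X_k^n(t_2^n)\le n_k/n$ together with the choice of $K$. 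Hence $U_\delta\cap\{t_2^n\le T\}\subseteq E^{n,\varepsilon}(\boldsymbol{q})$.

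The main obstacle is to show that $P(\{t_2^n\le T\}\mid(\boldsymbol{X}^n,Y^n)\in U_\delta)$ is bounded away from zero, or at least decays subexponentially in $n$. Uniform closeness on $[\tau,T]$ confines $X_0^n$ to the finite set $\{-1/n,0,1/n,\ldots,\lfloor n\delta\rfloor/n\}$, so the embedded Markov chain is restricted to a window of width $O(\delta n)$. A direct hitting-time estimate for this restricted chain should yield the required bound: the $-2$ transitions from any state $A\ge 2$ occur at a rate bounded below by a positive constant depending only on $\boldsymbol{p}-\boldsymbol{q}$, and when $p_1>q_1$ additional contributions come from the $k=1$ transitions, so for $\sigma$ fixed and $n$ large the chain reaches $A=0$ before time $T$ with probability tending to one; the exceptional case $p_1=q_1$ is handled using only the $-2$ transitions available from $A\ge 2$. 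Granted this, Theorem~\ref{thm:main-ldp} applied to the open set $U_\delta$ yields
\[
\liminf_{n\to\infty}\frac{1}{n}\log P(E^{n,\varepsilon}(\boldsymbol{q}))\ge -I_T(\boldsymbol{\zeta}^*,\psi^*)\ge -I^2_{0,\tau}((0,\boldsymbol{p}),(0,\boldsymbol{p}-\boldsymbol{q}))-\sigma,
\]
and taking $\liminf_{\varepsilon\to 0}$ followed by $\sigma\to 0$ completes the proof.
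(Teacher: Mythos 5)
Your overall strategy—apply the LDP lower bound of Theorem~\ref{thm:main-ldp} to a small neighborhood of the optimal trajectory from Construction~\ref{cons:cont}, and then argue that, conditionally on the trajectory staying in that neighborhood, the exploration closes its component with sufficiently high probability—is essentially the same as the paper's. The gap is in the closure estimate, which you describe as ``the main obstacle'' and then dispose of with an incorrect heuristic.

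The claim that ``the $-2$ transitions from any state $A\ge 2$ occur at a rate bounded below by a positive constant depending only on $\boldsymbol{p}-\boldsymbol{q}$,'' and hence that ``the chain reaches $A=0$ before time $T$ with probability tending to one,'' is false. At a state with $nX_0^n\approx j$ small and $\sum_k k\,n\zeta_k\approx n r(\boldsymbol{\zeta}^*(\tau))$, the probability that the next step is a $-2$ step is $r_0\approx j/(nr)$, which is $O(1/n)$; the remaining probability $\approx 1$ goes to waking new vertices. When the residual distribution $\boldsymbol{p}-\boldsymbol{q}$ is supercritical the embedded chain for $X_0^n$ therefore has positive drift in this regime, so the event of reaching $-1/n$ without waking $\Theta(n)$ new vertices (which would destroy the degree condition) is exponentially small in $n$ for every fixed $\delta>0$, not subexponential. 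The paper handles this head-on: it defines the closure events $G^n_j$ explicitly, computes $P(G^n_j\mid\mathcal{F}_\tau)$ as a telescoping product of at most $\delta n$ factors each of size $O(j/n)$, bounds it below by $\lfloor 2\delta n\rfloor!/(C_0 n)^{\lfloor 2\delta n\rfloor}$, and applies Stirling to show the exponential rate is $2\delta\log(2\delta/(C_0 e))$, which vanishes as $\delta\to 0$. Consequently an outer limit $\delta\to 0$ is indispensable; your final display takes $\liminf_{\varepsilon\to 0}$ and $\sigma\to 0$ only, with no $\delta\to 0$, because the flawed heuristic made the conditional closure probability look negligible for fixed $\delta$.

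There is a second, smaller omission. If $nX_0^n(\tau)+1$ is odd the component cannot close by pure $-2$ steps; exactly one additional sleeping vertex of odd degree must be woken, and whether such a vertex exists (and at what combinatorial cost) requires a case analysis on $\boldsymbol{p}$ and $\boldsymbol{q}$. The paper treats this through three cases (existence of some odd $m$ with $p_m>q_m$; all odd $p_m=0$; odd $m$ with $p_m=q_m>0$, handled by perturbing $\boldsymbol{q}$ and invoking the continuity in Lemma~\ref{lem:lemctybeta}). Your proposal collapses this to a remark about ``$p_1>q_1$'' versus ``$p_1=q_1$,'' which does not capture the parity structure and does not cover the case where all remaining odd-degree mass has been consumed. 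Both issues need to be addressed for the lower bound to go through.
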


\begin{proof}
%	Let $\mathfrak{P} \doteq \mathbb{D}([0,\infty):\mathbb{R} \times \mathbb{R}_+^\infty \times \mathbb{R})$.
%	From Lemma \ref{lem:minimizer-def} it follows that $I^2_{0,\tau}(\boldsymbol{p}, \boldsymbol{p}-\boldsymbol{q}) = \int_0^\tau L(\tilde{\zeta}(s), \tilde{\zeta}'(s))\,ds$, where 
	\blue{Let $({\boldsymbol{\tilde\zeta}}(t),\tilde{\psi}(t))$ be as introduced in Construction \ref{cons:cont} for $t \le \tau$}, with $t_1=0$, $\boldsymbol{x}^{(1)} = (0,\boldsymbol{p})$ and $\boldsymbol{x}^{(2)} = (0,\boldsymbol{p}-\boldsymbol{q})$.
	We define \blue{$(\tilde{\boldsymbol{\boldsymbol{\zeta}}}(t),\tilde{\psi}(t))$} for $t > \tau$ through \eqref{eq:psi}-\eqref{eq:phi_k} by setting $\varphi_k(t,y)=1$ for all $k,y$ and $t>\tau$. Then $I_t({\boldsymbol{\tilde\zeta}},\tilde{\psi}) = I_\tau({\boldsymbol{\tilde\zeta}},\tilde{\psi})$ for all $t > \tau$.
	So by Lemmas \ref{lem:I1I2L}  and  \ref{lem:minimizer-verify-general},  for $t\ge \tau$
	\begin{equation}
		\label{eq:puhalskii-lower-bound-minimizer}
		I_t({\boldsymbol{\tilde\zeta}},\tilde{\psi}) = I_\tau({\boldsymbol{\tilde\zeta}},\tilde{\psi}) = \int_0^\tau L({\boldsymbol{\tilde\zeta}}(s), {\boldsymbol{\tilde\zeta}}'(s))\,ds =  I^2_{0,\tau}((0,\boldsymbol{p}), (0,\boldsymbol{p} - \boldsymbol{q})).
	\end{equation}		 
	For $\delta \in (0,1)$ consider the set
	\begin{equation}
		\label{eq:Gtil-delta}
		\Scale[0.9]{\tilde{G}_{\delta}({\boldsymbol{\tilde\zeta}},\tilde{\psi}) \doteq \{ (\boldsymbol{\zeta},\psi) \in \mathbb{D}([0,\infty):\mathbb{R} \times \mathbb{R}_+^\infty \times \mathbb{R}) : \sup_{t \in [0,\tau]}|\zeta_k(t) - \tilde{\zeta}_k(t)| < \delta, \mbox{ for all } k = 0, 1 ,2,\dotsc, \lfloor\delta^{-1}\rfloor\}.}
	\end{equation}
	Let $\tau^n \doteq \inf\{t \ge \tau : X^n_0(t) = -\frac{1}{n}\}$.
	Then $\tau^n < \infty$ a.s.
	Define for odd integer $j \ge -1$,
	\begin{equation*}
		G^n_j \doteq \left\{ X^n_0(\tau) = \frac{j}{n}, X^n_k(\tau^n) = X^n_k(\tau), k \in \mathbb{N} \right\},
	\end{equation*}
	and for even integer $j \ge -1$,
	\begin{equation*}
		G^n_j \doteq \left\{ X^n_0(\tau) = \frac{j}{n}, \sum_{k=1}^\infty  (X^n_k(\tau^n) - X^n_k(\tau)) = - \frac{1}{n} \right\}.
	\end{equation*}
	Intuitively, $G^n_j$ describes the event that from time instant $\tau$ to the time $\tau^n$ at which the current component is fully explored, the continuous-time EEA does not wake up any sleeping vertices, with the exception that if the number of active half-edges at time $\tau$ is odd (namely $X^n_0(\tau) = \frac{j}{n}$ for some even integer $j \ge -1$), in which case exactly one sleeping vertex (necessarily with odd degree) will be woken up.
	Consider the event
	\begin{equation*}
		A^n_\delta({\boldsymbol{\tilde\zeta}},\tilde{\psi}) \doteq \left\{(\boldsymbol{X}^n,Y^n) \in \tilde{G}_{\delta}({\boldsymbol{\tilde\zeta}},\tilde{\psi})\right\} \bigcap \left( \bigcup_{j=-1}^\infty G^n_j \right).
	\end{equation*}
	Fix $\varepsilon \in (0,1)$.
	We claim that there exist $\delta_0>0$ and $n_0 >0$ such that
	\begin{equation}
		\label{eq:puhalskii-lower-bound-claim}
		A^n_\delta({\boldsymbol{\tilde\zeta}},\tilde{\psi}) \subset E^{n,\varepsilon}(\boldsymbol{q}) \mbox{ for all } \delta < \delta_0 \mbox{ and } n > n_0.
	\end{equation}
	To see this, first note that by Assumption \ref{asp:exponential-boundN}, there exists $M \in \mathbb{N}$ such that
	\begin{equation}
		\label{eq:puhalskii-lower-bound-tail}
		\sup_{n \in \mathbb{N}} \sum_{k=M}^\infty k \frac{n_k}{n} < \frac{\varepsilon}{2},\quad \sum_{k=M}^\infty k p_k < \frac{\varepsilon}{2}.
	\end{equation}
	By continuity of ${\boldsymbol{\tilde\zeta}}$, there exists $\varepsilon_0 > 0$ such that 
	\begin{align}
		&|\tilde{\zeta}_k(t) - \tilde{\zeta}_k(0)| < \frac{\varepsilon}{4} \mbox{ for all } t \in [0,\varepsilon_0], k=0,1,\dotsc,M, \label{eq:puhalskii-lower-bound-0} \\
		&|\tilde{\zeta}_k(t) - \tilde{\zeta}_k(\tau)| < \frac{\varepsilon}{4} \mbox{ for all } t \in [\tau - \varepsilon_0, \tau], k=0,1,\dotsc,M. \label{eq:puhalskii-lower-bound-tau}
	\end{align}
	From Lemma \ref{lem:minimizer-def}(c) we have $\tilde{\zeta}_0(t)>0$ for all $t \in (0,\tau)$. 
	Since $\tilde{\zeta}_0(t)$ is continuous,
	\begin{equation*}
		\delta_0 \doteq \left(\inf_{t \in [\varepsilon_0, \tau - \varepsilon_0]}\tilde{\zeta}_0(t)\right) \wedge \frac{\varepsilon}{4} \wedge \frac{1}{M} > 0.
	\end{equation*}
	Take $n_0 > \frac{4}{\varepsilon}$. We now show \eqref{eq:puhalskii-lower-bound-claim} with this choice of $n_0$ and $\delta_0$.
	Fix $\delta<\delta_0$ and $n>n_0$ and
 consider $\omega \in A^n_\delta({\boldsymbol{\tilde\zeta}},\tilde{\psi})$.
	For $t \in [\varepsilon_0, \tau - \varepsilon_0]$, since $|X^n_0(t)-\tilde{\zeta}_0(t)| < \delta < \delta_0 \le \tilde{\zeta}_0(t)$, we have $\inf_{t \in [\varepsilon_0, \tau - \varepsilon_0]}X^n_0(t) > 0$. 
	So there exist $t_1^n \in [0,\varepsilon_0]$ and $t_2^n \in [\tau-\varepsilon_0, \tau^n]$ such that 
	\begin{equation}
		\label{eq:puhalskii-lower-bound-degree-0}
		X^n_0(t_1^n-)=X^n_0(t_2^n)=-\frac{1}{n}, \quad X^n_0(t) > -\frac{1}{n} \mbox{ for } t \in [t_1^n,t_2^n),
	\end{equation}
	where by convention $X^n_0(0-)=X^n_0(0) = -1/n$.
	For $k \ge M$, it follows from \eqref{eq:puhalskii-lower-bound-tail} that
	\begin{equation}
		\label{eq:puhalskii-lower-bound-degree-1}
		|X_k^n(t_1^n-) - X_k^n(t_2^n) - q_k| \le |X_k^n(t_1^n-) - X_k^n(t_2^n)| + q_k \le \frac{n_k}{n} + p_k < \varepsilon.
	\end{equation}
	For $1 \le k \le M \le \lfloor \delta^{-1} \rfloor$,
	\begin{align}
		| X_k^n(t_1^n-) - X_k^n(t_2^n) - q_k|
		& =  |(X_k^n(t_1^n-) - X_k^n(t_2^n)) - ( \tilde{\zeta}_k(0) - \tilde{\zeta}_k(\tau))| \notag \\
		& \le |X_k^n(t_1^n-)-\tilde{\zeta}_k(0)| + |X_k^n(t_2^n)-\tilde{\zeta}_k(\tau)|. \label{eq:puhalskii-lower-bound-degree-2}
	\end{align}
	From \eqref{eq:Gtil-delta} and \eqref{eq:puhalskii-lower-bound-0} we have the following bound for the first term
	in \eqref{eq:puhalskii-lower-bound-degree-2}.
	\begin{equation*}
		|X_k^n(t_1^n-)-\tilde{\zeta}_k(0)| \le |X_k^n(t_1^n-)-\tilde{\zeta}_k(t_1^n)| + |\tilde{\zeta}_k(t_1^n)-\tilde{\zeta}_k(0)| < \delta + \frac{\varepsilon}{4}.
	\end{equation*}
	For the second term in \eqref{eq:puhalskii-lower-bound-degree-2}, if $t_2^n \le \tau$, then using \eqref{eq:Gtil-delta} and \eqref{eq:puhalskii-lower-bound-tau} we have
	\begin{equation*}
		|X^n_k(t_2^n) - \tilde{\zeta}_k(\tau)| \le |X^n_k(t_2^n) - \tilde{\zeta}_k(t_2^n)| + |\tilde{\zeta}_k(t_2^n) - \tilde{\zeta}_k(\tau)| < \delta + \frac{\varepsilon}{4}.
	\end{equation*}
	If $t_2^n > \tau$, then $t_2^n=\tau^n$ and from the definition of $G_j^n$ and \eqref{eq:Gtil-delta} we have
	\begin{equation*}
		|X^n_k(t_2^n) - \tilde{\zeta}_k(\tau)| \le |X^n_k(\tau^n) - X^n_k(\tau)| + |X^n_k(\tau) - \tilde{\zeta}_k(\tau)| \le \frac{1}{n} + \delta \le \frac{\varepsilon}{4} + \delta.
	\end{equation*}
	Combining these three displays with \eqref{eq:puhalskii-lower-bound-degree-2} gives
	\begin{equation*}
		| X_k^n(t_1^n-) - X_k^n(t_2^n) - q_k| < 2 \left( \delta + \frac{\varepsilon}{4} \right) < \varepsilon, \quad k \in \mathbb{N}.
	\end{equation*}
	From this, and \eqref{eq:enverps}, \eqref{eq:puhalskii-lower-bound-degree-0}, \eqref{eq:puhalskii-lower-bound-degree-1} we have $\omega \in E^{n,\varepsilon}(\boldsymbol{q})$. Since  $\delta < \delta_0$ and $n > n_0$ is arbitrary, the
	claim \eqref{eq:puhalskii-lower-bound-claim} holds.
	
	For fixed $\delta < \delta_0$ and $n > n_0$ consider the following two probabilities
	\begin{equation*}
		P(A^n_\delta({\boldsymbol{\tilde\zeta}},\tilde{\psi})), \quad P((\boldsymbol{X}^n,Y^n) \in \tilde{G}_{\delta}({\boldsymbol{\tilde\zeta}},\tilde{\psi})).
	\end{equation*}
%	Since $\tilde{\zeta}_0(\tau)=0$, we can 
	Write
	\begin{align}
		P(A^n_\delta({\boldsymbol{\tilde\zeta}},\tilde{\psi})) & = \sum_{j=-1}^\infty P\left(\{(\boldsymbol{X}^n,Y^n) \in \tilde{G}_{\delta}({\boldsymbol{\tilde\zeta}},\tilde{\psi})\} \cap G^n_j\right) 
		 = \sum_{j=-1}^{\lfloor \delta n \rfloor} \boldsymbol{E}\left[ 1_{\{(\boldsymbol{X}^n,Y^n) \in \tilde{G}_{\delta}({\boldsymbol{\tilde\zeta}},\tilde{\psi})\}} P\left( G^n_j | \mathcal{F}_\tau \right) \right] \label{eq:puhalskii-lower-bound-key}
	\end{align}
	where we only have to sum up to $\lfloor \delta n \rfloor$ in the last line when $(\boldsymbol{X}^n,Y^n) \in \tilde{G}_{\delta}({\boldsymbol{\tilde\zeta}},\tilde{\psi})$ since $\tilde{\zeta}_0(\tau)=0$.
	Since $\tau^n = \tau$ on $\{X_0^n(\tau)=-\frac{1}{n}\}$, we have
	\begin{equation}
		\label{eq:puhalskii-lower-bound-G1}
		P\left( G^n_{-1} | \mathcal{F}_\tau \right) = 1_{\{X^n_0(\tau) = -\frac{1}{n}\}}.
	\end{equation}
%	For $P\left( G^n_j | \mathcal{F}_\tau \right)$ with $j \ge 0$, first note that conditioning on 
	From Assumption \ref{asp:exponential-boundN} and the fact that $r(\boldsymbol{X}^n(t))$ is non-increasing it follows
	\begin{equation*}
		\sup_{n \in \mathbb{N}} \sup_{t \ge 0} r(\boldsymbol{X}^n(t)) \le \sup_{n \in \mathbb{N}} \sum_{k=1}^\infty \frac{kn_k}{n} \doteq C_0 < \infty.
	\end{equation*}
	Hence for odd integer $1 \le j \le \lfloor \delta n \rfloor$ and $\delta < \frac{C_0}{2}$,
	\begin{align}
		& 1_{\{(\boldsymbol{X}^n,Y^n) \in \tilde{G}_{\delta}({\boldsymbol{\tilde\zeta}},\tilde{\psi})\}} P\left( G^n_j | \mathcal{F}_\tau \right) \notag \\
		& = 1_{\{(\boldsymbol{X}^n,Y^n) \in \tilde{G}_{\delta}({\boldsymbol{\tilde\zeta}},\tilde{\psi})\}} \frac{j}{nr(\boldsymbol{X}^n(\tau))} \cdot \frac{j-2}{nr(\boldsymbol{X}^n(\tau))-2} \dotsm \frac{1}{nr(\boldsymbol{X}^n(\tau))-(j-1)} 1_{\{X^n_0(\tau) = \frac{j}{n}\}} \notag \\
		& \ge 1_{\{(\boldsymbol{X}^n,Y^n) \in \tilde{G}_{\delta}({\boldsymbol{\tilde\zeta}},\tilde{\psi})\}} \frac{j}{C_0n} \cdot \frac{j-2}{C_0n} \dotsm \frac{1}{C_0n} 1_{\{X^n_0(\tau) = \frac{j}{n}\}} \notag \\
		& \ge 1_{\{(\boldsymbol{X}^n,Y^n) \in \tilde{G}_{\delta}({\boldsymbol{\tilde\zeta}},\tilde{\psi})\}} \frac{\lfloor 2\delta n \rfloor!}{(C_0n)^{\lfloor 2\delta n \rfloor}} 1_{\{X^n_0(\tau) = \frac{j}{n}\}}, \label{eq:puhalskii-lower-bound-G2}
	\end{align}
	where the last inequality follows since the term on the last line includes  more fractions that are less than $1$ than the one on the previous line. For the first equality we have used the fact that on the event  $G^n_j$ all the active $j+1$ half edges (an even number) at time instant $\tau$ should merge among themselves (without waking any sleeping vertices) by the time instant $\tau^n$, whereas the total number of available half edges (either awake or sleeping) at time instant $\tau$ equals $nr(\boldsymbol{X}^n(\tau))+1$.
	
	For even integer $0\le j \le \lfloor \delta n \rfloor$, we consider three different cases for values of $\boldsymbol{p}$ and $\boldsymbol{q}$.
		
	{\bf Case 1:} There exists some odd $m \in \mathbb{N}$ such that $p_m > q_m \ge 0$.	
	Let $C_m \doteq \frac{1}{2}(p_m - q_m) > 0$. 
	For $\delta < \frac{1}{m} \wedge \delta_0 \wedge C_m$ and $(\boldsymbol{X}^n,Y^n) \in \tilde{G}_{\delta}({\boldsymbol{\tilde\zeta}},\tilde{\psi})$,  we  have from \eqref{eq:Gtil-delta},
	\begin{equation*}
		X^n_{m}(\tau) = \tilde{\zeta}^n_{m}(\tau) - (\tilde{\zeta}^n_{m}(\tau) - X^n_{m}(\tau)) > (p_{m} - q_{m}) - \delta > C_m,
	\end{equation*}
	which implies $X^n_{m}(\tau) \ge 1/n$ for $n \ge \delta^{-1}$.
	So for even integer $0 \le j \le \lfloor \delta n \rfloor$ and $n > \frac{m}{\delta} \vee n_0$,
	
	\begin{align}
		& 1_{\{(\boldsymbol{X}^n,Y^n) \in \tilde{G}_{\delta}({\boldsymbol{\tilde\zeta}},\tilde{\psi})\}} P\left( G^n_j | \mathcal{F}_\tau \right) \notag \\
		& \ge 1_{\{(\boldsymbol{X}^n,Y^n) \in \tilde{G}_{\delta}({\boldsymbol{\tilde\zeta}},\tilde{\psi})\}} \frac{mnX^n_{m}(\tau)}{nr(\boldsymbol{X}^n(\tau))} \cdot \frac{j+m-2}{nr(\boldsymbol{X}^n(\tau))-2} \cdot \frac{j+m-4}{nr(\boldsymbol{X}^n(\tau))-4} \notag \\
		& \quad \dotsm \frac{1}{nr(\boldsymbol{X}^n(\tau))-(j+m-1)} 1_{\{X^n_0(\tau) = \frac{j}{n}\}} \notag \\
		& \ge 1_{\{(\boldsymbol{X}^n,Y^n) \in \tilde{G}_{\delta}({\boldsymbol{\tilde\zeta}},\tilde{\psi})\}} \frac{m}{C_0n} \cdot \frac{j+m-2}{C_0n} \cdot \frac{j+m-4}{C_0n} \dotsm \frac{1}{C_0n} 1_{\{X^n_0(\tau) = \frac{j}{n}\}} \notag \\
		& \ge 1_{\{(\boldsymbol{X}^n,Y^n) \in \tilde{G}_{\delta}({\boldsymbol{\tilde\zeta}},\tilde{\psi})\}} \frac{\lfloor 2\delta n \rfloor!}{(C_0n)^{\lfloor 2\delta n \rfloor}} 1_{\{X^n_0(\tau) = \frac{j}{n}\}}, \label{eq:puhalskii-lower-bound-G3}
	\end{align}
	where the last inequality follows once again as in \eqref{eq:puhalskii-lower-bound-G2}.
	Combining \eqref{eq:puhalskii-lower-bound-key}--\eqref{eq:puhalskii-lower-bound-G3} implies that, for $\delta < \frac{C_0}{2} \wedge \frac{1}{m} \wedge \delta_0 \wedge C_m$ and $n > \frac{m}{\delta} \vee n_0$,
	\begin{align*}
		P(A^n_\delta({\boldsymbol{\tilde\zeta}},\tilde{\psi})) & \ge \sum_{j=-1}^{\lfloor \delta n \rfloor} \boldsymbol{E}\left[ 1_{\{(\boldsymbol{X}^n,Y^n) \in \tilde{G}_{\delta}({\boldsymbol{\tilde\zeta}},\tilde{\psi})\}} \frac{\lfloor 2\delta n \rfloor!}{(C_0n)^{\lfloor 2\delta n \rfloor}} 1_{\{X^n_0(\tau) = \frac{j}{n}\}} \right] \\
		& = \frac{\lfloor 2\delta n \rfloor!}{(C_0n)^{\lfloor 2\delta n \rfloor}} P((\boldsymbol{X}^n,Y^n) \in \tilde{G}_{\delta}({\boldsymbol{\tilde\zeta}},\tilde{\psi})) \\
		& \ge \sqrt{2\pi\lfloor 2\delta n \rfloor} \left(\frac{\lfloor 2\delta n \rfloor}{C_0en}\right)^{\lfloor 2\delta n \rfloor} P((\boldsymbol{X}^n,Y^n) \in \tilde{G}_{\delta}({\boldsymbol{\tilde\zeta}},\tilde{\psi})),
	\end{align*}
	where the last line uses Stirling's approximation $n! \ge \sqrt{2\pi n} (\frac{n}{e})^n$.
	From this and \eqref{eq:puhalskii-lower-bound-claim} we have
	\begin{align}
		& \liminf_{n \to \infty} \frac{1}{n}\log P(E^{n,\varepsilon}(\boldsymbol{q})) \notag \\
		& \ge \liminf_{n \to \infty}  \frac{1}{n}\log P( A^n_\delta({\boldsymbol{\tilde\zeta}},\tilde{\psi}) ) \notag \\
		& \ge \liminf_{n \to \infty} \left[ \frac{1}{2n} \log \left( 2\pi\lfloor 2\delta n \rfloor \right) + \frac{\lfloor 2\delta n \rfloor}{n} \log \left(\frac{\lfloor 2\delta n \rfloor}{C_0en}\right) + \frac{1}{n} \log P((\boldsymbol{X}^n,Y^n) \in \tilde{G}_{\delta}({\boldsymbol{\tilde\zeta}},\tilde{\psi})) \right] \notag \\
		& = 2\delta \log\left(\frac{ 2\delta }{C_0e}\right) + \liminf_{n \to \infty} \frac{1}{n} \log P((\boldsymbol{X}^n,Y^n) \in \tilde{G}_{\delta}({\boldsymbol{\tilde\zeta}},\tilde{\psi})). \label{eq:puhalskii-lower-bound-last}
	\end{align}
	Define the open set
	\begin{equation*}
		\Scale[0.9]{G_{\delta,\tau}({\boldsymbol{\tilde\zeta}},\tilde{\psi}) \doteq \{ (\boldsymbol{\zeta},\psi) \in \mathbb{D}([0,\tau]:\mathbb{R} \times \mathbb{R}_+^\infty \times \mathbb{R}): \sup_{t \in [0,\tau]}|\zeta_k(t) - \tilde{\zeta}_k(t)| < \delta, \mbox{ for all } k = 0, 1 ,2,\dotsc, \lfloor\delta^{-1}\rfloor\}.}
	\end{equation*}
	It follows from Theorem \ref{thm:main-ldp} that
	\begin{align*}
		\liminf_{n \to \infty} \frac{1}{n} \log P((\boldsymbol{X}^n,Y^n) \in \tilde{G}_{\delta}({\boldsymbol{\tilde\zeta}},\tilde{\psi})) & = \liminf_{n \to \infty} \frac{1}{n} \log P((\boldsymbol{X}^n,Y^n) \in G_{\delta,\tau}({\boldsymbol{\tilde\zeta}},\tilde{\psi})) \\
		& \ge - \inf_{(\boldsymbol{\zeta},\psi) \in G_{\delta,\tau}({\boldsymbol{\tilde\zeta}},\tilde{\psi})} I_\tau((\boldsymbol{\zeta},\psi))  \ge - I_\tau({\boldsymbol{\tilde\zeta}},\tilde{\psi}).
	\end{align*}
	Combining this with \eqref{eq:puhalskii-lower-bound-last},
	\eqref{eq:puhalskii-lower-bound-minimizer} and sending $\delta \to 0$ gives
	\begin{equation}
		\label{eq:puhalskii-lower-bound-pre-key}
		\liminf_{n \to \infty} \frac{1}{n}\log P(E^{n,\varepsilon}(\boldsymbol{q})) \ge - I_\tau({\boldsymbol{\tilde\zeta}},\tilde{\psi}) = -I^2_{0,\tau}((0,\boldsymbol{p}), (0,\boldsymbol{p} - \boldsymbol{q})). 
	\end{equation}
%	for $\varepsilon < C_m = \frac{1}{2}(p_m - q_m)$.
	The lower bound in Case 1 now follows on sending $\varepsilon \to 0$.
	
	{\bf Case 2:} $p_m = 0$ for all odd $m \in \mathbb{N}$.
	It suffices to establish a similar estimate as in \eqref{eq:puhalskii-lower-bound-G3}, the lower bound in Case 2 will then follow as in   Case 1.
	From Assumptions \ref{asp:convgN} and \ref{asp:exponential-boundN}  $$\sum_{k=0}^\infty (2k+1) \frac{n_{2k+1}}{n} \to \sum_{k = 0}^\infty (2k+1) p_{2k+1} = 0.$$
	Therefore for each $\kappa \in (0,1)$, there exists some $\ntil_\kappa \in \mathbb{N}$ such that $0 \le \sum_{k=0}^\infty (2k+1) \frac{n_{2k+1}}{n} < \kappa$ for $n > \ntil_\kappa$, which implies $n_m=0$ for all odd $m \ge \kappa n$.
Consider now an even integer $0 \le j \le \lfloor \delta n \rfloor$ and $n > \ntil_\kappa$.
Denote by $M^n$ the largest odd degree for which there is a sleeping vertex at time instant $\tau$ in the continuous time EEA. Note that $M^n \le \kappa n$ a.s.
	Therefore for $\kappa < \delta<\frac{C_0}{2} \wedge \delta_0$ and $n > n_0 \vee \ntil_\kappa$,
	\begin{align*}
		& 1_{\{(\boldsymbol{X}^n,Y^n) \in \tilde{G}_{\delta}({\boldsymbol{\tilde\zeta}},\tilde{\psi})\}} P\left( G^n_j | \mathcal{F}_\tau \right) \notag \\
		& \ge 1_{\{(\boldsymbol{X}^n,Y^n) \in \tilde{G}_{\delta}({\boldsymbol{\tilde\zeta}},\tilde{\psi})\}} \sum_{1 \le m \le \lfloor \kappa n \rfloor, m \mbox{ is odd}} \left[ 1_{\{M^n=m\}}   \frac{mnX^n_{m}(\tau)}{nr(\boldsymbol{X}^n(\tau))} \cdot \frac{j+m-2}{nr(\boldsymbol{X}^n(\tau))-2} \right. \notag \\
		& \quad \left. \cdot \frac{j+m-4}{nr(\boldsymbol{X}^n(\tau))-4} \dotsm \frac{1}{nr(\boldsymbol{X}^n(\tau))-(j+m-1)} \right] 1_{\{X^n_0(\tau) = \frac{j}{n}\}} 
		\end{align*}
Since $nX_m^n(\tau) \ge 1$ on the set $\{M^n=m\}$	
the right side can be bounded below by
\begin{align*}
 &1_{\{(\boldsymbol{X}^n,Y^n) \in \tilde{G}_{\delta}({\boldsymbol{\tilde\zeta}},\tilde{\psi})\}} \sum_{1 \le m \le \lfloor \kappa n \rfloor, m \mbox{ is odd}} \left[ 1_{\{M^n=m\}} \frac{(j+m-2)!!m}{(C_0n)^{(j+m+1)/2}} \right] 1_{\{X^n_0(\tau) = \frac{j}{n}\}} \notag \\
		& \ge 1_{\{(\boldsymbol{X}^n,Y^n) \in \tilde{G}_{\delta}({\boldsymbol{\tilde\zeta}},\tilde{\psi})\}} \sum_{1 \le m \le \lfloor \kappa n \rfloor, m \mbox{ is odd}} \left[ 1_{\{M^n=m\}} \frac{\lfloor 2\delta n \rfloor!}{(C_0n)^{\lfloor 2\delta n \rfloor}} \right] 1_{\{X^n_0(\tau) = \frac{j}{n}\}} \notag \\
		& = 1_{\{(\boldsymbol{X}^n,Y^n) \in \tilde{G}_{\delta}({\boldsymbol{\tilde\zeta}},\tilde{\psi})\}} \frac{\lfloor 2\delta n \rfloor!}{(C_0n)^{\lfloor 2\delta n \rfloor}} 1_{\{X^n_0(\tau) = \frac{j}{n}\}}, \notag
%		\label{eq:puhalskii-lower-bound-G4}
	\end{align*}
	 where the last inequality follows once again
	as  in \eqref{eq:puhalskii-lower-bound-G2}.
	Therefore we have  the same inequality as in \eqref{eq:puhalskii-lower-bound-G3} for $n > n_0 \vee \ntil_\kappa$ and $\delta<\frac{C_0}{2} \wedge \delta_0$, and so the lower bound in Case 2 follows.
	
{\bf Case 3:} There exists an odd $m \in \mathbb{N}$ such that $p_m > 0$ but $p_m=q_m$.
	For $i \in \mathbb{N}$, consider the vector $\boldsymbol{q}^i \doteq (q_k^i)_{k \in \mathbb{N}}$, where $q_k^i \doteq q_k$ for $k \ne m$ and $q_m^i \doteq q_m - \frac{1}{i}$.
	Fix $\varepsilon \in (0,1)$.
	Choose $i$  so that $p_m > q_m^i > 0$, $\varepsilon > \frac{1}{i} = q_m - q_m^i$ and $\sum_{k=1}^\infty kq_k^i > 2\sum_{k=1}^\infty q_k^i$.
	When $\boldsymbol{q}$ is replaced by $\boldsymbol{q}^i$ we are in Case 1 and
	thus for $\varepsilon^i < (\varepsilon - \frac{1}{i})$, from the lower bound \eqref{eq:puhalskii-lower-bound-pre-key} for Case 1, 
	\begin{equation}
		\label{eq:puhalskii-lower-bound-case3}
		\liminf_{n \to \infty} \frac{1}{n}\log P(E^{n,\varepsilon}(\boldsymbol{q})) \ge \liminf_{n \to \infty} \frac{1}{n}\log P(E^{n,\varepsilon^i}(\boldsymbol{q}^i)) \ge  -I^2_{0,\tau}((0,\boldsymbol{p}), (0,\boldsymbol{p} - \boldsymbol{q}^i)).
	\end{equation}
%	where the first inequality uses $\varepsilon^i < (\varepsilon - \frac{1}{i})$ and the second inequality uses $\varepsilon^i < \frac{1}{2i}$ and , on noting that $C_m$ there equals $\frac{1}{2i}$.
	From Proposition \ref{prop:minimizer-summary}(a) we have
%	and the form of $\tilde{I}_1$ in \eqref{eq:Itil_1} we see that
	\begin{equation}\label{eq:i20t}
		I^2_{0,\tau}((0,\boldsymbol{p}), (0,\boldsymbol{p} - \boldsymbol{q}^i)) = H({\boldsymbol{q}^i})+H({	\boldsymbol{p}}-{\boldsymbol{q}^i})-H({\boldsymbol{p}})+K({\boldsymbol{q}^i}).
	\end{equation}
	Since $((0,\boldsymbol{p}), (0,\boldsymbol{p}-\boldsymbol{q}^i))$ and $((0,\boldsymbol{p}), (0,\boldsymbol{p}-\boldsymbol{q}))$
	are in $\Xi$ and $\boldsymbol{q}^i\to \boldsymbol{q}$, from Lemma \ref{lem:lemctybeta} we have $K(\boldsymbol{q}^i)\to K(\boldsymbol{q})$. Also, clearly
	$$
	H({\boldsymbol{q}^i})+H({	\boldsymbol{p}}-{\boldsymbol{q}^i})-H({\boldsymbol{p}})
	\to H({\boldsymbol{q}})+H({	\boldsymbol{p}}-{\boldsymbol{q}})-H({\boldsymbol{p}}).$$
	Thus, as $i\to \infty$, the right side of \eqref{eq:i20t}  converges to
	$$H({\boldsymbol{q}})+H({	\boldsymbol{p}}-{\boldsymbol{q}})-H({\boldsymbol{p}})+K({\boldsymbol{q}})
	= I^2_{0,\tau}((0,\boldsymbol{p}), (0,\boldsymbol{p} - \boldsymbol{q})),$$
	where the equality follows again by Proposition \ref{prop:minimizer-summary}(a).
	The desired result now follows on sending $i \to \infty$ and then $\varepsilon \to 0$
	in \eqref{eq:puhalskii-lower-bound-case3}.

	The above three cases cover all possible values of $\boldsymbol{p}$ and $\boldsymbol{q}$.
	This completes the proof.	
\end{proof}

\subsection{Completing the proof of Theorem \ref{thm:ldg_degree_distribution}}

The upper bound of Theorem \ref{thm:ldg_degree_distribution} follows from Lemma \ref{lem:puhalskii-upper-bound}, Lemma \ref{lem:puhalskii-upper-bound-improvement}(b) and Proposition \ref{prop:minimizer-summary}(a).
The lower bound of Theorem \ref{thm:ldg_degree_distribution} follows from Lemma \ref{lem:puhalskii-lower-bound} and Proposition \ref{prop:minimizer-summary}(a).
\qed

\section{Proofs of Auxiliary Lemmas}
\label{sec:pfsect4}
In this section we prove the lemmas in Section \ref{sec:cal}. 
Specifically, in Section \ref{subsec:6.1} we prove Lemma \ref{lem:I1I2L}, 
in Section \ref{subsec:6.2} we prove Lemmas \ref{lem:uniqbeta} and \ref{lem:lemctybeta},
in Section \ref{subsec:6.3} we prove Lemma \ref{lem:minimizer-def},
in Section \ref{subsec:6.4} we prove Lemma \ref{lem:minimizer-cost},
and finally in Section \ref{subsec:6.5} we prove Lemma \ref{lem:minimizer-verify-general}.

We start with the following remark.

\begin{Remark}
	\label{rmk:prep_evolution}
	Fix $0 \le t_1 < t_2 < \infty$, $\boldsymbol{x}^{(1)},\boldsymbol{x}^{(2)} \in \Rmb_+^\infty$ and $(\boldsymbol{\zeta},\psi) \in \mathcal{J}^1_{t_1,t_2}(\boldsymbol{x}^{(1)}, \boldsymbol{x}^{(2)})$ such that $I_{t_1,t_2}(\boldsymbol{\zeta},\psi) < \infty$.
	Fix $\varepsilon \in (0,1)$.
	Then there exists $\boldsymbol{\varphi} \in \mathcal{S}_{t_2}(\boldsymbol{\zeta},\psi)$ such that
	\begin{equation*}
		\sum_{k=0}^\infty \int_{[t_1,t_2] \times [0,1]} \ell(\varphi_k(s,y)) \,ds\,dy \le I_{t_1,t_2}(\boldsymbol{\zeta},\psi) + \varepsilon.
	\end{equation*}
	Using convexity of $\ell$, we can assume without loss of generality that $\varphi_k(t,y) = \rho_k(t) 1_{[0,r_k(\boldsymbol{\zeta}(t)))}(y) + 1_{[r_k(\boldsymbol{\zeta}(t)),1]}(y)$ for $t \in [t_1,t_2]$, where
	$\rho_k$ is some nonnegative function.
	From \eqref{eq:psi} and \eqref{eq:phi_k} we see that for a.e.\ $t \in [t_1,t_2]$, 
	\begin{align}
		\zeta_k'(t) & = -\rho_k(t)r_k(\boldsymbol{\zeta}(t)), k \in \mathbb{N}, \label{eq:I1I2L_phi_k} \\
		\psi'(t) & = \sum_{k=0}^\infty (k-2)\rho_k(t)r_k(\boldsymbol{\zeta}(t)). \label{eq:I1I2L_psi}
%		\frac{d}{dt} r(\boldsymbol{\zeta}(t)) & = -2\sum_{k=0}^\infty (k-2)\rho_k(t)r_k(\boldsymbol{\zeta}(t)). \notag
	\end{align}
	Since $(\boldsymbol{\zeta},\psi) \in \mathcal{J}_{t_1,t_2}^1(\boldsymbol{x}^{(1)}, \boldsymbol{x}^{(2)})$, 
	$\zeta_0(t) = \zeta_0(t_1) + \psi(t) - \psi(t_1)$ over $(t_1,t_2)$, namely 
	there is no reflection over this interval.
	Therefore for a.e.\ $t \in [t_1,t_2]$, we have $\zeta_0'(t) = \psi'(t)$ and
	\begin{equation}
		\label{eq:rmk_prep}
		- \frac{1}{2} \frac{d}{dt} r(\boldsymbol{\zeta}(t)) = - \frac{1}{2} \left( \psi'(t) + \sum_{k=1}^\infty k \zeta'_k(t) \right) = \sum_{k=0}^\infty \rho_k(t) r_k(\boldsymbol{\zeta}(t)), t \in [t_1,t_2].
	\end{equation}
\end{Remark}

Now we  prove the lemmas in Section \ref{sec:cal}.

\subsection{Proof of Lemma \ref{lem:I1I2L}}
\label{subsec:6.1}
 We first prove \eqref{eq:I1I2}.
	Since $$\inf_{t_2 \ge t_1} I^1_{t_1,t_2}(\boldsymbol{x}^{(1)}, \boldsymbol{x}^{(2)}) \le I^1_{t_1,t_1+\ttau}(\boldsymbol{x}^{(1)}, \boldsymbol{x}^{(2)}) \le I^2_{t_1,t_1+\ttau}(\boldsymbol{x}^{(1)}, \boldsymbol{x}^{(2)}),$$
	it suffices to show 
	\begin{equation}
		\inf_{t_2 \ge t_1} I^1_{t_1,t_2}(\boldsymbol{x}^{(1)}, \boldsymbol{x}^{(2)}) \ge I^2_{t_1,t_1+\ttau}(\boldsymbol{x}^{(1)}, \boldsymbol{x}^{(2)}) \label{eq:25.2}
	\end{equation}
	when $\inf_{t_2 \ge t_1} I^1_{t_1,t_2}(\boldsymbol{x}^{(1)}, \boldsymbol{x}^{(2)}) < \infty$.
	Fix $\varepsilon \in (0,1)$.
	There exist $t_2^\varepsilon \ge t_1$, $(\boldsymbol{\zeta},\psi) \in \mathcal{J}^1_{t_1,t_2^\varepsilon}(\boldsymbol{x}^{(1)}, \boldsymbol{x}^{(2)})$ and $\boldsymbol{\varphi} \in \mathcal{S}_{t_2^\varepsilon}(\boldsymbol{\zeta},\psi)$ such that
	\begin{equation}
		\label{eq:I1I2L_first}
		\sum_{k=0}^\infty \int_{[t_1,t_2^\varepsilon] \times [0,1]} \ell(\varphi_k(s,y)) \,ds\,dy \le I_{t_1,t_2^\varepsilon}(\boldsymbol{\zeta},\psi) + \varepsilon \le \inf_{t_2 \ge t_1} I^1_{t_1,t_2}(\boldsymbol{x}^{(1)}, \boldsymbol{x}^{(2)}) + 2\varepsilon.
	\end{equation}
%	Let $\tau_\zeta \doteq \inf \{t \in [0,T]:r(\zeta(t))=0\} \wedge T$.
	
Recall that $t \mapsto r(\boldsymbol{\zeta}(t))$ is a non-increasing function (see \eqref{eq:rmk_prep}).	We claim that in fact we can assume without loss of generality that $t \mapsto r(\boldsymbol{\zeta}(t))$ is strictly decreasing for $t \in [t_1,t_2^\varepsilon]$.	
Indeed, if this function is not strictly decreasing, we can modify $(\boldsymbol{\zeta},\psi)$ such that for the modified trajectory strict monotonicity holds and the associated cost is not any higher. Such a modification can be constructed via a limiting argument as follows.
Consider $(\boldsymbol{\zeta}^n,\psi^n)$ defined recursively as:
	$(\boldsymbol{\zeta}^0,\psi^0,\boldsymbol{\varphi}^0) \doteq (\boldsymbol{\zeta},\psi,\boldsymbol{\varphi})$ on $[0,t_2^0]$ where $t_2^0 \doteq t_2^\varepsilon$.
	For $n \in \mathbb{N}_0$, having defined $(\boldsymbol{\zeta}^n,\psi^n)$  and $t_2^n \le t_2^\varepsilon$, where
	$(\boldsymbol{\zeta}^n,\psi^n) \in \mathcal{J}^1_{t_1,t_2^n}(\boldsymbol{x}^{(1)}, \boldsymbol{x}^{(2)})$, and
	$\boldsymbol{\varphi}^n \in \mathcal{S}_{t_2^n}(\boldsymbol{\zeta}^n,\psi^n)$ such that 
	\begin{equation}\sum_{k=0}^\infty \int_{[t_1,t_2^n] \times [0,1]} \ell(\varphi_k^n(s,y)) \,ds\,dy
	\le \sum_{k=0}^\infty \int_{[t_1,t_2^\varepsilon] \times [0,1]} \ell(\varphi_k(s,y)) \,ds\,dy,
	\label{eq:cosdec}
\end{equation}
	we modify $(\boldsymbol{\zeta}^n,\psi^n)$, in case $r(\boldsymbol{\zeta}(\cdot))$ is not strictly decreasing on
	$[t_1,t_2^n]$ as follows. Let $[s_1^n,s_2^n] \subset [t_1,t_2^n]$  be the largest constant piece of $r(\boldsymbol{\zeta}^n(\cdot))$, namely $r(\boldsymbol{\zeta}^n(t))$ is constant on $[s_1^n,s_2^n]$ and $s_2^n-s_1^n$ is maximized among all such possible pieces.
	Let $t_2^{n+1} \doteq t_2^n-(s_2^n-s_1^n)$ and define $(\boldsymbol{\zeta}^{n+1},\psi^{n+1})$ by shrinking $(\boldsymbol{\zeta}^n,\psi^n)$ over $[s_1^n,s_2^n]$, namely let $(\boldsymbol{\zeta}^{n+1}(t),\psi^{n+1}(t)) \doteq (\boldsymbol{\zeta}^n(t),\psi^n(t))$ for $t \le s_1^n$ and $(\boldsymbol{\zeta}^{n+1}(t),\psi^{n+1}(t)) \doteq (\boldsymbol{\zeta}^n(t+s_2^n-s_1^n),\psi^n(t+s_2^n-s_1^n))$ for $s_1^n < t \le t_2^{n+1}$.
	Clearly $(\boldsymbol{\zeta}^{n+1},\psi^{n+1}) \in \mathcal{J}^1_{t_1,t_2^{n+1}}(\boldsymbol{x}^{(1)}, \boldsymbol{x}^{(2)})$ and the associated control $\boldsymbol{\varphi}^{n+1}$   satisfies \eqref{eq:cosdec}
	with $n$ replaced with $n+1$. 
		% 
		% 
		% $I_{t_1,t_2^{n+1}}(\boldsymbol{\zeta}^{n+1},\psi^{n+1}) \le I_{t_1,t_2^n}(\boldsymbol{\zeta}^n,\psi^n)$.
	If $r(\boldsymbol{\zeta}(t))$ only has $N$ constant pieces over $[t_1,t_2^\varepsilon]$, then $(\boldsymbol{\zeta}^N,\psi^N)$ is the desired modification of $(\boldsymbol{\zeta},\psi)$.
	If $r(\boldsymbol{\zeta}(t))$ has countably many constant pieces over $[t_1,t_2^\varepsilon]$, then the sequence $(\boldsymbol{\zeta}^n,\psi^n)$ is well defined and \eqref{eq:cosdec} holds for every $n$. Since the sequence $t_2^n$
	is non-increasing, it converges to some point $\bar{t}_2$.
	Since $I_{\bar{t}_2}$ has compact sub-level sets, this sequence (of paths over the time interval $[t_1,\bar t_2]$) has a limit point $(\bar{\boldsymbol{\zeta}},\bar{\psi})$. It is easy to check that this limit point must belong to
	$\mathcal{J}^1_{t_1,\bar{t}_2}(\boldsymbol{x}^{(1)}, \boldsymbol{x}^{(2)})$ and 
	$I_{t_1,\bar{t}_2}(\bar{\boldsymbol{\zeta}},\bar{\psi}) \le \liminf_{n \to \infty} I_{t_1,\bar{t}_2}(\boldsymbol{\zeta}^n,\psi^n) \le I_{t_1,t_2^\varepsilon}(\boldsymbol{\zeta},\psi)$.
	From the construction one can show that for fixed $\delta \in (0,1)$, $\inf_{s \in [t_1,t_2^n-\delta]} |r(\boldsymbol{\zeta}^n(s)) - r(\boldsymbol{\zeta}^n(s+\delta))|$ is nondecreasing in $n$ and eventually positive.
	Therefore $|r(\bar{\boldsymbol{\zeta}}(s)) - r(\bar{\boldsymbol{\zeta}}(s+\delta))|>0$ for each $s \in [t_1,\tbar_2-\delta]$. 
	As $\delta$ is arbitrary, $(\bar{\boldsymbol{\zeta}},\bar{\psi})$ is the desired modification of $(\boldsymbol{\zeta},\psi)$ verifying
	the claim.

	From Remark \ref{rmk:prep_evolution} we can further assume without loss of generality that $\varphi_k(t,y) = \rho_k(t) 1_{[0,r_k(\boldsymbol{\zeta}(t)))}(y) + 1_{[r_k(\boldsymbol{\zeta}(t)),1]}(y)$ for $t \in [t_1,t_2^\varepsilon]$ and \eqref{eq:I1I2L_phi_k}--\eqref{eq:rmk_prep} hold for $t \in [t_1,t_2^\varepsilon]$.
	We now introduce a time transformation. Consider the non-decreasing function $f$ defined as: $f(0)=0$,
	\begin{equation*}
%		\label{eq:f'}
		f'(t) \doteq \left\{ 
		\begin{array}{ll}
			1, & t \in [0,t_1), \\
			- \frac{1}{2} \frac{d}{dt} r(\boldsymbol{\zeta}(t)) = \sum_{k=0}^\infty \rho_k(t) r_k(\boldsymbol{\zeta}(t)), & t \in [t_1,t_2^\varepsilon],
		\end{array} \right.
	\end{equation*}
	where the equality in the second line follows from \eqref{eq:rmk_prep}.
Since $r(\boldsymbol{\zeta}(t))$ is strictly decreasing for $t \in [t_1,t_2^\varepsilon]$, $f(t)$ must be strictly increasing for $t \in [t_1,t_2^\varepsilon]$.
	So $g \doteq f^{-1}$ is well-defined and absolutely continuous on $[0,f(t_2^\varepsilon)]$.
	Note that $f(t_2^\varepsilon) = f(t_1) + \int_{t_1}^{t_2^\varepsilon} f'(t)\,dt = t_1 - \frac{1}{2} (r(\boldsymbol{\zeta}(t_2^\varepsilon)) - r(\boldsymbol{\zeta}(t_1))) = t_1 + \ttau$, where the last equality is from
	\eqref{eq:tau}.
	Define $({\boldsymbol{\tilde\zeta}}(t),\tilde{\psi}(t)) \doteq (\boldsymbol{\zeta}(g(t)),\psi(g(t))$ for $t \in [0,t_1 + \ttau]$.
	Then it is easy to see that $({\boldsymbol{\tilde\zeta}},\tilde{\psi}) \in \mathcal{J}_{t_1,t_1+\ttau}^1(\boldsymbol{x}^{(1)}, \boldsymbol{x}^{(2)})$.
	Since $f(g(t))=t$,  $f'(g(t)) g'(t)=1$ and so
	\begin{equation*}
		\frac{d}{dt} r({\boldsymbol{\tilde\zeta}}(t)) = -2f'(g(t))g'(t) = -2 \mbox{ for a.e. } t \in [t_1,t_1+\ttau].
%		= \frac{d}{dt} \left( -2f(g(t)) \right) = -2.
	\end{equation*}	
	Therefore $({\boldsymbol{\tilde\zeta}},\tilde{\psi}) \in \mathcal{J}_{t_1,t_1+\ttau}^2(\boldsymbol{x}^{(1)}, \boldsymbol{x}^{(2)})$.
%	Letting $\tau_\tilde{\zeta} \doteq \inf \{t \in [0,T]:r(\tilde{\zeta}(t))=0\}$, we have $g(\tau_\tilde{\zeta}) = \tau_\zeta$ and $\tau_\tilde{\zeta} = f(\tau_\zeta)$.
%	Write $\lambda_t \doteq g'(t)$.
	Define 
	\begin{equation*}
		\tilde{\varphi}_k(t,y) \doteq \left\{ 
		\begin{array}{ll}
			\varphi_k(t,y), & t \in [0,t_1), \\
			\tilde{\rho}_k(t) 1_{[0,r_k({\boldsymbol{\tilde\zeta}}(t)))}(y) + 1_{[r_k({\boldsymbol{\tilde\zeta}}(t)),1]}(y), & t \in [t_1, t_1+\ttau],
		\end{array} \right.
	\end{equation*}
	where $\tilde{\rho}_k(t) \doteq \rho_k(g(t)) g'(t)$ for $t \in [t_1, t_1+\ttau]$.
	From \eqref{eq:I1I2L_phi_k} and \eqref{eq:I1I2L_psi}, for $t \in [t_1, t_1+\ttau]$,
	\begin{align*}
		\tilde{\zeta}_k'(t) & = \zeta_k'(g(t)) g'(t) = -\rho_k(g(t)) r_k(\boldsymbol{\zeta}(g(t))) g'(t) = -\tilde{\rho}_k(t) r_k({\boldsymbol{\tilde\zeta}}(t)), k \in \mathbb{N}, \\
%		 \label{eq:optimal_phitil_k} \\
		\tilde{\psi}'(t) & = \psi'(g(t)) g'(t) = \sum_{k=0}^\infty (k-2)\rho_k(g(t))r_k(\boldsymbol{\zeta}(g(t))) g'(t) = \sum_{k=0}^\infty (k-2)\tilde{\rho}_k(t)r_k({\boldsymbol{\tilde\zeta}}(t)). 
%		\label{eq:optimal_psitil} 
	\end{align*}
	So $\tilde{\boldsymbol{\varphi}} \in \mathcal{S}_{t_1+\ttau}({\boldsymbol{\tilde\zeta}},\tilde{\psi})$.
	We claim that
	\begin{equation}
		\label{eq:I1I2L_cost}
		\sum_{k=0}^\infty \int_{[t_1,t_2^\varepsilon] \times [0,1]} \ell(\varphi_k(t,y)) \, dt\,dy \ge \sum_{k=0}^\infty \int_{[t_1,t_1+\ttau] \times [0,1]} \ell(\tilde{\varphi}_k(t,y)) \, dt\,dy.
	\end{equation}
	To see the claim, first note that the left hand side of \eqref{eq:I1I2L_cost} equals
	\begin{equation*}	
	%		\sum_{k=0}^\infty \int_{[0,T] \times [0,1]} \ell(\varphi_k(t,y)) \, dt\,dy = 
		\sum_{k=0}^\infty \int_{t_1}^{t_2^\varepsilon} r_k(\boldsymbol{\zeta}(t)) \ell(\rho_k(t)) \, dt.
	\end{equation*}
	Since $g(f(t))=t$, we have $g'(f(t)) f'(t)=1$ and hence the right hand side of \eqref{eq:I1I2L_cost} is
	\begin{align*}
%		\sum_{k=0}^\infty \int_{[0,T] \times [0,1]} \ell(\tilde{\varphi}_k(t,y)) \, dt\,dy 
		\sum_{k=0}^\infty \int_{t_1}^{t_1+\ttau} r_k({\boldsymbol{\tilde\zeta}}(t)) \ell(\tilde{\rho}_k(t)) \, dt
		& = \sum_{k=0}^\infty \int_{t_1}^{t_2^\varepsilon} r_k({\boldsymbol{\tilde\zeta}}(f(t))) \ell(\tilde{\rho}_k(f(t))) f'(t) \, dt \\
		& = \sum_{k=0}^\infty \int_{t_1}^{t_2^\varepsilon} r_k(\boldsymbol{\zeta}(t)) \ell\left(\frac{\rho_k(t)}{f'(t)}\right) f'(t) \, dt.
	\end{align*}	
	Combining the above two facts, we have
	\begin{align*}
		& \sum_{k=0}^\infty \int_{[t_1,t_2^\varepsilon] \times [0,1]} \ell(\varphi_k(t,y)) \, dt\,dy - \sum_{k=0}^\infty \int_{[t_1,t_1+\ttau] \times [0,1]} \ell(\tilde{\varphi}_k(t,y)) \, dt\,dy \\
		% & = \sum_{k=0}^\infty \int_{t_1}^{t_2^\varepsilon} r_k(\boldsymbol{\zeta}(t)) \left[ \ell(\rho_k(t)) - \ell\left(\frac{\rho_k(t)}{f'(t)}\right) f'(t) \right] dt \\
		& = \sum_{k=0}^\infty \int_{t_1}^{t_2^\varepsilon} r_k(\boldsymbol{\zeta}(t)) \left[ \rho_k(t) \log f'(t) - f'(t) + 1 \right] dt \\
		& = \int_{t_1}^{t_2^\varepsilon} \left[ \left( \sum_{k=0}^\infty r_k(\boldsymbol{\zeta}(t)) \rho_k(t) \right) \log f'(t) - f'(t) + 1 \right] dt, \\
		& = \int_{t_1}^{t_2^\varepsilon} \ell(f'(t)) \, dt \; \ge 0,
	\end{align*}
	where the next to last equality uses the fact that $\sum_{k=0}^\infty r_k(\boldsymbol{\zeta}(t)) = 1$
	for all $t \in [t_1,t_2^\varepsilon)$ and the last equality uses the definition of $f'(t)$.
	This proves the claim in \eqref{eq:I1I2L_cost}.
	Combining \eqref{eq:I1I2L_first} and \eqref{eq:I1I2L_cost} with the fact that $\tilde{\boldsymbol{\varphi}} \in \mathcal{S}_{t_1+\ttau}({\boldsymbol{\tilde\zeta}},\tilde{\psi})$ and $({\boldsymbol{\tilde\zeta}},\tilde{\psi}) \in \mathcal{J}_{t_1,t_1+\ttau}^2(\boldsymbol{x}^{(1)}, \boldsymbol{x}^{(2)})$ gives
	\begin{equation*}
		I^2_{t_1,t_1+\ttau}(\boldsymbol{x}^{(1)}, \boldsymbol{x}^{(2)}) \le \sum_{k=0}^\infty \int_{[t_1,t_1+\ttau] \times [0,1]} \ell(\tilde{\varphi}_k(t,y)) \, dt\,dy \le \inf_{t_2 \ge t_1} I^1_{t_1,t_2}(\boldsymbol{x}^{(1)}, \boldsymbol{x}^{(2)}) + 2\varepsilon.
	\end{equation*}
	Since $\varepsilon \in (0,1)$ is arbitrary, \eqref{eq:25.2} follows, which, as argued previously, gives 
	\eqref{eq:I1I2}.

	Next consider \eqref{eq:IL} and the third statement in the lemma for fixed $(\boldsymbol{\zeta},\psi) \in \mathcal{J}^2_{t_1,t_1+\ttau}(\boldsymbol{x}^{(1)}, \boldsymbol{x}^{(2)})$.
	We first show that
	\begin{equation}
		\label{eq:I1I2L_second_1}
		I_{t_1,t_1+\ttau}(\boldsymbol{\zeta},\psi) \ge \int_{t_1}^{t_1+\ttau} L(\boldsymbol{\zeta}(s), \boldsymbol{\zeta}'(s))\,ds.
	\end{equation}	
	Assume without loss of generality that $I_{t_1,t_1+\ttau}(\boldsymbol{\zeta},\psi) < \infty$.
	Fix $\varepsilon \in (0,1)$.
	From Remark \ref{rmk:prep_evolution} we can find some $\boldsymbol{\varphi} \in \mathcal{S}_{t_1+\ttau}(\boldsymbol{\zeta},\psi)$ such that
	\begin{align*}
%		\label{eq:I1I2L_second_1_1}
		& \sum_{k=0}^\infty \int_{[t_1,t_1+\ttau] \times [0,1]} \ell(\varphi_k(s,y)) \,ds\,dy \le I_{t_1,t_1+\ttau}(\boldsymbol{\zeta},\psi) + \varepsilon, \\
		& \varphi_k(t,y) = \rho_k(t) 1_{[0,r_k(\boldsymbol{\zeta}(t)))}(y) + 1_{[r_k(\boldsymbol{\zeta}(t)),1]}(y), \quad t \in [t_1,t_1+\ttau], k \in \mathbb{N}_0,
	\end{align*}
	for a suitable sequence of non-negative functions $\rho_k$, 
	and \eqref{eq:I1I2L_phi_k}--\eqref{eq:rmk_prep} hold for $t \in [t_1,t_1+\ttau]$.
	Using \eqref{eq:I1I2L_phi_k}, \eqref{eq:I1I2L_psi} and the fact that $\frac{d}{dt} r(\boldsymbol{\zeta}(t))=-2$ for a.e.\ $t \in [t_1,t_1+\ttau]$ we have
	\begin{align*}
		\rho_0(t)r_0(\boldsymbol{\zeta}(t)) & = -\frac{\psi'(t)+\sum_{k=1}^\infty (k-2) \zeta'_k(t)}{2} = 1+\sum_{k=1}^\infty \zeta'_k(t),
	\end{align*}
	which also implies $\sum_{k=1}^\infty \zeta_k'(t) \ge -1$ for a.e.\ $t \in [t_1,t_1+\ttau]$, proving the third statement in the lemma.
	Furthermore we have
	\begin{align}
		& \sum_{k=0}^\infty \int_{[t_1,t_1+\ttau] \times [0,1]} \ell(\varphi_k(s,y)) \,ds\,dy \notag \\
		& = \sum_{k=0}^\infty \int_{t_1}^{t_1+\ttau} r_k(\boldsymbol{\zeta}(t)) \ell(\rho_k(t)) \, dt \notag \\
%		& = \int_{t_1}^{t_1+\tau} \left[ r_0(\boldsymbol{\zeta}(t)) \ell\left( -\frac{\psi'(t)+\sum_{k=1}^\infty (k-2) \zeta'_k(t)}{2r_0(\boldsymbol{\zeta}(t))} \right) + \sum_{k=1}^\infty r_k(\boldsymbol{\zeta}(t)) \ell\left(-\frac{\zeta'_k(t)}{r_k(\boldsymbol{\zeta}(t))}\right) \right] dt  \notag \\
		& = \int_{t_1}^{t_1+\ttau} \left[ r_0(\boldsymbol{\zeta}(t)) \ell\left( \frac{1+\sum_{k=1}^\infty \zeta'_k(t)}{r_0(\boldsymbol{\zeta}(t))} \right) + \sum_{k=1}^\infty r_k(\boldsymbol{\zeta}(t)) \ell\left(\frac{-\zeta'_k(t)}{r_k(\boldsymbol{\zeta}(t))}\right) \right] dt  \notag \\
		& = \int_{t_1}^{t_1+\ttau} L(\boldsymbol{\zeta}(t), \boldsymbol{\zeta}'(t)) \, dt, 
		\label{eq:I1I2L_second_1_2}
	\end{align}
%	where the third equality follows from the fact that $\frac{d}{dt} r(\boldsymbol{\zeta}(t))=-2$ and 
	where the last equality uses the definition of $\ell$ in \eqref{eq:ell} and $L$ in \eqref{eq:L} and we use the convention that $0 \ell(x/0)=0$ for $x\ge 0$.	
	Therefore 
	\begin{equation}
		\label{eq:IL_1}
		\int_{t_1}^{t_1+\ttau} L(\boldsymbol{\zeta}(t), \boldsymbol{\zeta}'(t)) \, dt = \sum_{k=0}^\infty \int_{[t_1,t_1+\ttau] \times [0,1]} \ell(\varphi_k(s,y)) \,ds\,dy \le I_{t_1,t_1+\ttau}(\boldsymbol{\zeta},\psi) + \varepsilon.
	\end{equation}
	Since $\varepsilon \in (0,1)$ is arbitrary, we have \eqref{eq:I1I2L_second_1}.

	Next we show that 
	\begin{equation}
		\label{eq:I1I2L_second_2}
		I_{t_1,t_1+\ttau}(\boldsymbol{\zeta},\psi) \le \int_{t_1}^{t_1+\ttau} L(\boldsymbol{\zeta}(s), \boldsymbol{\zeta}'(s))\,ds.
	\end{equation}
	Assume without loss of generality that $\int_{t_1}^{t_1+\ttau} L(\boldsymbol{\zeta}(s), \boldsymbol{\zeta}'(s))\,ds < \infty$.
	Since there exists some $(\boldsymbol{\zeta}^*,\psi^*) \in \mathcal{J}^0_{0,t_1}(\boldsymbol{x}^{(0)}, \boldsymbol{x}^{(1)})$ such that $I_{0,t_1}(\boldsymbol{\zeta}^*,\psi^*) < \infty$, we can further assume without loss of generality that $I_{0,t_1}(\boldsymbol{\zeta},\psi) < \infty$.
	Then there exists some $\boldsymbol{\varphi}^* \in \mathcal{S}_{t_1}(\boldsymbol{\zeta},\psi)$.
	Let $\boldsymbol{\varphi}(t,y) \doteq \boldsymbol{\varphi}^*(t,y)$ for $t \in [0,t_1)$, and for $t \in [t_1,t_1+\ttau]$ define
	\begin{align*}
		\rho_k(t) & \doteq - \frac{\zeta'_k(t)}{r_k(\boldsymbol{\zeta}(t))} \one_{\{r_k(\boldsymbol{\zeta}(t)) \ne 0\}}, k \in \mathbb{N}, \\
		\rho_0(t) & \doteq \frac{\sum_{k=1}^\infty (k-2)\rho_k(t)r_k(\boldsymbol{\zeta}(t)) - \psi'(t)}{2r_0(\boldsymbol{\zeta}(t))} \one_{\{r_0(\boldsymbol{\zeta}(t)) \ne 0\}}, \\
		\varphi_k(t,y) & \doteq \rho_k(t) 1_{[0,r_k(\boldsymbol{\zeta}(t)))}(y) + 1_{[r_k(\boldsymbol{\zeta}(t)),1]}(y), y \in [0,1], k \in \mathbb{N}_0.
	\end{align*}
	Clearly \eqref{eq:I1I2L_phi_k} and \eqref{eq:I1I2L_psi} hold for $t \in [t_1,t_1+\ttau]$ and hence $\boldsymbol{\varphi} \in \mathcal{S}_{t_1+\ttau}(\boldsymbol{\zeta},\psi)$.
	Also one can check that \eqref{eq:I1I2L_second_1_2} still holds.
	Therefore
	\begin{equation*}
%		\label{eq:lem4.1temp2}
		I_{t_1,t_1+\ttau}(\boldsymbol{\zeta},\psi) \le \sum_{k=0}^\infty \int_{[t_1,t_1+\ttau] \times [0,1]} \ell(\varphi_k(s,y)) \,ds\,dy = \int_{t_1}^{t_1+\ttau} L(\boldsymbol{\zeta}(t), \boldsymbol{\zeta}'(t)) \, dt.
	\end{equation*}
%	Taking infimum over $(\boldsymbol{\zeta},\psi) \in \mathcal{J}^2_{t_1,t_1+\tau}(\boldsymbol{x}^{(1)}, \boldsymbol{x}^{(2)})$ gives \eqref{eq:I1I2L_second_2} verifies the second equality in \eqref{eq:I1I2L}.
	This gives \eqref{eq:I1I2L_second_2} and completes the proof of  \eqref{eq:IL}.
	
	 Finally, \eqref{eq:I1I2L} follows on combining \eqref{eq:I1I2},  \eqref{eq:def-I-j} and \eqref{eq:IL}.
	This completes the proof of the lemma.
\hfill \qed

\subsection{Proofs of Lemmas \ref{lem:uniqbeta} and \ref{lem:lemctybeta}.}
\label{subsec:6.2}

{\em Proof of Lemma \ref{lem:uniqbeta}.}
Consider for $(\boldsymbol{x}^{(1)}, \boldsymbol{x}^{(2)}) \in \Xi$, the function $\alpha \mapsto B(\alpha)$ on $(0,1)$, defined by
\begin{align}
	B(\alpha) \equiv B(\alpha;\boldsymbol{x}^{(1)}, \boldsymbol{x}^{(2)}) & \doteq \frac{1}{\alpha} \left( (1-\alpha^2)\sum_{k=1}^\infty \frac{k \new_k}{1-\alpha^k} - \sum_{k=1}^\infty k \new_k + x_0^{(2)} - \alpha^2 x_0^{(1)} \right) \nonumber\\
%		& = \new_1 \alpha - \sum_{k=3}^\infty k \new_k \frac{\alpha^2 - \alpha^k}{1-\alpha^k} + x_0^{(2)} - \alpha^2 x_0^{(1)} \nonumber\\
	& = \new_1  - \sum_{k=3}^\infty k \new_k \frac{\alpha - \alpha^{k-1}}{1-\alpha^k} + \frac{x_0^{(2)}}{\alpha} - \alpha x_0^{(1)} \nonumber\\
	& = \new_1  - \sum_{k=3}^\infty k \new_k B_k(\alpha) + \frac{x_0^{(2)}}{\alpha} - \alpha x_0^{(1)}, \label{eq:longbalph}
\end{align}
where 
$B_k(\alpha) \doteq (\alpha- \alpha^{k-1})/(1 - \alpha^k)$ for $k \ge 3$ and 
$\boldsymbol{z} = \boldsymbol{x}^{(1)} -\boldsymbol{x}^{(2)}$ as before.
For each $k \ge 3$ and $\alpha \in (0,1)$, using the inequality of arithmetic and geometric means one can verify that
\begin{equation}
	B_k'(\alpha) = \frac{(1-\alpha^2)(k-1)}{(1-\alpha^k)^2} \left(\frac{1+\alpha^2+\alpha^4+\dotsb+\alpha^{2k-4}}{k-1} - \alpha^{k-2}\right) > 0, \label{eq:B_k_monotone}
\end{equation}
and
\begin{equation}
	0 = B_k(0+) \le B_k(\alpha) \le B_k(1-) = (k-2)/k. \label{eq:B_k_bd}
\end{equation}
So $B(1-) = \new_1 - \sum_{k=3}^\infty (k-2)\new_k + x_0^{(2)} - x_0^{(1)} = - \left( \sum_{k=1}^\infty (k-2) \new_k + \new_0 \right) < 0$ by assumption and $B(\alpha)$ is decreasing in $\alpha \in (0,1)$.
Also note that the assumption $\sum_{k=1}^\infty k \new_k + \new_0> 2 \sum_{k=1}^\infty \new_k$ can be rewritten as
\begin{equation*}
	\sum_{k=3}^\infty (k-2) \new_k + x_0^{(1)} > \new_1 + x_0^{(2)},
\end{equation*}
which implies
\begin{equation}
	\label{eq:cor_assump}
	\mbox{ either } x_0^{(1)}>0 \mbox{ or } \new_k > 0 \mbox{ for some } k \ge 3.
\end{equation} 
From this and \eqref{eq:B_k_monotone} we see that $B(\alpha)$ is actually strictly decreasing in $\alpha \in (0,1)$.
Since each $B_k(\alpha)$ is continuous on $(0,1)$, $B(\alpha)$ is also continuous by \eqref{eq:B_k_bd} and the dominated convergence theorem.
% Now we can show that $\beta$ is well-defined in case (ii).
% Note that 
Finally, since $B(0+) = \new_1 + \infty \cdot 1_{\{x_0^{(2)} > 0\}} > 0$ and $B(1-) <0$, there must exist a unique $\beta \in(0,1)$ such that $B(\beta)=0$.
This completes the proof of the lemma.
\hfill \qed\\

\noindent {\em Proof of Lemma  \ref{lem:lemctybeta}.}
Suppose $(\boldsymbol{x}^{(1),n}, \boldsymbol{x}^{(2),n}) \to (\boldsymbol{x}^{(1)}, \boldsymbol{x}^{(2)})$ as $n \to \infty$, where $(\boldsymbol{x}^{(1),n}, \boldsymbol{x}^{(2),n}), (\boldsymbol{x}^{(1)}, \boldsymbol{x}^{(2)}) \in \Xi$.
Recall the function $B(\cdot)$ defined above \eqref{eq:B_k_monotone} and the definition of $\beta(\cdot)$ from Section \ref{sec:degree_distribution}.
We consider two possible cases for the values of $x_0^{(2)}$ and $\new_1$. 

{\bf Case 1:} $x_0^{(2)}=0$ and $\new_1=0$. 
In this case $\beta = \beta(\boldsymbol{x}^{(1)}, \boldsymbol{x}^{(2)})=0$ by definition and $x_0^{(2)} \log \beta=0$ by our convention. 
Since $B(0+) = \new_1 + \infty \cdot 1_{\{x_0^{(2)} > 0\}} = 0$ and $B(\alpha)$ is strictly decreasing in $\alpha \in (0,1)$, we have $B(\alpha)<0$ for every $\alpha \in (0,1)$.
Fixing $\alpha \in (0,1)$, from \eqref{eq:B_k_bd} and the dominated convergence theorem one has 
\begin{equation}
	\label{eq:B_n_cvg}
	B^{(n)}(\alpha) \doteq B(\alpha;\boldsymbol{x}^{(1),n}, \boldsymbol{x}^{(2),n}) \to B(\alpha)
	\doteq B(\alpha;\boldsymbol{x}^{(1)}, \boldsymbol{x}^{(2)})
\end{equation} 
as $n \to \infty$.
Therefore $B^{(n)}(\alpha)<0$ for sufficiently large $n$.
Since $B^{(n)}$ is decreasing, we must have $\beta^{(n)} \doteq \beta(\boldsymbol{x}^{(1),n}, \boldsymbol{x}^{(2),n}) \le \alpha$ for all such $n$.
Since $\alpha \in (0,1)$ is arbitrary, this implies that as $n \to \infty$, $\beta^{(n)} \to 0 = \beta$.
Next note that the convergence of $x_0^{(2),n} \log \beta^{(n)} \to x_0^{(2)} \log \beta =0$ holds trivially if 
$x_0^{(2),n} =0$ for all sufficiently large $n$. Suppose now that $x_0^{(2),n} >0$ for every $n$. Also  take $n$ to be sufficiently large, so that $x_0^{(2),n}<1$.
From \eqref{eq:longbalph} and since $kB^{(n)}_k(\alpha) \le (k-2)$ from \eqref{eq:B_k_bd} [applied with $(\boldsymbol{x}^{(1)}, \boldsymbol{x}^{(2)})$ replaced by $(\boldsymbol{x}^{(1),n}, \boldsymbol{x}^{(2),n})$] we have
\begin{equation*}
	B^{(n)}((x_0^{(2),n})^2) \ge 0 - \sum_{k=3}^\infty (k-2)\new_k^n + \frac{1}{x_0^{(2),n}} - (x_0^{(2),n})^2x_0^{(1),n} > 0
\end{equation*}
for $x_0^{(2),n}$ sufficiently small.
So $\beta^{(n)} \ge (x_0^{(2),n})^2$ for  such $n$ and so $x_0^{(2),n} \log \beta^{(n)} \to 0 = x_0^{(2)} \log \beta$.
	
{\bf Case 2:} $x_0^{(2)} > 0$ or $\new_1 > 0$.
In this case, for $n$ sufficiently large,  we must have $x_0^{(2),n} > 0$ or $\new_1^n > 0$.	
So $\beta^{(n)}$ satisfies $B^{(n)}(\beta^{(n)})=0$ for all such $n$.
Since $\beta>0$, and by proof of Lemma \ref{lem:uniqbeta} $B(\beta)=0$, $B(0+) >0$ and $B(\cdot)$ is strictly decreasing, we have $B(\beta/2)>0$.
As in the proof of \eqref{eq:B_n_cvg}, we see that $B^{(n)}(\beta/2) \to B(\beta/2)$ as $n\to \infty$, and so
 $B^{(n)}(\beta/2) > 0$ for all sufficiently large $n$.
Since $B^{(n)}$ is decreasing, we must have $\beta^{(n)} \ge \beta/2>0$ for all sufficiently large $n$.
From this, \eqref{eq:B_k_bd} and the dominated convergence theorem one can show that along any convergent subsequence of $\beta^{(n)}$, $B^{(n)}(\beta^{(n)}) \to B(\lim \beta^{(n)})$.
So any  limit point of $\beta^{(n)}$ is a solution to $B(\alpha)=0$ defined on $(0,1)$.
But $\beta$ is the unique solution to this equation.
So  $\beta^{(n)} \to \beta$ and also $x_0^{(2),n} \log \beta^{(n)} \to x_0^{(2)} \log \beta$.
This completes the proof of the lemma. \hfill \qed

\subsection{Proof of Lemma \ref{lem:minimizer-def}}
\label{subsec:6.3}

(a) 
Recall the definition of $\ttau$ in \eqref{eq:tau} and $\tilde{\ttau},\tilde{z}_k$ from Construction \ref{cons:cont}.
From \eqref{eq:def-beta} we have
\begin{equation}\label{eq:taub}
	\tilde \ttau = \frac{\ttau}{1-\beta^2} = \frac{x^{(1)}_0-x^{(2)}_0 + \sum_{k=1}^\infty k \new_k}{2(1-\beta^2)} = \frac{1}{2} \left( x^{(1)}_0 + \sum_{k=1}^\infty k \tilde{\new}_k \right).
\end{equation}
Since $\beta \in [0,1)$, we have $\ttau \le \ttau/(1-\beta^2) = \tilde{\ttau}$.
This proves part (a).

	%Next we show that $\beta$ and $x_0^{(2)} \log \beta$ are continuous in $(\boldsymbol{x}^{(1)}, \boldsymbol{x}^{(2)})$.

	(b) We first show that $({\boldsymbol{\tilde\zeta}},\tilde{\psi}) \in \mathcal{J}^1_{t_1,t_1+\ttau}(\boldsymbol{x}^{(1)}, \boldsymbol{x}^{(2)})$.
	For this, it suffices to check
	\begin{align}
		\tilde{\zeta}_k(t_1+\ttau) & = p_k^{(2)} \mbox{ for } k \in \mathbb{N},  \;
				\tilde{\zeta}_0(t_1+\ttau)  = x^{(2)}_0, \label{eq:minimizer-condition-0} \\
		\tilde{\psi}(t) - \tilde{\psi}(t_1) & \ge -x_0^{(1)} 
%		\mbox{ and } \tilde{\zeta}_0(t) \ge 0 
\mbox{ for } t \in [t_1,t_1+\ttau]. \label{eq:minimizer-condition-2}
	\end{align}
	From \eqref{eq:def-minimizer} we have 
	$$\tilde{\zeta}_k(t_1+\ttau) = p_k^{(1)} - \tilde \new_k( 1 - (1- \ttau/\tilde \ttau)^{k/2})=  p_k^{(1)} - \tilde \new_k( 1 - \beta^k) = p_k^{(1)} - \new_k= p_k^{(2)},$$ 
	which gives the first statement in \eqref{eq:minimizer-condition-0}. 
	From this, \eqref{eq:def-minimizer-0} and \eqref{eq:tau} it follows that
	\begin{equation*}
		\tilde{\zeta}_0(t_1+\ttau) = x_0^{(1)} + \sum_{k=1}^\infty k(p_k^{(1)}-p_k^{(2)}) - 2\ttau = x_0^{(2)}, 
	\end{equation*}
	which gives the second statement in \eqref{eq:minimizer-condition-0}.
	
	For \eqref{eq:minimizer-condition-2}, applying the change of variable $t - t_1 = \tilde \ttau (1- \alpha_t^2)$, namely $\alpha_t = \left( 1-\frac{t-t_1}{\tilde \ttau}\right)^{1/2}$ for $t \in [t_1,t_1+\ttau]$, we have
	\begin{align*}
		\sum_{k=1}^\infty k(p_k^{(1)}-\tilde{\zeta}_k(t)) - 2(t-t_1) & = \sum_{k=1}^\infty k \tilde \new_k \left[ 1- \left( 1-\frac{t-t_1}{\tilde \ttau}\right)^{k/2} \right] -2(t-t_1) \\
		& = \sum_{k=1}^\infty k \tilde \new_k ( 1- \alpha_t^k) - 2\tilde \ttau (1- \alpha_t^2) \\
		& = \sum_{k=1}^\infty k \tilde \new_k ( 1- \alpha_t^k) - \sum_{k=1}^\infty k \tilde{\new}_k (1-\alpha_t^2) - x^{(1)}_0 (1-\alpha_t^2) \doteq F(\alpha_t),
	\end{align*}
	where the third equality follows from part (a).
	Using this we can write, for $t \in [t_1,t_1+\ttau]$,
	\begin{equation}
		\label{eq:minimizer-condition-alpha}
		\tilde{\zeta}_0(t) = x_0^{(1)} + F(\alpha_t) = \tilde{\psi}(t) - \tilde{\psi}(t_1) + x_0^{(1)}, 
		%\quad \alpha_t \in [\beta,1], t \in [t_1,t_1+\ttau].
	\end{equation}
	Note that, for $t \in [t_1,t_1+\ttau]$,
	\begin{align*}
		x_0^{(1)} + F(\alpha_t) & = \alpha_t^2 x_0^{(1)} + \sum_{k=1}^\infty k \tilde \new_k(\alpha_t^2 -\alpha_t^k) = \alpha_t(1 - \alpha_t) \left( \frac{\alpha_t x_0^{(1)}}{1-\alpha_t} -\tilde \new_1  + \sum_{k=3}^\infty k \tilde \new_k \frac{\alpha_t- \alpha_t^{k-1}}{1-\alpha_t}  \right) \\
		& = \alpha_t(1 - \alpha_t) \left( \frac{\alpha_t x_0^{(1)}}{1-\alpha_t} -\tilde \new_1  + \sum_{k=3}^\infty k \tilde \new_k \tilde{B}_k(\alpha_t) \right) = \alpha_t (1-\alpha_t) \tilde{B}(\alpha_t),
	\end{align*}
	where $\tilde{B}_k(\alpha) \doteq (\alpha- \alpha^{k-1})/(1-\alpha)$ for $k \ge 3$ and $\Btil(\alpha) \doteq \frac{\alpha x_0^{(1)}}{1-\alpha} -\tilde \new_1  + \sum_{k=3}^\infty k \tilde \new_k \tilde{B}_k(\alpha)$.
	One can verify (e.g.\ using Young's inequality) that
	\begin{equation*}
		\tilde{B}_k'(\alpha) = \frac{(k-2)\alpha^{k-1}-(k-1)\alpha^{k-2}+1}{(\alpha-1)^2} > 0, \quad k \ge 3, \: \alpha\in[0,1).
	\end{equation*}
	So $\tilde{B}(\alpha)$ is increasing.
	Using \eqref{eq:def-beta} one can verify that $\tilde{B}(\beta)=\frac{x_0^{(2)}}{\beta(1-\beta)} 1_{\{\beta>0\}} \ge 0$.
	Since for $t \in (t_1, t_1+\ttau]$, $\alpha_t \in [\beta, 1)$,  for all such $t$
	$x_0^{(1)} + F(\alpha_t) \ge 0$.
	This along with \eqref{eq:minimizer-condition-alpha} gives \eqref{eq:minimizer-condition-2}.
	
	So far we have verified that $({\boldsymbol{\tilde\zeta}},\tilde{\psi}) \in \mathcal{J}^1_{t_1,t_2}(\boldsymbol{x}^{(1)}, \boldsymbol{x}^{(2)})$.
	From \eqref{eq:def-minimizer-0} we also have $\frac{d}{dt} r({\boldsymbol{\tilde\zeta}}(t)) = -2$ for $t \in [t_1,t_1+\ttau]$.
	Thus actually $({\boldsymbol{\tilde\zeta}},\tilde{\psi}) \in \mathcal{J}^2_{t_1,t_2}(\boldsymbol{x}^{(1)}, \boldsymbol{x}^{(2)})$, completing the proof of (b).	
	
	(c) Since for $t \in (t_1, t_1+\ttau)$, $\tilde{\zeta}_0(t) = x_0^{(1)} + F(\alpha_t) = \alpha_t (1-\alpha_t) \tilde{B}(\alpha_t)$ and $\tilde{B}(\beta) \ge 0$, it suffices to show that $\tilde{B}(\alpha)$ is strictly increasing in $\alpha \in [\beta,1)$.
	But thanks to \eqref{eq:cor_assump}, this is immediate from the fact that $\tilde{B}_k'(\alpha)>0$ and $\frac{\alpha x_0^{(1)}}{1-\alpha}$ is strictly increasing when $x_0^{(1)}>0$.
	This gives part (c) and completes the proof of the lemma.
\hfill \qed

\subsection{Proof of Lemma \ref{lem:minimizer-cost}}
\label{subsec:6.4}

	% For ease of notation, write $(\boldsymbol{\zeta}(t),\psi(t)) = ({\boldsymbol{\tilde\zeta}}(t),\tilde{\psi}(t))$ where ${\boldsymbol{\tilde\zeta}}$ is as in Lemma \ref{lem:minimizer-def}. 
	Let $\mu_0 \doteq \frac{1}{2} \left( x_0^{(1)} + \sum_{k=1}^\infty k p_k^{(1)} \right)$.
	Recall $\tilde \new_k = \frac{\new_k}{1-\beta^k}$. 
	It then follows from \eqref{eq:def-minimizer}, \eqref{eq:def-minimizer-0} and Lemma \ref{lem:minimizer-def}(a) that for $t \in [t_1,t_1+\ttau]$,
	\begin{align}
		\tilde \zeta'_k(t) & = -\frac{k \tilde \new_k}{2 \tilde \ttau-2(t-t_1)} \left( 1- \frac{t-t_1}{\tilde \ttau} \right)^{k/2} = -\frac{k}{2\tilde \ttau-2(t-t_1)} [\tilde \zeta_k(t)-p_k^{(1)} + \tilde \new_k], \label{eq:Euler-Lagrange-key-1} \\
		1 + \sum_{k=1}^\infty \tilde \zeta'_k(t) & = \frac{x_0^{(1)} - 2(t-t_1) + \sum_{k=1}^\infty k [p_k^{(1)} - \tilde \zeta_k(t)]}{2\tilde \ttau -2(t-t_1)} = \frac{\tilde \zeta_0(t)}{2\tilde \ttau - 2(t-t_1)}, \label{eq:Euler-Lagrange-key-2} \\
		r(\boldsymbol{\tilde \zeta}(t)) & = 2\mu_0 -2(t-t_1) \notag.
	\end{align}	
	From these we have
	\begin{align}
		& \int_{t_1}^{t_1+\ttau} L(\boldsymbol{\tilde\zeta}(s), \boldsymbol{\tilde\zeta}'(s))\,ds \notag \\
	 	& = \int_{t_1}^{t_1+\ttau} \left[ \left(1 + \sum_{k=1}^\infty \tilde\zeta'_k(t)\right) \log \left(\frac{1 + \sum_{k=1}^\infty\tilde\zeta'_k(t) }{\tilde\zeta_0(t)/r(\boldsymbol{\tilde\zeta}(t))}\right) + \sum_{k=1}^\infty(- \tilde\zeta'_k(t)) \log \left(\frac{-\tilde\zeta'_k(t)}{k\tilde\zeta_k(t)/r(\boldsymbol{\tilde\zeta}(t))}\right) \right] dt \notag \\
%	\end{align}	
%	\begin{align}
%		& \int_{t_1}^{t_1+\tau} L(\boldsymbol{\zeta}(s), \boldsymbol{\zeta}'(s))\,ds \notag \\
%		& = \int_{t_1}^{t_1+\tau} \left[  \log(2 \mu_0-2(t-t_1)) - \left(1+ \sum_{k=1}^\infty  \zeta'_k(t)\right) \log (2 \tilde \tau-2(t-t_1)) \right. \notag \\
%		& \quad \left. - \sum_{k=1}^\infty  \zeta'_k(t) \log \left(\frac{\zeta_k(t)-p_k^{(1)}+ \tilde q_k}{\zeta_k(t)(2 \tilde \tau-2(t-t_1))}\right) \right] dt \notag \\
		& = \int_{t_1}^{t_1+\ttau} \left[  \log(2 \mu_0-2(t-t_1)) -  \log (2 \tilde \ttau - 2(t-t_1))	- \sum_{k=1}^\infty  \tilde\zeta'_k(t) \log \left(\frac{\tilde\zeta_k(t)-p_k^{(1)}+ \tilde \new_k}{\tilde\zeta_k(t)}\right) \right] dt. \label{eq:H-K-1}
	\end{align}
%	Here $x \log \frac{x}{0}$ is viewed as $\infty$ if $x > 0$ and as $0$ if $x = 0$. 
	We claim that we can interchange the integration and summation in the last line.
	To see this, first note that there exists some $M \in \mathbb{N}$ such that $\new_k \le \tilde{\new}_k \le 2 \new_k \le 2 p_k^{(1)} < 1$ for $k \ge M$.
	Since $\tilde\zeta_k(t)$ is non-increasing, we have
	
	\begin{align*}
		& \sum_{k=M}^\infty \int_{t_1}^{t_1+\ttau} \left| \tilde\zeta'_k(t) \log \left( \frac{\tilde\zeta_k(t)-p_k^{(1)}+ \tilde \new_k}{\tilde\zeta_k(t)} \right) \right| dt \\
		& = \sum_{k=M}^\infty \int_{t_1}^{t_1+\ttau} \left| \log \left( \frac{\tilde\zeta_k(t)-p_k^{(1)}+ \tilde \new_k}{\tilde\zeta_k(t)} \right) \right| d (p_k^{(1)}-\tilde\zeta_k(t)) 
		= \sum_{k=M}^\infty  \int_0^{\new_k}  \left| \log \left(\frac{ \tilde \new_k - u}{p_k^{(1)} - u}\right)\right| du \\
		& \le \sum_{k=M}^\infty  \int_0^{\new_k} \left( -\log (\tilde \new_k - u) - \log (p_k^{(1)} - u) \right) du
%		& \le - \sum_{k=M}^\infty \left( \int_0^{\tilde{\new}_k} \log u \,du + \int_0^{p_k^{(1)}}  \log u \, du \right), \\
		\le - 2\sum_{k=M}^\infty \int_0^{2p_k^{(1)}} \log u \,du.
	\end{align*}
	Using $\tilde{\ell}(x) \doteq x \log x - x = \int \log x \, dx$, the last expression equals
	\begin{equation*}
		- 2\sum_{k=M}^\infty (\tilde{\ell}(2p_k^{(1)}) - \tilde{\ell}(0)) = 4\sum_{k=M}^\infty p_k^{(1)} \log \left( \frac{1}{p_k^{(1)}} \right) -4(\log 2 -1)\sum_{k=M}^\infty p_k^{(1)}.
	\end{equation*}
	Here the last term is clearly finite.
	Letting $\tilde{M} \doteq \sum_{k=M}^\infty p_k^{(1)} \in (0,1]$, we have
	\begin{align*}
		\sum_{k=M}^\infty p_k^{(1)} \log \left( \frac{1}{p_k^{(1)}} \right) & = \tilde{M} \sum_{k=M}^\infty \frac{p_k^{(1)}}{\tilde{M}} \log \left( \frac{1}{k^2 p_k^{(1)}} \right) + \sum_{k=M}^\infty p_k^{(1)} \log k^2 \\
		& \le \tilde{M} \log \left( \sum_{k=M}^\infty \frac{1}{\tilde{M}k^2} \right) + 2 \sum_{k=M}^\infty kp_k^{(1)} < \infty,
	\end{align*}
	where the  inequality holds since $\log x$ is concave and $\log x \le x$.
	Therefore
	\begin{equation*}
		\sum_{k=M}^\infty \int_{t_1}^{t_1+\ttau} \left| \tilde\zeta'_k(t) \log \left( \frac{\tilde\zeta_k(t)-p_k^{(1)}+ \tilde \new_k}{\tilde\zeta_k(t)} \right) \right| dt < \infty.
	\end{equation*}
	One can easily verify that for $1 \le k \le M$,
	\begin{equation*}
		\int_{t_1}^{t_1+\ttau} \left| \tilde\zeta'_k(t) \log \left( \frac{\tilde\zeta_k(t)-p_k^{(1)}+ \tilde \new_k}{\tilde\zeta_k(t)} \right) \right| dt = \int_0^{\new_k}  \left| \log \left(\frac{ \tilde \new_k - u}{p_k^{(1)} - u}\right)\right| du < \infty.
	\end{equation*}
	So the claim holds.
	Actually we have also shown that $\int_{t_1}^{t_1+\ttau} L({\boldsymbol{\tilde\zeta}}(s), {\boldsymbol{\tilde\zeta}}'(s))\,ds < \infty$.
	
	From \eqref{eq:H-K-1} it then follows that
	\begin{align}
		& \int_{t_1}^{t_1+\ttau} L(\boldsymbol{\tilde\zeta}(s), \boldsymbol{\tilde\zeta}'(s))\,ds \notag \\
		& = \int_{t_1}^{t_1+\ttau} \left[  \log(\mu_0 - (t-t_1)) -  \log (\tilde \ttau - (t-t_1)) \right] dt  - \sum_{k=1}^\infty \int_{t_1}^{t_1+\ttau} \log \left( \frac{\tilde\zeta_k(t)-p_k^{(1)}+ \tilde \new_k}{\tilde\zeta_k(t)} \right) d\tilde\zeta_k(t) \notag \\
		& = \left. \left[ -\tilde{\ell}(\mu_0 - (t-t_1)) + \tilde{\ell} (\tilde \ttau - (t-t_1)) - \sum_{k=1}^\infty  \tilde{\ell}(\tilde\zeta_k(t)-p_k^{(1)} + \tilde \new_k) + \sum_{k=1}^\infty  \tilde{\ell}(\tilde\zeta_k(t)) \right] \right|_{t=t_1}^{t_1+\ttau} \notag \\
%		& = -(\mu_0 -\ttau) \log (\mu_0 -\ttau) + (\tilde \ttau-\ttau) \log (\tilde \ttau-\tau) \\
%		& - \sum_{k=1}^\infty  (\zeta_k(\tau)-p_k^{(1)} + \tilde \new_k) \log (\zeta_k(\tau)-p_k^{(1)} + \tilde \new_k) + \sum_{k=1}^\infty  \zeta_k(\tau) \log \zeta_k(\tau)\\
%		& + \mu_0 \log \mu_0 - \tilde \tau \log \tilde \tau + \sum_{k=1}^\infty \tilde \new_k \log  \tilde \new_k - \sum_{k=1}^\infty  p_k^{(1)} \log p_k^{(1)}\\
		& = -(\mu_0 -\ttau)\log(\mu_0 - \ttau) + ( \tilde \ttau-\ttau) \log (\tilde \ttau- \ttau)  + \mu_0 \log \mu_0 - \tilde \ttau \log \tilde \ttau \notag \\
		& \quad + \sum_{k=1}^\infty \left[ -(\tilde \new_k - \new_k) \log (\tilde \new_k - \new_k) + p_k^{(2)}\log p_k^{(2)} + \tilde \new_k \log \tilde \new_k - p_k^{(1)} \log p_k^{(1)} \right], \label{eq:H-K-2}
	\end{align}
	where the last line follows from $\boldsymbol{\tilde\zeta}(t_1) = \boldsymbol{x}^{(1)}$ and $\boldsymbol{\tilde\zeta}(t_1+\ttau) = \boldsymbol{x}^{(2)}$.
	Using  $\tilde \ttau = \ttau/(1-\beta^2)$, 
	\begin{align*}
		( \tilde \ttau-\ttau) \log (\tilde \ttau- \ttau) - \tilde \ttau \log \tilde \ttau 
		& = \frac{\beta^2 \ttau}{1- \beta^2}  \log \frac{\beta^2 \ttau}{1- \beta^2} - \frac{\ttau}{1-\beta^2} \log \frac{\ttau}{1-\beta^2}\\
		& = - \ttau \log \ttau + \ttau \log (1-\beta^2) + \frac{2\beta^2 \ttau}{1- \beta^2}  \log \beta.
	\end{align*}
	Since $\tilde \new_k = \new_k / (1 - \beta^k)$, we have
	\begin{align*}
		 &\sum_{k=1}^\infty \left[ -(\tilde \new_k - \new_k) \log (\tilde \new_k - \new_k)  + \tilde \new_k \log \tilde \new_k \right] \\
		& \quad= \sum_{k=1}^\infty \left[ \new_k \log \new_k - \new_k \log(1-\beta^k) - \frac{k\beta^k \new_k}{1-\beta^k} \log \beta \right]\\
		& \quad= \sum_{k=1}^\infty \left[ \new_k \log \new_k - \new_k \log(1-\beta^k) \right] + \sum_{k=1}^\infty \left( k\new_k- \frac{k\new_k}{1-\beta^k} \right) \log \beta\\
		& \quad= \sum_{k=1}^\infty \left[ \new_k \log \new_k - \new_k \log(1-\beta^k) \right] + \left( x_0^{(2)} -\frac{2\beta^2 \ttau}{1- \beta^2} \right) \log \beta,
	\end{align*}
	where the last line is from \eqref{eq:def-beta} and \eqref{eq:taub}. 
	The last two displays along with \eqref{eq:H-K-2} give
	\begin{align}
		& \int_{t_1}^{t_1+\ttau} L(\boldsymbol{\tilde\zeta}(s), \boldsymbol{\tilde\zeta}'(s))\,ds \notag \\
		& \quad= -(\mu_0 -\ttau)\log(\mu_0 - \ttau) - \ttau \log \ttau  + \mu_0 \log \mu_0  + \ttau \log (1-\beta^2) \notag \\
		& \quad\quad + \sum_{k=1}^\infty \left[ \new_k \log \new_k + p_k^{(2)} \log p_k^{(2)}  - p_k^{(1)} \log p_k^{(1)} \right] - \sum_{k=1}^\infty \new_k \log(1-\beta^k) + x_0^{(2)} \log \beta \label{eq:semi-cont}\\
		& \quad= \tilde{H}(\boldsymbol{\new}) + \tilde{H}(\boldsymbol{x}^{(2)}) - \tilde{H}(\boldsymbol{x}^{(1)}) + \tilde{K}(\boldsymbol{x}^{(1)}, \boldsymbol{x}^{(2)}). \notag
	\end{align}
	Finiteness of the above  follows as in Remark \ref{rem:finhk}.
	This gives the first statement in the lemma.
	
	For the lower semicontinuity, first note that $-(\mu_0 -\ttau)\log(\mu_0 - \ttau) - \ttau \log \ttau  + \mu_0 \log \mu_0  + \ttau \log (1-\beta^2) - \sum_{k=1}^\infty \new_k \log(1-\beta^k) + x_0^{(2)} \log \beta$ is continuous from Lemma \ref{lem:lemctybeta} and Assumption \ref{asp:exponential-boundN}.
	The remaining terms in \eqref{eq:semi-cont} can be written as
	\begin{align*}
		&\sum_{k=1}^\infty \left[ \new_k \log \new_k + p_k^{(2)} \log p_k^{(2)}  - p_k^{(1)} \log p_k^{(1)} \right]\\
		 &=
		 \sum_{k=0}^\infty \left[ \new_k \log \new_k + p_k^{(2)} \log p_k^{(2)}  - p_k^{(1)} \log p_k^{(1)} \right]
		- \left[ \new_0 \log \new_0 + p_0^{(2)} \log p_0^{(2)}  - p_0^{(1)} \log p_0^{(1)} \right]\\
		&=\sum_{k=0}^\infty \left[ \new_k \log \frac{\new_k}{p_k^{(1)}} \right] + \sum_{k=0}^\infty \left[ p_k^{(2)} \log \frac{p_k^{(2)}}{p_k^{(1)}} \right] - \left[ \new_0 \log \new_0 + p_0^{(2)} \log p_0^{(2)}  - p_0^{(1)} \log p_0^{(1)} \right],
	\end{align*}
	where $\new_0 \doteq 1- \sum_{k=1}^\infty \new_k$, and $p_0^{(i)} \doteq 1- \sum_{k=1}^\infty p_k^{(i)}$ for $i=1,2$.
	The last term in the above display is clearly a lower semicontinuous function of $(\boldsymbol{x}^{(1)}, \boldsymbol{x}^{(2)}) \in \Xi$. 
	The lemma follows.
\hfill \qed

\subsection{Proof of Lemma \ref{lem:minimizer-verify-general}}
\label{subsec:6.5}

We begin with a lemma that gives the statement in Lemma \ref{lem:minimizer-verify-general} under a stronger assumption.

\begin{Lemma}
	\label{lem:minimizer-verify-special}
	Suppose the same setting as in Lemma \ref{lem:minimizer-verify-general}. 
	Suppose in addition that: (i) $x^{(1)}_0,x^{(2)}_0>0$, and (ii) for every $k \in \mathbb{N}$, if $p^{(1)}_k > 0$ then $p^{(2)}_k > 0$.
	Then \eqref{eq:i2ttau} is satisfied.
		% $$I^2_{t_1,t_1+\tau}(\boldsymbol{x}^{(1)}, \boldsymbol{x}^{(2)}) = \inf_{(\boldsymbol{\zeta},\psi) \in \mathcal{J}^2_{t_1,t_1+\tau}(\boldsymbol{x}^{(1)}, \boldsymbol{x}^{(2)})} \int_{t_1}^{t_1+\tau} L(\boldsymbol{\zeta}(s), \boldsymbol{\zeta}'(s))\,ds = \int_{t_1}^{t_1+\tau} L({\boldsymbol{\tilde\zeta}}(s), {\boldsymbol{\tilde\zeta}}'(s))\,ds.$$
\end{Lemma}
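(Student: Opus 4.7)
The first equality in \eqref{eq:i2ttau} is Lemma \ref{lem:I1I2L}, so the task is to show that the candidate $(\tilde{\boldsymbol{\zeta}}, \tilde{\psi})$ from Construction \ref{cons:cont} realises the infimum. Under the additional assumptions (i) and (ii), Lemma \ref{lem:minimizer-def}(c) together with \eqref{eq:def-minimizer} gives $\tilde{\zeta}_0(t) > 0$ for $t \in (t_1, t_1+\ttau)$ and $\tilde{\zeta}_k(t) > 0$ for every $k \in \mathbb{N}$ with $p_k^{(1)} > 0$ and every $t \in [t_1, t_1+\ttau)$, so $\tilde{\boldsymbol{\zeta}}$ stays in the interior of the admissible set and all logarithms appearing below are finite. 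The point of Lemma \ref{lem:minimizer-verify-special} is thus to handle the "nondegenerate" case; Lemma \ref{lem:minimizer-verify-general} will then follow by a perturbation/approximation argument.

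First, I would eliminate $\psi$ via $\psi'(t) = -2 - \sum_{k \ge 1} k\,\zeta_k'(t)$ (equivalent to the $\mathcal{J}^2$-constraint $dr(\boldsymbol{\zeta}(t))/dt = -2$ together with $\zeta_0'(t) = \psi'(t)$, valid on the interior), turning \eqref{eq:i2ttau} into the problem of minimising $J(\boldsymbol{\zeta}) \doteq \int_{t_1}^{t_1+\ttau} L(\boldsymbol{\zeta}(t), \boldsymbol{\zeta}'(t))\,dt$ over absolutely continuous paths $\boldsymbol{\zeta}$ with fixed endpoints $\boldsymbol{x}^{(1)}, \boldsymbol{x}^{(2)}$ and with $r(\boldsymbol{\zeta}(t)) = r(\boldsymbol{x}^{(1)}) - 2(t-t_1)$. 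Because $L(\boldsymbol{x},\boldsymbol{\beta})$ is a relative entropy between probability measures on $\mathbb{N}_0$ whose numerator depends linearly on $\boldsymbol{\beta}$ and whose denominator depends on $\boldsymbol{x}$ only through the normalised weights $\mu(k|\boldsymbol{x}) = k x_k/r(\boldsymbol{x})$ (with $r(\boldsymbol{\zeta}(t))$ pinned by the constraint), $J$ is convex on the affine admissible set. Hence any critical point of $J$ is a global minimiser.

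Second, I would verify that $\tilde{\boldsymbol{\zeta}}$ is such a critical point. Introducing a Lagrange multiplier $\lambda(t)$ for $dr/dt = -2$, the Euler--Lagrange system reads, for each $k \in \mathbb{N}_0$,
\[
\frac{d}{dt}\,\partial_{\beta_k} L\bigl(\tilde{\boldsymbol{\zeta}}(t), \tilde{\boldsymbol{\zeta}}'(t)\bigr) \;=\; \partial_{x_k} L\bigl(\tilde{\boldsymbol{\zeta}}(t), \tilde{\boldsymbol{\zeta}}'(t)\bigr) \;-\; k\,\lambda(t).
\]
The relations \eqref{eq:Euler-Lagrange-key-1}--\eqref{eq:Euler-Lagrange-key-2} show that $\nu(k|\tilde{\boldsymbol{\zeta}}'(t))/\mu(k|\tilde{\boldsymbol{\zeta}}(t))$ factorises into a $k$-dependent piece times a $t$-dependent piece, which is exactly the structural condition needed for the E--L equations to hold with an explicit multiplier of the form $\lambda(t) = \tfrac{1}{2}\bigl[\log(\tilde{\ttau} - (t-t_1)) - \log(\mu_0 - (t-t_1))\bigr]$ (up to an additive constant). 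Combined with Step 1 this gives $J(\boldsymbol{\zeta}) \ge J(\tilde{\boldsymbol{\zeta}})$ for every admissible $\boldsymbol{\zeta}$, and Lemma \ref{lem:minimizer-cost} supplies the explicit value of $J(\tilde{\boldsymbol{\zeta}})$.

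\paragraph{Main obstacle.} The principal difficulty will lie in executing the Euler--Lagrange verification over the countable family of equations indexed by $k$, producing one $\lambda(t)$ that fits all of them, and justifying the interchange of derivative and the infinite sum that defines $L$. The asymmetry between $k=0$ (where $\nu(0|\boldsymbol{\beta}) = 1 + \sum_{k\ge 1}\beta_k$) and $k \ge 1$ is also a source of care. A cleaner alternative I would try in parallel is a dual-variable argument: start from the Donsker--Varadhan lower bound
\[
L(\boldsymbol{x},\boldsymbol{\beta}) \;\ge\; \sum_{k=0}^\infty \nu(k|\boldsymbol{\beta})\,h_k \;-\; \log \sum_{k=0}^\infty \mu(k|\boldsymbol{x})\, e^{h_k},
\]
choose $h_k(t) = \log\bigl(\nu(k|\tilde{\boldsymbol{\zeta}}'(t))/\mu(k|\tilde{\boldsymbol{\zeta}}(t))\bigr)$, integrate over $[t_1,t_1+\ttau]$, and use the factorisation from Construction \ref{cons:cont} together with the $\mathcal{J}^2$-constraint to show that after integration both terms depend only on the boundary data $(\boldsymbol{x}^{(1)}, \boldsymbol{x}^{(2)})$. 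Either route eventually reduces to the same algebraic identities among logarithms that are implicit in \eqref{eq:def-beta} and \eqref{eq:taub}.
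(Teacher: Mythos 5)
Your proposal identifies the right strategy — convexity of the functional on the affine admissible set, followed by an Euler--Lagrange verification that $(\tilde{\boldsymbol{\zeta}},\tilde{\psi})$ is a critical point — and this is indeed how the paper proceeds. However, your sketch stops short precisely at the step you flag as the "main obstacle," and that step is where the substance of the proof lies. Concretely, the paper does not introduce a Lagrange multiplier $\lambda(t)$ for the constraint $dr/dt=-2$; it eliminates the constraint outright by substituting $\zeta_0(t)=x_0^{(1)}+\sum_{k\ge 1}k(p_k^{(1)}-\zeta_k(t))-2(t-t_1)$, so that only the coordinates $(\zeta_k)_{k\ge 1}$ are free. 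This avoids having to justify the existence and form of a multiplier for an equality constraint on an infinite-dimensional path space, and it removes the asymmetry between $k=0$ and $k\ge 1$ from the variational calculation, since $k=0$ is no longer an independent variable. More importantly, you do not address the reduction to finitely many free coordinates: the paper first shows (via a dominated convergence argument using $\tilde G(\boldsymbol{\zeta}^{\varepsilon},\psi^{\varepsilon})<\infty$ and $\tilde G(\tilde{\boldsymbol{\zeta}},\tilde{\psi})<\infty$) that one can replace the competitor $\boldsymbol{\zeta}$ by a trajectory that agrees with $\tilde{\boldsymbol{\zeta}}$ for all $k\ge n_0$ while preserving the strict cost gap. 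Only after this truncation can one establish a uniform integrable bound on $\partial\tilde{\eta}(t,\varepsilon)/\partial\varepsilon$ — using assumptions (i) and (ii) to keep $\zeta_0^{\varepsilon}$, $\zeta_k^{\varepsilon}$, and $1+\sum_k(\zeta_k^{\varepsilon})'$ bounded away from $0$ — and legitimately differentiate $g(\varepsilon)$ under the integral. Your proposal neither performs this truncation nor exhibits the integrable dominating function, and those are exactly the two points at which a countable Euler--Lagrange system is otherwise unmanageable.

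As a minor point, the convexity argument as the paper uses it is a pure contradiction: if some $(\boldsymbol{\zeta},\psi)$ had strictly smaller cost, convexity of $g$ on $[0,1]$ forces $g'_+(0)<0$, while the Euler--Lagrange identity together with integration by parts and the vanishing boundary values $\theta_k(t_1)=\theta_k(t_1+\ttau)=0$ gives $g'_+(0)=0$. Your "critical point implies global minimiser" framing is equivalent but you would still need $g'_+(0)$ to exist, which again requires the truncation and domination steps above. The alternative Donsker--Varadhan dual route you mention is plausible in spirit but is not pursued and would face analogous integrability issues in the second (log-sum) term when $\mu(k|\boldsymbol{x})$ is small.
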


\begin{proof}
	The first equality in \eqref{eq:i2ttau} is proved in Lemma \ref{lem:I1I2L}.
	For the second equality, we need to show that $({\boldsymbol{\tilde\zeta}},\tilde{\psi})$ is the minimizer of the function
	\begin{equation*}
		\tilde{G}(\boldsymbol{\zeta},\psi) \doteq \int_{t_1}^{t_1+\ttau} L(\boldsymbol{\zeta}(s), \boldsymbol{\zeta}'(s))\,ds, \quad (\boldsymbol{\zeta},\psi) \in \mathcal{J}^2_{t_1,t_1+\ttau}(\boldsymbol{x}^{(1)}, \boldsymbol{x}^{(2)}).
	\end{equation*}
	We will prove this via contradiction.
	First note that $\mathcal{J}^2_{t_1,t_1+\ttau}(\boldsymbol{x}^{(1)}, \boldsymbol{x}^{(2)})$ is a convex set.
	Also using the definition of $L$, one can verify that $\tilde{G}(\boldsymbol{\zeta},\psi)$ is a convex function in $(\boldsymbol{\zeta},\psi) \in \mathcal{J}^2_{t_1,t_1+\ttau}(\boldsymbol{x}^{(1)}, \boldsymbol{x}^{(2)})$.
	Now suppose there exists some $(\boldsymbol{\zeta},\psi) \in \mathcal{J}^2_{t_1,t_1+\ttau}(\boldsymbol{x}^{(1)}, \boldsymbol{x}^{(2)})$ such that $\tilde{G}(\boldsymbol{\zeta},\psi) < \tilde{G}({\boldsymbol{\tilde\zeta}},\tilde{\psi})$.
	From Lemma \ref{lem:minimizer-cost} 
%	and the fact that $a \log \frac{a}{2} \le x \log x + (a-x)\log(a-x) \le a \log a$ one can check that
	we have $\tilde{G}({\boldsymbol{\tilde\zeta}},\tilde{\psi}) < \infty$.
	For $\varepsilon \in [0,1]$, construct the family of paths $(\boldsymbol{\zeta}^\varepsilon,\psi^\varepsilon) \doteq (1-\varepsilon) ({\boldsymbol{\tilde\zeta}},\tilde{\psi}) + \varepsilon (\boldsymbol{\zeta},\psi)$. 
	Letting $g(\varepsilon) \doteq \tilde{G}(\boldsymbol{\zeta}^\varepsilon,\psi^\varepsilon)$, we have $g(1) = \tilde{G}(\boldsymbol{\zeta},\psi) < \tilde{G}({\boldsymbol{\tilde\zeta}},\tilde{\psi}) = g(0)$.
	It follows from the convexity that $g$ is left and right differentiable wherever it is finite.
	We will show that $g'_+(0)=0$, where $g'_+(\cdot)$ is the right derivative of $g$.
	The convexity of $g$ will then give the desired contradiction.	
	
	By convexity of $g$, we have $g(\varepsilon) < g(0)$ for every $\varepsilon \in (0,1]$.
	From Lemma \ref{lem:minimizer-def}(c), assumption (i) and continuity of $\tilde{\zeta}_0$ we have
	\begin{equation}
		\label{eq:deltadefn}
		\delta \doteq \inf_{t \in [t_1,t_1+\ttau]} \tilde{\zeta}_0(t) > 0.
	\end{equation}
	From \eqref{eq:Euler-Lagrange-key-2} we see 
	\begin{equation}
		\label{eq:minimizer-verify-special-temp}
		1 + \sum_{k=1}^\infty \tilde\zeta'_k(t) = \frac{\tilde\zeta_0(t)}{2\tilde \ttau - 2(t-t_1)} \ge \frac{\delta}{2\tilde{\ttau}} > 0, \quad t \in [t_1,t_1+\ttau].
	\end{equation}
	Now fix $0 < \varepsilon < \frac{1}{4} \wedge \delta \wedge \frac{\delta}{2\tilde{\ttau}}$.
	Then $\zeta^\varepsilon_0(t) > \frac{\delta}{2}$ for all $t \in [t_1,t_1+\ttau]$.
	
	We next argue that one can assume without loss of generality that	
	\begin{equation}
			\label{eq:minimizer-finite-dim}
			\zeta_k(t) = \tilde{\zeta}_k(t) \mbox{ for all } t \in [t_1,t_1+\ttau] \mbox{ and } k \ge n_0
	\end{equation}
	for some large enough $n_0 \in \Nmb$.
	To show this, we define $(\boldsymbol{\zeta}^n,\psi^n)$ for $n \in \mathbb{N}$ as follows: For $t \in [0,t_1)$, $(\boldsymbol{\zeta}^n(t),\psi^n(t)) \doteq (\boldsymbol{\zeta}^\varepsilon(t),\psi^\varepsilon(t))$, and for $t \in [t_1,t_1+\ttau]$, 
	\begin{align*}
		\zeta^n_k(t) & \doteq \tilde{\zeta}_k(t), \quad k \ge n, \\ 
		\zeta^n_k(t) & \doteq \zeta^\varepsilon_k(t), \quad 1 \le k < n, \\
		\zeta^n_0(t) & \doteq x_0^{(1)} + \sum_{k=1}^\infty k(p_k^{(1)}-\zeta^n_k(t)) - 2(t-t_1), \\
		\psi^n(t) & \doteq \psi^\varepsilon(t_1) + \sum_{k=1}^\infty k(p_k^{(1)}-\zeta^n_k(t)) - 2(t-t_1).
	\end{align*}
	From this definition we have $(\zeta_k^n)_{k \in \mathbb{N}} \to (\zeta^\varepsilon_k)_{k \in \mathbb{N}}$ in $\mathbb{C}([0,t_1+\ttau]:\mathbb{R}_+^\infty)$ as $n \to \infty$.
	So $(\zeta_0^n,\psi^n) \to (\zeta^\varepsilon_0,\psi^\varepsilon)$ in $\mathbb{C}([0,t_1+\ttau]:\mathbb{R}^2)$ as $n \to \infty$.
	From this we see $\psi^n(t) - \psi^n(t_1) + x_0^{(1)} = \zeta_0^n(t)$ is uniformly bounded away from $0$ in $t \in[t_1,t_1+\ttau]$ for sufficiently large $n$.
%	Also note that $1 + \sum_{k=1}^\infty (\zeta^n_k)'(t) \ge 1 + \sum_{k=1}^\infty \tilde\zeta'_k(t) - \varepsilon > 0$ for $t \in[t_1,t_1+\ttau]$.
	So $\boldsymbol{\zeta}^n \in \mathcal{J}^2_{t_1,t_1+\ttau}(\boldsymbol{x}^{(1)}, \boldsymbol{x}^{(2)})$ for all such $n$.
	Recall $L_k$ and $L$ defined in \eqref{eq:L}.
	Using the definition of $\zeta_k^n$ for $1 \le k < n$ 
	\begin{align}
		\tilde{G}(\boldsymbol{\zeta}^n,\psi^n) - \tilde{G}(\boldsymbol{\zeta}^\varepsilon,\psi^\varepsilon)
		& = \int_{t_1}^{t_1+\ttau} [L(\boldsymbol{\zeta}^n(s),(\boldsymbol{\zeta}^n(s))') - L(\boldsymbol{\zeta}^\varepsilon(s),(\boldsymbol{\zeta}^\varepsilon(s))')] \,ds \nonumber\\
		& = \int_{t_1}^{t_1+\ttau} [L_0(\boldsymbol{\zeta}^n(s),(\boldsymbol{\zeta}^n(s))') - L_0(\boldsymbol{\zeta}^\varepsilon(s),(\boldsymbol{\zeta}^\varepsilon(s))')] \,ds \nonumber\\
%		+ \int \sum_{k=1}^{n-1} [L_k(\boldsymbol{\zeta}^n,(\boldsymbol{\zeta}^n)') - L_k(\boldsymbol{\zeta}^\varepsilon,(\boldsymbol{\zeta}^\varepsilon)')] \\
		& \quad + \int_{t_1}^{t_1+\ttau} \sum_{k=n}^\infty [L_k(\boldsymbol{\zeta}^n(s),(\boldsymbol{\zeta}^n(s))') - L_k(\boldsymbol{\zeta}^\varepsilon(s),(\boldsymbol{\zeta}^\varepsilon(s))')] \,ds. \label{eq:eqsun11}
	\end{align}
	We claim that both terms on the right side converge to $0$ as $n \to \infty$.
	To see this, note that
	$$\Scale[0.92]{L_0(\boldsymbol{\zeta}^n(s),(\boldsymbol{\zeta}^n(s))') = \left(1+\sum_{k=1}^\infty (\zeta^n_k)'(s)\right) \log \left[ \left(1+\sum_{k=1}^\infty (\zeta^n_k)'(s)\right) \Big/ \left(\frac{\zeta^n_0(s)}{r(\boldsymbol{\zeta}^n(s))}\right) \right] \to L_0(\boldsymbol{\zeta}^\varepsilon(s),(\boldsymbol{\zeta}^\varepsilon(s))')}$$
	as $n \to \infty$, for each $s \in [t_1,t_1+\ttau]$.
	From \eqref{eq:minimizer-verify-special-temp} and the choice of $\varepsilon$ we have that
	$$1 \ge 1+\sum_{k=1}^\infty (\zeta^n_k)'(s) \ge \frac{\delta}{2\tilde\ttau} - \varepsilon > 0.$$
	Since $\zeta^\varepsilon_0(s)$ and $\zetatil_0(s)$ are both bounded from above and away from $0$ for all $s \in [t_1,t_1+\ttau]$, 
	$$\sup_{n \in \Nmb} \sup_{s \in [t_1,t_1+\ttau]} |L_0(\boldsymbol{\zeta}^n(s),(\boldsymbol{\zeta}^n(s))')| < \infty.$$
	The first term on the right side of \eqref{eq:eqsun11} then converges to $0$ as $n \to \infty$ by the dominated convergence theorem.
	For the second term, note that
	$$L_k(\boldsymbol{\zeta}^n(s),(\boldsymbol{\zeta}^n(s))') = -(\zeta^n_k)'(s) \log \left[ - (\zeta^n_k)'(s) \Big/ \left(\frac{k\zeta_k^n(s)}{r(\boldsymbol{\zeta}^n(s))}\right) \right] \to L_k(\boldsymbol{\zeta}^\varepsilon(s),(\boldsymbol{\zeta}^\varepsilon(s))')$$
	as $n \to \infty$, for each $s \in [t_1,t_1+\ttau]$.
	Since $\boldsymbol{\zeta}^n \in \mathcal{J}^2_{t_1,t_1+\ttau}(\boldsymbol{x}^{(1)}, \boldsymbol{x}^{(2)})$, we have $r(\boldsymbol{\zeta}^n(s)) = r(\boldsymbol{\zetatil}(s)) = r(\boldsymbol{\zeta}^\varepsilon(s))$ for each $s \in [t_1,t_1+\ttau]$, and hence
	$$|L_k(\boldsymbol{\zeta}^n(s),(\boldsymbol{\zeta}^n(s))')| \le |L_k(\boldsymbol{\zetatil}(s),\boldsymbol{\zetatil}'(s))| + |L_k(\boldsymbol{\zeta}^\varepsilon(s),(\boldsymbol{\zeta}^\varepsilon(s))')|.$$
	Since $\tilde{G}(\boldsymbol{\zeta}^\varepsilon,\psi^\varepsilon)<\infty$ and $\tilde{G}({\boldsymbol{\tilde\zeta}},\tilde{\psi})<\infty$, we see that the last expression is summable over $k \in \Nmb$ and integrable over $s \in [t_1,t_1+\ttau]$.
	Therefore the second term in the claim converges to $0$ as $n \to \infty$ by the dominated convergence theorem.	
	From the above claim we then have that $\tilde{G}(\boldsymbol{\zeta}^{n_0},\psi^{n_0}) < \tilde{G}({\boldsymbol{\tilde\zeta}},\tilde{\psi})$ for sufficiently large $n_0$.
	We now fix such a $n_0$ and, abusing notation, denote 
	$(\boldsymbol{\zeta},\psi) = (\boldsymbol{\zeta}^{n_0},\psi^{n_0})$
	and define $(\boldsymbol{\zeta}^\varepsilon,\psi^\varepsilon)$ as before, by using the new definition of 
	$(\boldsymbol{\zeta},\psi)$, so that \eqref{eq:minimizer-finite-dim} holds.

	Since $(\boldsymbol{\zeta},\psi) \in \mathcal{J}^2_{t_1,t_1+\ttau}(\boldsymbol{x}^{(1)}, \boldsymbol{x}^{(2)})$, we have $r(\boldsymbol{\zeta}(t)) = x_0^{(1)} + \sum_{k=1}^\infty kp_k^{(1)} - 2(t-t_1)$ and $\zeta_0(t) = r(\boldsymbol{\zeta}(t)) - \sum_{k=1}^\infty k\zeta_k(t) = x_0^{(1)} + \sum_{k=1}^\infty k(p_k^{(1)}-\zeta_k(t)) - 2(t-t_1)$ for $t \in [t_1,t_1+\ttau]$.
	Using the definition of $L$, one can write
	\begin{align*}
		\tilde{G}(\boldsymbol{\zeta},\psi)
		& = \int_{t_1}^{t_1+\ttau} \left\{ \left(1+\sum_{k=1}^\infty \zeta'_k(t)\right) \log \left[ \left(1+\sum_{k=1}^\infty \zeta'_k(t)\right) \Big/ \left(\frac{\zeta_0(t)}{r(\boldsymbol{\zeta}(t))}\right) \right] \right. \\
		& \qquad \left. - \sum_{k=1}^\infty \zeta'_k(t) \log \left[ \left( -\zeta'_k(t) \right) \Big/ \left(\frac{k\zeta_k(t)}{r(\boldsymbol{\zeta}(t))}\right) \right] \right\} dt \\
		& = \int_{t_1}^{t_1+\ttau} \left\{ \left(1+\sum_{k=1}^\infty \zeta'_k(t)\right) \log \left(\frac{1+\sum_{k=1}^\infty \zeta'_k(t)}{x_0^{(1)} + \sum_{k=1}^\infty k(p_k^{(1)}-\zeta_k(t)) - 2(t-t_1)}\right) \right. \\
		& \qquad \left. - \sum_{k=1}^\infty \zeta'_k(t) \log \left(\frac{-\zeta'_k(t)}{k\zeta_k(t)}\right) \right\} dt + \int_{t_1}^{t_1+\ttau} \log \left( x_0^{(1)} + \sum_{k=1}^\infty kp_k^{(1)} - 2(t-t_1) \right) dt,
	\end{align*}	
	and the analogous expression holds for $\tilde{G}(\boldsymbol{\zeta}^\varepsilon,\psi^\varepsilon)$.
	Let $\boldsymbol{\theta} \doteq \boldsymbol{\zeta} - {\boldsymbol{\tilde\zeta}}$.
	From \eqref{eq:minimizer-finite-dim} we have $\theta_k=0$ for $k > n_0$ and hence
	\begin{align*}
		g(\varepsilon) & = \int_{t_1}^{t_1+\ttau} \left\{ \left(1+\sum_{k=1}^\infty (\zeta^\varepsilon_k)'(t)\right) \log \left(\frac{1+\sum_{k=1}^\infty (\zeta^\varepsilon_k)'(t)}{x_0^{(1)} + \sum_{k=1}^\infty k(p_k^{(1)}-\zeta_k^\varepsilon(t)) - 2(t-t_1)}\right) \right. \\
		& \qquad \left. - \sum_{k=1}^{n_0} (\zeta^\varepsilon_k)'(t) \log \left(\frac{-(\zeta^\varepsilon_k)'(t)}{k\zeta_k^\varepsilon(t)}\right) \right\} dt + C_0 \\
		& = \int_{t_1}^{t_1+\ttau} \eta(t,(\tilde{\zeta}_k(t) + \varepsilon \theta_k(t),\tilde{\zeta}_k'(t) + \varepsilon \theta_k'(t))_{k=1}^{n_0}) \, dt + C_0, \\
		& \doteq \int_{t_1}^{t_1+\ttau} \tilde{\eta}(t,\varepsilon) \, dt + C_0
	\end{align*}
	for some constant $C_0$, where
	\begin{align*}
		&\eta(t,(u_k,v_k)_{k=1}^{n_0})\\
		 &\quad = \left(1+\sum_{k=1}^{n_0} v_k +\alpha_t\right) \log \left(\frac{1+\sum_{k=1}^{n_0} v_k + \alpha_t}{x_0^{(1)} + \sum_{k=1}^{n_0} k(p_k^{(1)}-u_k) + \gamma_t - 2(t-t_1)}\right) - \sum_{k=1}^{n_0} v_k \log \left(\frac{-v_k}{ku_k}\right),
	\end{align*}
	with $\alpha_t \doteq \sum_{k=n_0+1}^{\infty} \tilde{\zeta}_k'(t)$ and
	$\gamma_t \doteq \sum_{k=n_0+1}^{\infty} k(p_k^{(1)}-\tilde{\zeta}_k(t))$.
	We wish to show that differentiation under the integral over $t$ with respect to $\varepsilon$ is valid in a neighborhood of $0$.
	For this, we now establish an integrable bound on the partial derivative of $\tilde{\eta}$ with respect to $\varepsilon$.
	To obtain such a bound, note that we only need to consider the contribution from $\varepsilon \theta_k(t)$ for $1 \le k \le n_0$ such that $p_k^{(2)}>0$, since when $p_k^{(2)} = 0$, one has that $p_k^{(1)} = 0$ by assumption (ii), which implies $\theta_k(t) \equiv 0$.
	Therefore assume without loss of generality that $p_k^{(2)}>0$ for every $1 \le k \le n_0$.
	Further note that we can assume $p_k^{(1)} > p_k^{(2)}$, since otherwise, once more,  $\theta_k(t) \equiv 0$.
	Therefore we assume without loss of generality that
	\begin{equation}
		\label{eq:p1p2strict}
		p_k^{(1)} > p_k^{(2)} > 0, \quad 1 \le k \le n_0.
	\end{equation} 
	Denote by $\frac{\partial \eta}{\partial u_k}$ and $\frac{\partial \eta}{\partial v_k}$ the corresponding partial derivatives for the function $\eta(t,(u_k,v_k)_{k=1}^{n_0})$.
	Then one can verify that
	\begin{equation*}
		\frac{\partial \tilde{\eta}(t,\varepsilon)}{\partial \varepsilon} = \sum_{k=1}^{n_0} \frac{\partial \eta}{\partial u_k} \vert_{(t,(\zeta_k^\varepsilon(t), (\zeta_k^\varepsilon)'(t))_{k=1}^{n_0})} \theta_k(t) + \sum_{k=1}^{n_0} \frac{\partial \eta}{\partial v_k} \vert_{(t,(\zeta_k^\varepsilon(t), (\zeta_k^\varepsilon)'(t))_{k=1}^{n_0})} \theta_k'(t).
	\end{equation*}
	The partial derivatives of $\eta$ are
	\small{\begin{align}
		& \frac{\partial \eta(t,(u_k,v_k)_{k=1}^{n_0})}{\partial u_k} = \frac{k(1+\sum_{j=1}^{n_0} v_j + \sum_{j=n_0+1}^\infty \tilde{\zeta}_j'(t))}{x_0^{(1)} + \sum_{j=1}^{n_0} j(p_j^{(1)}-u_j) + \sum_{j=n_0+1}^\infty j(p_j^{(1)}-\tilde{\zeta}_j(t)) - 2(t-t_1)} + \frac{v_k}{u_k},\label{eq:37.a} \\
		& \frac{\partial \eta(t,(u_k,v_k)_{k=1}^{n_0})}{\partial v_k} = \log \left(\frac{1+\sum_{j=1}^{n_0} v_j + \sum_{j=n_0+1}^\infty \tilde{\zeta}_j'(t)}{x_0^{(1)} + \sum_{j=1}^{n_0} j(p_j^{(1)}-u_j) + \sum_{j=n_0+1}^\infty j(p_j^{(1)}-\tilde{\zeta}_j(t)) - 2(t-t_1)}\right) - \log \frac{-v_k}{ku_k},		\label{eq:37.b}
	\end{align}}
	for $1 \le k \le n_0$.
%	\begin{align*}
%		& \frac{\partial G_0(t,{\boldsymbol{\tilde\zeta}}(t) + \varepsilon \boldsymbol{\theta}(t),{\boldsymbol{\tilde\zeta}}'(t) + \varepsilon \boldsymbol{\theta}'(t))}{\partial \varepsilon} \\
%		& \quad = \sum_{k=1}^{n_0} \frac{k(1+\sum_{j=1}^\infty (\zeta^\varepsilon_j)'(t))}{\zeta_0^\varepsilon(t)} \theta_k(t) + \sum_{k=1}^{n_0} \left( \log \frac{1+\sum_{j=1}^\infty (\zeta^\varepsilon_j)'(t)}{\zeta_0^\varepsilon(t)} + 1 \right) \theta_k'(t), \\
%		& \frac{\partial G_k(t,\tilde{\zeta}_k(t)+\varepsilon \theta_k(t),\tilde{\zeta}_k'(t)+\varepsilon \theta_k'(t))}{\partial \varepsilon} = \frac{(\zeta^\varepsilon_k)'(t)}{\zeta^\varepsilon_k(t)} \theta_k(t) - \left( \log \frac{-(\zeta^\varepsilon_k)'(t)}{\zeta^\varepsilon_k(t)} + 1 \right) \theta_k'(t), 1 \le k \le n_0.		
%	\end{align*}
	For all $0 \le \varepsilon < \quarter \wedge \delta \wedge \frac{\delta}{2 \tilde{\ttau}}$ and $t \in [t_1,t_1+\ttau]$, from \eqref{eq:p1p2strict} and \eqref{eq:deltadefn},
	\begin{align*}
		& 0 < \frac{\delta}{2} \le (1-\varepsilon) \zetatil_0(t) \le \zeta_0^\varepsilon(t) \le \sum_{k=1}^\infty kp_k < \infty, \\
		& \zeta_0^\varepsilon(t) = x_0^{(1)} + \sum_{j=1}^{n_0} j(p_j^{(1)}-\zeta^\varepsilon_j(t)) + \sum_{j=n_0+1}^\infty j(p_j^{(1)}-\tilde{\zeta}_j(t)) - 2(t-t_1), \\
		& 0 < p_k^{(2)} \le \zeta_k^\varepsilon(t) \le p_k^{(1)} < \infty, \:\: -1 \le (\zeta_k^\varepsilon)'(t) \le 0, \:\: |\theta_k(t)| \le p_k^{(1)}, \:\: |\theta_k'(t)| \le 2, \quad 1 \le k \le n_0, \\
%		& x_0^{(1)} + \sum_{j=1}^{n_0} j(p_j^{(1)}-\zeta^\varepsilon_j(t)) + \sum_{j=n_0+1}^\infty j(p_j^{(1)}-\tilde{\zeta}_j(t)) - 2(t-t_1), \\
		& 0 < \frac{\delta}{4\tilde{\ttau}} \le (1-\varepsilon)\left(1+\sum_{k=1}^\infty \zetatil_k'(t)\right) \le 1+\sum_{k=1}^\infty (\zeta^\varepsilon_k)'(t) \le 1,
	\end{align*}
	where the last line uses \eqref{eq:minimizer-verify-special-temp} and Lemma \ref{lem:I1I2L}.  Furthermore, using \eqref{eq:def-minimizer} we get
	\begin{align*}
		(\zeta_k^\varepsilon)'(t) \le (1-\varepsilon) \zetatil_k'(t) \le -\frac{3k\ztil_k}{8\tilde \ttau} \left( 1-\frac{t-t_1}{\tilde \ttau}\right)^{k/2-1} = -\frac{3k\ztil_k}{8\tilde \ttau^{k/2}} \left( \tilde\ttau-(t-t_1)\right)^{k/2-1}.
	\end{align*}
	Combining these bounds we have
	\begin{align*}
		\left| \frac{\partial \eta}{\partial u_k} \vert_{(t,(\zeta_k^\varepsilon(t), (\zeta_k^\varepsilon)'(t))_{k=1}^{n_0})} \right| & \le \frac{k}{\delta/2} + \frac{1}{p_k^{(2)}}, \\
		\left| \frac{\partial \eta}{\partial v_k} \vert_{(t,(\zeta_k^\varepsilon(t), (\zeta_k^\varepsilon)'(t))_{k=1}^{n_0})} \right| & \le \max\left\{\left|\log \frac{1}{\delta/2}\right|, \left|\log \frac{\delta/4\tilde\ttau}{x_0^{(1)}+\sum_{j=1}^\infty jp_j}\right| \right\} \\
		& \quad + \max\left\{\left|\log \frac{1}{kp_k^{(2)}}\right|, \left|\log \frac{\frac{3k\ztil_k}{8\tilde \ttau^{k/2}} \left( \tilde \ttau-(t-t_1)\right)^{k/2-1}}{kp_k^{(1)}}\right| \right\}
	\end{align*}
	for all $\varepsilon \in [0,1/4]$, $t \in [t_1,t_1+\ttau]$, and $k=1,\dotsc,n_0$.
	Therefore 
	one can find some $\tilde C_0 \in (0,\infty)$ such that
	\begin{equation*}
		\left| \frac{\partial \tilde{\eta}(t,\varepsilon)}{\partial \varepsilon} \right| \le \Ctil_0 + \Ctil_0 |\log \left( \tilde\ttau-(t-t_1)\right)|, \quad \varepsilon \in [0,1/4], \quad t \in [t_1,t_1+\ttau].
	\end{equation*}
	Since $|\log(\tilde\ttau - (t-t_1))|$ is integrable in $t \in [t_1,t_1+\ttau]$, we have obtained an integrable bound on $\left| \frac{\partial \tilde{\eta}(t,\varepsilon)}{\partial \varepsilon} \right|$ that is uniform in $\varepsilon \in [0, 1/4]$. Thus
	we can differentiate under the integral sign to get
	\begin{equation*}
		g'(\varepsilon) = \int_{t_1}^{t_1+\ttau} \frac{\partial \tilde{\eta}(t,\varepsilon)}{\partial \varepsilon} \, dt
	\end{equation*}
	for all $0 \le \varepsilon < \frac{1}{4} \wedge \delta \wedge \frac{\delta}{2 \tilde{\ttau}}$.
	Next we claim that the following Euler-Lagrange equations are satisfied.
	\begin{equation}
		\label{eq:Euler-Lagrange}
		\frac{\partial \eta}{\partial u_n} (t,(\tilde{\zeta}_k(t),\tilde{\zeta}'_k(t))_{k =1}^{n_0}) = \frac{d}{dt} \frac{\partial \eta}{\partial v_n} (t,(\tilde{\zeta}_k(t),\tilde{\zeta}'_k(t))_{k =1}^{n_0}) \mbox{ for } 1 \le n \le n_0, t \in [t_1,t_1+\ttau].
	\end{equation}	
	Once this claim is verified, we have
	\begin{align*}
		g'_+(0) & = \sum_{k=1}^{n_0} \int_{t_1}^{t_1+\ttau} \left[ \frac{\partial \eta}{\partial u_k} \vert_{(t,(\tilde{\zeta}_k(t), \tilde{\zeta}_k'(t))_{k=1}^{n_0})} \theta_k(t) + \frac{\partial \eta}{\partial v_k} \vert_{(t,(\tilde{\zeta}_k(t), \tilde{\zeta}_k'(t))_{k=1}^{n_0})} \theta_k'(t) \right] dt \\
		& = \sum_{k=1}^{n_0} \int_{t_1}^{t_1+\ttau} \theta_k'(t) \left[ - \int_{t_1}^t \frac{\partial \eta}{\partial u_k} \vert_{(s,(\tilde{\zeta}_k(s), \tilde{\zeta}_k'(s))_{k=1}^{n_0})} \, ds + \frac{\partial \eta}{\partial v_k} \vert_{(t,(\tilde{\zeta}_k(t), \tilde{\zeta}_k'(t))_{k=1}^{n_0})} \right] dt \\
		& = \sum_{k=1}^{n_0} \int_{t_1}^{t_1+\ttau} \tilde{c}_k \theta_k'(t) \, dt 
		 = \sum_{k=1}^{n_0} \tilde{c}_k (\theta_k(t_1+\ttau) - \theta_k(t_1)) = 0,
	\end{align*}
	where the second equality follows from integration by parts, the third is a consequence of \eqref{eq:Euler-Lagrange} with some suitable constants $\tilde c_k$ and the last equality holds since $\theta_k(t_1)=0=\theta_k(t_1+\ttau)$.
	This gives the desired contradiction and shows that $({\boldsymbol{\tilde\zeta}},\tilde{\psi})$ is the minimizer.
	
	Finally we prove the claim \eqref{eq:Euler-Lagrange}. Fix $1 \le n \le n_0$.
%	Since $\new_n > 0$, we have $\tilde{\zeta}_n(t) > 0$ for $t \in [0,\tau]$.
%	Using Lemma \ref{lem:minimizer-def} one can check that
%	\begin{equation}
%		\label{eq:Euler-Lagrange-key}
%		1 + \sum_{k=1}^\infty \tilde{\zeta}'_k(t) = \frac{\tilde{\zeta}_0(t)}{2\tilde{\tau} -2(t-t_1)}.
%	\end{equation}
	Using \eqref{eq:37.a} and  \eqref{eq:Euler-Lagrange-key-2} one can verify that
	\begin{align*}
		\frac{\partial \eta}{\partial u_n} (t,(\tilde{\zeta}_k(t),\tilde{\zeta}'_k(t))_{k =1}^{n_0}) & = \frac{n(1 + \sum_{k=1}^\infty \tilde{\zeta}'_k(t))}{\tilde{\zeta}_0(t)} + \frac{\tilde{\zeta}_n'(t)}{\tilde{\zeta}_n(t)} \\
		& = \frac{n}{2\tilde{\ttau} -2(t-t_1)} + \frac{\tilde{\zeta}_n'(t)}{\tilde{\zeta}_n(t)} \\
		& = \frac{d}{dt} \left( -\frac{n}{2} \log(\tilde{\ttau}-(t-t_1)) + \log(\tilde{\zeta}_n(t)) \right).
	\end{align*}
	Therefore it suffices to show
	\begin{equation}
		\label{eq:Euler-Lagrange-simple}
		-\frac{n}{2} \log(\tilde{\ttau}-(t-t_1)) + \log(\tilde{\zeta}_n(t)) = \frac{\partial \eta}{\partial v_n} (t,(\tilde{\zeta}_k(t),\tilde{\zeta}'_k(t))_{k =1}^{n_0}) + \bar{c}_n
	\end{equation}
	for some constant $\bar{c}_n$.
	From \eqref{eq:37.b} one has that
	\begin{align*}
		\frac{\partial \eta}{\partial v_n} (t,(\tilde{\zeta}_k(t),\tilde{\zeta}'_k(t))_{k =1}^{n_0}) & = \log(n\tilde{\zeta}_n(t)) - \log(-\tilde{\zeta}_n'(t)) + \log\left(\frac{1 + \sum_{k=1}^\infty \tilde{\zeta}'_k(t)}{\tilde{\zeta}_0(t)}\right) \\
		& = \log(n\tilde{\zeta}_n(t)) - \log(-\tilde{\zeta}_n'(t)) - \log(2\tilde{\ttau} -2(t-t_1))
	\end{align*}
	where the last line follows from \eqref{eq:Euler-Lagrange-key-2}.
	From this we have
	\begin{align*}
		& -\frac{n}{2} \log(\tilde{\ttau}-(t-t_1)) + \log(\tilde{\zeta}_n(t)) - \frac{\partial \eta}{\partial v_n} (t,(\tilde{\zeta}_k(t),\tilde{\zeta}'_k(t))_{k =1}^{n_0}) \\
		& = - \left(\frac{n}{2}-1\right) \log(\tilde{\ttau}-(t-t_1)) - \log\left(\frac{n}{2}\right) + \log(-\tilde{\zeta}_n'(t)) \\
		& = \log \tilde{z}_n - \frac{n}{2} \log \tilde{\ttau}.
	\end{align*}
	where the last line follows from \eqref{eq:Euler-Lagrange-key-1} and \eqref{eq:def-minimizer}.
	Therefore \eqref{eq:Euler-Lagrange-simple} holds with $\bar{c}_n=\log \tilde{z}_n - \frac{n}{2} \log \tilde{\ttau}$
	which proves \eqref{eq:Euler-Lagrange}.
%	This shows that $({\boldsymbol{\tilde\zeta}},\tilde{\psi})$ is the minimizer of $\tilde{G}(\boldsymbol{\zeta},\psi)$ for $(\boldsymbol{\zeta},\psi) \in \mathcal{J}^2_{0,\tau}(\boldsymbol{p},\boldsymbol{p}-\boldsymbol{q})$.
	This completes the proof.		
\end{proof}

\begin{proof}[Proof of Lemma \ref{lem:minimizer-verify-general}]
	The first equality in \eqref{eq:i2ttau} follows as before from  Lemma \ref{lem:I1I2L}.
	Lemma \ref{lem:minimizer-verify-special} shows that the second equality holds if additional two assumptions in Lemma \ref{lem:minimizer-verify-special} are satisfied.
	Let $(\boldsymbol{\zeta},\psi) \in \mathcal{J}^2_{t_1,t_1+\ttau}(\boldsymbol{x}^{(1)}, \boldsymbol{x}^{(2)})$ be a  trajectory such that $\int_{t_1}^{t_1+\ttau} L(\boldsymbol{\zeta}(s), \boldsymbol{\zeta}'(s))\,ds \le \int_{t_1}^{t_1+\ttau} L({\boldsymbol{\tilde\zeta}}(s), {\boldsymbol{\tilde\zeta}}'(s))\,ds$.
	It suffices to show 
	\begin{equation}\int_{t_1}^{t_1+\ttau} L(\boldsymbol{\zeta}(s), \boldsymbol{\zeta}'(s))\,ds \ge \int_{t_1}^{t_1+\ttau} L({\boldsymbol{\tilde\zeta}}(s), {\boldsymbol{\tilde\zeta}}'(s))\,ds.\label{eq:showineq}
	\end{equation}
	We claim that we can assume 
	\begin{itemize}
	\item
		$\zeta_0(t) > 0$ for all $t \in (t_1,t_1+\ttau)$,
	\item
		if $\new_k>0$ for some $k \in \mathbb{N}$, then $\zeta_k(t) > 0$ for all $t \in (t_1,t_1+\ttau)$.
	\end{itemize}
	For this, note that ${\boldsymbol{\tilde\zeta}}$ satisfies these two properties.
	Letting $(\boldsymbol{\zeta}^\varepsilon,\psi^\varepsilon) \doteq \varepsilon (\boldsymbol{\zeta},\psi) + (1-\varepsilon) ({\boldsymbol{\tilde\zeta}},\tilde{\psi})$ for $\varepsilon \in (0,1)$ we have that $(\boldsymbol{\zeta}^\varepsilon,\psi^\varepsilon) \in \mathcal{J}^2_{t_1,t_1+\ttau}(\boldsymbol{x}^{(1)}, \boldsymbol{x}^{(2)})$ and it satisfies the two claimed properties.
	Also, from the convexity of $L$ we see that, it suffices to prove \eqref{eq:showineq} with $(\boldsymbol{\zeta},\psi)$ 
	replaced with $(\boldsymbol{\zeta}^\varepsilon,\psi^\varepsilon)$. 
	Therefore the claim holds.
	
	Fix two sequences of time instants $t_1^{(n)} \doteq t_1 + \frac{1}{n}$ and $t_2^{(n)} \doteq t_1+\ttau - \frac{1}{n}$.
	Note that $t_2^{(n)} = t_1^{(n)} + \ttau^{(n)}$ where $\ttau^{(n)}$ is defined by \eqref{eq:tau} by replacing
	$(\boldsymbol{x}^{(1)},\boldsymbol{x}^{(2)})$ with $(\boldsymbol{x}^{(1),n},\boldsymbol{x}^{(2),n}) =
	(\boldsymbol{\zeta}(t_1^{(n)}), \boldsymbol{\zeta}(t_2^{(n)}))$.
	Consider now the optimization problem in \eqref{eq:def-I-j} associated with
	$I^2_{t_1^{(n)},t_1^{(n)} + \ttau^{(n)}}(\boldsymbol{x}^{(1),n},\boldsymbol{x}^{(2),n})$.
	Note that for this problem the  two additional assumptions in Lemma \ref{lem:minimizer-verify-special} are satisfied.
	Furthermore, the assumption $\sum_{k=1}^\infty k \new_k + \new_0 > 2 \sum_{k=1}^\infty \new_k$ in Lemma \ref{lem:minimizer-verify-general} also holds with $\boldsymbol{z}$ replaced by $\boldsymbol{z}^{(n)} =
\boldsymbol{x}^{(1),n}-\boldsymbol{x}^{(2),n}$, for sufficiently large $n$.
	Therefore Lemma \ref{lem:minimizer-verify-special} can be applied with $(\boldsymbol{x}^{(1)},\boldsymbol{x}^{(2)})$ replaced with $(\boldsymbol{x}^{(1),n},\boldsymbol{x}^{(2),n})$.
	%the optimization problem $I^2_{t_1^{(n)},t_2^{(n)}}(\boldsymbol{\zeta}(t_1^{(n)}), \boldsymbol{\zeta}(t_2^{(n)}))$.
	Let $({\boldsymbol{\tilde\zeta}}^{(n)},\tilde{\psi}^{(n)}) \in \mathcal{J}^2_{t_1^{(n)},t_1^{(n)} + \ttau^{(n)}}(\boldsymbol{x}^{(1),n},\boldsymbol{x}^{(2),n})$ 
	be the corresponding minimizer and $\beta^{(n)} \doteq \beta(\boldsymbol{x}^{(1),n},\boldsymbol{x}^{(2),n})$.
	Then
	\begin{align*}
		 \int_{t_1}^{t_1+\ttau} L(\boldsymbol{\zeta}(s), \boldsymbol{\zeta}'(s))\,ds 
		& = \lim_{n \to \infty} \int_{t_1^{(n)}}^{t_2^{(n)}} L(\boldsymbol{\zeta}(s), \boldsymbol{\zeta}'(s))\,ds \\
		& \ge \liminf_{n \to \infty} \int_{t_1^{(n)}}^{t_2^{(n)}} L({\boldsymbol{\tilde\zeta}}^{(n)}(s), ({\boldsymbol{\tilde\zeta}}^{(n)})'(s))\,ds \\
		& = \liminf_{n \to \infty} [\tilde{H}(\boldsymbol{z}^{n}) + \tilde{H}(\boldsymbol{x}^{(2),n}) - \tilde{H}(\boldsymbol{x}^{(1),n}) + \tilde{K}(\boldsymbol{x}^{(1),n}, \boldsymbol{x}^{(2),n})] \\
		& \ge \tilde{H}(\boldsymbol{\new}) + \tilde{H}(\boldsymbol{x}^{(2)}) - \tilde{H}(\boldsymbol{x}^{(1)}) + \tilde{K}(\boldsymbol{x}^{(1)}, \boldsymbol{x}^{(2)}) \\
		& = \int_{t_1}^{t_1+\ttau} L({\boldsymbol{\tilde\zeta}}(s), {\boldsymbol{\tilde\zeta}}'(s))\,ds.
	\end{align*}
	Here the first inequality follows from Lemma \ref{lem:minimizer-verify-special} and the last three lines use Lemma \ref{lem:minimizer-cost}.
\end{proof}

\section{Proof of LLN}

\label{sec:examples} In this section we give the proofs of Theorem \ref{thm:LLN} and Proposition \ref{prop:uniqueness_LLN}.

\noindent {\em Proof of Theorem \ref{thm:LLN} }

	(1)
	Assume without loss of generality that $T\ge 1$. Since $f_1(t) \le 1$, we see from Assumption \ref{asp:exponential-boundN-S} that $r(\boldsymbol{\zeta}(\cdot))$ with $r$ from (\ref{eq:r_k}) and $\psi$ are well-defined.	
	Let $\varphi_k(s,y) = 1$ for all $k \in \mathbb{N}_0$ and $(s,y) \in [0,T] \times [0,1]$.
	It suffices to show $\boldsymbol{\varphi} \in \mathcal{S}_T(\boldsymbol{\zeta} ,\psi )$ and $(\boldsymbol{\zeta} ,\psi ) \in \mathcal{C}_T$.
	Since $f_1(t)=F^{-1}_1(t) {{1}}_{[0,1]}(t)$, we have $\tau_{\boldsymbol{\zeta}}=1$, where $\tau_{\boldsymbol{\zeta}}$ was defined in \eqref{eq:defntauphi}.
	Since $F_1(f_1(t))=t$ for $t\in [0,1]$,  
	\begin{equation*}
		f'_1(t) = - \frac{1}{\sum_{k=1}^\infty k p_k (f_1(t))^{k-1}} \mbox{ for } 0 < t < \tau_{\boldsymbol{\zeta}} \mbox{ and } f'_1(t)=0 \mbox{ for } \tau_{\boldsymbol{\zeta}} < t <T.
	\end{equation*} 
	Using this it follows that for $k \in \mathbb{N}$,
	\begin{equation*}
		\zeta'_k(t) = - \frac{k \zeta_k(t)}{\sum_{j=1}^\infty j\zeta_j(t)} = - r_k(\boldsymbol{\zeta}(t)) \mbox{ for } 0 < t < \tau_{\boldsymbol{\zeta}} \mbox{ and } \zeta'_k(t)=0 \mbox{ for } \tau_{\boldsymbol{\zeta}} < t <T.
	\end{equation*}
	From this we see that \eqref{eq:phi_k} holds and we can write
	\begin{equation*}
		\psi(t) = \sum_{k=0}^\infty (k-2) \int_0^t r_k(\boldsymbol{\zeta}(s))\,ds.
	\end{equation*}
	This gives \eqref{eq:psi} and verifies that $\boldsymbol{\varphi} \in \mathcal{S}_T(\boldsymbol{\zeta} ,\psi )$.
	
	Next we argue that $(\boldsymbol{\zeta} ,\psi ) \in \mathcal{C}_T$.
	From Assumption \ref{asp:exponential-boundN-S},  for $t < \tau_{\boldsymbol{\zeta}}$, as $K \to \infty$,
	\begin{equation*}
		\sum_{k=K}^\infty |k-2| |\zeta'_k(t)| \le \sum_{k=K}^\infty k r_k(\boldsymbol{\zeta}(t)) \le \frac{\sum_{k=K}^\infty k^2 p_k}{r(\boldsymbol{\zeta}(t))} \to 0. 
	\end{equation*}
	In particular, $\psi$ is absolutely continuous and thus property (a) of $\mathcal{C}_T$ holds.
	Also, for $t < \tau_{\boldsymbol{\zeta}}$,
	\begin{equation*}
		\psi'(t) = \sum_{k=1}^\infty (k-2) r_k(\boldsymbol{\zeta}(t)) = \frac{\sum_{k=1}^\infty k(k-2)p_k (f_1(t))^k}{r(\boldsymbol{\zeta}(t))} \le \frac{f_1(t) \sum_{k=1}^\infty k(k-2)p_k}{r(\boldsymbol{\zeta}(t))} \le 0.
	\end{equation*}	
	Therefore $\Gamma(\psi)(t)=0=\zeta_0(t)$ for $t < \tau_{\boldsymbol{\zeta}}$.
	For $\tau_{\boldsymbol{\zeta}} \le t \le T$, clearly $\Gamma(\psi)(t)=0=\zeta_0(t)$.
	So we have checked property (b) of $\mathcal{C}_T$.
	Property (c) of $\mathcal{C}_T$ follows from the definition of $\zeta_k$, $k \in \mathbb{N}$.
	Therefore $(\boldsymbol{\zeta} ,\psi ) \in \mathcal{C}_T$ and part (1) follows.

	(2) The fact that when $p_1>0$ there is a unique $\rho \in (0,1)$ such that $G_1(\rho)=\rho$ is proved in \cite{molloy1995critical}.
	Since $f_\rho(t) \le 1$, we see from Assumption \ref{asp:exponential-boundN-S} that $r(\boldsymbol{\zeta}(\cdot))$ and $\psi$ are well-defined.
	Let $\varphi_k(s,y) = 1$ for all $k \in \mathbb{N}_0$ and $(s,y) \in [0,T] \times [0,1]$.
	It suffices to show $\boldsymbol{\varphi} \in \mathcal{S}_T(\boldsymbol{\zeta} ,\psi )$ and $(\boldsymbol{\zeta} ,\psi ) \in \mathcal{C}_T$.
	First consider times $t < \tau$.
	Using the definitions of $r$, $G_1$ and $\tau$, for $t < \tau$
	\begin{equation*}
		r(\boldsymbol{\zeta}(t)) = \mu-2t -\mu \sqrt{1-2t/\mu}G_1 ( \sqrt{1-2t/\mu}) + \sum_{k=1}^\infty k p_k (1 - 2t/\mu)^{k/2} = \mu-2t > \mu \rho^2 \ge 0.
	\end{equation*}
	From this one can verify that for $t < \tau$, 
	\begin{align*}
		\zeta'_k(t) & = - \frac{k \zeta_k(t)}{\mu-2t} = -r_k(\boldsymbol{\zeta}(t)).
	\end{align*}
	Using this we see that \eqref{eq:phi_k} holds for $t < \tau$ and hence as before \eqref{eq:psi} holds as well.
	%So $\varphi \in \mathcal{S}_\tau(\boldsymbol{\zeta} ,\psi )$.
	To show that $(\boldsymbol{\zeta} ,\psi ) \in \mathcal{C}_t$ for $t<\tau$, it suffices to show that $\psi(t)$ is absolutely continuous and $\zeta_0(t) = \psi(t)$ for $t \in [0,\tau)$.
	Note that for $t < \tau$, $\sum_{k=1}^\infty |k-2| |r_k(\boldsymbol{\zeta}(t))| \le \frac{\sum_{k=1}^\infty k^2 p_k}{\mu-2t}$.
	So from Assumption \ref{asp:exponential-boundN-S}, $\psi$ is absolutely continuous over $[0,\tau]$.
	Also, one can verify that for $t < \tau$,
	\begin{equation*}
		\zeta'_0(t) = \frac{d}{dt} r(\boldsymbol{\zeta}(t)) - \sum_{k=1}^\infty k \zeta'_k(t) = - 2 + \sum_{k=1}^\infty k r_k(\boldsymbol{\zeta}(t)) = \psi'(t).
	\end{equation*}
	So $\zeta_0(t) = \psi(t)$ for $t < \tau$.
	Thus we have  that $\boldsymbol{\varphi} \in \mathcal{S}_t(\boldsymbol{\zeta} ,\psi )$ and $(\boldsymbol{\zeta} ,\psi ) \in \mathcal{C}_t$ for each $t<\tau$.
	
%	 we can write
%	\begin{equation*}
%		\psi(t) = \sum_{k=0}^\infty (k-2) \int_0^t {{1}}_{\{r(\boldsymbol{\zeta}(s))>0\}} r_k(\boldsymbol{\zeta}(s))\,ds.
%	\end{equation*}
%	This gives \eqref{eq:psi} and verifies property (b) of $\mathcal{S}_T(\boldsymbol{\zeta} ,\psi )$ for $t < \tau$.
%	Since property (a) of $\mathcal{S}_T(\boldsymbol{\zeta} ,\psi )$ clearly holds, $\boldsymbol{\varphi} \in \mathcal{S}_T(\boldsymbol{\zeta} ,\psi )$ for $t < \tau$.
 
	We now consider $t \in [\tau, \tau_{\boldsymbol{\zeta}}]$. 
	Since $\rho \in [0,1)$ and $G_1(\rho)=\rho$, we have
	\begin{align*}
		0 & = \frac{\mu(G_1(\rho)-\rho)}{\rho-1} = \frac{1}{\rho-1} \sum_{k=1}^\infty kp_k (\rho^{k-1}-\rho) = -p_1 + \rho \sum_{k=3}^\infty kp_k \frac{\rho^{k-2}-1}{\rho-1} \\
		& = -p_1 + \rho \sum_{k=3}^\infty kp_k (\rho^{k-3}+\rho^{k-4}+\dotsb+1) \\
		& \ge -p_1 + \rho \sum_{k=3}^\infty kp_k (k-2)\rho^{k-3} 
		 \ge \sum_{k=1}^\infty k(k-2)p_k\rho^{k-1}
	\end{align*}
	and therefore $0 \ge \sum_{k=1}^\infty k(k-2)p_k\rho^k = \sum_{k=1}^\infty k(k-2) \zeta_k(\tau)$.
	Namely, the assumption in part (1) is satisfied with ${\boldsymbol{p}}$ replaced by $\boldsymbol{\zeta}(\tau)$.
	Thus the proof for the case $t \in [\tau, \tau_{\boldsymbol{\zeta}}]$ is very similar to that in part (1), with $f_1(t)$ replaced by $f_{\rho}(t-\tau)$ and $p_k$ replaced with $\zeta_k(\tau)$, and we would like to omit the detail.
	This completes the proof of (2). \hfill \qed \\

\noindent {\em Proof of Proposition \ref{prop:uniqueness_LLN}.}
	Suppose for $i=1,2$, $(\boldsymbol{\zeta}^{(i)},\psi^{(i)})$ are two pairs such that $I_T(\boldsymbol{\zeta}^{(i)},\psi^{(i)}) = 0$.
	By the definition of $I_T(\cdot)$, $(\boldsymbol{\zeta}^{(i)},\psi^{(i)}) \in \mathcal{C}_T$.
	\blue{From Remark \ref{rmk:unique_varphi} we see that there exists some $\boldsymbol{\varphi}^{(i)} \in \mathcal{S}_T(\boldsymbol{\zeta}^{(i)},\psi^{(i)})$ whose cost equals $I_T(\boldsymbol{\zeta}^{(i)},\psi^{(i)})$, namely}
%	By the definition of $I_T(\cdot)$, we have that $(\boldsymbol{\zeta} ,\psi ) \in \mathcal{C}_T$ and for each $n \in \mathbb{N}$, there exits $\varphi^n = (\varphi^n_k)_{k \in \mathbb{N}} \in \mathcal{S}_T(\boldsymbol{\zeta} ,\psi )$ such that
%	\begin{equation*}
%		\sum_{k=0}^\infty \int_{[0,T] \times [0,1]} \ell(\varphi_k^n(s,y)) \, ds\,dy \le \frac{1}{n}.
%	\end{equation*}
%	So \eqref{eq:cost_bd_rate} holds with $(\zeta^n,\psi^n) = (\psi,\zeta)$ and $N=0$.
%	Then we see from the argument in Section \ref{sec:rate_function} (in particular, Lemmas \ref{lem:pre_compact_rate} and \ref{lem:cvg_rate}) that along some subsequence, $\nu^{\varphi^n} \to \nu^\varphi$ as $n \to \infty$ for some $\varphi = (\varphi_k)_{k \in \mathbb{N}} \in \mathcal{S}_T(\boldsymbol{\zeta} ,\psi )$ and
	\begin{equation*}
		\sum_{k=0}^\infty \int_{[0,T] \times [0,1]} \ell(\varphi_k^{(i)}(s,y)) \, ds\,dy =I_T(\boldsymbol{\zeta}^{(i)},\psi^{(i)})= 0.
	\end{equation*}	
	Since $\ell(x) = 0$ if and only if $x=1$, we must have $\varphi_k^{(i)}(s,y) = 1$ for a.e.\ $(s,y) \in [0,T]\times[0,1]$ and $k \in \mathbb{N}_0$. 
	Using such $\varphi^{(i)}$ with \eqref{eq:psi} and \eqref{eq:phi_k}, we see that
	\begin{align}
		\zeta_k^{(i)}(t) & = p_k - \int_0^t  r_k(\boldsymbol{\zeta}^{(i)}(s))\,ds, k \in \mathbb{N}, \label{eq:phi_unique_LLN} \\
		\psi^{(i)}(t) & = \sum_{k=0}^\infty (k-2) \int_0^t  r_k(\boldsymbol{\zeta}^{(i)}(s))\,ds. \label{eq:psi_unique_LLN}
	\end{align}
	Since $\zeta_0^{(i)} = \Gamma(\psi^{(i)})$,  for a.e.\ $t$,  $(\zeta_0^{(i)})'(t) \ge (\psi^{(i)})'(t) = \sum_{k=0}^\infty (k-2)  r_k(\boldsymbol{\zeta}^{(i)}(t))$, and by (\ref{eq:r_k})
	\begin{align*}
		\frac{d}{dt} r(\boldsymbol{\zeta}^{(i)}(t)) & = (\zeta_0^{(i)})'(t) + \sum_{k=1}^\infty k (\zeta_k^{(i)})'(t) \ge \sum_{k=0}^\infty (k-2)  r_k(\boldsymbol{\zeta}^{(i)}(t)) - \sum_{k=1}^\infty k  r_k(\boldsymbol{\zeta}^{(i)}(t)) \\
		& = -2\cdot{{1}}_{\{r(\boldsymbol{\zeta}^{(i)}(t))>0\}} \ge -2.
	\end{align*}
	%Let $\tau_{\boldsymbol{\zeta}} \doteq \inf \{t \in [0,T] : r(\boldsymbol{\zeta}(t))=0 \} \wedge T$.
	Consider the strictly increasing function $g^{(i)}(t)$ defined by 
	\begin{equation}
		\label{eq:g_unique_LLN}
		g^{(i)}(0) = 0, \: (g^{(i)})'(t) = r(\boldsymbol{\zeta}^{(i)}(g^{(i)}(t))) {{1}}_{\{g^{(i)}(t) < \tau_{\boldsymbol{\zeta}^{(i)}}\}} + {{1}}_{\{g^{(i)}(t) \ge \tau_{\boldsymbol{\zeta}^{(i)}}\}},
	\end{equation}
	where $\tau_{\boldsymbol{\zeta}^{(i)}}$ is as in \eqref{eq:defntauphi}. 
	Since $\frac{d}{dt} r(\boldsymbol{\zeta}^{(i)}(t)) \in [-2,0]$ and $0 \le r(\boldsymbol{\zeta}^{(i)}(\cdot)) \le r(\boldsymbol{\zeta}^{(i)}(0)) = \sum_{k=1}^\infty kp_k < \infty$, we see that $r(\boldsymbol{\zeta}^{(i)}(\cdot))$ is bounded and Lipschitz.
	Also $r(\boldsymbol{\zeta}^{(i)}(t)) > 0$ for $t < \tau_{\boldsymbol{\zeta}^{(i)}}$.
	So we have existence and uniqueness of the strictly increasing function $g^{(i)}(t)$ before it reaches $\tau_{\boldsymbol{\zeta}^{(i)}}$.
	The existence, uniqueness and monotonicity of $g^{(i)}(t)$ after  $\tau_{\boldsymbol{\zeta}^{(i)}}$ is straightforward.
	
	Define $({\tilde{\boldsymbol{\zeta}}}^{(i)}(t),{\tilde{\psi}}^{(i)}(t)) \doteq (\boldsymbol{\zeta}^{(i)}(g^{(i)}(t)),\psi^{(i)}(g^{(i)}(t)))$.
	From \eqref{eq:phi_unique_LLN} and \eqref{eq:psi_unique_LLN} it follows that
	\[
	\Scale[0.9]{\begin{aligned}
		{\tilde{\zeta}}_k^{(i)}(t) & = p_k - \int_0^t  k{\tilde{\zeta}}_k^{(i)}(s)\,ds, k \in \mathbb{N}, \\
		{\tilde{\psi}}^{(i)}(t) & = \sum_{k=1}^\infty (k-2) \int_0^t  k{\tilde{\zeta}}_k^{(i)}(s)\,ds - 2 \int_0^t  {\tilde{\zeta}}_0^{(i)}(s)\,ds 
		 = \sum_{k=1}^\infty (k-2) \int_0^t  k{\tilde{\zeta}}_k^{(i)}(s)\,ds - 2 \int_0^t  \Gamma({\tilde{\psi}}^{(i)})(s)\,ds.	
	\end{aligned}}
	\]
	Clearly ${\tilde{\zeta}}_k^{(1)} = {\tilde{\zeta}}_k^{(2)}$ for each $k \in \mathbb{N}$.
	Also, since $\Gamma$ is Lipschitz on path space, Gronwall's inequality implies ${\tilde{\psi}}^{(1)} = {\tilde{\psi}}^{(2)}$, and hence ${\tilde{\zeta}}_0^{(1)} = {\tilde{\zeta}}_0^{(2)}$.
%	Therefore there is a unique pair $({\tilde{\zeta}},{\tilde{\psi}}) \in \mathcal{C}_\infty\times \mathcal{C}$ that solves the above set of equations.
	% is unique and must be the time-changed one obtained from the pair defined in Theorem \ref{thm:LLN}.
	Noting that \eqref{eq:g_unique_LLN} can be written as
	\begin{equation*}
		g^{(i)}(0) = 0, \: (g^{(i)})'(t) = r(\tilde{\boldsymbol{\zeta}}^{(i)}(t)) {{1}}_{\{r(\tilde{\boldsymbol{\zeta}}^{(i)}(t)) > 0\}} + {{1}}_{\{r(\tilde{\boldsymbol{\zeta}}^{(i)}(t)) = 0\}},
	\end{equation*}
	we have $g^{(1)} = g^{(2)}$.
	Since $g^{(i)}$ is strictly increasing, its inverse function is well-defined and we must have that $(\boldsymbol{\zeta}^{(1)},\psi^{(1)})=(\boldsymbol{\zeta}^{(2)},\psi^{(2)})$.
	This completes the proof. \hfill \qed

\vspace{\baselineskip}\noindent \textbf{Acknowledgement:} 
We would like to thank the two referees for a careful review of our work and for the many helpful suggestions.
The research of SB was supported in part by the NSF (DMS-1606839, DMS-1613072)  and the Army Research Office (W911NF-17-1-0010).
The research of AB was supported in part by the NSF (DMS-1305120, DMS-1814894, DMS-1853968).
The research of PD was supported in part by the NSF (DMS-1904992) and DARPA (W911NF-15-2-0122). 
The research of RW was supported in part by DARPA (W911NF-15-2-0122).

\bibliographystyle{plain}
%\bibliography{reference}
% \bib, bibdiv, biblist are defined by the amsrefs package.
\begin{bibdiv}
\begin{biblist}

\bib{aldous1997brownian}{article}{
      author={Aldous, David},
       title={Brownian excursions, critical random graphs and the
  multiplicative coalescent},
        date={1997},
     journal={The Annals of Probability},
       pages={812\ndash 854},
}

\bib{bender1978asymptotic}{article}{
      author={Bender, Edward~A},
      author={Canfield, E~Rodney},
       title={The asymptotic number of labeled graphs with given degree
  sequences},
        date={1978},
     journal={Journal of Combinatorial Theory, Series A},
      volume={24},
      number={3},
       pages={296\ndash 307},
}

\bib{Billingsley1999}{book}{
      author={Billingsley, P.},
       title={{Convergence of Probability Measures}},
      series={Wiley series in probability and mathematical statistics:
  Probability and statistics},
   publisher={John Wiley \& Sons, New York},
        date={1999},
}

\bib{bollobas1980probabilistic}{article}{
      author={Bollob{\'a}s, B{\'e}la},
       title={A probabilistic proof of an asymptotic formula for the number of
  labelled regular graphs},
        date={1980},
     journal={European Journal of Combinatorics},
      volume={1},
      number={4},
       pages={311\ndash 316},
}

\bib{bordenave2015large}{article}{
      author={Bordenave, Charles},
      author={Caputo, Pietro},
       title={Large deviations of empirical neighborhood distribution in sparse
  random graphs},
        date={2015},
     journal={Probability Theory and Related Fields},
      volume={163},
      number={1-2},
       pages={149\ndash 222},
}

\bib{borgs2008convergent}{article}{
      author={Borgs, Christian},
      author={Chayes, Jennifer~T},
      author={Lov{\'a}sz, L{\'a}szl{\'o}},
      author={S{\'o}s, Vera~T},
      author={Vesztergombi, Katalin},
       title={Convergent sequences of dense graphs i: Subgraph frequencies,
  metric properties and testing},
        date={2008},
     journal={Advances in Mathematics},
      volume={219},
      number={6},
       pages={1801\ndash 1851},
}

\bib{borgs2012convergent}{article}{
      author={Borgs, Christian},
      author={Chayes, Jennifer~T},
      author={Lov{\'a}sz, L{\'a}szl{\'o}},
      author={S{\'o}s, Vera~T},
      author={Vesztergombi, Katalin},
       title={Convergent sequences of dense graphs {I}{I}. multiway cuts and
  statistical physics},
        date={2012},
     journal={Annals of Mathematics},
      volume={176},
      number={1},
       pages={151\ndash 219},
}

\bib{BoueDupuis1998variational}{article}{
      author={Bou{\'e}, M.},
      author={Dupuis, P.},
       title={{A variational representation for certain functionals of Brownian
  motion}},
        date={1998},
     journal={The Annals of Probability},
      volume={26},
      number={4},
       pages={1641\ndash 1659},
}

\bib{BudhirajaChenDupuis2013large}{article}{
      author={Budhiraja, A.},
      author={Chen, J.},
      author={Dupuis, P.},
       title={{Large deviations for stochastic partial differential equations
  driven by a Poisson random measure}},
        date={2013},
     journal={Stochastic Processes and their Applications},
      volume={123},
      number={2},
       pages={523\ndash 560},
}

\bib{BudhirajaDupuis2000variational}{article}{
      author={Budhiraja, A.},
      author={Dupuis, P.},
       title={{A variational representation for positive functionals of
  infinite dimensional Brownian motion}},
        date={2000},
     journal={Probability and Mathematical Statistics},
      volume={20},
      number={1},
       pages={39\ndash 61},
}

\bib{buddupbook}{book}{
      author={Budhiraja, A.},
      author={Dupuis, P.},
       title={{Analysis and Approximation of Rare Events. Representations and
  Weak Convergence Methods}},
   publisher={Series Prob. Theory and Stoch. Modelling, Springer},
        date={2019},
      volume={94},
}

\bib{BudhirajaDupuisGanguly2015moderate}{article}{
      author={Budhiraja, A.},
      author={Dupuis, P.},
      author={Ganguly, A.},
       title={{Moderate deviation principles for stochastic differential
  equations with jumps}},
        date={2016},
     journal={The Annals of Probability},
      volume={44},
      number={3},
       pages={1723\ndash 1775},
}

\bib{BudhirajaDupuisMaroulas2011variational}{article}{
      author={Budhiraja, A.},
      author={Dupuis, P.},
      author={Maroulas, V.},
       title={{Variational representations for continuous time processes}},
        date={2011},
     journal={Annales de l'Institut Henri Poincar{\'e}(B), Probabilit{\'e}s et
  Statistiques},
      volume={47},
      number={3},
       pages={725\ndash 747},
}

\bib{BudhirajaWu2017moderate}{article}{
      author={Budhiraja, A.},
      author={Wu, R.},
       title={{Moderate deviation principles for weakly interacting particle
  systems}},
        date={2017},
        ISSN={1432-2064},
     journal={Probability Theory and Related Fields},
      volume={168},
      number={3},
       pages={721\ndash 771},
         url={http://dx.doi.org/10.1007/s00440-016-0723-3},
}

\bib{chatterjee2016introduction}{article}{
      author={Chatterjee, Sourav},
       title={An introduction to large deviations for random graphs},
        date={2016},
     journal={Bulletin of the American Mathematical Society},
      volume={53},
      number={4},
       pages={617\ndash 642},
}

\bib{chatterjee2011large}{article}{
      author={Chatterjee, Sourav},
      author={Varadhan, SR~Srinivasa},
       title={The large deviation principle for the {E}rd{\H{o}}s-{R}{\'e}nyi
  random graph},
        date={2011},
     journal={European Journal of Combinatorics},
      volume={32},
      number={7},
       pages={1000\ndash 1017},
}

\bib{choset}{article}{
      author={Choi, Jihyeok},
      author={Sethuraman, Sunder},
       title={Large deviations for the degree structure in preferential
  attachment schemes},
        date={2013},
     journal={The Annals of Applied Probability},
      volume={23},
      number={2},
       pages={722\ndash 763},
}

\bib{choi2013large}{article}{
      author={Choi, Jihyeok},
      author={Sethuraman, Sunder},
      author={others},
       title={Large deviations for the degree structure in preferential
  attachment schemes},
        date={2013},
     journal={The annals of applied probability},
      volume={23},
      number={2},
       pages={722\ndash 763},
}

\bib{DhaSen}{article}{
      author={Dhara, S.},
      author={Sen, S.},
       title={{Large deviation for uniform graphs with given degrees}},
        date={2019},
     journal={arXiv preprint arXiv:1904.07666},
}

\bib{DupuisEllis2011weak}{book}{
      author={Dupuis, P.},
      author={Ellis, R.~S.},
       title={{A Weak Convergence Approach to the Theory of Large Deviations}},
      series={Wiley series in probability and mathematical statistics:
  Probability and statistics},
   publisher={John Wiley \& Sons, New York},
        date={1997},
      volume={902},
}

\bib{dupnuzwhi}{article}{
      author={Dupuis, Paul},
      author={Nuzman, Carl},
      author={Whiting, Phil},
       title={Large deviation asymptotics for occupancy problems},
        date={2004},
     journal={The Annals of Probability},
      volume={32},
      number={3B},
       pages={2765\ndash 2818},
}

\bib{fortunato2010community}{article}{
      author={Fortunato, Santo},
       title={Community detection in graphs},
        date={2010},
     journal={Physics Reports},
      volume={486},
      number={3},
       pages={75\ndash 174},
}

\bib{IkedaWatanabe1990SDE}{book}{
      author={Ikeda, N.},
      author={Watanabe, S.},
       title={{Stochastic Differential Equations and Diffusion Processes}},
      series={North-Holland Mathematical Library},
   publisher={Elsevier},
        date={1981},
      volume={24},
}

\bib{Janson2009new}{article}{
      author={Janson, S.},
      author={Luczak, M.~J},
       title={{A new approach to the giant component problem}},
        date={2009},
     journal={Random Structures \& Algorithms},
      volume={34},
      number={2},
       pages={197\ndash 216},
}

\bib{janson2009probability}{article}{
      author={JANSON, SVANTE},
       title={The probability that a random multigraph is simple},
        date={2009},
     journal={Combinatorics, Probability \& Computing},
      volume={18},
      number={1-2},
       pages={205},
}

\bib{KaratzasShreve1991brownian}{book}{
      author={Karatzas, I.},
      author={Shreve, S.~E.},
       title={{Brownian Motion and Stochastic Calculus}},
      series={Graduate Texts in Mathematics},
   publisher={Springer New York},
        date={1991},
      volume={113},
        ISBN={9780387976556},
}

\bib{Kurtz1981approximation}{book}{
      author={Kurtz, T.~G.},
       title={{Approximation of Population Processes}},
      series={CBMS-NSF Regional Conference Series in Applied Mathematics},
   publisher={SIAM},
        date={1981},
      volume={36},
}

\bib{lovasz2012large}{book}{
      author={Lov{\'a}sz, L{\'a}szl{\'o}},
       title={Large networks and graph limits},
   publisher={American Mathematical Society Providence},
        date={2012},
      volume={60},
}

\bib{molloy1995critical}{article}{
      author={Molloy, Michael},
      author={Reed, Bruce},
       title={A critical point for random graphs with a given degree sequence},
        date={1995},
     journal={Random Structures \& Algorithms},
      volume={6},
      number={2-3},
       pages={161\ndash 180},
}

\bib{MolloyReed1998size}{article}{
      author={Molloy, Michael},
      author={Reed, Bruce},
       title={The size of the giant component of a random graph with a given
  degree sequence},
        date={1998},
     journal={Combinatorics, Probability and Computing},
      volume={7},
      number={3},
       pages={295–305},
}

\bib{newman2002spread}{article}{
      author={Newman, Mark~EJ},
       title={Spread of epidemic disease on networks},
        date={2002},
     journal={Physical Review E},
      volume={66},
      number={1},
       pages={016128},
}

\bib{newman2006modularity}{article}{
      author={Newman, Mark~EJ},
       title={Modularity and community structure in networks},
        date={2006},
     journal={Proceedings of the national academy of sciences},
      volume={103},
      number={23},
       pages={8577\ndash 8582},
}

\bib{newman2001random}{article}{
      author={Newman, Mark~EJ},
      author={Strogatz, Steven~H},
      author={Watts, Duncan~J},
       title={Random graphs with arbitrary degree distributions and their
  applications},
        date={2001},
     journal={Physical review E},
      volume={64},
      number={2},
       pages={026118},
}

\bib{o1998some}{article}{
      author={O'Connell, Neil},
       title={Some large deviation results for sparse random graphs},
        date={1998},
     journal={Probability Theory and Related Fields},
      volume={110},
      number={3},
       pages={277\ndash 285},
}

\bib{puhalskii2005stochastic}{article}{
      author={Puhalskii, Anatolii~A},
       title={Stochastic processes in random graphs},
        date={2005},
     journal={The Annals of Probability},
      volume={33},
      number={1},
       pages={337\ndash 412},
}

\bib{puhalskii2013number}{article}{
      author={Puhalskii, Anatolii~A},
       title={On the number of isolated vertices in a growing random graph},
        date={2013},
     journal={Rocky Mountain Journal of Mathematics},
      volume={43},
      number={6},
       pages={1941\ndash 1989},
}

\bib{Hofstad2016}{book}{
      author={Van Der~Hofstad, Remco},
       title={Random {G}raphs and {C}omplex {N}etworks},
   publisher={Cambridge University Press},
        date={2016},
      volume={1},
}

\end{biblist}
\end{bibdiv}

\vspace{\baselineskip}

\scriptsize{\textsc{\noindent S. Bhamidi and A. Budhiraja\newline
Department of Statistics and Operations Research\newline
University of North Carolina\newline
Chapel Hill, NC 27599, USA\newline
email: bhamidi@email.unc.edu, budhiraj@email.unc.edu \vspace{\baselineskip} }

\textsc{\noindent P. Dupuis \newline
Division of Applied Mathematics\newline
Brown University\newline
Providence, RI 02912, USA\newline
email: paul\_dupuis@brown.edu \vspace{\baselineskip} }

\textsc{\noindent R. Wu\newline
Department of Mathematics\newline
University of Michigan\newline
Ann Arbor, MI 48109, USA\newline
email: ruoyu@umich.edu }}

\end{document}